\DeclareMathAlphabet{\mathbfsf}{\encodingdefault}{\sfdefault}{bx}{sl}
\renewcommand{\catname}[1]{\textup{\textmd{\textsf{#1}}}}
\renewcommand{\Cat}{\catname{Cat}}
\renewcommand{\Mon}{\catname{Mon}}
\renewcommand{\Seg}{\catname{Seg}}
\renewcommand{\Span}{\catname{Span}}
\newcommand{\CAT}{\catname{CAT}}
\newcommand{\Tw}{\catname{Tw}}
\renewcommand{\Fun}{\catname{Fun}}
\renewcommand{\Map}{\catname{Map}}
\newcommand{\CATI}{\CAT_{\infty}}
\newcommand{\ev}{\txt{ev}}
\newcommand{\xF}{\mathbb{F}}
\newcommand{\Fan}{F_{\txt{an}}}
\newcommand{\Uan}{U_{\txt{an}}}
\newcommand{\Flin}{F_{\txt{lin}}}
\newcommand{\Ulin}{U_{\txt{lin}}}
\theoremstyle{plain}
\newtheorem{thmA}{Theorem}
\title[From analytic monads to $\infty$-operads through Lawvere theories]{From analytic monads to $\infty$-operads\\ through Lawvere theories}
\author{Rune Haugseng}
\address{Norwegian University of Science and Technology (NTNU),
  Trondheim, Norway}
\urladdr{http://folk.ntnu.no/runegha}
\date{\today}
\theoremstyle{definition}
\newcommand{\act}{\txt{act}}
\newcommand{\Yo}{\mathsf{y}}
\newcommand{\PSh}{\mathsf{P}}
\let\lim\relax \DeclareMathOperator{\lim}{lim}
\let\colim\relax \DeclareMathOperator{\colim}{colim}
\DeclareSymbolFont{bbold}{U}{bbold}{m}{n}
\DeclareSymbolFontAlphabet{\mathbbold}{bbold}
\renewcommand{\Map}{\textup{\textsf{Map}}}
\renewcommand{\Fun}{\textup{\textsf{Fun}}}
\renewcommand{\Cat}{\textup{\textsf{Cat}}}
\renewcommand{\CatI}{\Cat_{\infty}}
\newcommand{\SpF}{\Span(\xF)}
\newcommand{\SpFL}{\Span(\xF)^{\natural}}
\newcommand{\AnMnd}{\catname{AnMnd}}
\newcommand{\LinMnd}{\catname{LinMnd}}
\newcommand{\AnEnd}{\catname{AnEnd}}
\newcommand{\LinEnd}{\catname{LinEnd}}
\DeclareMathOperator{\Sym}{Sym}
\newcommand{\bfone}{\textbf{\textup{\textlf{1}}}}
\newcommand{\fset}[1]{\textbf{\textup{\textlf{#1}}}}
\newcommand{\Act}{\catname{Act}}
\newcommand{\actto}{\rightsquigarrow}
\newcommand{\intto}{\rightarrowtail}
\newcommand{\xint}{\txt{int}}
\newcommand{\Lawv}{\catname{LwvTh}}
\newcommand{\LawvSpF}{\Lawv_{/\SpFL}}
\newcommand{\LawvAn}{\LawvSpF^{\txt{an}}}
\newcommand{\LawvAnpre}{\Lawv^{\txt{an}}_{\Th(\Sym)}}
\newcommand{\AlgMnd}{\catname{AlgMnd}}
\newcommand{\Th}{\mathfrak{Th}}
\newcommand{\Md}{\mathfrak{Mod}}
\newcommand{\POpd}{\catname{POpd}}
\newcommand{\PCat}{\catname{PCat}}
\newcommand{\PCatI}{\PCat_{\infty}}
\newcommand{\bbO}{\boldsymbol{\Omega}}
\newcommand{\bbOint}{\boldsymbol{\Omega}^{\txt{int}}}
\newcommand{\bbOel}{\boldsymbol{\Omega}^{\txt{el}}}
\newcommand{\Dint}{\boldsymbol{\Delta}^{\txt{int}}}
\newcommand{\Dintop}{\boldsymbol{\Delta}^{\txt{int},\op}}
\newcommand{\Del}{\boldsymbol{\Delta}^{\txt{el}}}
\newcommand{\Delop}{\boldsymbol{\Delta}^{\txt{el},\op}}
\newcommand{\Oop}{\bbO^{\op}}
\newcommand{\SegD}{\Seg_{\Dop}(\mathcal{S})}
\newcommand{\CSeg}{\catname{CSeg}}
\newcommand{\CSegD}{\CSeg_{\Dop}(\mathcal{S})}
\newcommand{\SegO}{\Seg_{\Oop}(\mathcal{S})}
\newcommand{\CSegO}{\CSeg_{\Oop}(\mathcal{S})}
\begin{document}

\begin{abstract}
  We show that Lurie's model for $\infty$-operads (or more precisely a
  ``flagged'' or ``pinned'' version thereof) is equivalent to the
  analytic monads previously studied by Gepner, Kock, and the author, with an
 $\infty$-operad $\mathcal{O}$ corresponding to the monad for
  $\mathcal{O}$-algebras in spaces. In particular, the $\infty$-operad
  $\mathcal{O}$ is completely determined by this monad. To prove this
  we study the Lawvere
  theories of analytic monads, and show that these are precisely
  pinned \iopds{} in a slight (equivalent) variant of Lurie's
  definition, where finite pointed sets are replaced by spans in
  finite sets.
\end{abstract}

\maketitle

\tableofcontents

\section{Introduction}

The theory of \emph{$\infty$-operads} gives a powerful framework for
working with homotopy-coherent algebraic structures. Our goal in this
paper is to give a direct comparison between Lurie's approach to
\iopds{} \cite{HA} and the \emph{analytic monads} introduced in
\cite{polynomial}. In particular, we will show that an \iopd{}
$\mathcal{O}$ in Lurie's sense is completely determined by the monad
for free $\mathcal{O}$-algebras in spaces.

Before we explain these results in more detail, it is helpful to first
recall the relation between operads in
$\Set$ and monads; 
for simplicity, we focus on the case of one-object
operads\footnote{As opposed to (coloured) operads with more general
  sets of objects/colours.} in $\Set$.
One common description of such
operads is that they are associative
monoids in symmetric sequences with respect to the composition
product. Here a \emph{symmetric sequence} in $\Set$
consists of a sequence of sets
$\mathcal{O}(n)$, $n = 0,1,\ldots$, where $\mathcal{O}(n)$ has an
action of the symmetric group $\Sigma_{n}$; this data can also be
encoded as a functor
$\xF^{\simeq} \to \Set$, where $\xF^{\simeq}$ is the groupoid of
finite sets and bijections. The \emph{composition
  product} of two symmetric sequences $\mathcal{O}$ and $\mathcal{P}$
is then given by the formula
\[ (\mathcal{O} \circ \mathcal{P})(n) \cong \coprod_{k =0}^{\infty}
  \mathcal{O}(k) \times_{\Sigma_{k}} \left(
    \coprod_{i_{1}+\cdots+i_{k} = n} (\mathcal{P}(i_{1}) \times \cdots
    \times \mathcal{P}(i_{k})) \times_{\Sigma_{i_{1}} \times \cdots
      \times \Sigma_{i_{k}}} \Sigma_{n} \right),\] analogous to the
formula for composition of power series.

Joyal~\cite{JoyalAnalytique} extended the analogy with power series by
proving that under left Kan extension along the inclusion
$\xF^{\simeq} \to \Set$, symmetric sequences in $\Set$ (or
\emph{species} in Joyal's terminology) are identified with a certain
class of \emph{analytic} endofunctors of $\Set$. Not all natural
transformations among such functors come from morphisms of symmetric
sequences, but we get an equivalence of categories if we consider a
certain class of ``weakly cartesian'' transformations. Moreover, under
this equivalence the composition product is identified with
composition of endofunctors, which means that one-object operads can
be identified with associative monoids in analytic functors under
composition, \ie{} with the class of \emph{analytic} monads on
$\Set$.\footnote{A similar description applies to operads with an
  arbitrary set of objects, we have only restricted to the one-object
  case for simplicity.}  Moreover, the analytic monad that corresponds
to an operad $\mathcal{O}$ can be identified with the monad for free
$\mathcal{O}$-algebras in $\Set$. 

Analytic monads in the \icatl{} setting were introduced by Gepner, Kock,
and the author in \cite{polynomial}. Here the
appropriate\footnote{For example, appropiate in
  the sense of corresponding to symmetric sequences and their
  generalizations, \cf{} \cite{polynomial}*{\S 3.2}.}
notion of
analytic functor admits a considerably simpler characterization than
in the classical 1-categorical context: if $\mathcal{S}$ is the
\icat{} of spaces or \igpds{}, then a functor
$\mathcal{S}_{/I} \to \mathcal{S}_{/J}$ is \emph{analytic} precisely
when it preserves sifted colimits and weakly contractible
limits. Moreover, the appropriate morphisms between these are
precisely the \emph{cartesian} natural transformations, \ie{} those
whose naturality squares are all cartesian. An analytic monad is then
a monad on a slice $\mathcal{S}_{/I}$ whose underlying endofunctor is
analytic, and whose multiplication and unit transformations are
cartesian. 

In \cite{polynomial}, we defined an \icat{} of analytic monads on
slices $\mathcal{S}_{/I}$ where the base space $I$ can vary, and
related these analytic monads to \iopds{} by proving that this \icat{}
is equivalent to that of \emph{dendroidal Segal spaces} due to
Cisinski and Moerdijk~\cite{CisinskiMoerdijkDendSeg}. These are known
to be equivalent to other models for \iopds{} by the comparison
results of
Cisinski--Moerdijk~\cite{CisinskiMoerdijkDendSeg,CisinskiMoerdijkSimplOpd},
Heuts--Hinich--Moerdijk \cite{HeutsHinichMoerdijkDendrComp},
Barwick~\cite{BarwickOpCat}, Chu, Heuts, and the
author~\cite{iopdcomp}, and most recently Hinich and
Moerdijk~\cite{HinichMoerdijkOpd} (who directly compare Lurie's
\iopds{} to complete dendroidal Segal spaces).

In particular, it follows from these comparisons that analytic monads
are equivalent to Lurie's model for \iopds{}~\cite{HA}, which is by
far the best developed approach. However, the resulting connection between the two 
is rather indirect, since it passes through at least one additional model
for \iopds{}. This is rather unsatisfying, since it seems clear what a
direct equivalence ought to do in one
direction: If $\mathcal{O}$ is an \iopd{}, then we can explicitly
describe the monad for free $\mathcal{O}$-algebras in $\mathcal{S}$,
and this is an analytic monad.

The main goal of the present paper is therefore to prove a direct comparison
between Lurie's \iopds{} and analytic monads (\ie{} without passing
through dendroidal Segal spaces and the other equivalences cited
above), which assigns to an \iopd{} $\mathcal{O}$ the monad for
$\mathcal{O}$-algebras in $\mathcal{S}$. To state this precisely 
we first need to explain a slight wrinkle,
however: we proved in \cite{polynomial} that analytic monads
correspond to dendroidal Segal spaces, but \iopds{} should instead correspond
to the full subcategory of \emph{complete} dendroidal Segal spaces. To
correct for this, we can (following \cite{AyalaFrancisFlagged}) instead
consider what we will called \emph{pinned \iopds{}}: pairs
$(\mathcal{O}, p)$ where $\mathcal{O}$ is an \iopd{} and
$p \colon X \twoheadrightarrow \mathcal{O}_{\angled{1}}^{\simeq}$ is
an essentially surjective morphism from an \igpd{} $X$ to the \igpd{} of objects
of the \iopd{} $\mathcal{O}$. The composite
\begin{equation}
  \label{eq:pinnedmonad}
  \Alg_{\mathcal{O}}(\mathcal{S}) \to
  \Fun(\mathcal{O}_{\angled{1}}^{\simeq}, \mathcal{S}) \xto{p^{*}}
  \Fun(X, \mathcal{S}) 
\end{equation}
is then also a monadic right adjoint corresponding to an analytic
monad. We can now state our main result as follows:
\begin{thmA}\label{thm:main}
  Let $\POpd(\xF_{*})$ denote the \icat{} of pinned \iopds{} and
  $\AnMnd$ that of analytic monads. Then the functor
  \[ \POpd(\xF_{*}) \to \AnMnd, \]
  taking a pinned \iopd{} $(\mathcal{O}, p)$ to the monad
  \cref{eq:pinnedmonad}, is an equivalence.
\end{thmA}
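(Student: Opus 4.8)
The plan is to prove the equivalence by interposing the $\infty$-category of \emph{Lawvere theories}, exactly as the abstract suggests. I would factor the comparison as a chain of equivalences
\[ \AnMnd \simeq \LawvAn \simeq \POpd(\SpF) \simeq \POpd(\xF_{*}), \]
where $\LawvAn$ is the $\infty$-category of analytic Lawvere theories over $\SpFL$ and $\POpd(\SpF)$ is the $\infty$-category of pinned \iopds{} in the span variant of Lurie's definition, and then check that the composite is the functor sending $(\mathcal{O},p)$ to the monad \cref{eq:pinnedmonad}. The underlying mechanism on the monad side is that an analytic monad $T$ on $\mathcal{S}_{/I}$ is recorded by its theory $\Th(T)$, assembled from the free $T$-algebras on the finite sets over $I$ (its arities) and carrying a canonical functor to $\SpFL$; conversely $T$ is recovered as the monad of the monadic adjunction on $\Md(\Th(T))$, its $\infty$-category of models.

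First I would establish that the theory functor $\Th \colon \AnMnd \to \LawvAn$ is an equivalence. I expect the classical correspondence between monads of bounded rank and Lawvere theories to carry over, with two \icatl{} refinements: full faithfulness amounts to the statement that an analytic monad is determined by its models $\Md(\Th(T))$ together with the forgetful functor; and the essential image is cut out by translating the two analyticity conditions --- preservation of sifted colimits and cartesianness of the unit and multiplication --- into conditions on the theory. The cartesianness is precisely what forces the admissible morphisms to be the \emph{cartesian} natural transformations rather than arbitrary ones, and this is the reason $\LawvAn$ must be defined as a subcategory of $\LawvSpF$ rather than of plain Lawvere theories.

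The central identification is that $\LawvAn$ consists exactly of pinned span-operads, i.e.\ $\LawvAn \simeq \POpd(\SpF)$. I would show that an analytic Lawvere theory over $\SpFL$ is the same datum as a Segal object for the algebraic pattern $\SpF$ equipped with a pinning: the active spans encode the operadic compositions visible in the theory, the inert (backward-injective) spans encode the Segal decomposition into elementary objects, and the essentially surjective pinning $p \colon X \twoheadrightarrow \mathcal{O}_{\angled{1}}^{\simeq}$ matches the choice of generating arities $I = X$. It then remains to compare the span pattern $\SpF$ with Lurie's pointed-set pattern $\xF_{*}$: since pointed maps are exactly the backward-injective spans, I would argue that the extra spans in $\SpF$ carry no new data --- their values on a Segal object are forced by the inert Segal condition --- yielding $\POpd(\SpF) \simeq \POpd(\xF_{*})$. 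Composing the three equivalences and checking that the composite computes $\Md$ of the theory, hence the algebra $\infty$-category and its free-algebra monad, identifies it with \cref{eq:pinnedmonad}.

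The hard part will be the central identification $\LawvAn \simeq \POpd(\SpF)$: one must show that the monad-theoretic conditions (sifted-colimit preservation and cartesianness) and the pattern-theoretic Segal and inert-cocartesian conditions cut out literally the same objects and morphisms, with full control of the \icatl{} coherences relating the active and inert factorizations on the two sides. By contrast I expect the pattern comparison $\POpd(\SpF) \simeq \POpd(\xF_{*})$, though delicate, to reduce to bookkeeping with the combinatorics relating partial functions and spans, and the first equivalence to follow the well-understood template for Lawvere theories once the analyticity conditions are correctly encoded.
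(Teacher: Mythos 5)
Your proposal matches the paper's proof in essentially every respect: \cref{thm:lawvere} gives $\AlgMnd \simeq \Lawv$ via $\Th$ and $\Md$, which restricts over $\Sym$ (where, by a key lemma, morphisms of algebraic monads are automatically cartesian, so that $\AnMnd$ sits as a \emph{full} subcategory of $\AlgMnd_{/\Sym}$) to give $\AnMnd \simeq \LawvAn$; \cref{thm:SpopdLawv}, proved using the inert--active factorization on Kleisli $\infty$-categories, gives $\LawvAn \simeq \POpd(\SpF)$; and the comparison with $\POpd(\xF_{*})$ closes the chain, with the composite computing the monad \cref{eq:pinnedmonad}. The one caveat is that the paper does not prove that last comparison at all --- it is imported from \cite{BHS}, where it is established via symmetric monoidal envelopes together with the compatibility of algebra $\infty$-categories --- so your expectation that it ``reduces to bookkeeping'' (the extra spans being ``forced'' by the inert Segal condition) undersells a genuinely nontrivial input, though as an outline this is exactly the paper's argument.
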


To prove this, we also want an explicit functor in the other
direction, extracting a pinned
\iopd{} from an analytic monad. To this end, we will study \emph{Lawvere
  theories} of analytic monads. Let us say that a monad $T$ on
$\Fun(X, \mathcal{S})$ for some \igpd{} $X$ is \emph{algebraic} if the
functor $T$ preserves sifted colimits; then the \emph{Lawvere theory}
$(\mathcal{L}(T),p)$ of $T$
is given by taking $\mathcal{L}(T)^{\op}$ to be the full subcategory
of $\Alg(T)$ spanned by the free algebras on finite coproducts of
representables in $\Fun(X, \mathcal{S})$, together with the functor $p
\colon X \to \mathcal{L}(T)$ taking a point of $X$ to the free algebra
on the presheaf represented by $x$. More generally, a Lawvere theory
can be defined as an \icat{} $\mathcal{L}$ with finite products,
together with a functor $p \colon X \to \mathcal{L}$ such that every
object of $\mathcal{L}$ is a finite product of objects in the image of
$p$. Generalizing a result of
Gepner--Groth--Nikolaus~\cite{GepnerGrothNikolaus} (for the case $X
\simeq *$), we prove an \icatl{} version of the monad--theory
correspondence for multi-sorted Lawvere theories:
\begin{thmA}\label{thm:lawvere}
  Let $\AlgMnd$ be the \icat{} of algebraic monads and $\Lawv$ that of
  Lawvere theories. Taking Lawvere theories of algebraic monads gives
  a functor \[\Th \colon \AlgMnd \to \Lawv.\] This functor is an
  equivalence, with inverse the functor \[\Md \colon \Lawv \to \AlgMnd\]
  that assigns to a Lawvere theory $(\mathcal{L},p)$ its \icat{}
  $\Mod_{\mathcal{L}}(\mathcal{S})$ of \emph{models} in $\mathcal{S}$,
  meaning functors $\mathcal{L} \to \mathcal{S}$ that preserve finite
  products, together with the functor to $\Fun(X, \mathcal{S})$ given
  by restriction along $p$ (which is a monadic right adjoint
  corresponding to an algebraic monad).
\end{thmA}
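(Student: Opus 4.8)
The plan is to factor the monad--theory correspondence through Lurie's theory of sifted cocompletions, identifying both $\AlgMnd$ and $\Lawv$ with $\infty$-categories of the form $\PSh_{\Sigma}$. Recall that for a small $\infty$-category $\mathcal{A}$ with finite coproducts, the sifted cocompletion $\PSh_{\Sigma}(\mathcal{A}) = \Fun^{\times}(\mathcal{A}^{\op}, \mathcal{S})$ is generated under sifted colimits by the (compact projective) representables, and $\mathcal{A}$ is recovered as the full subcategory of compact projective objects closed under finite coproducts. Two observations set the stage. First, writing $X^{\amalg}$ for the free finite-coproduct completion of a groupoid $X$, a product-preserving presheaf on $(X^{\amalg})^{\op}$ is determined by its restriction to $X$, so $\PSh_{\Sigma}(X^{\amalg}) \simeq \Fun(X, \mathcal{S})$, with the free objects exactly the finite coproducts of representables. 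Second, $\Mod_{\mathcal{L}}(\mathcal{S}) = \Fun^{\times}(\mathcal{L}, \mathcal{S}) = \PSh_{\Sigma}(\mathcal{L}^{\op})$ by definition.

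First I would check that $\Md$ is well defined. Given a Lawvere theory $(\mathcal{L}, p)$, the generation condition says that $p$ extends to an essentially surjective finite-coproduct-preserving functor $X^{\amalg} \to \mathcal{L}^{\op}$. Restriction along this functor is the comparison $p^{*} \colon \PSh_{\Sigma}(\mathcal{L}^{\op}) \to \PSh_{\Sigma}(X^{\amalg}) \simeq \Fun(X, \mathcal{S})$; it preserves limits and sifted colimits, it admits a left adjoint computed by left Kan extension (which preserves products, since $X^{\amalg}\to\mathcal{L}^{\op}$ preserves finite coproducts), and it is conservative because a product-preserving functor on $\mathcal{L}$ is determined by its values on the image of $p$. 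The Barr--Beck--Lurie theorem then shows $p^{*}$ is monadic, and since $p^{*}$ and its left adjoint both preserve sifted colimits the associated monad is algebraic.

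Next I would set up $\Th$ and prove the two composites are the identity. For an algebraic monad $T$ on $\Fun(X,\mathcal{S})$ with monadic adjunction $F \dashv U$, let $\mathcal{L}(T)^{\op} \subseteq \Alg(T)$ be the full subcategory on the free algebras $F(c)$ for $c \in X^{\amalg}$. Because $F$ is a left adjoint it carries the finite coproducts of $X^{\amalg}$ to coproducts in $\Alg(T)$, so $\mathcal{L}(T)$ has finite products and $p$ generates it under them; thus $(\mathcal{L}(T), p)$ is a Lawvere theory. The heart of the argument is the recognition principle: since $T$ is algebraic, $\Alg(T)$ admits sifted colimits which $U$ preserves and reflects, the objects $F(c)$ are compact projective, and they generate $\Alg(T)$ under sifted colimits (as the representables generate $\Fun(X,\mathcal{S})$ and $F$ preserves colimits and compact projectives). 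Lurie's characterization of $\PSh_{\Sigma}$ therefore yields $\Alg(T) \simeq \PSh_{\Sigma}(\mathcal{L}(T)^{\op}) = \Mod_{\mathcal{L}(T)}(\mathcal{S})$, compatibly with the forgetful functors to $\Fun(X, \mathcal{S})$, giving $\Md \circ \Th \simeq \mathrm{id}$. Running the same recognition in reverse, by identifying the compact projective objects of $\Mod_{\mathcal{L}}(\mathcal{S})$ closed under finite coproducts with $\mathcal{L}^{\op}$, gives $\Th \circ \Md \simeq \mathrm{id}$.

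The main obstacle will be promoting this objectwise correspondence to an equivalence of $\infty$-categories, coherently in the base groupoid $X$. The pointwise recognition statements are essentially the single-sorted arguments of Gepner--Groth--Nikolaus; the genuinely new work is to construct $\Th$ and $\Md$ as actual functors between $\AlgMnd$ and $\Lawv$, whose morphisms record a change of base $X \to X'$ together with a compatible comparison on algebras, respectively a finite-product-preserving functor of theories, and to check that the equivalences $\Alg(T) \simeq \Mod_{\mathcal{L}(T)}(\mathcal{S})$ assemble into a natural transformation. Handling the variation of $X$, and in particular verifying that the base-change functors on the two sides agree under the $\PSh_{\Sigma}$ identification, is where I expect the real effort to lie.
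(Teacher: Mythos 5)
Your proposal is correct in substance but takes a genuinely different route at the key step. Where you invoke Lurie's recognition principle for sifted cocompletions (\cite{HTT}*{Proposition 5.5.8.25}) --- compact projective generators plus generation under sifted colimits --- to get $\Alg(T) \simeq \mathcal{P}_{\Sigma}(\mathcal{L}(T)^{\op}) \simeq \Mod_{\mathcal{L}(T)}(\mathcal{S})$, the paper (\cref{propn:AlgMndisMod}) instead builds the comparison functor $\nu \colon \Alg(T) \to \Mod_{\mathcal{L}(T)}(\mathcal{S})$ by restricted Yoneda, checks $p^{*}\nu \simeq U_{T}$, and applies the comparison criterion for monadic adjunctions (\cite{HA}*{Corollary 4.7.3.16}): since both sides preserve sifted colimits and every presheaf on $X$ is a sifted colimit of finite coproducts of representables, everything reduces to an explicit unit-tracking Yoneda computation on representables. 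Your route is essentially the multi-sorted form of the Gepner--Groth--Nikolaus argument, and the paper itself remarks after \cref{propn:AlgMndisMod} that its equivalence identifies $\Alg(T)$ with $\mathcal{P}_{\Sigma}(\mathcal{L}(T)^{\op})$; what the paper's more pedestrian route buys is an explicit comparison map whose naturality in $T$ is inherited from naturality of the Yoneda embedding (as cited in the notation section), which is exactly what resolves the coherence issue you flag at the end. The well-definedness of $\Md$ (\cref{propn:ModisAlgMnd}) is proved the same way in both treatments.

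Two caveats. First, your argument for $\Th\circ\Md \simeq \id$ is wrong as stated: the compact projective objects of $\mathcal{P}_{\Sigma}(\mathcal{A})$ are the \emph{retracts} of objects of $\mathcal{A}$, so ``compact projectives closed under finite coproducts'' recovers $\mathcal{L}^{\op}$ only when it is idempotent complete, which it need not be (retracts of free algebras are generally not free --- think of retracts of polynomial rings in the 1-categorical theory of commutative rings). The fix is to characterize the \emph{free} objects rather than the compact projectives, using data you already have: the left adjoint of $p^{*}$ sends $\Yo_{X}(x)$ to $\Yo_{\mathcal{L}^{\op}}(px)$, coproducts of representables in $\Mod_{\mathcal{L}}(\mathcal{S})$ are representables of products, and $\overline{p}$ is essentially surjective, so the free algebras on finite coproducts of representables are precisely the representables; this is the content of the paper's \cref{lem:ThTL}, which also records that the pinning maps match, a point your sketch should make explicit. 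Second, the functoriality you defer is real but modest work rather than the main difficulty: the paper's \cref{cor:AlgMndMor} obtains the left adjoint $F_{!}$ of $F^{*}$ from presentability of $\Alg(T)$ (itself a consequence of the equivalence with models) via the adjoint functor theorem, and uses the exchange $F_{!}F_{S} \simeq F_{T}f_{!}$ to see that $F_{!}$ carries $\mathcal{L}(S)^{\op}$ into $\mathcal{L}(T)^{\op}$ and induces a product-preserving functor of theories; a complete write-up along your lines would need this step as well before the two functors can even be compared.
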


We can regard analytic monads as forming a (non-full) subcategory of
algebraic monads, and therefore \cref{thm:lawvere} identifies $\AnMnd$
with a (non-full) subcategory of $\Lawv$. For ordinary categories (and
one-sorted Lawvere theories) this subcategory was identified by
Szawiel--Zawadowski~\cite{SzZ}, but this is not quite the relevant
identification for us: The \icat{} $\AnMnd$ has a terminal object
$\Sym$, which is the monad for commutative monoids, and it turns out
that $\AnMnd$ is a \emph{full} subcategory of algebraic monads over
$\Sym$. The Lawvere theory of $\Sym$ can be identified with the pair
$\SpFL := (\SpF, \{\fset{1}\})$ consisting of the $(2,1)$-category
$\SpF$ of spans of finite sets, together with the inclusion of the
one-element set $\fset{1}$. From \cref{thm:lawvere} we then get an
identification between $\AnMnd$ and a full subcategory $\LawvAn$ of
the \icat{} $\LawvSpF$ of Lawvere theories over $\SpFL$. This \icat{}
turns out to have a very nice alternative description:

The (2,1)-category $\SpF$ has many of the same features as $\xF_{*}$
(which can be regarded as the subcategory containing only those spans
where the backwards map is injective), so that we can modify Lurie's
definition of \iopds{} over $\xF_{*}$ to get a notion of
\emph{$\SpF$-\iopds{}} as certain \icats{} over $\SpF$. A key
difference is that $\SpF$-\iopds{} turn out to have finite products,
so that we can regard pinned $\SpF$-\iopds{} as Lawvere theories. This
gives in fact a fully faithful functor from the \icat{} $\POpd(\SpF)$
of pinned $\SpF$-\iopds{} to $\LawvSpF$,
and just as for pinned \iopds{} over $\xF_{*}$, the corresponding
monads are analytic. 
We thus have an inclusion
\begin{equation}
  \label{eq:popdlawvincl}
 \POpd(\SpF) \hookrightarrow \LawvAn.
\end{equation}
Conversely, we prove the following:
\begin{thmA}\label{thm:SpopdLawv}
  The Lawvere theory of any analytic monad is a pinned
  $\SpF$-\iopd{}.
\end{thmA}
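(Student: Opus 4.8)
The plan is to exhibit an explicit structure functor $\pi\colon \mathcal{L} \to \SpF$ on the Lawvere theory $\mathcal{L} = \mathcal{L}(T)$ of an analytic monad $T$ on $\Fun(X,\mathcal{S})$, and then to verify the $\SpF$-\iopd{} axioms for it. The functor comes for free: since $\AnMnd$ sits inside the algebraic monads over $\Sym$, there is a canonical map $T \to \Sym$, and applying the functoriality of $\Th$ from \cref{thm:lawvere} together with the identification $\Th(\Sym) \simeq \SpFL$ produces a morphism of Lawvere theories, hence a finite-product-preserving functor $\pi\colon \mathcal{L} \to \SpF$ and a pinning $p\colon X \to \mathcal{L}$. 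I would first make everything concrete: $\mathcal{L}^{\op}$ is the full subcategory of $\Alg(T)$ on the free algebras $F(\coprod_{s\in S} y_{x_s})$ indexed by finite families $(x_s)_{s\in S}$ of points of $X$; the functor $\pi$ sends such an object to its underlying finite set $S \in \SpF$; and by the free--forgetful adjunction $\Map_{\Alg(T)}\bigl(F(\coprod_{S}y_{x_s}), F(\coprod_{S'}y_{x'_{s'}})\bigr) \simeq \prod_{s\in S} T\bigl(\coprod_{S'}y_{x'_{s'}}\bigr)(x_s)$.

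The crux is the existence of $\pi$-cocartesian lifts of inert morphisms of $\SpF$, and once these are in hand the remaining axioms are formal. An inert morphism $S \to J$ is the restriction to a subset $J \subseteq S$, and the candidate lift of $(x_s)_{s \in S}$ is the product projection $F(\coprod_{S} y_{x_s}) \to F(\coprod_{J} y_{x_j})$ in $\mathcal{L}$ (a coproduct inclusion of free algebras in $\Alg(T)$). Granting that these projections are cocartesian, the Segal equivalences $\mathcal{L}_I \simeq \mathcal{L}_{\fset{1}}^{\,I}$ follow because the cocartesian pushforwards along the inert maps $I \to \fset{1}$ are exactly the product projections and $I \simeq \prod_{i}\fset{1}$ in $\SpF$; and the operadic mapping-space condition follows because the target of any morphism is a product in the Lawvere theory $\mathcal{L}$, so its mapping spaces split accordingly, compatibly with $\pi$ since $\pi$ preserves finite products.

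To establish the cocartesian property I would reduce, using that $\mathcal{L}$ is a Lawvere theory, to testing against a single color $F(y_c)$, where the condition becomes that the restriction map identifies $\Map(F(\coprod_J y), F(y_c))$ with the part of $\Map(F(\coprod_S y), F(y_c))$ lying over spans $S \leftarrow K \to \fset{1}$ supported on $J$. By the mapping-space formula this is a statement about the value $T(\coprod_S y_{x_s})(c)$, and here analyticity is essential: because $T$ is polynomial with cartesian unit and multiplication and preserves weakly contractible limits, its value on a coproduct $\coprod_S y_{x_s}$ decomposes naturally as a coproduct indexed by the groupoid $\xF^{\simeq}_{/S}$ of finite sets over $S$ --- an arity, or symmetric-sequence, decomposition --- and this is precisely $\Map_{\SpF}(S, \fset{1})$. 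Matching these two decompositions, and checking naturality of the match under restriction along $J \subseteq S$, exhibits the projection as $\pi$-cocartesian.

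I expect the main obstacle to be exactly this identification: translating the analyticity of $T$ into the fibrancy of $\pi$ over inert morphisms, i.e.\ showing that the arity decomposition of $T$ on coproducts of representables is natural enough to realize the product projections as cocartesian lifts. This is the step that genuinely uses analyticity rather than the formal Lawvere-theory structure, and it is where a general Lawvere theory over $\SpFL$ would fail to be a $\SpF$-\iopd{}. The remaining points are routine: the Segal and operadic conditions reduce to the product structure as above, and the pinning $p\colon X \to \mathcal{L}_{\fset{1}}^{\simeq}$ is essentially surjective by construction, since $\mathcal{L}_{\fset{1}}$ consists precisely of the free algebras on single representables.
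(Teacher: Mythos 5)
Your proposal is correct in outline and shares the paper's skeleton (the structure map $\pi\colon \mathcal{L}(T)\to\SpF$ obtained from functoriality of $\Th$ applied to the unique morphism $T\to\Sym$ together with $\Th(\Sym)\simeq\SpFL$; coproduct inclusions of free algebras as the candidate cocartesian lifts; verification of the conditions of \cref{SpFopdcond}), but it takes a genuinely different route at the crucial step. The paper never computes the arity decomposition against single colours directly: instead it first establishes a general inert--active factorization system on the whole Kleisli \icat{} $\mathcal{K}(T)$ via the local right adjoint $L_{*}$ (\cref{thm:Kleislifact}, imported from \cite{patterns1}), shows it restricts to $\mathcal{L}(T)$ using the formula $L_{*}(\Yo_{Y}(y)\to F(*))\simeq\coprod_{e}\Yo(s(e))$ (\cref{cor:LTfact}), proves for an \emph{arbitrary} morphism of analytic monads that inert morphisms are exactly the cartesian lifts of inerts, constructing the lifts by explicit pullback squares in presheaf categories whose cartesianness comes from the cartesian (co)units and the cartesian transformation $Sf^{*}\to f^{*}T$ (\cref{propn:anmndmorcart}, \cref{cor:LSintact}), and finally identifies $\mathcal{L}(\Sym)$ with $\SpF$, factorization systems included, via the canonical pattern of \cite{patterns1} (\cref{lem:SpFinertagree}). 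Your specialization to the single morphism $T\to\Sym$ and reduction to testing against $F(y_{c})$ is legitimate (products in $\mathcal{L}(T)$ and in $\SpF$ split the cocartesianness condition), and your arity decomposition of $T(\coprod_{S}y_{x_{s}})(c)$ over $\xF^{\simeq}_{/S}\simeq\Map_{\SpF}(S,\fset{1})$ is the correct mechanism; what you correctly flag as the main obstacle --- that the abstractly defined $\pi$ acts on mapping spaces \emph{by} this decomposition, naturally in restriction along $J\subseteq S$ --- is precisely the content the paper packages into the cartesianness statements of \cref{propn:anmndmorcart} and the identification in \cref{lem:SpFinertagree}, and it is where the cartesian unit/multiplication and the cartesianness of the transformation to $\Sym$ do the work. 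The trade-off: your route is more elementary and self-contained for this one theorem, at the cost of an explicit unwinding of $\pi$ on mapping spaces; the paper's route requires the Kleisli factorization machinery but gets functoriality in arbitrary morphisms of analytic monads for free, which the paper reuses elsewhere (e.g.\ in \cref{propn:LTopd} the same pullback squares also deliver conditions (2$'$) and (3$'$) rather than just (1$'$)).
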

The functor \cref{eq:popdlawvincl} is thus essentially surjective, and
combining this with \cref{thm:lawvere} we have equivalences
\[ \POpd(\SpF) \simeq \LawvAn \simeq \AnMnd.\]
To deduce \cref{thm:main} from this we only need to know that the two
version of \iopds{} are equivalent. This has already been shown as
part of joint work with
Barkan and Steinebrunner~\cite{BHS}, using the symmetric monoidal
envelopes over $\xF_{*}$ and $\SpF$. Specifically,
\cite{BHS}*{Corollary 5.1.13 and Corollary 5.3.17} imply that pulling
back along the inclusion $\xF_{*} \hookrightarrow \SpF$
gives an equivalence between (pinned) \iopds{} over $\SpF$ and
$\xF_{*}$, and this is compatible with \icats{} of algebras.
  
Our last main result is that when we combine \cref{thm:main} with the equivalence
between analytic monads and dendroidal Segal spaces from
\cite{polynomial}, we get the expected relation between complete
dendroidal Segal spaces and \iopds{}:
\begin{thmA}\label{thm:csegO}
  Under the composite equivalence
  \[ \POpd(\xF_{*}) \simeq \AnMnd \simeq
    \Seg_{\bbO^{\op}}(\mathcal{S}) \]
  the full subcategory $\CSeg_{\bbO^{\op}}(\mathcal{S})$ of complete
  dendroidal Segal spaces corresponds to the \icat{} $\Opd(\xF_{*})$
  of \iopds{} (identified as those pinned \iopds{} where the morphism
  of spaces is an equivalence).
\end{thmA}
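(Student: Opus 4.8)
The plan is to check the correspondence object by object, exploiting that both $\CSegO \subseteq \SegO$ and $\Opd(\xF_{*}) \subseteq \POpd(\xF_{*})$ are full subcategories: it then suffices to show that the composite equivalence $\POpd(\xF_{*}) \simeq \AnMnd \simeq \SegO$ matches the two defining conditions, after which it automatically restricts to the full subcategories. I would phrase both conditions in terms of analytic monads. For an analytic monad $T$ on $\Fun(X, \mathcal{S})$, write $E_{T}$ for the space of equivalences of the underlying $\infty$-category of unary operations of $T$; using the Lawvere-theory description of \cref{thm:lawvere}, this is the space of equivalences between the free $T$-algebras on representables in $\Fun(X,\mathcal{S})$. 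There is a canonical comparison map $c_{T} \colon X \to E_{T}$, sending a point $x$ to the free algebra it represents together with its identity equivalence. The key claim is that each defining condition is equivalent to $c_{T}$ being an equivalence.

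On the pinned-operad side this is immediate: under \cref{thm:main} the monad attached to $(\mathcal{O}, p)$ lives on $\Fun(X, \mathcal{S})$, its underlying $\infty$-category of unary operations is $\mathcal{O}_{\angled{1}}$, so $E_{T} \simeq \mathcal{O}_{\angled{1}}^{\simeq}$, and $c_{T}$ is identified with the structure map $p \colon X \to \mathcal{O}_{\angled{1}}^{\simeq}$; hence $c_{T}$ is an equivalence precisely when $(\mathcal{O}, p)$ lies in $\Opd(\xF_{*})$. On the dendroidal side I would use that, under the equivalence $\AnMnd \simeq \SegO$ of \cite{polynomial}, the base $X$ is the space $Y_{\eta}$ of colours of the associated dendroidal Segal space $Y$, while restricting $Y$ along the linear trees $\Dop \hookrightarrow \Oop$ yields an underlying Segal space whose space of homotopy equivalences is exactly $E_{T}$, with $c_{T}$ the Rezk completeness map $Y_{\eta} \to Y_{\mathrm{heq}}$. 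Since a dendroidal Segal space is complete exactly when this underlying Segal space is a complete Segal space, i.e.\ when $Y_{\eta} \to Y_{\mathrm{heq}}$ is an equivalence, the completeness condition defining $\CSegO$ also translates to $c_{T}$ being an equivalence. Combining the two translations, the composite equivalence restricts to an equivalence between $\CSegO$ and the full subcategory $\Opd(\xF_{*})$, as claimed.

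The main work is the identification on the dendroidal side: verifying that the space of homotopy equivalences of the underlying Segal space of $Y$ really is the space $E_{T}$ of equivalences of the $\infty$-category of unary operations, and that the completeness map agrees with $c_{T}$. This requires unwinding the construction of \cite{polynomial} at the level of linear trees, checking that the monad unit and multiplication recover the identities and composition of the underlying $\infty$-category, and that these identifications are natural in $T$ so that they assemble into a map of full subcategories. Once this comparison of underlying Segal spaces is established, the reduction of completeness to the condition on $c_{T}$, and hence to the condition cutting out $\Opd(\xF_{*})$, is formal.
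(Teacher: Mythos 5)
Your global strategy is sound: since $\CSegO \subseteq \SegO$ and $\Opd(\xF_{*}) \subseteq \POpd(\xF_{*})$ are full subcategories, it suffices to match the two defining conditions objectwise through $\AnMnd$, and the criterion you propose --- that both conditions amount to the canonical map $c_{T} \colon X \to \mathcal{L}(T)^{\simeq}_{\fset{1}}$ being an equivalence --- is exactly the notion of \emph{complete} analytic monad the paper arrives at (see the closing remark of \S\ref{sec:complete}); the pinned-operad half of your translation is indeed immediate from \cref{lem:ThTL} and \cref{cor:opdmndeq}. But on the dendroidal side there is a genuine error in the one precise claim you make: the space of homotopy equivalences of the underlying Segal space is \emph{not} $E_{T} \simeq \mathcal{L}(T)^{\simeq}_{\fset{1}}$, and the Rezk completeness map is not $c_{T}$. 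Using the formula the paper establishes in \cref{rmk:SegspfromPCat}, the Segal space attached to a pinned \icat{} $(\mathcal{C}, p \colon X \twoheadrightarrow \mathcal{C}^{\simeq})$ is $[n] \mapsto \Map([n], \mathcal{C}) \times_{(\mathcal{C}^{\simeq})^{\times n}} X^{\times n}$, whose space of homotopy equivalences is the fibre product $X \times_{\mathcal{C}^{\simeq}} X$, with completeness map the \emph{diagonal} $X \to X \times_{\mathcal{C}^{\simeq}} X$. (Take $X = \fset{2} \twoheadrightarrow \ast$: the space of homotopy equivalences has four points, while your $E_{T}$ is a point.) Your criterion survives, but only via a step you do not supply: the diagonal is an equivalence \IFF{} $p$ is a monomorphism by \cite{HTT}*{Lemma 5.5.6.15}, and a monomorphism that is surjective on $\pi_{0}$ is an equivalence --- this is precisely the paper's locality argument with respect to the object $E = ([0], \fset{2} \twoheadrightarrow \ast)$ in \cref{cor:lincompeq}.

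More substantially, everything in your sketch presupposes that the composite equivalence is compatible with passing to unary parts: that restricting a dendroidal Segal space along $\simp \hookrightarrow \bbO$ corresponds, on the monad side, to the linear monad of the pinned \icat{} of unary operations, and that the induced equivalence $\PCatI \simeq \SegD$ is the standard one. You correctly flag this as ``the main work'' but defer it entirely, and it is where the paper spends \S\ref{sec:line-monads-compl}--\ref{sec:linmndsegal}: one must identify $\PCatI$ inside $\POpd(\SpF)$ via $\xF^{\op}_{(\blank)}$ and its image in $\AnMnd$ as $\LinMnd$, show that the equivalence of \cite{polynomial} restricts to $\LinMnd \simeq \SegD$ (via the characterization of linear objects by the vanishing of mapping spaces from non-linear trees, and a mate-calculus cube of presheaf categories making the a priori lax squares commute), and then pin down the induced embedding $\simp \to \PCatI$ --- including ruling out the order-reversing automorphism of $\simp$ (\cref{propn:simpemb}), without which the formula for the underlying Segal space, and hence the completeness comparison, is not determined. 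So the proposal names the right target criterion, but neither establishes the compatibility with the unary restriction nor gets the completeness translation right as stated.
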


\subsubsection*{Overview}
\label{sec:overview}

In \S\ref{sec:iopds} we first introduce (pinned) \iopds{} over $\xF_{*}$ and
$\SpF$, and recall the comparison between them. We also introduce
Lawvere theories and their models, and show that pinned
$\SpF$-\iopds{} are examples of Lawvere theories, and their models are
precisely algebras (or monoids) in the \icat{} of spaces.

In \S\ref{sec:Lawveremonads}, we introduce algebraic monads and their
associated Lawvere theories, before proving \cref{thm:lawvere}.

We then study analytic monads and their Lawvere theories in
\S\ref{sec:anmnds}. Here we also show that pinned $\SpF$-\iopds{}
give analytic monads, and prove \cref{thm:SpopdLawv}.

Finally, in \S\ref{sec:completedend} our goal is to prove
\cref{thm:csegO}; this turns out to require an understanding of how
the equivalences between pinned \iopds{}, analytic monads, and
dendroidal Segal spaces restrict to equivalences between full
subcategories of pinned \icats{}, linear monads, and Segal spaces on
$\simp$.

\subsubsection*{Notation}
Much of the notation used is hopefully fairly standard, but we
mention a couple of points here for the reader's convenience:
  \begin{itemize}
\item If $T$ is a monad on an \icat{} $\mathcal{C}$, we will generally denote
  the corresponding monadic adjunction by
  \[ F_{T} : \mathcal{C} \rightleftarrows \Alg(T) : U_{T}.\]
\item If $\mathcal{C}$ is a locally small \icat{}, we denote the
  Yoneda embedding for $\mathcal{C}$ by $\Yo_{\mathcal{C}} \colon
  \mathcal{C} \to \Fun(\mathcal{C}^{\op}, \mathcal{S})$. If the
  \icat{}
  $\mathcal{C}$ is clear from context, we will also just write $\Yo$
  for $\Yo_{\mathcal{C}}$.
\item We write $\PSh(\mathcal{C})$ for the \icat{}
  $\Fun(\mathcal{C}^{\op}, \mathcal{S})$ of presheaves on
  $\mathcal{C}$. We write $\PSh^{*}$ for the contravariant presheaves
  functor and $\PSh_{!}$ for the covariant version (obtained from
  $\PSh^{*}$ by taking left adjoints). We will use without comment
  that the Yoneda embedding gives a natural transformation $\Yo \colon
  \id \to \PSh_{!}$, \cf{} \cite{HHLN2}*{\S 8}.
\end{itemize}

\subsubsection*{Acknowledgments}
\label{sec:acknowledgments}

I thank Hongyi Chu, David Gepner, and Joachim Kock for helpful
discussions related to this paper.

\section{$\infty$-operads and Lawvere theories}\label{sec:iopds}

Our main concern in this section is \iopds{} over $\SpF$ and their
relation to Lurie's \iopds{} and to Lawvere theories. In
\S\ref{sec:iopds-over-xf} we introduce (pinned) \iopds{} over both $\SpF$ and
$\xF_{*}$ and recall the equivalence between them, while in
\S\ref{sec:spf-iopds-as} we define Lawvere theories and show that
pinned $\SpF$-\iopds{} are examples of these. Finally, in the brief
\S\ref{sec:algmodel} we show that if $\mathcal{O}$ is an
$\SpF$-\iopd{}, then $\mathcal{O}$-algebras or $\mathcal{O}$-monoids
in an \icat{} with finite products are the same thing as \emph{models}
of $\mathcal{O}$ when it is viewed as a Lawvere theory.

\subsection{\iopds{} over $\xF_*$ and $\SpF$}
\label{sec:iopds-over-xf}

In this subsection we review the notions of \iopds{} over $\xF_{*}$ and
$\SpF$, and the comparison between them proved in \cite{BHS}. We start
by recalling Lurie's definition of \iopds{} over $\xF_{*}$ from \cite{HA}:

\begin{notation}
  Let $\xF$ denote the ordinary category of finite sets and $\xF_{*}$
  that of finite pointed sets.  We write
  $S_{+} := (S \amalg \{*\}, *)$ for the pointed set obtained by
  adding a disjoint base point $*$ to a set $S$; all objects of $\xF_{*}$ are
  of this form. A morphism $\phi \colon S_{+} \to T_{+}$ is called
  \emph{inert} if $|\phi^{-1}(t)|=1$ for $t \neq *$, or in other words if $\phi$ restricts to an isomorphism $S\setminus \phi^{-1}(*) \to T$,
  and \emph{active}
  if $\phi^{-1}(*) := \{*\}$, \ie{} if $\phi$ restricts to a morphism of sets $S \to T$; we write $\xF_{*}^{\xint}$ and
  $\xF_{*}^{\act}$ for the wide subcategories of $\xF_{*}$ containing
  the inert and active morphisms, respectively. Then
  $(\xF_{*}, \xF_{*}^{\xint}, \xF_{*}^{\act})$ is a factorization
  system: for every morphism $\phi$ of $\xF_{*}$, the groupoid of
  factorizations of $\phi$ as an inert morphism followed by an active
  morphism is contractible. For $s \in S$ we write
  $\rho_{s} \colon S_{+} \to \bfone_{+}$ for the inert map that sends
  all elements of $S$ except $s$ to the base point.
\end{notation}

\begin{defn}\label{defn:F*opd}
  An \emph{\iopd{}} is a functor of \icats{} $p \colon \mathcal{O} \to
  \xF_{*}$ such that
  \begin{enumerate}[(1)]
  \item $\mathcal{O}$ has all $p$-cocartesian lifts of inert morphisms
    in $\xF_{*}$.
  \item Given $X, Y \in \mathcal{O}$ over $S_{+},T_{+} \in \xF_{*}$
    and cocartesian lifts $Y \to Y_{t}$ over $\rho_{t}$, the
    commutative square
    \[
      \begin{tikzcd}
        \Map_{\mathcal{O}}(X,Y) \arrow{r} \arrow{d} & \prod_{t \in
          T}\Map_{\mathcal{O}}(X,Y_{t})\arrow{d} \\
        \Map_{\xF_{*}}(S_{+}, T_{+}) \arrow{r} & \prod_{t \in T} \Map_{\xF_{*}}(S_{+},\bfone_{+})
      \end{tikzcd}
    \]
    is cartesian.
  \item For every $S_{+}\in \xF_{*}$, the functor
    \[ \mathcal{O}_{S_{+}} \to \prod_{s \in S}
      \mathcal{O}_{\bfone_{+}} \]
    given by cocartesian transport along the maps $\rho_{s}$ ($s \in
    S$) is an equivalence.
  \end{enumerate}
\end{defn}

The definition of \iopd{} does not use anything special about
$\xF_{*}$ except for the inert--active factorization system and the
object $\bfone_{+}$ --- as spelled out in \cite{patterns1}*{\S
  9} it can be straightforwardly extended to any \icat{} equipped with
a factorization system and a collection of special ``elementary''
objects. Here we are interested in the variant where $\xF_{*}$ is
replaced by the (2,1)-category of \emph{spans} of finite sets:

\begin{defn}
  We write $\SpF$ for the \icat{} (in fact a (2,1)-category) of spans of
  finite sets, defined as in \cite{spans} or \cite{BarwickMackey}.
  Informally, this has finite sets as objects with the morphisms given
  by \emph{spans}, that is diagrams of the form
  \[
    \begin{tikzcd}
      {} & X \arrow{dl} \arrow{dr} \\
      S & & T,
    \end{tikzcd}
  \]
  with composition given by taking pullbacks. More formally, $\SpF$
  can be defined as the complete Segal space given by
  \[ \Map([n], \SpF) \simeq \Map_{\txt{cart}}(\Tw([n]), \xF);\]
  here $\Tw([n])$ is the \emph{twisted arrow category} of $[n]$,
  which can be succinctly described as the partially ordered set of pairs
  $(i,j)$, $0 \leq i \leq j \leq n$, with
  \[ (i,j) \leq (i',j') \quad \iff \quad i \leq i' \leq j' \leq j, \]
  and $\Map_{\txt{cart}}(\Tw([n]), \xF)$ denotes the subspace of
  $\Map(\Tw([n]), \xF)$ spanned by functors $F \colon \Tw([n]) \to
  \xF$ such that $F$ takes all squares of the form
  \[
    \begin{tikzcd}
      (i,j) \arrow{r} \arrow{d} & (i',j) \arrow{d} \\
      (i,j') \arrow{r} & (i',j') 
    \end{tikzcd}
  \]
  to cartesian squares in $\xF$. 
\end{defn}  

\begin{defn}\label{defn:SpFactint}
  We say a morphism $S \xfrom{f} X \xto{g} T$ in $\SpF$ is \emph{active} if $f$
  is an isomorphism, and \emph{inert} if $g$ is an isomorphism. The
  inert and active morphisms are closed under composition, and if we
  write $\SpF^{\xint}$ and $\SpF^{\act}$ for the wide subcategories of
  $\SpF$ containing only these morphisms, then
  it is easy to see (\cf{} \cite{HHLN2}*{Proposition 2.15}) that we have equivalences
  \[ \SpF^{\xint} \simeq \xF^{\op}, \qquad \SpF^{\act} \simeq \xF.\]  
  For $s \in S$, let $\rho'_{s}$ denote the (inert) span $S \hookleftarrow
  \{s\} \isoto \bfone$.
\end{defn}

\begin{observation}
  The inert and active morphisms form a factorization system on
  $\SpF$; indeed, the analogous claim holds for any \icat{} of spans,
  \cf{} \cite{HHLN2}*{Proposition 4.9}.
\end{observation}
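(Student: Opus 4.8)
The plan is to verify directly the conditions defining an orthogonal factorization system: stability of the two classes under equivalences (and retracts), existence of an inert--active factorization of every morphism, and left-orthogonality of inert morphisms to active morphisms. (Alternatively, one can simply observe that $\SpF = \Span(\xF)$ and invoke the general statement \cite{HHLN2}*{Proposition 4.9} for span \icats{}; the argument below is the hands-on version of that result.)

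The stability and factorization conditions are immediate. A morphism is inert exactly when its forward leg is an equivalence and active exactly when its backward leg is an equivalence, and each condition is manifestly invariant under equivalence of spans and stable under retracts; closure under composition is already recorded in \cref{defn:SpFactint}. For the factorization, given a span $S \xfrom{f} X \xto{g} T$ I would take the intermediate object to be the apex $X$ itself, writing the morphism as the inert span $S \xfrom{f} X \xto{=} X$ followed by the active span $X \xfrom{=} X \xto{g} T$; since the pullback computing this composite is the trivial one $X \times_{X} X \simeq X$, it reproduces the original span.

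The essential point is therefore orthogonality, and here I would use the explicit description of mapping spaces: a morphism $S \to T$ is a finite set over $S \times T$, so $\Map_{\SpF}(S,T) \simeq (\xF_{/S \times T})^{\simeq}$, and under this identification precomposition with an inert span (backward map $p \colon B \to A$) and postcomposition with an active span (forward map $q \colon X \to Y$) are computed by pushforward along $p \times \id$ and $\id \times q$, respectively. Left-orthogonality of the inert span $A \xfrom{p} B$ to the active span $X \xto{q} Y$ then amounts to the assertion that the square
\[
  \begin{tikzcd}
    (\xF_{/B \times X})^{\simeq} \arrow{r} \arrow{d} & (\xF_{/A \times X})^{\simeq} \arrow{d} \\
    (\xF_{/B \times Y})^{\simeq} \arrow{r} & (\xF_{/A \times Y})^{\simeq}
  \end{tikzcd}
\]
is cartesian. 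The underlying square of indexing sets is itself cartesian (one checks $B \times X \simeq (A \times X) \times_{A \times Y} (B \times Y)$), so what remains is to see that the functor $W \mapsto (\xF_{/W})^{\simeq}$, with its covariant pushforward functoriality, carries this particular cartesian square to a cartesian square of spaces.

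This last verification is the main obstacle, and where the combinatorics of spans genuinely enters: the functor $W \mapsto (\xF_{/W})^{\simeq}$ is the free commutative monoid (symmetric power) functor, which is a left adjoint and so does \emph{not} preserve arbitrary limits, so preservation must be checked for these specific ``independent product'' squares. I would prove it by hand using the decomposition $(\xF_{/W})^{\simeq} \simeq \prod_{w \in W} \xF^{\simeq}$, under which pushforward becomes summation along fibers: an object of the pullback is then a pair of fiberwise families together with an equivalence of their total sums, and this data is encoded, up to canonical equivalence (matching the relevant automorphism groups via a ``flow'' finite set attached to each pair $(b,x) \in B \times X$), precisely by an object upstairs in $(\xF_{/B \times X})^{\simeq}$. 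Matching objects, isomorphisms, and automorphisms in this way identifies the top-left corner with the pullback and establishes orthogonality; essential uniqueness of the factorization then follows formally.
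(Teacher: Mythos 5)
Your proof is correct, but it follows a genuinely different route from the paper, which gives no argument at all: the observation is justified purely by citing \cite{HHLN2}*{Proposition 4.9}, where the statement is proved for an arbitrary span \icat{} by general fibrational methods, with no combinatorics of finite sets. Your version is the elementary, self-contained one, and its reductions are all sound: $\Map_{\SpF}(S,T) \simeq (\xF_{/S \times T})^{\simeq}$; pre- and postcomposition with an inert span (backward map $p \colon B \to A$) and an active span (forward map $q \colon X \to Y$) are indeed pushforward along $p \times \id$ and $\id \times q$; and $B \times X \simeq (A \times X) \times_{A \times Y} (B \times Y)$, so orthogonality reduces to your square of symmetric-power groupoids being cartesian. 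The final step, which you only sketch, does close up: since all four corners are $1$-groupoids, the homotopy pullback is the iso-comma groupoid whose objects are triples $(U \to A \times X,\ V \to B \times Y,\ \alpha \colon U \cong V \text{ over } A \times Y)$; identifying $W := U \cong V$, a pair of structure maps agreeing over $A \times Y$ is precisely a map $W \to B \times X$ (by the pullback identity above), and the same bookkeeping matches morphisms and automorphisms, so the comparison functor from $(\xF_{/B \times X})^{\simeq}$ is an equivalence --- your ``flow'' set at $(b,x)$ is just the fibre $W_{b,x}$, and the decompositions $U_{a,x} \simeq \coprod_{b \in p^{-1}(a)} W_{b,x}$, $V_{b,y} \simeq \coprod_{x \in q^{-1}(y)} W_{b,x}$ are recovered. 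You are also right that this is the one genuinely non-formal point: $\Sym$, being a left adjoint, preserves only such ``independent'' squares, a Beck--Chevalley-type fact that the citation route hides inside general machinery. What each approach buys: the citation gives the result uniformly for all span \icats{} and interfaces with the rest of \cite{HHLN2}, while your computation stays inside $(2,1)$-categorical set combinatorics and makes explicit \emph{why} orthogonality holds, at the price of the fibrewise matching argument.

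One small caution: stability of the two classes under retracts is not really ``manifest'' from the shape of a span, and a direct check is fiddlier than you suggest; but it is also unnecessary for the notion of factorization system this paper actually uses (contractibility of the groupoid of inert--active factorizations, as formulated for $\xF_{*}$), which follows formally from the existence of factorizations together with the orthogonality you establish.
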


We can then consider the following variant of \cref{defn:F*opd}:
\begin{defn}\label{defn:SpFiopd}
  A \emph{$\SpF$-\iopd{}} is a functor of \icats{} $p \colon \mathcal{O} \to
  \SpF$ such that
  \begin{enumerate}[(1)]
  \item\label{wsf1} $\mathcal{O}$ has all $p$-cocartesian lifts of inert morphisms
    in $\SpF$.
  \item\label{wsf2} Given $X, Y \in \mathcal{O}$ over $S,T \in \xF_{*}$
    and cocartesian lifts $Y \to Y_{t}$ over $\rho'_{t}$, the
    commutative square
    \[
      \begin{tikzcd}
        \Map_{\mathcal{O}}(X,Y) \arrow{r} \arrow{d} & \prod_{t \in
          T}\Map_{\mathcal{O}}(X,Y_{t})\arrow{d} \\
        \Map_{\SpF}(S, T) \arrow{r} & \prod_{t \in T} \Map_{\SpF}(S,\bfone)
      \end{tikzcd}
    \]
    is cartesian.
  \item\label{wsf3} For every $S \in \SpF$, the functor
    \[ \mathcal{O}_{S} \to \prod_{s \in S}
      \mathcal{O}_{\bfone} \]
    given by cocartesian transport along the maps $\rho'_{s}$ ($s \in
    S$) is an equivalence.
  \end{enumerate}
\end{defn}

\begin{observation}\label{rmk:wsfcondred}
  Condition \ref{wsf3} is slightly redundant: Given \ref{wsf1} and
  \ref{wsf2}, we have for all $X,X' \in \mathcal{O}_{\fset{n}}$ equivalences
  \[ \Map_{\mathcal{O}_{\fset{n}}}(X',X) \isoto
    \prod_{i=1}^{n} 
    \Map_{\mathcal{O}}(X', X_{i})_{\rho'_{i}} \isofrom
    \prod_{i=1}^{n}
    \Map_{\mathcal{O}_{\bfone}}(X'_{i}, X_{i}),\]
  where the first map is an equivalence of fibres from one of the
  cartesian squares in \ref{wsf2} and the second is an equivalence by
  the universal property of the cocartesian maps $X' \to
  X'_{i}$. This shows that the functor $\mathcal{O}_{\fset{n}} \to
  \prod_{i=1}^{n} \mathcal{O}_{\bfone}$ in \ref{wsf3} is fully faithful. To conclude that it is an
  equivalence it therefore suffices to additionally assume that these
  functors are essentially surjective.
\end{observation}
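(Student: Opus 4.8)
The plan is to establish the asserted redundancy by showing that conditions \ref{wsf1} and \ref{wsf2} alone force the comparison functor $\mathcal{O}_{\fset{n}} \to \prod_{i=1}^{n} \mathcal{O}_{\bfone}$, given by cocartesian transport along the inert spans $\rho'_{i}$, to be fully faithful; condition \ref{wsf3} then adds exactly the requirement that this functor also be essentially surjective. Thus the entire content to verify is that for all $X, X' \in \mathcal{O}_{\fset{n}}$ the induced map
\[ \Map_{\mathcal{O}_{\fset{n}}}(X', X) \to \prod_{i=1}^{n} \Map_{\mathcal{O}_{\bfone}}(X'_{i}, X_{i}) \]
is an equivalence of spaces, where $X_{i}$ and $X'_{i}$ are the chosen cocartesian images of $X$ and $X'$ over $\rho'_{i}$.

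I would begin by identifying $\Map_{\mathcal{O}_{\fset{n}}}(X', X)$ with the fibre of $\Map_{\mathcal{O}}(X', X) \to \Map_{\SpF}(\fset{n}, \fset{n})$ over $\id_{\fset{n}}$. Applying condition \ref{wsf2}, with the objects named $X$ and $Y$ there taken to be our source $X'$ and our target $X$ (so that the splitting is along the lifts $X \to X_{i}$), the relevant square is cartesian, and its lower horizontal map is postcomposition with the $\rho'_{i}$, so that $\id_{\fset{n}}$ is carried to the tuple $(\rho'_{i})_{i}$. Passing to fibres over this point gives an equivalence
\[ \Map_{\mathcal{O}_{\fset{n}}}(X', X) \isoto \prod_{i=1}^{n} \Map_{\mathcal{O}}(X', X_{i})_{\rho'_{i}}, \]
with the subscript denoting the fibre over $\rho'_{i} \in \Map_{\SpF}(\fset{n}, \bfone)$.

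Next I would unwind each factor using the universal property of the cocartesian lift $X' \to X'_{i}$ over $\rho'_{i}$. Specialized to the target $X_{i}$, this property gives a cartesian square comparing $\Map_{\mathcal{O}}(X'_{i}, X_{i})$ with $\Map_{\mathcal{O}}(X', X_{i})$ over the precomposition map $\Map_{\SpF}(\bfone, \bfone) \to \Map_{\SpF}(\fset{n}, \bfone)$. Passing to fibres over $\id_{\bfone}$, which precomposition with $\rho'_{i}$ sends to $\rho'_{i}$, yields
\[ \Map_{\mathcal{O}_{\bfone}}(X'_{i}, X_{i}) \isoto \Map_{\mathcal{O}}(X', X_{i})_{\rho'_{i}}. \]
Composing this with the previous display produces the required identification, and full faithfulness follows once one checks that this composite is the map induced by the comparison functor---a routine verification, since both equivalences are built from the very cocartesian transports defining that functor.

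The step I expect to be the most delicate is the bookkeeping of basepoints across the two cartesian squares: one must confirm that $\id_{\fset{n}}$ (through postcomposition, in condition \ref{wsf2}) and $\id_{\bfone}$ (through precomposition, in the cocartesian universal property) both classify the \emph{same} point $\rho'_{i}$ of $\Map_{\SpF}(\fset{n}, \bfone)$, which is precisely what allows the two fibre computations to be glued over a common base. Everything else is a formal manipulation of pullbacks of spaces. Granting full faithfulness, condition \ref{wsf3} is then equivalent to the single extra hypothesis that each $\mathcal{O}_{\fset{n}} \to \prod_{i} \mathcal{O}_{\bfone}$ be essentially surjective, which is exactly the claimed redundancy.
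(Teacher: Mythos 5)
Your proposal is correct and follows essentially the same route as the paper's own argument: the paper's observation consists precisely of the two fibre comparisons you spell out, first passing to fibres over $\id_{\fset{n}} \mapsto (\rho'_{i})_{i}$ in the cartesian square of condition \ref{wsf2}, then using the universal property of the cocartesian lifts $X' \to X'_{i}$ over the common basepoint $\rho'_{i}$. Your explicit bookkeeping of basepoints and the remark that the composite must be identified with the map induced by the transport functor simply make precise what the paper leaves implicit.
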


\begin{defn}
  For $\mathfrak{F} = \xF_{*},\SpF$ and
  $p \colon \mathcal{O} \to \mathfrak{F}$ an $\mathfrak{F}$-\iopd{},
  we say that a morphism $\phi \colon X \to Y$ in $\mathcal{O}$ is
  \emph{inert} if it is cocartesian and its image in $\mathfrak{F}$ is
  inert, and \emph{active} if its image in $\mathfrak{F}$ is
  active. The inert and active morphisms then form a factorization
  system on $\mathcal{O}$ by \cite{HA}*{Proposition 2.1.2.5}.
\end{defn}
  
\begin{defn}
  For $\mathfrak{F} = \xF_{*},\SpF$, a \emph{morphism of
    $\mathfrak{F}$-\iopds{}} is a commutative triangle
  \[
    \begin{tikzcd}
      \mathcal{O} \arrow{rr} \arrow{dr} & & \mathcal{P} \arrow{dl} \\
       & \mathfrak{F},
    \end{tikzcd}
  \]
  where the two diagonal maps are $\mathfrak{F}$-\iopds{} and the
  horizontal map preserves inert morphisms. We
  define $\Opd(\mathfrak{F})$ to be the subcategory of
  $\Cat_{\infty/\mathfrak{F}}$ spanned by the $\mathfrak{F}$-\iopds{}
  and the morphisms thereof. For $\mathfrak{F}$-\iopds{} $\mathcal{O}$
  and $\mathcal{P}$, we also write $\Alg_{\mathcal{O}}(\mathcal{P})$
  for the full subcategory of
  $\Fun_{/\mathfrak{F}}(\mathcal{O},\mathcal{P})$ spanned by the
  morphisms of $\mathfrak{F}$-\iopds{}.
\end{defn}

\begin{observation}\label{rmk:F*asspan}
  We can think of $\xF_{*}$ as the wide subcategory of $\SpF$
 that contains only those morphisms
    \[
    \begin{tikzcd}
      {} & X \arrow[hookrightarrow]{dl} \arrow{dr}{f} \\
      S & & T
    \end{tikzcd}
  \]
  where the backwards map is injective: we identify this with the map
  of pointed sets $S_{+} \to T_{+}$ given by
  \[ s \mapsto
    \begin{cases}
      *, & s \notin X,\\
      f(s), & s \in X.
    \end{cases}
  \]
  We thus have an inclusion
  $\mathfrak{i} \colon \xF_{*} \to \SpF$, and the factorization system
  on $\xF_{*}$ is obtained by restricting that on $\SpF$ along $\mathfrak{i}$.
\end{observation}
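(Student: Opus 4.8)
The plan is to realize $\mathfrak{i}$ as the inclusion of the wide subcategory of $\SpF$ obtained by restricting the allowed backward legs to injections, and to identify this subcategory with $\xF_{*}$ through the classical description of pointed maps as partial functions. First I would record the correspondence on morphisms: a map of pointed sets $\phi \colon S_{+} \to T_{+}$ is the same datum as a partial function $S \rightharpoonup T$, \ie{} the span $S \hookleftarrow X \xto{f} T$ with $X := S \setminus \phi^{-1}(*)$, backward leg the inclusion, and $f := \phi|_{X}$; the inverse assignment is precisely the displayed formula. I would then note that a span whose backward leg is injective has no nontrivial automorphisms --- any self-equivalence of its apex compatible with the two legs is the identity, since the backward leg is monic --- so each such span is a contractible component of $\Map_{\SpF}(S,T)$, and under the correspondence these components biject with the set $\Map_{\xF_{*}}(S_{+}, T_{+})$. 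In particular the subcategory being cut out is a $1$-category.

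Next I would verify functoriality. Composing the spans attached to $\phi \colon S_{+} \to T_{+}$ and $\psi \colon T_{+} \to U_{+}$ amounts to forming the pullback of $X \xto{f} T \hookleftarrow Y$; as injections in $\xF$ are stable under pullback and composition, the resulting backward leg $X \times_{T} Y \to X \hookrightarrow S$ is again injective, and unwinding the formula shows the composite span is the one attached to $\psi\phi$. To upgrade this bookkeeping into an honest functor of \icats{} I would invoke the span formalism of \cite{spans,BarwickMackey}: the pair consisting of the injective backward maps and all forward maps is an adequate triple on $\xF$, so there is an associated span \icat{}, and the inclusion of this adequate triple into the maximal one induces a wide subcategory inclusion into $\SpF$. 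The two previous steps identify this span \icat{} with the $1$-category $\xF_{*}$, producing the desired inclusion $\mathfrak{i} \colon \xF_{*} \hookrightarrow \SpF$. This packaging is the only genuinely non-formal point --- everything else is the classical $1$-categorical identification of pointed maps with partial functions --- and it is supplied by the cited span machinery.

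Finally, for the factorization systems I would match the two notions directly. By \cref{defn:SpFactint}, a span in the image of $\mathfrak{i}$ is inert in $\SpF$ (forward leg an isomorphism) exactly when $\phi$ restricts to an isomorphism $S \setminus \phi^{-1}(*) \isoto T$, \ie{} when $\phi$ is inert, and active (backward leg an isomorphism) exactly when $X = S$, \ie{} when $\phi^{-1}(*) = \{*\}$, so that $\phi$ is active. It then remains to check that the $\SpF$-factorization of a morphism of $\xF_{*}$ stays inside $\xF_{*}$: the inert--active factorization of $S \xfrom{g} X \xto{h} T$ runs through the apex, as $S \xfrom{g} X \xto{\id} X$ followed by $X \xfrom{\id} X \xto{h} T$, and both factors have injective backward leg ($g$ and $\id$) whenever $g$ is. Hence the factorization system on $\SpF$ restricts to the subcategory $\mathfrak{i}(\xF_{*})$, recovering the inert--active factorization system on $\xF_{*}$ recorded above.
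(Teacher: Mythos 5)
Your proposal is correct and takes essentially the route the paper intends: the statement is an \emph{observation} given without proof, and your argument simply supplies the implicit verifications --- the partial-function correspondence, the triviality of automorphisms of spans with monic backward leg (so the subcategory is a $1$-category), stability of injections under pullback for composition and for the adequate-triple packaging via \cite{spans,BarwickMackey}, the matching of inert/active classes under \cref{defn:SpFactint}, and the through-the-apex factorization staying inside the subcategory. Nothing to flag.
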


\begin{thm}[\cite{BHS}*{Corollaries 5.1.13 and 5.3.17}]
  Pullback along $\mathfrak{i}$ induces an equivalence
  \[ \mathfrak{i}^{*} \colon \Opd(\SpF) \isoto \Opd(\xF_{*}).\] Moreover,
  for any $\mathcal{O},\mathcal{P} \in \Opd(\SpF)$ we also get a natural
  equivalence
  \[ \Alg_{\mathcal{O}}(\mathcal{P}) \isoto
    \Alg_{\mathfrak{i}^{*}\mathcal{O}}(\mathfrak{i}^{*}\mathcal{P})\]
  between \icats{} of algebras.\qed
\end{thm}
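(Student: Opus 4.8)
The plan is to recognize $\mathfrak{i}\colon \xF_{*}\to\SpF$ as a morphism of algebraic patterns in the sense of \cite{patterns1} and to show that it is a Segal equivalence, so that restriction along it is an equivalence on the associated \icats{} of operads. First I would verify the structural compatibility: by \cref{rmk:F*asspan} the inclusion carries inert maps of $\xF_{*}$ to inert maps of $\SpF$ and active maps to active maps, it sends the elementary object $\bfone_{+}$ to $\bfone$, and it identifies the elementary inert maps $\rho_{s}$ with $\rho'_{s}$. The essential input is that the active subcategories agree, $\xF_{*}^{\act}\simeq\xF\simeq\SpF^{\act}$ (combining \cref{defn:SpFactint} with the description of active maps of $\xF_{*}$ as plain functions of sets), and that the two patterns share the same single elementary object together with the same Segal maps out of each object.

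Next I would reduce both claims to a statement about envelopes, following \cite{BHS}. The symmetric monoidal envelope $\Env_{\mathfrak{F}}$ sends an $\mathfrak{F}$-\iopd{} to a symmetric monoidal \icat{} built out of the active maps of $\mathfrak{F}$, is fully faithful with an explicitly characterized essential image, and computes the mapping \icats{} $\Alg_{\mathcal{O}}(\mathcal{P})$. Since $\Env_{\xF_{*}}$ and $\Env_{\SpF}$ are both assembled from the common active category $\xF$, restriction along $\mathfrak{i}$ should commute with taking envelopes; I would make this precise by producing a commuting square relating $\mathfrak{i}^{*}$ to the two envelope functors, thereby reducing both the equivalence $\mathfrak{i}^{*}\colon\Opd(\SpF)\isoto\Opd(\xF_{*})$ and the equivalence on algebras to the claim that $\mathfrak{i}^{*}$ matches up envelopes and their essential images.

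Full faithfulness of $\mathfrak{i}^{*}$, and the algebra comparison, I would then deduce from the observation that maps of operads are detected on active and elementary-inert data, which $\mathfrak{i}$ preserves: by \cref{rmk:wsfcondred} all relevant mapping spaces are reconstructed from the elementary fibres $\mathcal{O}_{\bfone}$ together with the active operations, and these are insensitive to whether one works over $\xF_{*}$ or over $\SpF$. Thus on the image of $\mathfrak{i}^{*}$ the two notions of morphism of operads, and of algebra, coincide.

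The main obstacle is essential surjectivity: given an $\xF_{*}$-\iopd{} $\mathcal{O}$, I must produce cocartesian lifts over the \emph{extra} inert maps of $\SpF$, namely those whose backward leg is non-injective, which under $\SpF^{\xint}\simeq\xF^{\op}$ correspond to non-injective functions, i.e.\ the ``fold/diagonal'' maps. The key point is that these are forced by the Segal condition: a cocartesian lift over a diagonal $\bfone\to\fset{2}$ must be the diagonal object in $\mathcal{O}_{\bfone}\times\mathcal{O}_{\bfone}\simeq\mathcal{O}_{\fset{2}}$. I would show that the Segal equivalences $\mathcal{O}_{S}\simeq\prod_{s\in S}\mathcal{O}_{\bfone}$ supply these lifts functorially and that the resulting functor to $\SpF$ satisfies all three conditions of \cref{defn:SpFiopd}, using that $\xF^{\op}$ is generated under composition by injections together with folds, with the folds handled by the Segal structure. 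Checking that these forced lifts cohere into an honest cocartesian fibration over the inert maps is the technical heart of the argument, and is precisely what the cited results of \cite{BHS} supply through the envelope.
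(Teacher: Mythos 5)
The paper offers no independent argument for this statement: it is imported verbatim from \cite{BHS}*{Corollaries 5.1.13 and 5.3.17}, whose proof goes through the symmetric monoidal envelopes over $\xF_{*}$ and $\SpF$ exactly as you outline, so your proposal takes essentially the same route. Your preliminary verifications are also correct as stated --- $\mathfrak{i}$ matches the inert--active factorization systems and the elementary objects, with $\xF_{*}^{\act}\simeq\xF\simeq\SpF^{\act}$, the only new data over $\SpF$ being the inert maps with non-injective backward leg, whose cocartesian lifts are indeed forced by the Segal condition to be the diagonal objects --- and your deferral of the coherence of these forced lifts to the envelope machinery of \cite{BHS} is precisely the move the paper itself makes.
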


\begin{defn}
  For $\mathfrak{F}$ either $\SpF$ or $\xF_{*}$ (which we think of as
  a subcategory of $\SpF$), we say that a \emph{pinned}
  $\mathfrak{F}$-\iopd{} is an $\mathfrak{F}$-\iopd{} $\mathcal{O}$
  together with a morphism $p \colon X \to \mathcal{O}^{\simeq}_{\bfone}$
  that is essentially surjective (\ie{} surjective on $\pi_{0}$); we
  refer to the morphism $p$ as a \emph{pinning} (of $\mathcal{O}$). We then
  define the \icat{} $\POpd(\mathfrak{F})$ as the pullback
  \[
    \begin{tikzcd}
      \POpd(\mathfrak{F}) \arrow{r} \arrow{d} & \Opd(\mathfrak{F})
      \arrow{d}{(\blank)^{\simeq}_{\bfone}} \\
      \Fun([1], \mathcal{S})_{\txt{es}} \arrow{r}{\ev_{1}} & \mathcal{S},
    \end{tikzcd}
  \]
  where $\Fun([1], \mathcal{S})_{\txt{es}}$ is the full subcategory of
  $\Fun([1], \mathcal{S})$ spanned by the essentially surjective morphisms.
\end{defn}

\begin{notation}
  If $\mathcal{O}$ is an $\mathfrak{F}$-\iopd{}, we write
  $\mathcal{O}^{\natural}$ for the pinned $\mathfrak{F}$-\iopd{}
  $(\mathcal{O}, \mathcal{O}_{\fset{1}}^{\simeq} \xto{=}
  \mathcal{O}_{\fset{1}}^{\simeq})$. (It is easy to see that this
  gives a functor $(\blank)^{\natural} \colon \Opd(\mathfrak{F}) \to
  \POpd(\mathfrak{F})$, which is right adjoint to the functor that
  forgets the pinning.)
\end{notation}

\begin{remark}
  The term \emph{pinned} is taken from \cite{BKW}. The analogous
  notion for $(\infty,n)$-categories has previously been studied by
  Ayala and Francis~\cite{AyalaFrancisFlagged} under the name
  \emph{flagged $(\infty,n)$-categories}. Their terminology is
  inspired by that of ``flags'' in vector spaces, which makes sense
  for general $n$ where one considers a sequence of
  $(\infty,k)$-categories for $k = 0,1,\ldots,n$. Since we only
  consider the case $n = 1$, however, there is not much of a flag to
  speak of, which is why we have chosen to use different terminology.
\end{remark}

Base change along $\mathfrak{i}$ is compatible with the pullback
squares definining pinned \iopds{}, giving:
\begin{cor}
  Pullback along $\mathfrak{i}$ induces an equivalence
  \[ \mathfrak{i}^{*} \colon \POpd(\SpF) \isoto \POpd(\xF_{*})\]
  between \icats{} of pinned \iopds{}. \qed
\end{cor}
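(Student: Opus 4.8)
The plan is to deduce the corollary from the equivalence $\mathfrak{i}^{*} \colon \Opd(\SpF) \isoto \Opd(\xF_{*})$ of the preceding theorem, using that pullbacks in $\CatI$ are invariant under equivalences of cospans. Recall that for $\mathfrak{F} = \SpF, \xF_{*}$ the \icat{} $\POpd(\mathfrak{F})$ is the pullback of the cospan
\[ \Fun([1], \mathcal{S})_{\txt{es}} \xto{\ev_{1}} \mathcal{S} \xfrom{(\blank)^{\simeq}_{\bfone}} \Opd(\mathfrak{F}). \]
Both cospans share the leg $\ev_{1}$, so it suffices to produce a natural equivalence between the legs $(\blank)^{\simeq}_{\bfone}$ that is compatible with $\mathfrak{i}^{*}$; the resulting map of cospans $(\id, \id, \mathfrak{i}^{*})$ will then induce the desired equivalence on pullbacks, since $\mathfrak{i}^{*}$ is an equivalence.

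First I would identify the functor $(\blank)_{\bfone} \colon \Opd(\mathfrak{F}) \to \CatI$ (fibre over $\bfone$) as the restriction of the fibre functor on $\Cat_{\infty/\mathfrak{F}}$. Since $\mathfrak{i}^{*}$ is pullback along $\mathfrak{i}$ and $\mathfrak{i}(\bfone) = \bfone$, the pasting law for pullbacks gives, for every $\SpF$-\iopd{} $\mathcal{O}$,
\[ (\mathfrak{i}^{*}\mathcal{O})_{\bfone} = \{\bfone\} \times_{\xF_{*}} (\xF_{*} \times_{\SpF} \mathcal{O}) \simeq \{\bfone\} \times_{\SpF} \mathcal{O} = \mathcal{O}_{\bfone}, \]
naturally in $\mathcal{O}$ --- this is an instance of the natural equivalence $(\blank)_{\bfone} \circ \mathfrak{i}^{*} \simeq (\blank)_{\mathfrak{i}(\bfone)}$ of fibre functors on $\Cat_{\infty/\SpF}$. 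Passing to cores yields a natural equivalence $(\mathfrak{i}^{*}\mathcal{O})^{\simeq}_{\bfone} \simeq \mathcal{O}^{\simeq}_{\bfone}$, that is, a commuting triangle
\[
  \begin{tikzcd}
    \Opd(\SpF) \arrow{rr}{\mathfrak{i}^{*}} \arrow{dr}[swap]{(\blank)^{\simeq}_{\bfone}} & & \Opd(\xF_{*}) \arrow{dl}{(\blank)^{\simeq}_{\bfone}} \\
     & \mathcal{S} &
  \end{tikzcd}
\]

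With this triangle in hand, the triple $(\id_{\Fun([1],\mathcal{S})_{\txt{es}}}, \id_{\mathcal{S}}, \mathfrak{i}^{*})$ is an equivalence between the two defining cospans, so the induced comparison $\mathfrak{i}^{*} \colon \POpd(\SpF) \to \POpd(\xF_{*})$ on pullbacks is an equivalence. The one point requiring genuine care is the naturality of the fibre identification above; granting that, the conclusion is the purely formal statement that pullbacks preserve levelwise equivalences of cospans, so I expect no serious obstacle.
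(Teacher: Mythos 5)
Your proof is correct and takes essentially the same route as the paper, which disposes of this corollary with the single remark that base change along $\mathfrak{i}$ is compatible with the pullback squares defining pinned \iopds{} --- precisely the cospan-comparison argument you spell out. The detail you supply, the natural identification $(\mathfrak{i}^{*}\mathcal{O})^{\simeq}_{\bfone} \simeq \mathcal{O}^{\simeq}_{\bfone}$ via pasting of pullbacks (using $\mathfrak{i}(\bfone) = \bfone$), is exactly the content the paper leaves implicit.
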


\subsection{$\SpF$-\iopds{} and Lawvere theories}
\label{sec:spf-iopds-as}

Our goal in this subsection is to show that we can think of
$\SpF$-\iopds{} as \emph{Lawvere theories} in the following sense:
\begin{defn}
  A \emph{Lawvere theory} $(\mathcal{L},p)$ is an \icat{}
  $\mathcal{L}$ that has finite products, together with a functor
  $p \colon X \to \mathcal{L}$ from an \igpd{} $X$, such that every
  object in $\mathcal{L}$ can be written as a product of objects in
  the image of $p$.
\end{defn}

\begin{observation}
  If we freely adjoin finite coproducts to an \igpd{} $X$, we get the
  \icat{} $\xF_{/X} := \xF \times_{\mathcal{S}} \mathcal{S}_{/X}$ of
  finite sets with a map to $X$.
  If $\mathcal{L}$ is an \icat{} with finite products, a morphism $p
  \colon X \to \mathcal{L}$ therefore uniquely extends to a
  product-preserving functor $\overline{p} \colon \xF_{/X}^{\op} \to
  \mathcal{L}$, and the requirement for $(\mathcal{L},p)$ to be a
  Lawvere theory can be formulated as: $\overline{p}$ is essentially surjective.
\end{observation}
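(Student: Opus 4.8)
The plan is to recognize $\xF_{/X}$ as the free finite-coproduct completion of $X$ and then extract the extension $\overline{p}$ from the universal property after passing to opposites. First I would make the identification precise. Since $X$ is an \igpd{}, straightening gives $\mathcal{S}_{/X} \simeq \Fun(X, \mathcal{S})$, under which the representable presheaf at a point $x$ corresponds to the one-element family $\bfone \xto{x} X$. The full subcategory $\xF_{/X} = \xF \times_{\mathcal{S}} \mathcal{S}_{/X} \subseteq \mathcal{S}_{/X}$ on the finite families $S \to X$ is then precisely the closure of the representables under finite coproducts (the coproduct of $(S \to X)$ and $(S' \to X)$ being $(S \amalg S' \to X)$). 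To see that this is the \emph{free} finite-coproduct completion I would check mapping spaces directly: unwinding the pullback defining $\xF_{/X}$, a morphism $(S \xto{\alpha} X) \to (T \xto{\beta} X)$ is a map of finite sets $f \colon S \to T$ together with a homotopy between $\alpha$ and $\beta \circ f$ in $\Map(S, X)$, i.e.\ for each $s \in S$ a path $\alpha(s) \rightsquigarrow \beta(f(s))$ in $X$ — exactly the data of a morphism in the free coproduct completion.

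Next I would invoke the universal property of this completion, applied to $\mathcal{L}^{\op}$, which has finite coproducts because $\mathcal{L}$ has finite products: restriction along the inclusion $X \hookrightarrow \xF_{/X}$ gives an equivalence
\[ \Fun^{\amalg}(\xF_{/X}, \mathcal{L}^{\op}) \isoto \Fun(X, \mathcal{L}^{\op}) \]
between coproduct-preserving functors and all functors. Passing to opposite categories, and using $X \simeq X^{\op}$, this becomes an equivalence
\[ \Fun^{\times}(\xF_{/X}^{\op}, \mathcal{L}) \isoto \Fun(X, \mathcal{L}) \]
between finite-product-preserving functors and all functors, where the inclusion $X \hookrightarrow \xF_{/X}^{\op}$ is again the one-element families. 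Thus $p \colon X \to \mathcal{L}$ extends, uniquely up to a contractible space of choices, to a product-preserving functor $\overline{p} \colon \xF_{/X}^{\op} \to \mathcal{L}$.

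Finally I would compute the essential image of $\overline{p}$. In $\xF_{/X}$ we have $(S \xto{\alpha} X) \simeq \coprod_{s \in S}(\bfone \xto{\alpha(s)} X)$, so this becomes a product in $\xF_{/X}^{\op}$; product-preservation of $\overline{p}$ therefore yields $\overline{p}(S \to X) \simeq \prod_{s \in S} p(\alpha(s))$, with the empty family sent to the terminal object. Hence the essential image of $\overline{p}$ is exactly the collection of objects of $\mathcal{L}$ that can be written as a finite product of objects in the image of $p$. Consequently $\overline{p}$ is essentially surjective if and only if every object of $\mathcal{L}$ is such a finite product, which is precisely the defining condition for $(\mathcal{L}, p)$ to be a Lawvere theory.

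The main obstacle is the first step: establishing the universal property of the free finite-coproduct completion in the \icatl{} setting and matching it with $\xF_{/X} = \xF \times_{\mathcal{S}} \mathcal{S}_{/X}$, including the verification that the mapping spaces agree. Once this identification is in hand, the remaining two steps are formal manipulations with opposite categories and with the product-preservation of $\overline{p}$.
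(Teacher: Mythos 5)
The paper states this observation without proof, treating it as the standard universal property of the free finite-coproduct completion, and your argument fills in exactly the details the paper implicitly has in mind: identifying $\xF_{/X}$ with the finite coproducts of representables in $\PSh(X) \simeq \mathcal{S}_{/X}$ (including the correct mapping-space computation $\prod_{s}\coprod_{t}\Map_{X}(\alpha(s),\beta(t))$), applying the universal property to $\mathcal{L}^{\op}$, and reading off the essential image. Your proof is correct and takes essentially the same (indeed, the intended) approach.
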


\begin{remark}
  Our definition of Lawvere theory is an \icatl{} version of the
  classical notion of a (finitary) multi-sorted Lawvere theory. The
  one-sorted version was first introduced in Lawvere's thesis~\cite{LawvereThesis}, and has
  been studied for \icats{} by Cranch~\cite{CranchThesis},
  Gepner--Groth-- Nikolaus~\cite{GepnerGrothNikolaus} and
  Berman~\cite{BermanLawv}. There is a very substantial literature on
  1-categorical Lawvere theories and generalizations thereof, which we
  will not attempt to survey; in the $\infty$-categorical setting,
  general notions of algebraic theories have recently been developed
  by Henry--Meadows~\cite{HenryMeadows} and
  Kositsyn~\cite{KositsynTheories}.
\end{remark}

\begin{defn}
  A \emph{morphism of Lawvere theories} from $(\mathcal{L}, p \colon X
  \to \mathcal{L})$ to $(\mathcal{L}',p' \colon X' \to \mathcal{L}')$
  is a product-preserving functor $F \colon \mathcal{L} \to
  \mathcal{L}'$ together with a commutative square
  \[
    \begin{tikzcd}
      X \arrow{r}{f} \arrow{d}{p} & X' \arrow{d}{p'} \\
      \mathcal{L} \arrow{r}{F} & \mathcal{L}'.
    \end{tikzcd}
  \]
  We write $\Lawv$ for the \icat{} of Lawvere theories,
  defined as a subcategory of $\Fun([1], \CatI)$.
\end{defn}

\begin{lemma}\label{lem:SpFprod}
  The disjoint union of finite sets gives cartesian products in
  $\SpF$. In particular, $\SpFL := (\SpF, \{\bfone\} \hookrightarrow
  \SpF)$ is a Lawvere theory.
\end{lemma}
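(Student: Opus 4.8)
The plan is to show that the disjoint union $S \amalg T$ serves as a categorical product of $S$ and $T$ in $\SpF$, by exhibiting the projection maps and verifying the universal property. First I would identify the projections: the span $S \xleftarrow{\id} S \amalg T \to \ldots$ is not quite right, so instead I would take $\pi_{S} \colon S \amalg T \to S$ to be the span $S \amalg T \hookleftarrow S \xto{\id} S$ whose backwards leg is the inclusion $S \hookrightarrow S \amalg T$ and whose forwards leg is the identity (dually for $\pi_{T}$). The key point is that these are the spans representing, on each side, the canonical projection-by-restriction. To verify the universal property I would fix an arbitrary object $R \in \SpF$ and show that the induced map of mapping spaces
\[ \Map_{\SpF}(R, S \amalg T) \to \Map_{\SpF}(R, S) \times \Map_{\SpF}(R, T) \]
is an equivalence.

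The main computation is to unwind both sides explicitly. A morphism $R \to S \amalg T$ is a span $R \xleftarrow{a} U \xto{b} S \amalg T$, and composing with $\pi_{S}$ amounts to pulling back $U$ along the inclusion $S \hookrightarrow S \amalg T$, i.e.\ to taking the component $U_{S} := b^{-1}(S)$ together with its maps to $R$ and $S$; similarly for $T$. Since $S \amalg T$ is a coproduct in $\xF$, the set $U$ decomposes canonically as $U_{S} \amalg U_{T}$, and the pair of spans $(R \leftarrow U_{S} \to S,\; R \leftarrow U_{T} \to T)$ determines and is determined by the original span $R \leftarrow U \to S \amalg T$. This bijection-on-objects upgrades to an equivalence of spaces because $\SpF$ is defined as a complete Segal space via $\Map_{\txt{cart}}(\Tw([n]), \xF)$, so mapping spaces are computed in $\xF$, where disjoint union genuinely is the coproduct; the decomposition $U \simeq U_{S} \amalg U_{T}$ is then natural and functorial. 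I would also note that the empty set $\emptyset$ is the terminal object (the unique span to it forces $U \simeq \emptyset$), which supplies the nullary part of the product structure.

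The step I expect to require the most care is checking that the equivalence of mapping \emph{spaces}, and not merely the bijection on isomorphism classes of spans, holds coherently --- that is, identifying the whole space $\Map_{\SpF}(R, S \amalg T)$ with the product via the simplicial/Segal description rather than just on $\pi_{0}$. Concretely, one must verify that the two projection spans are themselves natural and that the comparison map is an equivalence on each simplicial level of $\Tw$; this is where the cartesian-square condition in the definition of $\SpF$ does the work, guaranteeing that pullback along $S \hookrightarrow S \amalg T$ behaves correctly under composition. Once products are established, the final sentence is immediate: the terminal object is $\emptyset$, every finite set $S$ is the $|S|$-fold product $\bfone^{\times |S|}$ of copies of the one-element set $\bfone$, so every object of $\SpF$ is a finite product of objects in the image of $\{\bfone\} \hookrightarrow \SpF$, and hence $\SpFL = (\SpF, \{\bfone\} \hookrightarrow \SpF)$ satisfies the defining condition of a Lawvere theory.
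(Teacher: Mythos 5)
Your proposal is correct and follows essentially the same route as the paper: the paper also takes the projections to be the inert spans $S \amalg T \hookleftarrow S \xto{=} S$, identifies $\Map_{\SpF}(R, S)$ with the groupoid $\xF^{\simeq}_{/R \times S}$ of spans, and obtains the equivalence of mapping spaces from the extensivity equivalence $\xF_{/S \amalg T} \isoto \xF_{/S} \times \xF_{/T}$, which is exactly your decomposition $U \simeq U_{S} \amalg U_{T}$. Your extra worry about checking higher simplicial levels of $\Tw$ is unnecessary (the universal property of a product is detected on mapping spaces, which here are just these groupoids of spans), and your explicit treatment of the terminal object $\emptyset$ and of $S \simeq \bfone^{\times |S|}$ correctly supplies the ``in particular'' clause.
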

\begin{proof}
  For $S,T \in \xF$, pullback along the inclusions $S,T \hookrightarrow S \amalg T$ gives an equivalence
  \[ \xF_{/S \amalg T} \isoto \xF_{/S} \times \xF_{/T}.\]
  Since we have
  $\Map_{\SpF}(S',S) \simeq
  \xF_{/S' \times S}^{\simeq}$, we get an equivalence
  \[ \Map_{\SpF}(S',S \amalg T) \isoto \Map_{\SpF}(S',S) \times
    \Map_{\SpF}(S',T)\] given by composition with the maps
  $S \xfrom{=} S \hookrightarrow S \amalg T$ and
  $T \xfrom{=} T \hookrightarrow S \amalg T$. These maps therefore
  exhibit $S \amalg T$ as the product of $S$ and $T$ in $\SpF$.
\end{proof}

\begin{propn}\label{SpFopdcond}
  A functor $\pi \colon \mathcal{O} \to \SpF$ is a $\SpF$-\iopd{}
  \IFF{} the following conditions hold:
  \begin{enumerate}[(1$'$)]
  \item\label{spfwsf1} $\mathcal{O}$ has $\pi$-cocartesian morphisms
    over inert morphisms in $\SpF$.
  \item\label{spfwsf2} For $X \in \mathcal{O}$ over $\fset{n}$ in $\SpF$, the
    $\pi$-cocartesian morphisms $X \to X_{i}$ over $\rho'_{i} \colon
    \fset{n} \intto \fset{1}$ exhibit $X$ as the product
    $\prod_{i=1}^{n} X_{i}$ in $\mathcal{O}$.
  \item\label{spfwsf3} The functor $\mathcal{O}_{\fset{n}} \to
    \prod_{i=1}^{n} \mathcal{O}_{\fset{1}}$, given by cocartesian
    transport over the maps $\rho'_{i}$, is essentially
    surjective.
  \end{enumerate}
\end{propn}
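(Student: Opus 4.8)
The plan is to compare the conditions of \cref{defn:SpFiopd} with the primed conditions one at a time, matching \ref{wsf1} with \ref{spfwsf1}, \ref{wsf2} with \ref{spfwsf2}, and \ref{wsf3} with \ref{spfwsf3}; each matched pair will turn out to be equivalent (given the earlier ones), so the two conjunctions agree. The first pair needs no argument: \ref{wsf1} and \ref{spfwsf1} are verbatim the same requirement that $\mathcal{O}$ admit $\pi$-cocartesian lifts of inert morphisms.

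For the second pair, the key input is \cref{lem:SpFprod}: every object of $\SpF$ is a finite set, and $\fset{n}$ is the product $\prod_{i=1}^{n}\fset{1}$ in $\SpF$ with the maps $\rho'_{i}$ as its projections. Writing $X$ for the object over $\fset{n}$ being decomposed (this is the object called $Y$ in \cref{defn:SpFiopd}) and $W$ for an arbitrary test object over some $S$ (the source object called $X$ there), \cref{lem:SpFprod} tells us that the lower map
\[ \Map_{\SpF}(S, \fset{n}) \to \prod_{i=1}^{n} \Map_{\SpF}(S, \fset{1}) \]
in the square of \ref{wsf2} is an equivalence. Since a commutative square of spaces whose bottom map is an equivalence is cartesian exactly when its top map is an equivalence, I would conclude that \ref{wsf2} (the square is cartesian for all $W$) holds if and only if
\[ \Map_{\mathcal{O}}(W, X) \to \prod_{i=1}^{n} \Map_{\mathcal{O}}(W, X_{i}) \]
is an equivalence for all $W$, which is precisely the assertion \ref{spfwsf2} that the cocartesian maps $X \to X_{i}$ exhibit $X$ as $\prod_{i} X_{i}$ in $\mathcal{O}$.

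For the third pair I would invoke \cref{rmk:wsfcondred}: granting \ref{wsf1} and \ref{wsf2} (equivalently \ref{spfwsf1} and \ref{spfwsf2}), the functor $\mathcal{O}_{\fset{n}} \to \prod_{i=1}^{n} \mathcal{O}_{\fset{1}}$ is already fully faithful. A fully faithful functor is an equivalence if and only if it is essentially surjective, so under these hypotheses the equivalence required by \ref{wsf3} coincides with the essential surjectivity required by \ref{spfwsf3}. Combining the three equivalences proves the proposition.

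I do not anticipate a genuine obstacle: this is a formal unwinding, and its only real content is the reduction in the second step, where the product structure on $\SpF$ supplied by \cref{lem:SpFprod} collapses the cartesian-square condition \ref{wsf2} to the honest product universal property \ref{spfwsf2} in the total category $\mathcal{O}$. The one point to handle with care is the bookkeeping of source versus target objects, which I have fixed above by naming the test object $W$ and reserving $X$ for the object over $\fset{n}$.
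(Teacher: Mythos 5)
Your proposal is correct and follows essentially the same route as the paper's proof: identifying \ref{spfwsf1} with \ref{wsf1} verbatim, using \cref{lem:SpFprod} to see that the bottom map of the square in \ref{wsf2} is an equivalence (so the square is cartesian precisely when the top map is, which is the product universal property \ref{spfwsf2}), and invoking \cref{rmk:wsfcondred} to reduce \ref{wsf3} to essential surjectivity. Your explicit renaming of the test object as $W$ is a harmless clarification of the same bookkeeping the paper handles with $X$ and $X'$.
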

\begin{proof}
  We check that conditions \ref{wsf1}--\ref{wsf3} in \cref{defn:SpFiopd}
  correspond to the given conditions
  \ref{spfwsf1}--\ref{spfwsf3}. Here \ref{spfwsf1} is identical to
  \ref{wsf1}. For \ref{wsf2}, consider the commutative
  square
  \[
    \begin{tikzcd}
      \Map_{\mathcal{O}}(X',X) \arrow{r} \arrow{d} & \prod_{i=1}^{n}
      \Map_{\mathcal{O}}(X', X_{i}) \arrow{d} \\
      \Map_{\SpF}(\fset{n}', \fset{n}) \arrow{r}{\sim} &
      \prod_{i=1}^{n} \Map_{\SpF}(\fset{n}', \fset{1}).
    \end{tikzcd}
  \]
  where $X$ and $X'$ are objects of $\mathcal{O}$ over $\fset{n}$ and
  $\fset{n}'$, respectively, and $X \to X_{i}$ is cocartesian over
  $\rho'_{i} \colon \fset{n} \intto \fset{1}$. Here the bottom
  horizontal map is an equivalence, since the maps $\rho'_{i}$ exhibit
  $\fset{n}$ as the product of $n$ copies of $\fset{1}$ in $\SpF$ by
  \cref{lem:SpFprod}. This means the square is cartesian \IFF{} the
  top horizontal morphism is an equivalence, which is true for all
  $X'$ \IFF{} the cocartesian morphisms $X \to X_{i}$ exhibit $X$ as
  the product $\prod_{i=1}^{n} X_{i}$ in $\mathcal{O}$. Thus
  \ref{wsf2} is equivalent to \ref{spfwsf2}. By \cref{rmk:wsfcondred}
  we know that we can replace \ref{wsf3} by the additional
  assumption that the map
  \[ \mathcal{O}_{\fset{n}} \to \prod_{i=1}^{n}
    \mathcal{O}_{\fset{1}} \]
  is essentially surjective, which is precisely \ref{spfwsf3}.
\end{proof}

\begin{lemma}\label{lem:SpFopdprod}
  Suppose $\pi \colon \mathcal{O} \to \SpF$ is a $\SpF$-\iopd{}.
  Given objects $X_{i} \in \mathcal{O}_{S_{i}}$ for $i = 1,\ldots,n$
  then a collection of morphisms $p_{i} \colon X \to X_{i}$ in
  $\mathcal{O}$ exhibit $X$ as the product $\prod_{i} X_{i}$ \IFF{}
  \begin{enumerate}[(i)]
  \item $\pi(p_{i})$ exhibit $\pi(X)$ as the product of $S_{i}$
    in $\SpF$ (so that they are the inert morphisms
    corresponding to the
    inclusions of $S_{i}$ in $\coprod_{i} S_{i}$),
  \item $p_{i}$ is cocartesian for each $i$.
  \end{enumerate}
  In particular, $\mathcal{O}$ has finite products, $\pi$ preserves
  these, and $(\mathcal{O}, \mathcal{O}_{\bfone}^{\simeq})$ is a
  Lawvere theory.
\end{lemma}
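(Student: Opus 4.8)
The plan is to reduce the general product characterization to condition \ref{spfwsf2} of \cref{SpFopdcond} --- the case where all the $S_i$ equal $\fset{1}$ --- by decomposing objects into their fibrewise components over $\fset{1}$ via cocartesian lifts. First I would prove the "if" direction, which simultaneously establishes existence. Given $X_i \in \mathcal{O}_{S_i}$, I decompose each $X_i$ using the cocartesian lifts $q_s \colon X_i \to (X_i)_s$ of $\rho'_s \colon S_i \intto \fset{1}$ (for $s \in S_i$), which exist by \ref{spfwsf1}; by \ref{spfwsf2} these exhibit $X_i$ as $\prod_{s \in S_i}(X_i)_s$, so for any $X'$ the map $\Map_{\mathcal{O}}(X', X_i) \to \prod_{s \in S_i}\Map_{\mathcal{O}}(X', (X_i)_s)$ is an equivalence.

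Now suppose $p_i \colon X \to X_i$ satisfy (i) and (ii). Since $\pi(p_i)$ is the inert projection $\coprod_j S_j \to S_i$ by (i) and $p_i$ is cocartesian by (ii), each composite $q_s \circ p_i \colon X \to (X_i)_s$ is a cocartesian lift of $\rho'_s \colon \coprod_j S_j \intto \fset{1}$. Applying \ref{spfwsf2} to $X$ (over $\coprod_j S_j \cong \fset{m}$, with $m = \sum_j |S_j|$) shows that the induced map $\Map_{\mathcal{O}}(X', X) \to \prod_i\prod_{s \in S_i}\Map_{\mathcal{O}}(X', (X_i)_s)$, sending $f$ to $(q_s \circ p_i \circ f)_{i,s}$, is an equivalence. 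This map factors as the map $\Map_{\mathcal{O}}(X', X) \to \prod_i \Map_{\mathcal{O}}(X', X_i)$ induced by the $p_i$, followed by the product over $i$ of the decomposition equivalences for the $X_i$; since the latter is an equivalence, a two-out-of-three argument shows the former is as well, i.e. the $p_i$ exhibit $X \simeq \prod_i X_i$.

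For existence I note that by \ref{spfwsf3} there is an object $X \in \mathcal{O}_{\coprod_j S_j}$ whose $\fset{1}$-components are the $(X_i)_s$; taking the $p_i$ to be cocartesian lifts of the inert projections $\coprod_j S_j \to S_i$ yields maps satisfying (i) and (ii), hence a product by the previous paragraph. The "only if" direction then follows from uniqueness of products: any $p_i \colon X \to X_i$ exhibiting the product is canonically equivalent to the one just constructed, and both (i) and (ii) are preserved under such equivalences (inert morphisms and cocartesian morphisms are each closed under composition with equivalences, and products in $\SpF$ are detected up to equivalence), so they hold for the given $p_i$.

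Finally, the "in particular" assertions are immediate: $\mathcal{O}$ has all finite products, including the terminal object, which exists over $\fset{0}$ by \ref{spfwsf3} and \ref{spfwsf2} with $n = 0$; the functor $\pi$ preserves them, since the product lies over $\coprod_j S_j$, which is $\prod_j S_j$ in $\SpF$ by \cref{lem:SpFprod} and condition (i); and $(\mathcal{O}, \mathcal{O}_{\bfone}^{\simeq})$ is a Lawvere theory because \ref{spfwsf2} exhibits every object of $\mathcal{O}$ as a finite product of objects of $\mathcal{O}_{\bfone}$. I expect the only delicate point to be verifying that the universal-property equivalence is genuinely induced by the maps $p_i$ rather than by some other choice; factoring through the $\fset{1}$-components as above resolves this cleanly, so I anticipate no serious obstacle.
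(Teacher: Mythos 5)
Correct, and essentially the paper's own argument: your two-out-of-three factorization of the equivalence $\Map_{\mathcal{O}}(X',X) \to \prod_{i}\prod_{s \in S_{i}}\Map_{\mathcal{O}}(X',(X_{i})_{s})$ through $\prod_{i}\Map_{\mathcal{O}}(X',X_{i})$ is precisely the commutative square in the paper's proof, with existence likewise obtained from the Segal condition over $\coprod_{i} S_{i}$ and the ``only if'' direction handled by uniqueness of products. No gaps to flag.
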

\begin{proof}
  Since products are unique up to equivalence if they exist, it
  suffices to show there exists a unique object satisfying the
  conditions, and this is a product.

  Let $\eta_{j}$ denote the inert morphism $S := \coprod_{i} S_{i} \intto
  S_{j}$ in $\SpF$ corresponding to the inclusion of $S_{j}$ in the
  coproduct. Then cocartesian transport along the maps $\eta_{j}$ fits
  in a commutative square
  \[
    \begin{tikzcd}
      \mathcal{O}_{S} \arrow{r}{(\eta_{i,!})_{i}}
      \arrow{d}{\sim} &  \prod_{i} \mathcal{O}_{S_{i}} 
      \arrow{d}{\sim} \\
       \prod_{S} \mathcal{O}_{\bfone} \arrow{r}{\sim} &
       \prod_{i} \left(\prod_{S_{i}} \mathcal{O}_{\bfone}\right),
    \end{tikzcd}
  \]
  where the vertical morphisms are equivalences by condition
  \ref{wsf3} in \cref{defn:SpFiopd}. Then the top horizontal functor
  is also an equivalence, which implies there exists a unique object
  $X \in \mathcal{O}_{S}$ with cocartesian morphisms $X \intto X_{i}$
  over $\eta_{i}$. It remains to show that these exhibit $X$ as a
  product.

  For this, observe that if $X_{i} \intto X_{i,s}$ denotes the
  cocartesian morphisms over $\rho'_{s} \colon S_{i} \intto \bfone$,
  then for any $Y \in \mathcal{O}$ we have a
  commutative square
  \[
    \begin{tikzcd}
      \Map_{\mathcal{O}}(Y, X) \arrow{r} \arrow{d}{\sim} & \prod_{i}
      \Map_{\mathcal{O}}(Y, X_{i}) \arrow{d}{\sim} \\
      \prod_{(i,s) \in S} \Map_{\mathcal{O}}(Y, X_{i,s})
      \arrow{r}{\sim} & \prod_{i} \left(\prod_{s \in S_{i}} \Map_{\mathcal{O}}(Y,X_{i,s})\right),
    \end{tikzcd}
  \]
  where the vertical maps are equivalences by \cref{SpFopdcond}.
  Hence the top horizontal map is also an equivalence, which shows
  that $X$ is a product, as required. It only remains to observe that
  $(\mathcal{O}, \mathcal{O}_{\bfone}^{\simeq})$ is a Lawvere theory
  since every object is a product of objects in $\mathcal{O}_{\bfone}$
  by \cref{SpFopdcond}.
\end{proof}

\begin{observation}
  More generally, any pinned $\SpF$-\iopd{} $(\mathcal{O}, q \colon X
  \twoheadrightarrow \mathcal{O}_{\bfone}^{\simeq})$ is a Lawvere theory.
\end{observation}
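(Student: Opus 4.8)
The plan is to reduce the claim for a general pinned $\SpF$-\iopd{} to the case of the trivial pinning already handled in \cref{lem:SpFopdprod}. First I would recall that by \cref{lem:SpFopdprod}, the pair $(\mathcal{O}, \mathcal{O}_{\bfone}^{\simeq})$ is already a Lawvere theory: $\mathcal{O}$ has finite products, and every object of $\mathcal{O}$ is a finite product of objects lying over $\bfone$, \ie{} objects in $\mathcal{O}_{\bfone}$. The only thing that changes when we replace the canonical pinning $\id \colon \mathcal{O}_{\bfone}^{\simeq} \to \mathcal{O}_{\bfone}^{\simeq}$ by an arbitrary essentially surjective $q \colon X \twoheadrightarrow \mathcal{O}_{\bfone}^{\simeq}$ is the functor out of the \igpd{}; the underlying \icat{} with finite products is untouched.

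The key point is therefore to check that $(\mathcal{O}, p)$ still satisfies the defining condition of a Lawvere theory, where $p$ denotes the composite $X \xto{q} \mathcal{O}_{\bfone}^{\simeq} \hookrightarrow \mathcal{O}_{\bfone} \hookrightarrow \mathcal{O}$. Concretely I must verify that every object of $\mathcal{O}$ is a finite product of objects in the \emph{image of $p$}. By \cref{lem:SpFopdprod} every object $Y \in \mathcal{O}$ is a finite product $\prod_{i} Y_{i}$ with each $Y_{i} \in \mathcal{O}_{\bfone}$, so it suffices to show each such $Y_{i}$ lies in the image of $p$ (at least up to equivalence). Since $q$ is essentially surjective onto $\mathcal{O}_{\bfone}^{\simeq}$, which is the underlying \igpd{} of $\mathcal{O}_{\bfone}$, every object of $\mathcal{O}_{\bfone}$ is equivalent to $q(x)$ for some $x \in X$, hence lies in the image of $p$. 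Thus every object of $\mathcal{O}$ is a finite product of objects in the image of $p$, as required.

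I do not expect any real obstacle here; the statement is essentially a bookkeeping observation once \cref{lem:SpFopdprod} is in hand. The only mild subtlety is to phrase "image of $p$" correctly in the \icatl{} setting --- "being a finite product of objects in the image of $p$" should be read up to equivalence, and the relevant fact is exactly that essential surjectivity of $q$ means $\pi_{0}$-surjectivity on the underlying groupoid, which guarantees that every object of $\mathcal{O}_{\bfone}$ is hit up to equivalence. Equivalently, one can invoke the reformulation in the preceding \texttt{observation}: the extension $\overline{p} \colon \xF_{/X}^{\op} \to \mathcal{O}$ factors as $\xF_{/X}^{\op} \to \xF_{/\mathcal{O}_{\bfone}^{\simeq}}^{\op} \xto{\overline{\id}} \mathcal{O}$, and since $q$ is essentially surjective the first functor is essentially surjective on the relevant objects while the second is essentially surjective by \cref{lem:SpFopdprod}; hence $\overline{p}$ is essentially surjective, which is precisely the condition for $(\mathcal{O},p)$ to be a Lawvere theory.
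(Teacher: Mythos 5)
Your argument is correct and is exactly the (implicit) justification behind the paper's observation, which is stated without proof as an immediate consequence of \cref{lem:SpFopdprod}: since $q$ is essentially surjective, every object of $\mathcal{O}_{\bfone}$ is equivalent to something in the image of the composite $X \to \mathcal{O}$, so the product decomposition from \cref{lem:SpFopdprod} already exhibits every object of $\mathcal{O}$ as a finite product of objects in the image of the pinning. Your remark that ``image'' must be read up to equivalence is the right reading of essential surjectivity here, and nothing further is needed.
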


\begin{lemma}\label{lem:inertprod}
  Let $\pi \colon \mathcal{O} \to \SpF$ be a $\SpF$-\iopd{}. A
  morphism $\phi \colon X \to X'$ in $\mathcal{O}$
  is inert \IFF{} the composite $X \to X' \intto X'_{i}$ is inert for
  all $i \in \pi(X')$,
  where $X' \to X'_{i}$  is cocartesian over $\rho'_{i}$.
\end{lemma}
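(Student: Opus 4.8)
The plan is to verify the two implications separately; the forward direction is immediate, while the converse carries essentially all of the content.

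For the forward implication, suppose $\phi$ is inert, so that $\phi$ is cocartesian and $\pi(\phi)$ is inert. Since composites of cocartesian morphisms are cocartesian and the inert morphisms of $\SpF$ are closed under composition (\cref{defn:SpFactint}), each composite $X \to X' \intto X'_{i}$ is cocartesian over the inert morphism $\rho'_{i} \circ \pi(\phi)$, and hence inert.

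For the converse I would first pin down $\pi(\phi)$. Writing $\pi(\phi)$ as a span $S \xfrom{f} W \xto{g} T$ with $S = \pi(X)$ and $T = \pi(X')$, the composite $\rho'_{i} \circ \pi(\phi)$ is computed by pulling $g$ back along $\{i\} \hookrightarrow T$, giving the span $S \leftarrow g^{-1}(i) \to \bfone$; this is inert precisely when $g^{-1}(i) \to \bfone$ is an isomorphism. So the hypothesis that $\rho'_{i} \circ \pi(\phi)$ is inert for every $i \in T$ forces $g$ to be a bijection, and thus $\pi(\phi)$ is inert. Using condition \ref{wsf1} of \cref{defn:SpFiopd} I then choose a $\pi$-cocartesian lift $X \to X''$ of $\pi(\phi)$, and by the universal property of this cocartesian morphism factor $\phi$ as $X \to X'' \xto{h} X'$ with $h$ lying in the fibre $\mathcal{O}_{T}$. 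It remains to show that $h$ is an equivalence, for then $\phi$ is a composite of a cocartesian morphism and an equivalence over the inert morphism $\pi(\phi)$, hence inert.

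To prove $h$ is an equivalence I would invoke condition \ref{wsf3} of \cref{defn:SpFiopd}: the functor $\mathcal{O}_{T} \to \prod_{i \in T} \mathcal{O}_{\bfone}$ given by cocartesian transport along the $\rho'_{i}$ is an equivalence, so $h$ is an equivalence iff each induced map $h_{i} \colon X''_{i} \to X'_{i}$ in $\mathcal{O}_{\bfone}$ is. The map $h_{i}$ is determined by the relation that $X'' \to X''_{i}$ followed by $h_{i}$ agrees with $h$ followed by $X' \to X'_{i}$. Precomposing with the cocartesian morphism $X \to X''$, the latter composite becomes $X \to X' \to X'_{i}$, which is cocartesian over $\rho'_{i} \circ \pi(\phi)$ by hypothesis, while the former is $X \to X''_{i}$ followed by $h_{i}$, with $X \to X''_{i}$ cocartesian over the same morphism $\rho'_{i} \circ \pi(\phi)$. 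Thus $h_{i}$ is a morphism in $\mathcal{O}_{\bfone}$ comparing the targets of two cocartesian lifts of one and the same morphism and commuting with their structure maps; by the essential uniqueness of cocartesian lifts it is an equivalence. The main obstacle is precisely this fibrewise comparison: correctly producing $h_{i}$ via cocartesian transport and recognizing it as a comparison of two cocartesian lifts of $\rho'_{i} \circ \pi(\phi)$, so that the universal property forces invertibility. Once this is in place, the computation of composites in $\SpF$ controlling $\pi(\phi)$ and the reduction via condition \ref{wsf3} are routine.
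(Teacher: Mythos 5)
Your proof is correct and follows essentially the same route as the paper: the same span computation showing $\pi(\phi)$ is inert, the same factorization of $\phi$ through a cocartesian lift of $\pi(\phi)$, and the same reduction via the Segal equivalence $\mathcal{O}_{T} \simeq \prod_{i \in T}\mathcal{O}_{\bfone}$ to checking the fibrewise maps $h_{i}$. The only cosmetic difference is in the last step, where you invoke essential uniqueness of cocartesian lifts directly, while the paper phrases the same comparison through the inert--active factorizations of the composites $X \to X'_{i}$.
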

\begin{proof}
  Since inert morphisms are closed under composition, the ``only if''
  direction is immediate. In the ``if'' direction, we first check that
  a morphism
  \[ S \xfrom{f} T \xto{g} S'\]
  in $\SpF$ is inert \IFF{} its composite with $\rho'_{i}$ is inert for each $i
  \in S'$. This composite is the span
  \[ S \from T_{i} \to \bfone,\]
  where $T_{i}$ is the fibre of $g$ at $i$. This is inert for all $i$
  \IFF{} each of the fibres $T_{i}$ has a single element, which is
  equivalent to $g$ being an isomorphism, \ie{} to the original span
  being inert, as required.

  Thus if $\phi$ satisfies the condition then $\pi(\phi)$ is inert in
  $\SpF$. If $X \intto X'' \actto X'$ is the inert--active
  factorization of $\phi$, then it follows that $X'' \actto X'$ lies
  over $\id_{\pi(X')}$, and since we have a factorization system the
  map $\phi$ is inert \IFF{} this active map is an equivalence. Now
  observe that the inert--active factorization of the composite
  $X \to X'_{i}$ yields a commutative diagram
  \[
    \begin{tikzcd}
      X \arrow[inert]{r}  \arrow[inert]{dr} & X'' \arrow[active]{r}
      \arrow[inert]{d} & X' \arrow[inert]{d} \\
       & X''_{i} \arrow[active]{r} & X'_{i},
    \end{tikzcd}
  \]
  where by assumption the map $X \to X'_{i}$ is inert, and so $X''_{i}
  \actto X'_{i}$ is an equivalence. But the equivalence of \icats{} $\mathcal{O}_{\pi(X')} \simeq
  (\mathcal{O}_{\bfone})^{\times |\pi(X')|}$ shows that $X'' \actto X'$ is an
  equivalence \IFF{} $X''_{i} \to X'_{i}$ is one for each $i$.
\end{proof}

\begin{propn}\label{propn:SpFopdmor}
  Given a commutative triangle
  \[
    \begin{tikzcd}
      \mathcal{O} \arrow{dr}[swap]{\pi} \arrow{rr}{f} & & \mathcal{O}'
      \arrow{dl}{\pi'} \\
       & \SpF
    \end{tikzcd}
  \]
  where $\mathcal{O}$ and $\mathcal{O}'$ are $\SpF$-\iopds{}, the
  functor $f$ preserves inert morphisms \IFF{} it preserves finite
  products.
\end{propn}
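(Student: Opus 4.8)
The plan is to deduce both implications from the two structural results already in hand. \cref{lem:SpFopdprod} tells us that a finite product in an $\SpF$-\iopd{} is precisely a family of cocartesian lifts of the inert projection maps that exhibit a disjoint union as a product in $\SpF$; in particular every product projection is an inert morphism. \cref{lem:inertprod} tells us, conversely, that a morphism is inert as soon as all of its composites with the cocartesian lifts over the elementary maps $\rho'_{i}$ (whose targets lie over $\bfone$) are inert. The one preliminary observation I would record is that, by \cref{SpFopdcond}, an inert morphism whose target lies over $\bfone$ is exactly a product projection out of its source: it is the cocartesian lift of some $\rho'_{s} \colon S \intto \bfone$, and these lifts are precisely the maps exhibiting the source as a product of objects over $\bfone$. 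Throughout I will use that $f$ lies over $\SpF$, so $\pi'(f(\phi)) = \pi(\phi)$ for every $\phi$; hence $f$ carries a morphism lying over an inert map (or over a product-projection map) of $\SpF$ to one lying over the same map.

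\emph{Preserving inert morphisms implies preserving finite products.} Here I would start from a product diagram $\{p_{i} \colon X \to X_{i}\}$ in $\mathcal{O}$. By \cref{lem:SpFopdprod} each $p_{i}$ is cocartesian over the inert projection $\pi(p_{i})$, hence inert, so by hypothesis each $f(p_{i})$ is inert in $\mathcal{O}'$. To conclude that $\{f(p_{i})\}$ is again a product diagram I apply \cref{lem:SpFopdprod} in $\mathcal{O}'$: its condition on the base maps holds because $\pi'(f(p_{i})) = \pi(p_{i})$ still exhibits $\pi'(f(X))$ as the relevant product in $\SpF$, and its cocartesianness condition holds because each $f(p_{i})$ is inert. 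The empty product, i.e.\ the terminal object over $\emptyset$, is preserved for free, since both fibres over $\emptyset$ are contractible and $f$ maps one terminal object to the other.

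\emph{Preserving finite products implies preserving inert morphisms.} Now suppose $f$ preserves finite products and let $\phi \colon X \to X'$ be inert, with $X'$ lying over $S' = \pi(X')$. Writing $X' \to X'_{i}$ for the cocartesian lift of $\rho'_{i}$ ($i \in S'$), product-preservation together with \cref{lem:SpFopdprod} identifies $f(X') \to f(X'_{i})$ as a product projection in $\mathcal{O}'$, hence as a cocartesian lift of $\rho'_{i}$; so the cocartesian pushforward $(f(X'))_{i}$ is identified with $f(X'_{i})$. By the forward direction of \cref{lem:inertprod} each composite $X \to X' \to X'_{i}$ is inert, and as its target lies over $\bfone$ it is a product projection out of $X$; since $f$ preserves products, $f(X \to X'_{i})$ is again a product projection, i.e.\ cocartesian over $\rho'_{i}$, hence inert. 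These composites are exactly $f(X) \to f(X') \to (f(X'))_{i}$, so the reverse direction of \cref{lem:inertprod}, applied in $\mathcal{O}'$, shows that $f(\phi)$ is inert.

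The step I expect to be most delicate is the identification $(f(X'))_{i} \simeq f(X'_{i})$ in the second implication: a priori $f$ need not preserve cocartesian lifts, so the pushforward computed in $\mathcal{O}'$ could differ from $f(X'_{i})$, and it is exactly the product-preservation hypothesis, fed through \cref{lem:SpFopdprod}, that forces the product projection $f(X' \to X'_{i})$ to be cocartesian over $\rho'_{i}$ and so to compute this pushforward. Matching up the two sides of \cref{lem:inertprod} correctly is the main bookkeeping to be careful with; everything else is a direct translation between the description of products in \cref{lem:SpFopdprod} and the detection of inert morphisms in \cref{lem:inertprod}.
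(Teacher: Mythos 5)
Your proof is correct and follows essentially the same route as the paper: both directions are reduced to \cref{lem:SpFopdprod} (products are exactly families of cocartesian lifts of the inert projections) and \cref{lem:inertprod} (inertness is detected on composites into the fibre over $\bfone$), and the delicate point you flag --- that product-preservation forces $f$ to carry the cocartesian lifts over $\rho'_{i}$ to cocartesian lifts, identifying $(f(X'))_{i}$ with $f(X'_{i})$ --- is precisely the first step of the paper's argument. Your write-up is merely more explicit than the paper's in the easy direction (inerts imply products, which the paper dispatches in one sentence) and in handling the empty product.
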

\begin{proof}
  Let us first suppose that $f$ preserves finite products.  To show
  that $f$ preserves inert morphisms, we start by observing that $f$
  preserves inert morphisms over $\rho'_{i}$: Consider
  $X \in \mathcal{O}_{\fset{n}}$, with inert morphisms
  $\xi_{i} \colon X \intto X_{i}$ over $\rho'_{i}$. Then by assumption
  the morphisms $f(\xi_{i}) \colon f(X) \to f(X_{i})$ exhibit $f(X)$
  as the product of the $f(X_{i})$'s, and so are indeed inert by
  \cref{lem:SpFopdprod}. 
  Now consider an arbitrary inert morphism $X \intto X'$ in
  $\mathcal{O}$. To show that $f(X) \to f(X')$ is inert, it suffices
  by \cref{lem:inertprod}
  to show that $f(X) \to f(X') \intto f(X')_{i}$ is inert for each
  $i$, where $f(X') \to f(X')_{i}$ is cocartesian over $\rho'_{i}$. But we know $f(X') \intto f(X')_{i}$ is the image of the inert
  morphism $X' \intto X'_{i}$, and hence  $f(X) \to f(X')_{i}$ is
  indeed inert, since it is the image of the inert composite $X \intto
  X' \intto X'_{i}$. 

  Conversely, suppose we know that $f$ preserves inert morphisms.
  Then \cref{lem:SpFopdprod} implies that $f$ preserves products,
  since these are characterized in terms of inert morphisms.
\end{proof}

Putting the preceding results together, we have shown:
\begin{cor}\label{cor:POpdSpFLT}
  There is a fully faithful functor $\POpd(\SpF) \hookrightarrow
  \Lawv_{/\SpFL}$. \qed
\end{cor}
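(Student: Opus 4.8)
The plan is to construct the functor directly from the pullback square defining $\POpd(\SpF)$, using \cref{lem:SpFopdprod} and \cref{propn:SpFopdmor} to identify the structure and the morphisms on the nose, and then to establish full faithfulness by an explicit comparison of mapping spaces. On objects, a pinned $\SpF$-\iopd{} $(\mathcal{O}, q \colon X \twoheadrightarrow \mathcal{O}_{\bfone}^{\simeq})$ is sent to the Lawvere theory $(\mathcal{O}, p)$ with $p \colon X \xrightarrow{q} \mathcal{O}_{\bfone}^{\simeq} \hookrightarrow \mathcal{O}$; this is a Lawvere theory by \cref{lem:SpFopdprod} and the subsequent observation, since $\mathcal{O}$ has finite products and every object is a product of objects in $\mathcal{O}_{\bfone}^{\simeq}$, each of which lies in the image of the essentially surjective $q$. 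Moreover $\pi \colon \mathcal{O} \to \SpF$ is product-preserving by \cref{lem:SpFopdprod}, and since $p$ factors through the fibre over $\bfone$ the composite $\pi p$ is constant at $\bfone$; this exhibits $(\mathcal{O}, p)$ as an object of $\Lawv_{/\SpFL}$, where $\SpFL = (\SpF, \{\bfone\})$ is a Lawvere theory by \cref{lem:SpFprod}.

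For functoriality, a morphism of pinned $\SpF$-\iopds{} supplies an inert-preserving functor over $\SpF$, which by \cref{propn:SpFopdmor} is exactly a product-preserving functor over $\SpF$, i.e.\ a morphism of Lawvere theories over $\SpFL$, together with a compatible map of pinnings. The functor $\POpd(\SpF) \to \Lawv_{/\SpFL}$ is thus obtained by assembling these assignments at the level of the pullback defining $\POpd(\SpF)$.

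For full faithfulness I would compare mapping spaces of $\mathbf{O} = (\mathcal{O}, q)$ and $\mathbf{O}' = (\mathcal{O}', q')$. Unwinding the pullback defining $\POpd(\SpF)$, a point of $\Map_{\POpd(\SpF)}(\mathbf{O},\mathbf{O}')$ is a morphism of $\SpF$-\iopds{} $f \colon \mathcal{O}\to\mathcal{O}'$ over $\SpF$, a map $h \colon X \to X'$, and a homotopy $q' h \simeq f_{\bfone}^{\simeq} q$ in $\Map(X, (\mathcal{O}')^{\simeq}_{\bfone})$. Unwinding the slice $\Lawv_{/\SpFL}$, a point of $\Map_{\Lawv_{/\SpFL}}(\mathbf{O},\mathbf{O}')$ is a product-preserving $F \colon \mathcal{O} \to \mathcal{O}'$ over $\SpF$, a map $h \colon X \to X'$, and a homotopy $p' h \simeq F p$ in $\Map_{\CatI}(X, \mathcal{O}') \simeq \Map(X, (\mathcal{O}')^{\simeq})$. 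The two indexing spaces of functors agree by \cref{propn:SpFopdmor}. For the remaining (pinning) data, note that with $\iota' \colon (\mathcal{O}')^{\simeq}_{\bfone} \hookrightarrow (\mathcal{O}')^{\simeq}$ the inclusion of the fibre over $\bfone$ one has
\[ F \circ p \;\simeq\; \iota' \circ f_{\bfone}^{\simeq} \circ q, \qquad p' \circ h \;\simeq\; \iota' \circ q' \circ h. \]
Now $(\mathcal{O}')^{\simeq}_{\bfone} \simeq (\mathcal{O}')^{\simeq} \times_{\SpF^{\simeq}} \{\bfone\}$ is a base change of $\{\bfone\} \hookrightarrow \SpF^{\simeq} \simeq \xF^{\simeq}$, whose component is $B\Sigma_1 \simeq *$, so this inclusion is a monomorphism and hence $\iota'$ is a monomorphism of spaces. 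Therefore postcomposing with $\iota'$ identifies the fibre of $h \mapsto q'h$ over $f_{\bfone}^{\simeq}q$ with the fibre of $h \mapsto \iota' q' h$ over $\iota' f_{\bfone}^{\simeq} q$, so the pinning data agree and the mapping spaces are equivalent.

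The main obstacle is exactly this last step: reconciling the pinning data, which in $\POpd(\SpF)$ is recorded over the fibre $\mathcal{O}_{\bfone}^{\simeq}$ but in $\Lawv_{/\SpFL}$ over all of $\mathcal{O}$. The clean way through is the observation that $\mathcal{O}_{\bfone}^{\simeq} \hookrightarrow (\mathcal{O})^{\simeq}$ is a monomorphism (being a base change of $\{\bfone\} \hookrightarrow \xF^{\simeq}$), so that a homotopy witnessing compatibility of pinnings in $\mathcal{O}'$ is automatically and uniquely one in $(\mathcal{O}')^{\simeq}_{\bfone}$; everything else is a formal consequence of \cref{lem:SpFopdprod} and \cref{propn:SpFopdmor}.
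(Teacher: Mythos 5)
Your proposal is correct and follows the paper's route: the corollary carries no separate proof in the paper precisely because it is the assembly of \cref{lem:SpFprod}, \cref{lem:SpFopdprod} (together with the observation that pinned $\SpF$-\iopds{} are Lawvere theories), and \cref{propn:SpFopdmor}, which is exactly the assembly you perform. Your additional monomorphism argument --- that $\mathcal{O}_{\bfone}^{\simeq} \hookrightarrow \mathcal{O}^{\simeq}$ is a base change of the component inclusion $\{\bfone\} \hookrightarrow \xF^{\simeq} \simeq \coprod_{n} B\Sigma_{n}$, so that reconciling the pinning data recorded over the fibre with that recorded over all of $\mathcal{O}$ is a propositional (empty-or-contractible) matter --- correctly fills in the mapping-space comparison that the paper leaves implicit.
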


\subsection{Algebras, monoids and models}\label{sec:algmodel}
If $\mathcal{O}$ is a $\SpF$-\iopd{} and $\mathcal{C}$ is an \icat{}
with finite products, our goal in this subsection is to identify three
structures in $\mathcal{C}$ parametrized by $\mathcal{O}$:
$\mathcal{O}$-algebras in the symmetric monoidal \icat{} given by the
cartesian product on $\mathcal{C}$, $\mathcal{O}$-monoids in
$\mathcal{C}$, and models for $\mathcal{O}$ as a Lawvere theory in
$\mathcal{C}$. 

We begin by introducing the definitions of the last two structures:

\begin{defn}
  If $\mathcal{L}$ and $\mathcal{C}$ are \icats{} with finite products
  (such as the underlying \icat{} of a Lawvere theory), then a
  \emph{model} of $\mathcal{L}$ in $\mathcal{C}$ is a functor
  $\mathcal{L} \to \mathcal{C}$ that preserves finite products. We
  write $\Mod_{\mathcal{L}}(\mathcal{C})$ for the full subcategory of
  $\Fun(\mathcal{L}, \mathcal{C})$ spanned by the models.
\end{defn}

\begin{defn}
  If $\mathcal{O}$ is an $\SpF$-\iopd{} and $\mathcal{C}$ is an
  \icat{} with finite products, then an \emph{$\mathcal{O}$-monoid} in
  $\mathcal{C}$ is a functor $M \colon \mathcal{O} \to \mathcal{C}$
  such that for every $X \in \mathcal{O}$ over $\fset{n} \in \SpF$,
  the morphism
  \[ M(X) \to \prod_{i = 1}^{n} M(X_{i}),\]
  induced by the inert morphisms $X \intto X_{i}$ over $\rho'_{i}$, is
  an equivalence. We write $\Mon_{\mathcal{O}}(\mathcal{C})$ for the
  full subcategory of $\Fun(\mathcal{O}, \mathcal{C})$ spanned by the $\mathcal{O}$-monoids.
\end{defn}

\begin{propn}\label{propn:SpFopdprod}
  If $\mathcal{O}$ is an $\SpF$-\iopd{} and $\mathcal{C}$ is an
  \icat{}, then a functor $M \colon \mathcal{O} \to \mathcal{C}$ is an
  $\mathcal{O}$-monoid \IFF{} $M$ preserves finite products. In
  particular, $\Mon_{\mathcal{O}}(\mathcal{C}) \simeq
  \Mod_{\mathcal{O}}(\mathcal{C})$ as full subcategories of
  $\Fun(\mathcal{O}, \mathcal{C})$.
\end{propn}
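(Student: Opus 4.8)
The plan is to reduce everything to the characterization of products in $\mathcal{O}$ from \cref{lem:SpFopdprod}: a cone $p_{i} \colon X \to X_{i}$, with $X_{i}$ over $S_{i}$, exhibits $X$ as $\prod_{i} X_{i}$ precisely when each $p_{i}$ is cocartesian lying over the inert projection $\coprod_{j} S_{j} \intto S_{i}$. Specializing all $S_{i} = \bfone$, this says that for $X$ over $\fset{n}$ the cocartesian morphisms $X \intto X_{i}$ over the $\rho'_{i}$ already exhibit $X$ as $\prod_{i=1}^{n} X_{i}$. The point of the proposition is thus that the $\mathcal{O}$-monoid condition asks exactly that $M$ send these ``elementary'' product cones to product cones, and I must show this is equivalent to sending \emph{all} finite product cones to product cones.

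One direction is immediate: if $M$ preserves finite products, then applying $M$ to the elementary product cone $X \intto X_{i}$ over the $\rho'_{i}$ shows $M(X) \to \prod_{i} M(X_{i})$ is a product cone in $\mathcal{C}$, which is precisely the $\mathcal{O}$-monoid condition. So I would dispose of this direction in a sentence and concentrate on the converse.

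For the converse I would take an $\mathcal{O}$-monoid $M$ and an arbitrary product cone $p_{i} \colon X \to X_{i}$ ($i = 1,\dots,m$, with $X_{i}$ over $S_{i}$), so by \cref{lem:SpFopdprod} $X$ lies over $S = \coprod_{i} S_{i}$ and each $p_{i}$ is cocartesian over the projection $\eta_{i}$. For $s \in S_{i}$, choosing cocartesian $X_{i} \intto X_{i,s}$ over $\rho'_{s} \colon S_{i} \intto \bfone$ and using $\rho'_{s} \circ \eta_{i} = \rho'_{(i,s)}$, the composite $X \to X_{i} \to X_{i,s}$ is the elementary cocartesian morphism of $X$ at the point $(i,s) \in S$. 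The monoid condition for $X$ then makes $M(X) \to \prod_{(i,s) \in S} M(X_{i,s})$ a product cone, while the monoid condition for each $X_{i}$, taken in product over $i$, makes $\prod_{i} M(X_{i}) \to \prod_{(i,s)} M(X_{i,s})$ an equivalence. Since each elementary morphism of $X$ factors through the projection $p_{i}$, these fit into a commutative triangle over $M(X) \to \prod_{i} M(X_{i})$, and two-out-of-three forces that map to be an equivalence. The empty case $m = 0$, where $X$ lies over $\fset{0}$, is the terminal object and is covered by the monoid condition with $n = 0$. Hence $M$ preserves every finite product, and the two full subcategories of $\Fun(\mathcal{O}, \mathcal{C})$ agree, giving $\Mon_{\mathcal{O}}(\mathcal{C}) \simeq \Mod_{\mathcal{O}}(\mathcal{C})$.

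The step I expect to require the most care is the compatibility in that last triangle: one must check that the decomposition of $M(X)$ provided by the monoid condition genuinely \emph{refines} the projections $M(X) \to \prod_{i} M(X_{i})$, rather than merely having the same product as target. This comes down to the identity $\rho'_{s} \circ \eta_{i} = \rho'_{(i,s)}$ together with functoriality of cocartesian transport, which identifies the elementary morphism of $X$ at $(i,s)$ with the composite through $X_{i}$. Once \cref{lem:SpFopdprod} is in hand the rest is formal.
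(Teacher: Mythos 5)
Your proof is correct and is essentially the paper's own argument: the paper also deduces the converse from \cref{lem:SpFopdprod} by forming the commutative square comparing $M(X) \to \prod_{i} M(X_{i})$ with the elementary monoid equivalences $M(X) \isoto \prod_{(i,s) \in S} M(X_{i,s})$ and $\prod_{i} M(X_{i}) \isoto \prod_{i}\prod_{s \in S_{i}} M(X_{i,s})$, concluding by two-out-of-three. Your explicit verification that the composite $X \to X_{i} \intto X_{i,s}$ is the elementary cocartesian morphism over $\rho'_{(i,s)}$ (via $\rho'_{s} \circ \eta_{i} = \rho'_{(i,s)}$) is exactly the compatibility the paper's square uses implicitly, so there is nothing to add.
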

\begin{proof}
  It follows from \ref{spfwsf2} in \cref{SpFopdcond} that if $M$ is an
  $\mathcal{O}$-model, then $M$ is an $\mathcal{O}$-monoid. To prove
  the converse, we again use the description of products in
  $\mathcal{O}$ from \cref{lem:SpFopdprod}: Suppose the (necessarily inert) morphisms
  $X \intto X_{i}$ for $i = 1,\ldots,n$ exhibit $X$ in $\mathcal{O}$
  over $S$ as a product of
  the objects $X_{i}$, which lie over $S_{i}$. If
  $X_{i} \intto X_{i,s}$ denotes the
  cocartesian morphisms over $\rho'_{s} \colon S_{i} \intto \bfone$,
  then we have a
  commutative square
  \[
    \begin{tikzcd}
      M(X) \arrow{r} \arrow{d}{\sim} & \prod_{i} M(X_{i})
      \arrow{d}{\sim} \\
      \prod_{(i,s) \in S} M(X_{i,s})
      \arrow{r}{\sim} & \prod_{i} \left(\prod_{s \in S_{i}} M(X_{i,s})\right).
    \end{tikzcd}
  \]
  It follows that the top horizontal morphism is also an equivalence,
  so that $M$ preserves this product.
\end{proof}

\begin{propn}
  Let $\mathcal{C}$ be an \icat{} with finite products. Then there
  exists a $\SpF$-\iopd{} $\mathcal{C}^{\times} \to \SpF$ (in fact a
  cocartesian fibration) with a (product-preserving) functor $\mathcal{C}^{\times} \to
  \mathcal{C}$ such composition with this functor induces an
  equivalence
  \[ \Alg_{\mathcal{O}}(\mathcal{C}^{\times}) \isoto
    \Mon_{\mathcal{O}}(\mathcal{C})\]
  for every $\SpF$-\iopd{} $\mathcal{O}$.
\end{propn}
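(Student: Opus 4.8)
The plan is to realize $\mathcal{C}^{\times}$ as the \emph{cartesian $\SpF$-monoidal structure}, \ie{} the cocartesian fibration associated to the functor $\SpF \to \CatI$ sending a finite set $S$ to $\Fun(S, \mathcal{C}) \simeq \mathcal{C}^{S}$. The functoriality in spans is the usual ``restrict, then multiply'': a span $S \xfrom{f} W \xto{g} T$ acts by the composite $\mathcal{C}^{S} \xto{f^{*}} \mathcal{C}^{W} \xto{g_{*}} \mathcal{C}^{T}$, where $f^{*}$ is restriction and $g_{*}$ is right Kan extension along $g$, so that $g_{*}(c)_{t} = \prod_{w \in g^{-1}(t)} c_{w}$. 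To build this as an honest functor on $\SpF$ I would invoke the span-functoriality machinery of \cite{HHLN2}: the assignments $S \mapsto \mathcal{C}^{S}$ on $\xF^{\op}$ (via $f^{*}$) and on $\xF$ (via $g_{*}$) agree on objects and satisfy the Beck--Chevalley condition, since finite products commute with restriction along pullback squares of finite sets. Unstraightening then yields a cocartesian fibration $\mathcal{C}^{\times} \to \SpF$, whose objects are pairs $(S, (c_{s})_{s \in S})$ and in which a morphism over $S \xfrom{f} W \xto{g} T$ from $(c_{s})$ to $(d_{t})$ is a family of maps $\prod_{w \in g^{-1}(t)} c_{f(w)} \to d_{t}$, $t \in T$.

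Next I would check, using \cref{SpFopdcond}, that $\mathcal{C}^{\times}$ is an $\SpF$-\iopd{}. Condition \ref{spfwsf1} is automatic since $\mathcal{C}^{\times} \to \SpF$ is a cocartesian fibration. For \ref{spfwsf2}--\ref{spfwsf3}, the cocartesian transport along $\rho'_{i} \colon \fset{n} \intto \bfone$ is the $i$-th projection $\mathcal{C}^{n} \to \mathcal{C}$; condition \ref{spfwsf3} holds because $\mathcal{C}^{\times}_{\fset{n}} \to \prod_{i} \mathcal{C}^{\times}_{\bfone}$ is then the identity $\mathcal{C}^{n} \to \mathcal{C}^{n}$, and \ref{spfwsf2} follows from the explicit description of morphisms above, which shows that the projections exhibit $(c_{1}, \dots, c_{n})$ as the product of the $c_{i}$ viewed as objects over $\bfone$. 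By \cref{lem:SpFopdprod} the \icat{} $\mathcal{C}^{\times}$ then has finite products, computed by concatenation of tuples, and the projection to $\SpF$ preserves them. I would define $\theta \colon \mathcal{C}^{\times} \to \mathcal{C}$ to be the total-product functor, given on the fibre $\mathcal{C}^{S}$ by $(c_{s}) \mapsto \prod_{s \in S} c_{s}$, with its action on a morphism over $S \xfrom{f} W \xto{g} T$ obtained by precomposing the structure maps with the diagonal $\prod_{s} c_{s} \to \prod_{w} c_{f(w)}$; this is manifestly product-preserving, since the concatenation product goes to a product of products.

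It then remains to prove the equivalence. By \cref{propn:SpFopdmor} a morphism of $\SpF$-\iopds{} is exactly a product-preserving functor over $\SpF$, so $\Alg_{\mathcal{O}}(\mathcal{C}^{\times})$ is the \icat{} of product-preserving functors $\mathcal{O} \to \mathcal{C}^{\times}$ over $\SpF$; on the other side, \cref{propn:SpFopdprod} identifies $\Mon_{\mathcal{O}}(\mathcal{C})$ with the product-preserving functors $\mathcal{O} \to \mathcal{C}$. Post-composition with the product-preserving functor $\theta$ sends product-preserving functors to product-preserving functors, so it defines $\theta_{*} \colon \Alg_{\mathcal{O}}(\mathcal{C}^{\times}) \to \Mon_{\mathcal{O}}(\mathcal{C})$, and the task is to show this is an equivalence. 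I would do this by exhibiting an inverse: given a monoid $M$, every $X \in \mathcal{O}$ over $S$ decomposes as $X \simeq \prod_{s \in S} X_{s}$ with $X_{s} \in \mathcal{O}_{\bfone}$ via the inert projections (\cref{lem:SpFopdprod}), and one sets $\widetilde{M}(X) := (M(X_{s}))_{s \in S} \in \mathcal{C}^{S} = \mathcal{C}^{\times}_{S}$, a product-preserving lift over $\SpF$ satisfying $\theta \widetilde{M} \simeq M$ because $M$ preserves the product $X \simeq \prod_{s} X_{s}$.

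The hard part is making the assignment $M \mapsto \widetilde{M}$ into a coherent functor and verifying that it is a two-sided inverse to $\theta_{*}$ --- \ie{} establishing the universal property of the cartesian structure, that precomposition with $\theta$ induces an equivalence $\Fun^{\times}_{/\SpF}(\mathcal{D}, \mathcal{C}^{\times}) \isoto \Fun^{\times}(\mathcal{D}, \mathcal{C})$ for every \icat{} $\mathcal{D}$ with finite products over $\SpF$ whose structure functor preserves them. I expect to prove this at the level of the underlying straightened data: a product-preserving functor over $\SpF$ into the cocartesian fibration $\mathcal{C}^{\times}$ is determined by its restriction to the fibres over $\bfone$ together with compatibility with cocartesian transport, and the total-product functor $\theta$ precisely records that restriction; since both sides are product-preserving and every object is a finite product of objects over $\bfone$, the two descriptions carry the same information. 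Alternatively, one can identify $\mathcal{C}^{\times}$ with the cartesian structure of the algebraic pattern $\SpF$ and invoke the general universal property of such structures. This universal-property step is the main obstacle; the construction of $\mathcal{C}^{\times}$ and the verification that it is an $\SpF$-\iopd{} are routine.
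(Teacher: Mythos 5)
Your construction of $\mathcal{C}^{\times}$ as the unstraightening of $S \mapsto \mathcal{C}^{S}$, with a span acting by restriction followed by fibrewise product and the Beck--Chevalley condition supplying the span-functoriality, is sound, and your verification of conditions \ref{spfwsf1}--\ref{spfwsf3} of \cref{SpFopdcond} goes through as stated. But the proof has a genuine gap, and you have located it yourself: everything after the construction is asserted rather than proved. Two steps are missing. First, the functor $\theta \colon \mathcal{C}^{\times} \to \mathcal{C}$ is not actually constructed: ``precompose the structure maps with the diagonal'' prescribes $\theta$ on objects and on individual morphisms, which in the $\infty$-categorical setting does not define a functor; since $\theta$ does not preserve cocartesian edges, it cannot be obtained from a map of straightenings, and one needs real input --- for instance, it corresponds to a lax natural transformation from the diagram $S \mapsto \mathcal{C}^{S}$ to the constant diagram at $\mathcal{C}$, which must be built coherently. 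Second, and more seriously, your inverse is given only by the pointwise formula $\widetilde{M}(X) = (M(X_{s}))_{s}$; this is likewise not a functor $\mathcal{O} \to \mathcal{C}^{\times}$ until all coherence data is supplied, and the closing claim that ``the two descriptions carry the same information'' is precisely the statement of the proposition, not an argument for it. As written, the heart of the proof is deferred.

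The paper sidesteps all of this by observing that Lurie's proof in \cite{HA}*{\S 2.4.1} applies verbatim with $\SpF$ in place of $\xF_{*}$, and Lurie's route differs from yours in exactly the way that matters. Instead of straightening, he builds $\mathcal{C}^{\times}$ as a full subcategory of an auxiliary category $\widetilde{\Gamma}^{\times}(\mathcal{C})$, whose objects over $S$ are diagrams in $\mathcal{C}$ indexed by the subsets of $S$; the functor to $\mathcal{C}$ is then built in (evaluation at the maximal subset), and a functor $\mathcal{O} \to \mathcal{C}^{\times}$ over the base unwinds to an \emph{honest} functor from a comma-type category into $\mathcal{C}$, with no unstraightening needed. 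The payoff of that model is exactly the step you are missing: the lift of a given $\mathcal{O}$-monoid is produced by a relative right Kan extension whose space of choices is contractible, and the equivalence $\Alg_{\mathcal{O}}(\mathcal{C}^{\times}) \isoto \Mon_{\mathcal{O}}(\mathcal{C})$ follows formally. Only the inert--active structure of the base, the identification $\Map_{\SpF}(S',S) \simeq \xF_{/S' \times S}^{\simeq}$, and \cref{lem:SpFprod} enter in transporting the argument from $\xF_{*}$ to $\SpF$, which is why the paper can simply cite it. If you want to keep your straightened model, you should either identify it with Lurie's construction and quote his argument, or prove the universal property $\Fun^{\times}_{/\SpF}(\mathcal{D}, \mathcal{C}^{\times}) \simeq \Fun^{\times}(\mathcal{D}, \mathcal{C})$ directly, which requires the machinery of lax transformations or relative Kan extensions rather than the fibrewise bookkeeping you sketch.
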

\begin{proof}
  This follows by the same proof as for the version with $\xF_{*}$ in
  place of $\SpF$, proved in \cite{HA}*{\S 2.4.1}, and we do not
  repeat it here.
\end{proof}

\section{Lawvere theories and algebraic
  monads}\label{sec:Lawveremonads}

It was first shown by Linton~\cite{Linton} that one-sorted Lawvere
theories are equivalent to finitary monads on the category of sets,
and the \icatl{} analogue of this correspondence has been proved by
Gepner--Groth--Nikolaus~\cite{GepnerGrothNikolaus}. In this section we
will extend this result to our multi-sorted notion of Lawvere
theories, proving \cref{thm:lawvere}. We start by introducing the
relevant notion of \emph{algebraic} monad and showing that any Lawvere
theory gives rise to one of these in \S\ref{sec:lawvere-theories}. In
\S\ref{sec:theoryfrommonad} we then define the Lawvere theory
associated to an algebraic monad and prove that its models recover the
monad we started with.

\subsection{Algebraic monads from Lawvere theories}
\label{sec:lawvere-theories}

Our goal in this subsection is to show that models for a Lawvere
theory in the \icat{} of spaces always gives an algebraic monad, in
the following sense:
\begin{defn}
  An \emph{algebraic monad} is a monad $T$ on
  $\Fun(X, \mathcal{S}) \simeq \mathcal{S}_{/X}$ for some
  $X \in \mathcal{S}$ such that the underlying endofunctor of $T$
  preserves sifted colimits.
\end{defn}

\begin{remark}
  In the 1-categorical context, Lawvere theories are related to
  \emph{finitary} monads, \ie{} monads that preserve \emph{filtered}
  colimits. This is not the case in our \icatl{} setting, where the
  monads are required to preserve \emph{sifted} colimits (which in
  $\mathcal{S}_{/X}$ boils down to filtered colimits and simplicial
  colimits). The fundamental reason for this difference is that $\Set$
  is obtained from the category $\xF$ of finite sets by freely adding
  filtered colimits, but $\mathcal{S}$ is obtained from $\xF$ by
  freely adding sifted colimits. Here the relevance of $\xF$ is that
  in both cases we want Lawvere theories to be defined in terms of
  finite products rather than more general limits.
\end{remark}

We note the following general result about monads and colimits,
taken from Henry and Meadows~\cite{HenryMeadows}:
\begin{propn}\label{propn:monadcolim}
  Let $T$ be a monad on an \icat{} $\mathcal{C}$, which has all
  $\mathcal{I}$-shaped colimits for some \icat{} $\mathcal{I}$. Then
  the following are equivalent:
  \begin{enumerate}[(1)]
  \item\label{it:TIcolim} The endofunctor $T \colon \mathcal{C} \to \mathcal{C}$
    preserves $\mathcal{I}$-shaped colimits.
  \item\label{it:Algcolim} The \icat{} $\Alg(T)$ has $\mathcal{I}$-shaped colimits and
    the functor $U_{T}\colon \Alg(T) \to \mathcal{C}$ preserves these.
  \end{enumerate}
\end{propn}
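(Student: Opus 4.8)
The plan is to treat the two implications separately, with \ref{it:Algcolim} $\Rightarrow$ \ref{it:TIcolim} being essentially formal and \ref{it:TIcolim} $\Rightarrow$ \ref{it:Algcolim} requiring the real work. For \ref{it:Algcolim} $\Rightarrow$ \ref{it:TIcolim}, recall that the monad $T$ is by definition the composite $U_{T}F_{T}$ of the monadic adjunction $F_{T} \colon \mathcal{C} \rightleftarrows \Alg(T) : U_{T}$. The left adjoint $F_{T}$ preserves every colimit that exists in $\mathcal{C}$; in particular it preserves $\mathcal{I}$-shaped colimits, whose values exist in $\Alg(T)$ by hypothesis \ref{it:Algcolim}. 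Since $U_{T}$ also preserves $\mathcal{I}$-shaped colimits by \ref{it:Algcolim}, the composite $T = U_{T}F_{T}$ preserves $\mathcal{I}$-shaped colimits, which is \ref{it:TIcolim}.

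For \ref{it:TIcolim} $\Rightarrow$ \ref{it:Algcolim}, I would construct the colimit of a diagram $q \colon \mathcal{I} \to \Alg(T)$ by hand on underlying objects and then upgrade it. Set $c := \colim_{i \in \mathcal{I}} U_{T}q(i)$ in $\mathcal{C}$. Because $T$ preserves $\mathcal{I}$-shaped colimits we have $Tc \simeq \colim_{i \in \mathcal{I}} TU_{T}q(i)$, so the algebra structure maps $TU_{T}q(i) \to U_{T}q(i)$ assemble, after passing to the colimit, into a candidate structure map $a \colon Tc \to c$. Since $T$ and $T^{2}$ also preserve $\mathcal{I}$-shaped colimits, the unit and associativity data of the individual $q(i)$ can likewise be transported through the colimit to endow $(c,a)$ with the structure of a $T$-algebra; concretely I would produce this structure using the module-theoretic model of $\Alg(T)$ from \cite{HA}, in which algebras are modules over $T$ regarded as an associative algebra object of $\Fun(\mathcal{C}, \mathcal{C})$ under composition.

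It then remains to check that $A := (c,a)$ is the colimit of $q$ in $\Alg(T)$ and that $U_{T}$ preserves it; the latter is immediate, as $U_{T}A = c = \colim_{i} U_{T}q(i)$ by construction. For the universal property I would invoke the cobar (totalization) description of mapping spaces: for any algebra $B$, the space $\Map_{\Alg(T)}(A,B)$ is the limit over $[n] \in \Delta$ of the cosimplicial space $[n] \mapsto \Map_{\mathcal{C}}(T^{n}c, U_{T}B)$, with coface maps built from $a$, the unit, and the multiplication of $T$. Since each $T^{n}$ preserves $\mathcal{I}$-shaped colimits, $\Map_{\mathcal{C}}(T^{n}c, U_{T}B) \simeq \lim_{i \in \mathcal{I}^{\op}} \Map_{\mathcal{C}}(T^{n}U_{T}q(i), U_{T}B)$, and interchanging the totalization with the limit over $\mathcal{I}^{\op}$ (limits commute with limits) identifies $\Map_{\Alg(T)}(A,B)$ with $\lim_{i \in \mathcal{I}^{\op}} \Map_{\Alg(T)}(q(i), B)$, which is exactly the statement that $A$ is a colimit of $q$.

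I expect the main obstacle to be the coherent $\infty$-categorical construction of the $T$-algebra structure on $c$, together with the justification of the totalization formula for mapping spaces in $\Alg(T)$, rather than the formal interchange of limits in the last step. Both are cleanly handled through the module description of $\Alg(T)$ and the associated bar/cobar machinery of \cite{HA}, and this is in essence the route taken in \cite{HenryMeadows}.
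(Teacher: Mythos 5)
Your proposal is correct, and the two implications are decomposed exactly as in the paper; the easy direction is argued identically, via $T \simeq U_{T}F_{T}$ with $F_{T}$ a left adjoint. The difference lies in the nontrivial direction, for which the paper offers no argument of its own: it simply cites \cite{HenryMeadows}*{Lemma 6.6}, which is in turn an application of \cite{HA}*{Corollary 4.2.3.5} on colimits in \icats{} of modules over an algebra object (here $T$ regarded as an associative algebra in $\Fun(\mathcal{C},\mathcal{C})$ acting on $\mathcal{C}$ --- the same module-theoretic model you invoke). You reconstruct this result rather than cite it, and by a somewhat different internal route: Lurie's proof runs through operadic colimit diagrams in the fibration encoding the module structure, whereas you build the candidate algebra $(c,a)$ directly and verify its universal property via the cobar formula $\Map_{\Alg(T)}(A,B) \simeq \lim_{[n]\in\Delta}\Map_{\mathcal{C}}(T^{n}c,U_{T}B)$. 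That formula is legitimate: every $T$-algebra is the geometric realization of its bar resolution, and since that simplicial object is $U_{T}$-split, the relevant realizations exist in $\Alg(T)$ and are created by $U_{T}$ by monadicity (\cf{} \cite{HA}*{Proposition 4.7.3.14}), without presupposing the colimits whose existence is at stake. Two points would need care in a full write-up, though neither is a gap in strategy: first, as you yourself flag, the coherent $T$-algebra structure on $c$ is precisely what the module formalism must supply, so in effect you are redoing the content of \cite{HA}*{Corollary 4.2.3.5} at that step; second, your chain of equivalences must be shown to be implemented by the comparison map induced by the canonical cone $q \Rightarrow A$, not merely to produce an abstract equivalence of spaces --- this follows from functoriality of the bar resolution in the algebra variable, but should be said explicitly. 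What your route buys is a self-contained proof where the paper settles for a two-line citation; what the citation buys is avoiding exactly these coherence verifications.
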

\begin{proof}
  Since $T \simeq U_{T}F_{T}$ where $F_{T}$ is a left adjoint, the
  implication from \ref{it:Algcolim} to \ref{it:TIcolim} is
  immediate. The non-trivial direction is \cite{HenryMeadows}*{Lemma
    6.6}, which in turn is an application of \cite{HA}*{Corollary 4.2.3.5}.
\end{proof}

\begin{cor}\label{lem:algmndUsifted}
  A monad $T$ on $\mathcal{S}_{/X}$ is algebraic \IFF{} for the corresponding monadic
  adjunction
  \[ F_{T} \colon \mathcal{S}_{/X} \rightleftarrows \Alg(T) :\!
    U_{T},\]
  the \icat{} $\Alg(T)$ has sifted colimits and 
  the right adjoint $U_{T}$ preserves these. \qed
\end{cor}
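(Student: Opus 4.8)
The plan is to obtain this as an immediate specialization of \cref{propn:monadcolim}, applied with $\mathcal{C} = \mathcal{S}_{/X}$ and with $\mathcal{I}$ ranging over all sifted \icats{}. First I would check that the hypothesis of that proposition is met for every such shape: since $X$ is an \igpd{}, the slice $\mathcal{S}_{/X} \simeq \Fun(X, \mathcal{S})$ is presentable, hence cocomplete, so in particular it admits $\mathcal{I}$-shaped colimits for every sifted \icat{} $\mathcal{I}$.

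By definition $T$ is algebraic exactly when its underlying endofunctor preserves sifted colimits, equivalently when it preserves $\mathcal{I}$-shaped colimits for every sifted \icat{} $\mathcal{I}$. For a fixed such $\mathcal{I}$, the equivalence of \ref{it:TIcolim} and \ref{it:Algcolim} in \cref{propn:monadcolim} says that this holds \IFF{} $\Alg(T)$ has $\mathcal{I}$-shaped colimits and $U_{T}$ preserves them. Quantifying over all sifted $\mathcal{I}$ then yields precisely the assertion that $\Alg(T)$ has all sifted colimits and that $U_{T}$ preserves them, which is the claimed characterization.

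I expect no real obstacle here, since the entire content lies in \cref{propn:monadcolim}; the corollary merely records the sifted case. The only point worth spelling out is that both ``$T$ preserves sifted colimits'' and ``$\Alg(T)$ has all sifted colimits and $U_{T}$ preserves them'' are conditions that can be checked one diagram shape at a time, so that the pointwise application of the proposition over all sifted $\mathcal{I}$ assembles into the global biconditional without any further argument.
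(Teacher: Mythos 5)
Your proposal is correct and matches the paper's own treatment: the corollary carries a \qed in the paper precisely because it is the immediate specialization of \cref{propn:monadcolim} to sifted diagram shapes, using that $\mathcal{S}_{/X}$ is cocomplete and quantifying the proposition over all sifted $\mathcal{I}$, exactly as you do.
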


\begin{observation}\label{rmk:Alglocsmall}
  If $T$ is an algebraic monad on $\Fun(X, \mathcal{S})$, then
  $\Alg(T)$ is locally small. Indeed, if $T$ is any monad on a locally
  small \icat{} $\mathcal{C}$ then $\Alg(T)$ is locally small: any
  object $A \in \Alg(T)$ can be written as the simplicial colimit of a
  free resolution, which means that $\Map_{\Alg(T)}(A, B)$ is a
  cosimplicial limit of mapping spaces in $\mathcal{C}$ and so is small.
\end{observation}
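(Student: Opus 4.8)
The plan is to prove the more general statement, since $\Fun(X,\mathcal{S}) \simeq \mathcal{S}_{/X}$ is locally small for any $X \in \mathcal{S}$ and the algebraic hypothesis on $T$ plays no role here; so let $T$ be an arbitrary monad on a locally small \icat{} $\mathcal{C}$, with monadic adjunction $F_{T} \dashv U_{T}$. The crucial input is the monadic bar resolution: for every $A \in \Alg(T)$ the canonical augmented simplicial object $F_{T} T^{\bullet} U_{T} A \to A$ becomes split after applying $U_{T}$, so by the \icatl{} Barr--Beck--Lurie theorem \cite{HA}*{\S 4.7} its colimit exists in $\Alg(T)$ and recovers $A$. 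Thus every algebra is canonically the simplicial colimit (geometric realization) of a simplicial object whose terms $F_{T}(T^{n} U_{T} A)$ are all free algebras.

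First I would combine this resolution with the general fact that mapping spaces send colimits in the source to limits, which gives
\[ \Map_{\Alg(T)}(A, B) \simeq \lim_{n \in \Delta} \Map_{\Alg(T)}(F_{T}(T^{n}U_{T}A), B). \]
Next I would use the adjunction to rewrite each term: since $F_{T} \dashv U_{T}$,
\[ \Map_{\Alg(T)}(F_{T}(T^{n}U_{T}A), B) \simeq \Map_{\mathcal{C}}(T^{n}U_{T}A, U_{T}B), \]
which is a small space because $\mathcal{C}$ is locally small. Finally, $\Map_{\Alg(T)}(A,B)$ is then exhibited as a $\Delta$-indexed limit of small spaces; since $\Delta$ is a small indexing \icat{} and small spaces are closed under small limits, this limit is itself small, and hence $\Alg(T)$ is locally small.

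I expect the main obstacle to be the first step: producing the bar resolution and verifying that its colimit recovers $A$ inside $\Alg(T)$. This is exactly where monadicity enters---one must know that $F_{T} T^{\bullet} U_{T} A$ is $U_{T}$-split, so that the relevant geometric realization exists and is computed correctly---rather than anything special about $\mathcal{C}$ or $T$ beyond local smallness. Everything afterwards is formal: the colimit-to-limit formula for mapping spaces and the free--forgetful adjunction equivalence are standard, and the smallness conclusion uses only that the indexing category $\Delta$ is small.
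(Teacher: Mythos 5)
Your proposal is correct and follows exactly the argument the paper itself sketches: the observation's one-line proof is precisely your bar-resolution argument, writing $A$ as the geometric realization of the $U_{T}$-split simplicial object $F_{T}T^{\bullet}U_{T}A$ and then expressing $\Map_{\Alg(T)}(A,B)$ as a $\Delta$-indexed (cosimplicial) limit of small mapping spaces $\Map_{\mathcal{C}}(T^{n}U_{T}A, U_{T}B)$ via the adjunction. You have merely filled in the details (existence of the colimit of a $U_{T}$-split simplicial object in $\Alg(T)$ via \cite{HA}*{\S 4.7}, smallness of limits of small spaces over the small category $\Delta$) that the paper leaves implicit.
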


\begin{propn}\label{propn:ModisAlgMnd}
  Let $(\mathcal{L}, p \colon X \twoheadrightarrow \mathcal{L})$ be a Lawvere theory. Then:
  \begin{enumerate}[(i)]
  \item\label{it:Modpres} The \icat{} $\Mod_{\mathcal{L}}(\mathcal{S})$ is presentable,
    and the inclusion $\Mod_{\mathcal{L}}(\mathcal{S}) \hookrightarrow
    \Fun(\mathcal{L}, \mathcal{S})$ has a left adjoint.
  \item\label{it:p*MRA} The functor $p^{*}\colon
    \Mod_{\mathcal{L}}(\mathcal{S}) \to \Fun(X, \mathcal{S})$ is a
    monadic right adjoint.
  \item\label{it:TLalgc}  The corresponding monad $T_{\mathcal{L}}$
    is algebraic.
  \item\label{it:LYox} The left adjoint $L$ of $p^{*}$ satisfies
    \[ L \Yo_{X} x \simeq \Yo_{\mathcal{L}^{\op}}(px).\]
  \end{enumerate}
\end{propn}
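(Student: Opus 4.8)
The plan is to establish the four claims essentially in the order stated, leveraging the structure of presheaf categories and the universal property of $\mathcal{S}$ as the free sifted-cocompletion of $\xF$ (equivalently, as a presentable $\infty$-category).

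For \ref{it:Modpres}, I would first observe that $\Mod_{\mathcal{L}}(\mathcal{S})$ is the full subcategory of $\Fun(\mathcal{L},\mathcal{S})$ cut out by the requirement that a functor send the finite-product diagrams in $\mathcal{L}$ to product diagrams in $\mathcal{S}$. Since $\mathcal{L}$ has finite products and these are small-colimit conditions in the target, $\Mod_{\mathcal{L}}(\mathcal{S})$ is an accessible localization of the presheaf-type category $\Fun(\mathcal{L},\mathcal{S})$ (which is presentable because $\mathcal{L}$ is small — or at least essentially small, since every object is a finite product of objects in the image of $p$ from the $\infty$-groupoid $X$). The localization is given by forcing a small set of maps to be equivalences, so the inclusion admits a left adjoint $L$ and $\Mod_{\mathcal{L}}(\mathcal{S})$ is presentable. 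This is a standard argument and I expect it to be routine.

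For \ref{it:p*MRA}, the functor $p^{*}$ is restriction along $p\colon X\to\mathcal{L}$, followed by the identification $\Fun(X,\mathcal{S})\simeq\mathcal{S}_{/X}$ (using that $X$ is an $\infty$-groupoid). Since $\mathcal{L}$ is generated under finite products by the image of $p$, the essential-surjectivity hypothesis on $\overline{p}\colon\xF_{/X}^{\op}\to\mathcal{L}$ shows that $p^{*}$ is conservative: a natural transformation of models that is an equivalence on the generators $p(x)$ is an equivalence on all finite products of them, hence on all of $\mathcal{L}$. I would then exhibit $p^{*}$ as a right adjoint — its left adjoint is the composite of left Kan extension along $p$ with the localization $L$ from part \ref{it:Modpres} — and verify that $p^{*}$ preserves colimits that are computed pointwise in $\mathcal{S}_{/X}$, in particular sifted colimits and geometric realizations of $U_T$-split simplicial objects. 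With conservativity and the preservation of the relevant colimits in hand, the Barr–Beck–Lurie monadicity theorem (\cite{HA}*{Theorem 4.7.3.5}) gives that $p^{*}$ is monadic.

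For \ref{it:TLalgc}, I would invoke \cref{lem:algmndUsifted}: it suffices to show $\Mod_{\mathcal{L}}(\mathcal{S})$ has sifted colimits and that $p^{*}$ (the forgetful functor $U_{T_{\mathcal{L}}}$) preserves them. Sifted colimits exist because the category is presentable. The key point is that sifted colimits of product-preserving functors $\mathcal{L}\to\mathcal{S}$, computed pointwise, again preserve finite products: this is exactly the statement that in $\mathcal{S}$ finite products commute with sifted colimits. Hence the inclusion $\Mod_{\mathcal{L}}(\mathcal{S})\hookrightarrow\Fun(\mathcal{L},\mathcal{S})$ preserves and detects sifted colimits, and so does the subsequent restriction to $\Fun(X,\mathcal{S})$; therefore $p^{*}$ preserves sifted colimits, making $T_{\mathcal{L}}$ algebraic. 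Finally, for \ref{it:LYox}, I would compute $L$ on a representable. The left adjoint before localizing is left Kan extension $p_{!}$ along $p$, and since $p^{*}$ is the composite $\Fun(\mathcal{L},\mathcal{S})\to\Fun(X,\mathcal{S})$ with the model-inclusion, a Yoneda/adjunction chase identifies $L\,\Yo_X x$ with the product-preserving functor corepresented by $p(x)$, namely $\Yo_{\mathcal{L}^{\op}}(px)=\Map_{\mathcal{L}}(px,-)$ — which is indeed a model since mapping out of $px$ sends finite products in $\mathcal{L}$ to products of mapping spaces. \textbf{The main obstacle} I anticipate is bookkeeping the size and essential-smallness issues correctly (so that presentability genuinely applies) together with the careful verification in \ref{it:p*MRA} that the left Kan extension $p_{!}$, after localization, lands in models and is genuinely left adjoint to $p^{*}$; this adjunction juggling across the localization is where a clean argument must be given rather than asserted.
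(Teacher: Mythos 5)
Your proposal is correct and follows essentially the same route as the paper's proof: realizing $\Mod_{\mathcal{L}}(\mathcal{S})$ as an accessible localization of $\Fun(\mathcal{L},\mathcal{S})$ with left adjoint $L_{\txt{prod}}$, deducing monadicity of $p^{*}$ from conservativity (every object of $\mathcal{L}$ is a product of objects in the image of $p$) together with preservation of sifted colimits (pointwise sifted colimits of models are models, since finite products commute with sifted colimits in $\mathcal{S}$), and computing $L\Yo_{X}x \simeq L_{\txt{prod}}p_{!}\Yo_{X}x$ by noting that $\Yo_{\mathcal{L}^{\op}}(px) \simeq \Map_{\mathcal{L}}(px,\blank)$ already preserves products. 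The ``adjunction juggling across the localization'' you flag as the main obstacle is handled in the paper exactly as you suggest, so no further idea is needed.
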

\begin{proof}
  The \icat{} $\Mod_{\mathcal{L}}(\mathcal{S})$ can be defined as the
  full subcategory of $\Fun(\mathcal{L}, \mathcal{S})$ spanned by the
  objects that are local with respect to the set of maps of the form
  \[ \coprod_{i=1}^{n} \Yo_{\mathcal{L}^{\op}}(px_{i}) \to
    \Yo_{\mathcal{L}^{\op}}\left(\prod_{i=1}^{n} px_{i}\right) \]
  for $x_{1},\ldots,x_{n}$ in $X$. Thus $\Mod_{\mathcal{L}}(\mathcal{S})$ is an
  accessible localization of $\Fun(\mathcal{L}, \mathcal{S})$, which
  means the
  inclusion has a left adjoint $L_{\txt{prod}}$ and
  $\Mod_{\mathcal{L}}(\mathcal{S})$ is presentable. This proves
  \ref{it:Modpres}.

  If $p_{!} \colon
  \Fun(X, \mathcal{S}) \to \Fun(\mathcal{L}, \mathcal{S})$ denotes
  left Kan extension along $p$, then the composite
  $L_{\txt{prod}}p_{!}$ is a left adjoint to $p^{*}$. To prove that
  this is the monadic adjunction for an algebraic monad, 
  it suffices by the
  \icatl{} monadicity theorem, \cite[Theorem 4.7.3.5]{HA}, to prove
  that $p^{*}$ is conservative and preserves sifted colimits.

  To see that $p^{*}$ is conservative, observe that if $\eta \colon F
  \to G$ is a natural transformation between product-preserving
  functors $\mathcal{L} \to \mathcal{S}$, then since every object of
  $\mathcal{L}$ is a product of objects in the image of $p$, the
  component $\eta_{A}$ must be an equivalence for every $A \in
  \mathcal{L}$ if it is an equivalence for objects in the image of
  $p$, \ie{} if $p^{*}\eta$ is an equivalence.

  If we view $p^{*}$ as a functor $\Fun(\mathcal{L}, \mathcal{S}) \to \Fun(X,
  \mathcal{S})$, it preserves all colimits, so to show $p^{*}$
  preserves sifted colimits it suffices to check that
  $\Mod_{\mathcal{L}}(\mathcal{S})$ is closed under sifted colimits in
  $\Fun(\mathcal{L}, \mathcal{S})$. In other words, given a sifted
  diagram $\phi \colon \mathcal{I} \to \Fun(\mathcal{L},
  \mathcal{S})$ where $\phi(i)$ lies in
  $\Mod_{\mathcal{L}}(\mathcal{S})$, so does the colimit of
  $\phi$. This holds because colimits in $\Fun(\mathcal{L},
  \mathcal{S})$ are computed pointwise, and for a product $A \times B$
  in $\mathcal{L}$ we have
  \[ \colim_{i \in \mathcal{I}} \phi_{i}(A \times B) \isoto \colim_{i
      \in \mathcal{I}} \phi_{i}(A) \times \phi_{i}(B) \isoto (\colim_{i
      \in \mathcal{I}} \phi_{i}(A)) \times  (\colim_{i
      \in \mathcal{I}} \phi_{i}(B)),\]
  where the second equivalence holds since sifted colimits by
  definition commute with finite products in $\mathcal{S}$. This
  proves \ref{it:p*MRA} and \ref{it:TLalgc}

  It remains to prove \ref{it:LYox}. We saw that the left adjoint of
  $p^{*}$ is
  $L_{\txt{prod}}p_{!}$, and we know $p_{!}\Yo_{X}x \simeq
  \Yo_{\mathcal{L}^{\op}}p(x)$, so this amounts to checking that
  $\Yo_{\mathcal{L}^{\op}}p(x)$ already lies in the full subcategory
  $\Mod_{\mathcal{L}}(\mathcal{S})$. In other words, we must show that
  the functor
  \[ \Map_{\mathcal{L}}(p(x), \blank) \colon \mathcal{L} \to
    \mathcal{S}\] preserves finite products, which is obvious.
\end{proof}

\begin{defn}
  The appropriate notion of \emph{morphism of algebraic monads} for us
  is a lax morphism of monads, \ie{} a lax natural transformation $S
  \to T$ between algebraic monads, viewed as functors of
  $(\infty,2)$-categories from the universal monad 2-category to
  $\CATI$, the $(\infty,2)$-category of \icats{}. As shown in
  \cite{adjmnd}, such a lax morphism is equivalent to a commutative
  square
  \begin{equation}
    \label{eq:algmndmor}
    \begin{tikzcd}
      \Alg(T) \arrow{r}{F^{*}} \arrow{d}{U_{T}} & \Alg(T) \arrow{d}{U_{S}} \\
      \Fun(X, \mathcal{S}) \arrow{r}{f^{*}} & \Fun(Y,\mathcal{S}),
    \end{tikzcd}
  \end{equation}
  where in our case we want $f^{*}$ to be given by composition with a
  morphism $f \colon Y \to X$ of \igpds{}. We can thus define an
  \icat{} $\AlgMnd^{\op}$ of algebraic monads as the full subcategory
  of the pullback of $\ev_{1}\colon \Fun([1], \LCatI) \to \LCatI$
  along
  $\Fun(\blank, \mathcal{S}) \colon \mathcal{S}^{\op} \to \LCatI$
  whose objects are the monadic right adjoints corresponding to
  algebraic monads. Equivalently, by \cref{lem:algmndUsifted} these
  are the right adjoints that are conservative and preserve sifted
  colimits.
\end{defn}

\begin{observation}\label{rmk:AlgMorpressifted}
  Given a morphism of algebraic monads as in \cref{eq:algmndmor}, the
  functor $F^{*}$ automatically preserves limits and sifted colimits:
  since $U_{S}$ is conservative and preserves limits and sifted
  colimits, it suffices to show that $U_{S}F^{*}$ preserves these, but
  this is by assumption equivalent to $f^{*}U_{T}$, which preserves
  them since both $U_{T}$ and $f^{*}$ do so.
\end{observation}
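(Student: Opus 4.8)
The plan is to exploit the commutative square \cref{eq:algmndmor} together with the good properties of the monadic functor $U_{S}$. The key observation is that $U_{S}$ is \emph{conservative} and, since $S$ is algebraic, preserves both limits and sifted colimits: limits because $U_{S}$ is a right adjoint, and sifted colimits by \cref{lem:algmndUsifted}. Because $\Alg(S)$ has all limits (being monadic over the complete \icat{} $\Fun(Y,\mathcal{S})$) and all sifted colimits (again by \cref{lem:algmndUsifted}), a conservative functor that preserves these limits and sifted colimits also \emph{reflects} them, and hence detects when a cone is (co)limiting. Consequently a functor landing in $\Alg(S)$ preserves limits, respectively sifted colimits, \IFF{} its composite with $U_{S}$ does.

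First I would reduce the claim for $F^{*}$ to the corresponding claim for $U_{S}F^{*}$ via this detection principle. Then I would rewrite $U_{S}F^{*} \simeq f^{*}U_{T}$ using the commutativity of \cref{eq:algmndmor}. It then remains to check that this right-hand composite preserves limits and sifted colimits. Here $f^{*}$ is restriction along $f \colon Y \to X$, which is computed pointwise and therefore preserves all limits and colimits; and $U_{T}$ preserves limits (as a right adjoint) and sifted colimits (as $T$ is algebraic, once more by \cref{lem:algmndUsifted}). The composite $f^{*}U_{T}$ thus preserves limits and sifted colimits, which completes the argument.

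The only genuinely delicate point is the detection principle invoked in the first step: one must ensure that the relevant limits and sifted colimits actually exist in $\Alg(S)$ before concluding that $U_{S}$ reflects them. Existence of limits follows from monadicity over the complete \icat{} $\Fun(Y,\mathcal{S})$, while existence of sifted colimits is exactly the content of \cref{lem:algmndUsifted} applied to the algebraic monad $S$. With both in hand, the conservativity of $U_{S}$ upgrades preservation to reflection, and the remaining verifications for $f^{*}$ and $U_{T}$ are routine.
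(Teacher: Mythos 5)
Your proposal is correct and follows essentially the same route as the paper: reduce preservation by $F^{*}$ to preservation by $U_{S}F^{*}$ using that $U_{S}$ is conservative and preserves limits and sifted colimits, then rewrite $U_{S}F^{*} \simeq f^{*}U_{T}$ via \cref{eq:algmndmor} and conclude since $f^{*}$ and $U_{T}$ preserve both. The only difference is that you spell out the detection principle (existence of limits and sifted colimits in $\Alg(S)$, via monadicity and \cref{lem:algmndUsifted}, so that conservativity upgrades preservation to reflection), which the paper leaves implicit; this is a worthwhile clarification rather than a deviation.
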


\begin{observation}\label{rmk:Mdfunctor}
  Given a morphism of Lawvere theories
    \[
    \begin{tikzcd}
      X \arrow{r}{f} \arrow{d}{p} & X' \arrow{d}{p'} \\
      \mathcal{L} \arrow{r}{F} & \mathcal{L}',
    \end{tikzcd}
  \]
  composition with $F$ preserves product-preserving functors, and so
  restricts to a functor $F^{*}\colon \Mod_{\mathcal{L}'}(\mathcal{S})
  \to \Mod_{\mathcal{L}}(\mathcal{S})$. This fits in a commutative
  square
  \[
    \begin{tikzcd}
      \Mod_{\mathcal{L}'}(\mathcal{S}) \arrow{r}{F^{*}} \arrow{d}{p'^{*}} & \Mod_{\mathcal{L}}(\mathcal{S}) \arrow{d}{p^{*}} \\
      \Fun(X', \mathcal{S}) \arrow{r}{f^{*}} & \Fun(X,\mathcal{S}),
    \end{tikzcd}
  \]
  which is a morphism of algebraic monads $T_{\mathcal{L}} \to
  T_{\mathcal{L}'}$ by \cref{propn:ModisAlgMnd}. We thus have a
  functor $\Md \colon \Lawv \to \AlgMnd$ that takes a
  Lawvere theory to its \icat{} of models in $\mathcal{S}$, viewed as
  an algebraic monad via the given map from an \igpd{}.
\end{observation}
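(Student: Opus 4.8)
The plan is to realize $\Md$ (or rather $\Md^{\op}\colon \Lawv^{\op} \to \AlgMnd^{\op}$, to match the definition of $\AlgMnd$ through $\AlgMnd^{\op}$) as a restriction of the functor obtained by applying $\Fun(\blank, \mathcal{S})$ arrow-wise, and then to cut the source of each resulting arrow down from $\Fun(\mathcal{L}, \mathcal{S})$ to $\Mod_{\mathcal{L}}(\mathcal{S})$. Since $\Lawv$ is a subcategory of $\Fun([1], \CatI)$ in which a morphism has its $\mathcal{L}$-component $F$ product-preserving and its $X$-component a map of \igpds{}, evaluation at the endpoints gives functors $\ev_{0}\colon \Lawv \to \mathcal{S}$, $(p\colon X \to \mathcal{L}) \mapsto X$, and $\ev_{1}\colon \Lawv \to \CatI^{\times}$, $(p\colon X \to \mathcal{L}) \mapsto \mathcal{L}$, where $\CatI^{\times} \subseteq \CatI$ denotes the (non-full) subcategory of \icats{} with finite products and product-preserving functors between them.

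First I would apply $\Fun(\blank, \mathcal{S})\colon \CatI^{\op} \to \LCatI$ arrow-wise: using $[1] \simeq [1]^{\op}$ this yields a functor $\Fun([1], \CatI)^{\op} \to \Fun([1], \LCatI)$ sending $(p\colon X \to \mathcal{L})$ to $(p^{*}\colon \Fun(\mathcal{L}, \mathcal{S}) \to \Fun(X, \mathcal{S}))$. Restricting along $\Lawv^{\op} \hookrightarrow \Fun([1], \CatI)^{\op}$ and observing that the $\ev_{1}$ of the composite is $(p) \mapsto \Fun(X, \mathcal{S})$, which factors through $\Fun(\blank, \mathcal{S}) \circ \ev_{0}^{\op}$, one lands in the pullback $\Fun([1], \LCatI) \times_{\LCatI} \mathcal{S}^{\op}$ defining $\AlgMnd^{\op}$. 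To corestrict the source of each arrow to models, I would use that $\Mod_{(\blank)}(\mathcal{S}) = \Fun^{\times}(\blank, \mathcal{S})$ is itself a functor $(\CatI^{\times})^{\op} \to \LCatI$ --- this is exactly the first assertion of the observation, that precomposition with a product-preserving $F$ carries product-preserving functors to product-preserving functors --- equipped with a natural full-subcategory inclusion into $\Fun(\blank, \mathcal{S})|_{(\CatI^{\times})^{\op}}$. Composing with $\ev_{1}^{\op}$ gives $(p) \mapsto \Mod_{\mathcal{L}}(\mathcal{S})$ together with a natural map to $(p) \mapsto \Fun(\mathcal{L}, \mathcal{S})$, and restricting the arrow $p^{*}$ along this inclusion produces the refined functor $(p) \mapsto (p^{*}\colon \Mod_{\mathcal{L}}(\mathcal{S}) \to \Fun(X, \mathcal{S}))$, whose value on a morphism of Lawvere theories is precisely the commutative square of the observation.

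Finally, \cref{propn:ModisAlgMnd} shows that each arrow $p^{*}\colon \Mod_{\mathcal{L}}(\mathcal{S}) \to \Fun(X, \mathcal{S})$ is a monadic right adjoint corresponding to an algebraic monad, so the functor just built lands in the full subcategory $\AlgMnd^{\op}$ of the pullback; passing to opposite categories then yields $\Md\colon \Lawv \to \AlgMnd$. I expect the main obstacle to be the coherent corestriction to models in the middle step: the two ends of the arrow $p\colon X \to \mathcal{L}$ carry different functorialities --- the $X$-end uses all of $\Fun(\blank, \mathcal{S})$, while the $\mathcal{L}$-end must be cut down to $\Fun^{\times}(\blank, \mathcal{S})$ --- so the restriction $p^{*}|_{\Mod_{\mathcal{L}}(\mathcal{S})}$ has to be assembled as a genuine natural transformation of functors on $\Lawv^{\op}$ rather than merely checked object by object. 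Making this precise is exactly what upgrades the object-and-morphism bookkeeping of the observation to an actual functor of \icats{}.
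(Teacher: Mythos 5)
Your proposal is correct and takes essentially the same route as the paper, which records this statement as an observation: the square is obtained by applying $\Fun(\blank, \mathcal{S})$ arrowwise to the defining square of a morphism of Lawvere theories, corestricted to models using that $F^{*}$ preserves product-preserving functors, with \cref{propn:ModisAlgMnd} guaranteeing that one lands in the full subcategory $\AlgMnd^{\op}$ of the defining pullback. Your explicit assembly of $\Mod_{(\blank)}(\mathcal{S}) = \Fun^{\times}(\blank,\mathcal{S})$ as a subfunctor of $\Fun(\blank,\mathcal{S})$ over the subcategory of \icats{} with finite products and product-preserving functors, so that the corestriction of $p^{*}$ is a genuine natural transformation rather than an objectwise construction, correctly supplies the coherence that the paper leaves implicit.
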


\subsection{Lawvere theories from algebraic monads}\label{sec:theoryfrommonad}
Our next goal is to prove that \emph{every} algebraic monad arises
from a Lawvere theory via the following construction:
\begin{defn}
  If $T$ is an algebraic monad on $\Fun(X,\mathcal{S})$, then the
  \emph{Lawvere theory $\Th(T)$ of $T$} is the pair
  $(\mathcal{L}(T), p_{T})$ where $\mathcal{L}(T)^{\op}$ is the full
  subcategory of $\Alg(T)$ spanned by the objects of the form
  $\coprod_{i=1}^{n}F_{T}\Yo_{X} x_{i}$ for $x_{i}$ in $X$, together
  with the map $p_{T} \colon X \to \mathcal{L}(T)$ obtained by restricting
  $F_{T}$.
\end{defn}

\begin{notation}
  It will be convenient to write $(x_{1},\ldots,x_{n})$ for the
  coproduct $\coprod_{i=1}^{n} \Yo_{X}(x)$ in $\Fun(X, \mathcal{S})$,
  and $F_{T}(x_{1},\ldots,x_{n})$ for its image
  $F_{T}(\coprod_{i=1}^{n} \Yo_{X}(x_{i})) \simeq \coprod_{i=1}^{n}
  F_{T}\Yo_{X} x_{i}$ in $\mathcal{L}(T)^{\op}$.
\end{notation}

\begin{lemma}\label{lem:ThTL}
  If $(\mathcal{L}, p)$ is a Lawvere theory, then the Yoneda embedding
  for $\mathcal{L}^{\op}$ factors through a fully faithful functor
  \[ \mathcal{L}^{\op} \hookrightarrow
    \Mod_{\mathcal{L}}(\mathcal{S}) \]
  whose image is precisely $\mathcal{L}(T_{\mathcal{L}})$. This gives
  an equivalence of Lawvere theories
  \[ (\mathcal{L}, p) \simeq \Th(T_{\mathcal{L}}).\]
\end{lemma}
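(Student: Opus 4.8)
The plan is to exhibit the factored Yoneda embedding as the desired equivalence and to read off its image. First I would note that for every object $A \in \mathcal{L}$ the representable functor $\Yo_{\mathcal{L}^{\op}}(A) \simeq \Map_{\mathcal{L}}(A, \blank)$ preserves all limits, in particular finite products, so it is a model; hence $\Yo_{\mathcal{L}^{\op}}$ factors through a functor $j \colon \mathcal{L}^{\op} \to \Mod_{\mathcal{L}}(\mathcal{S})$. Since $\Mod_{\mathcal{L}}(\mathcal{S})$ is a full subcategory of $\Fun(\mathcal{L}, \mathcal{S})$ and the Yoneda embedding is fully faithful, $j$ is fully faithful.

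The key computation is that $j$ sends finite products in $\mathcal{L}$ to coproducts in $\Mod_{\mathcal{L}}(\mathcal{S})$. I would prove this by comparing mapping spaces rather than computing colimits directly. For a model $M$ the Yoneda lemma gives $\Map_{\Mod_{\mathcal{L}}(\mathcal{S})}(j(A), M) \simeq M(A)$, and therefore
\[ \Map(\textstyle\coprod_{i} j(A_{i}), M) \simeq \prod_{i} \Map(j(A_{i}),M) \simeq \prod_{i} M(A_{i}) \simeq M(\textstyle\prod_{i} A_{i}) \simeq \Map(j(\textstyle\prod_{i} A_{i}), M),\]
where the middle equivalence uses that $M$ preserves finite products; by the Yoneda lemma for $\Mod_{\mathcal{L}}(\mathcal{S})$ this yields $j(\prod_{i} A_{i}) \simeq \coprod_{i} j(A_{i})$.

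With this in hand I would identify the image of $j$ with the full subcategory $\mathcal{L}(T_{\mathcal{L}})^{\op} \subseteq \Alg(T_{\mathcal{L}})$ that defines $\mathcal{L}(T_{\mathcal{L}})$. Under the monadic equivalence $\Mod_{\mathcal{L}}(\mathcal{S}) \simeq \Alg(T_{\mathcal{L}})$ from \cref{propn:ModisAlgMnd}, the free functor $F_{T_{\mathcal{L}}}$ is the left adjoint $L$, which preserves coproducts and satisfies $L\Yo_{X}x \simeq \Yo_{\mathcal{L}^{\op}}(px) = j(px)$ by part \ref{it:LYox}. Hence the generating objects of $\mathcal{L}(T_{\mathcal{L}})^{\op}$ are
\[ \textstyle\coprod_{i=1}^{n} F_{T_{\mathcal{L}}}\Yo_{X}x_{i} \simeq \coprod_{i=1}^{n} j(px_{i}) \simeq j\bigl(\textstyle\prod_{i=1}^{n} px_{i}\bigr),\]
using the previous step. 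Because $(\mathcal{L}, p)$ is a Lawvere theory, every object of $\mathcal{L}$ has the form $\prod_{i} px_{i}$, so these are exactly the objects in the image of $j$; thus $j$ corestricts to an equivalence $\mathcal{L}^{\op} \isoto \mathcal{L}(T_{\mathcal{L}})^{\op}$, that is $\mathcal{L} \simeq \mathcal{L}(T_{\mathcal{L}})$. Finally, since $p_{T_{\mathcal{L}}}$ is the restriction of $F_{T_{\mathcal{L}}}$ along $\Yo_{X}$ and $j(px) \simeq F_{T_{\mathcal{L}}}\Yo_{X}x = p_{T_{\mathcal{L}}}(x)$, this equivalence carries $p$ to $p_{T_{\mathcal{L}}}$, giving the asserted equivalence of Lawvere theories $(\mathcal{L}, p) \simeq \Th(T_{\mathcal{L}})$.

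I expect the only genuinely delicate point to be the coproduct computation in the second step: colimits in $\Mod_{\mathcal{L}}(\mathcal{S})$ are computed by applying the localization $L_{\txt{prod}}$ and are therefore \emph{not} pointwise, so one cannot argue objectwise. The mapping-space argument above sidesteps this, reducing everything to the defining property that models preserve finite products together with the Yoneda lemma. The remainder is bookkeeping with opposite categories and with the identifications $F_{T_{\mathcal{L}}} \simeq L$ and $L\Yo_{X} \simeq \Yo_{\mathcal{L}^{\op}}p$.
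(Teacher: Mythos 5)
Your proposal is correct and takes essentially the same route as the paper's proof: both factor the Yoneda embedding through the models, invoke the natural equivalence $L\Yo_{X} \simeq \Yo_{\mathcal{L}^{\op}}p$ from \cref{propn:ModisAlgMnd}, and establish $\coprod_{i} L\Yo_{X}(x_{i}) \simeq \Yo_{\mathcal{L}^{\op}}\left(\prod_{i} px_{i}\right)$ by exactly the same Yoneda-style computation of mapping spaces into an arbitrary model, before matching up the pinnings via the restriction of $F_{T_{\mathcal{L}}}$. Your closing remark correctly identifies the one delicate point --- colimits in $\Mod_{\mathcal{L}}(\mathcal{S})$ are not pointwise --- and your mapping-space argument circumvents it in the same way the paper does.
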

\begin{proof}
  First observe that for $X \in \mathcal{L}$, the copresheaf
  \[ \Yo_{\mathcal{L}^{\op}}X \simeq \Map_{\mathcal{L}}(X, \blank) \]
  clearly preserves products, so that $\Yo_{\mathcal{L}^{\op}}$
  factors through the full subcategory
  $\Mod_{\mathcal{L}}(\mathcal{S})$ of $\Fun(\mathcal{L}, \mathcal{S})$.

  If $L$ denotes the left adjoint to $p^{*} \colon \Mod_{\mathcal{L}}(\mathcal{S}) \to \Fun(X,  \mathcal{S})$, then we saw in
  \cref{propn:ModisAlgMnd} that we have
  \begin{equation}
    \label{eq:LYoX}
   L \Yo_{X}(x) \simeq \Yo_{\mathcal{L}^{\op}}(px).  
  \end{equation}
  Then $L(\coprod_{i=1}^{n} \Yo_{X}(x_{i})) \simeq \coprod_{i=1}^{n}
  \Yo_{\mathcal{L}^{\op}}(px)$, giving natural equivalences
  \[
    \begin{split}
    \Map_{\Mod_{\mathcal{L}}(\mathcal{S})}\left(L\left(\coprod_{i=1}^{n}
  \Yo_{X}(x_{i})\right), \Phi\right) & \simeq \Map_{\Fun(\mathcal{L},
    \mathcal{S})}\left(\coprod_{i=1}^{n} \Yo_{\mathcal{L}^{\op}}(px_{i}), \Phi\right)
  \simeq \prod_{i=1}^{n} \Phi(px_{i}) \\
  & \simeq \Phi\left(\prod_{i=1}^{n}px_{i}\right) \\
  & \simeq
  \Map_{\Mod_{\mathcal{L}}(\mathcal{S})}\left(\Yo_{\mathcal{L}^{\op}}\left(\prod_{i=1}^{n}px_{i}\right),
  \Phi\right)
\end{split}
\]
for any $\Phi \in \Mod_{\mathcal{L}}(\mathcal{S})$. From the Yoneda
Lemma we then have an equivalence
\[ L(\coprod_{i=1}^{n}
  \Yo_{X}(x_{i})) \simeq
  \Yo_{\mathcal{L}^{\op}}(\prod_{i=1}^{n}px_{i}).\]
  By definition, these are precisely the objects that lie in the full
  subcategory $\mathcal{L}(T_{\mathcal{L}})^{\op}$. On the other hand, by assumption
  every object in $\mathcal{L}$ is a product of objects in the image
  of $p$, so these are also precisely the objects in the image of
  $\mathcal{L}^{\op}$. The image of $\mathcal{L}^{\op}$ is thus
  $\mathcal{L}(T_{\mathcal{L}})^{\op}$, as required.

  The equivalence \cref{eq:LYoX} can be rephrased as giving a
  commutative square
  \[
    \begin{tikzcd}
      X \arrow{d}[swap]{p^{\op}} \arrow[hookrightarrow]{r}{\Yo_{X}} &
      \Fun(X, \mathcal{S}) \arrow{d}{L} \\
      \mathcal{L}^{\op}
      \arrow[hookrightarrow]{r}{\Yo_{\mathcal{L}^{\op}}} &
      \Mod_{\mathcal{L}}(\mathcal{S}).
    \end{tikzcd}
  \]
  This restricts to a commutative triangle
  \[
    \begin{tikzcd}
    {} & X \arrow{dl}[swap]{p^{\op}} \arrow{dr}{p_{T}^{\op}} \\
    \mathcal{L}^{\op} \arrow{rr}{\sim} & & \mathcal{L}(T_{\mathcal{L}})^{\op},
  \end{tikzcd}
  \]
  which gives the required equivalence of Lawvere theories.
\end{proof}

\begin{propn}\label{propn:AlgMndisMod} 
  Let $T$ be an algebraic monad on $X$. Then the fully faithful
  inclusion $\mathcal{L}(T)^{\op} \hookrightarrow
  \Alg(T)$ induces a commutative triangle
  \[
    \begin{tikzcd}
      \Alg(T) \arrow{rr}{\nu} \arrow{dr}[swap]{U_{T}} & &
      \Mod_{\mathcal{L}(T)}(\mathcal{S}) \arrow{dl}{p^{*}} \\
       & \Fun(X, \mathcal{S}),
    \end{tikzcd}
  \]
  where $\nu$ arises from the Yoneda embedding of $\Alg(T)$ restricted
  to $\mathcal{L}(T)^{\op}$ and
    $p$ is the induced map $X \to
  \mathcal{L}(T)$. Moreover, the functor $\nu$ is an equivalence.
\end{propn}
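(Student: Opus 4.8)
The plan is to realize both $\Alg(T)$ and $\Mod_{\mathcal{L}(T)}(\mathcal{S})$ as the sifted-colimit completion of the common full subcategory $\mathcal{L}(T)^{\op}$, and then to check that $\nu$ implements the identity on this subcategory. First I would construct the triangle. The restricted Yoneda embedding sends $A \in \Alg(T)$ to the functor $\Map_{\Alg(T)}(-,A)|_{\mathcal{L}(T)^{\op}} \colon \mathcal{L}(T) \to \mathcal{S}$. Since the objects of $\mathcal{L}(T)^{\op}$ are the free algebras $F_{T}(x_{1},\dots,x_{n}) \simeq \coprod_{i} F_{T}\Yo_{X} x_{i}$ and $\Map_{\Alg(T)}(-,A)$ carries coproducts to products, $\nu(A)$ preserves finite products, \ie{} lies in $\Mod_{\mathcal{L}(T)}(\mathcal{S})$; here one uses the description of products in $\mathcal{L}(T)$ as coproducts in $\mathcal{L}(T)^{\op}\subseteq\Alg(T)$. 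The triangle $p^{*}\nu \simeq U_{T}$ is then the computation $\nu(A)(px) = \Map_{\Alg(T)}(F_{T}\Yo_{X} x, A) \simeq \Map_{\Fun(X,\mathcal{S})}(\Yo_{X} x, U_{T} A) \simeq (U_{T} A)(x)$, via the free--forgetful adjunction.

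Next I would show that $\nu$ preserves sifted colimits. Since $T$ is algebraic, $U_{T}$ preserves sifted colimits by \cref{lem:algmndUsifted}; likewise $p^{*}$ preserves sifted colimits and is conservative by \cref{propn:ModisAlgMnd}. Given a sifted diagram in $\Alg(T)$, applying $p^{*}$ to the canonical comparison map $\colim \nu(-) \to \nu(\colim(-))$ yields an equivalence, because $p^{*}\nu \simeq U_{T}$ and both $U_{T}$ and $p^{*}$ preserve sifted colimits; conservativity of $p^{*}$ then forces the comparison map itself to be an equivalence.

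Finally, I would identify the generators. On $\mathcal{L}(T)^{\op}$ the functor $\nu$ restricts to the Yoneda embedding of $\mathcal{L}(T)^{\op}$, which is fully faithful; by \cref{lem:ThTL} applied to the Lawvere theory $\Th(T) = (\mathcal{L}(T),p_{T})$, its essential image is exactly the representable models $\Yo_{\mathcal{L}(T)^{\op}}(c)$, which are the free models $L(\coprod_{i}\Yo_{X} x_{i})$ by the computation $L\Yo_{X} x \simeq \Yo_{\mathcal{L}(T)^{\op}}(px)$ in \cref{propn:ModisAlgMnd}. Thus $\nu|_{\mathcal{L}(T)^{\op}}$ is an equivalence onto the representable models. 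Now $\Fun(X,\mathcal{S}) \simeq \PSh_{\Sigma}(\xF_{/X})$ is generated under sifted colimits by the compact projective objects $(x_{1},\dots,x_{n})$; since $F_{T}$ preserves colimits and every algebra is the geometric realization of its bar resolution, $\Alg(T)$ is generated under sifted colimits by $\mathcal{L}(T)^{\op}$, \ie{} $\Alg(T) \simeq \PSh_{\Sigma}(\mathcal{L}(T)^{\op})$. On the other side, product-preserving functors to $\mathcal{S}$ form the sifted cocompletion, so $\Mod_{\mathcal{L}(T)}(\mathcal{S}) \simeq \PSh_{\Sigma}(\mathcal{L}(T)^{\op})$ as well. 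Because $\nu$ preserves sifted colimits and restricts to the identity of $\mathcal{L}(T)^{\op}$ under these identifications, the universal property of $\PSh_{\Sigma}$ identifies $\nu$ with the induced equivalence.

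I expect the main obstacle to be this last step: rigorously establishing that the free algebras $F_{T}(x_{1},\dots,x_{n})$ are compact projective generators of $\Alg(T)$ (so that $\Alg(T) \simeq \PSh_{\Sigma}(\mathcal{L}(T)^{\op})$), and matching the essential image of $\nu|_{\mathcal{L}(T)^{\op}}$ precisely with the generating subcategory of representable models. A perhaps more self-contained alternative is to argue directly with monads: the functor $\nu$ over $\Fun(X,\mathcal{S})$ together with the equivalence $\nu F_{T} \simeq L$ (which follows from the generator computation, since both sides preserve sifted colimits and agree on $\xF_{/X}$) exhibits the induced map of monads $T_{\mathcal{L}(T)} \to T$ as an equivalence, whence $\nu$ is an equivalence on algebras.
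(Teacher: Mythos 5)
Your proposal is correct, and its main route is genuinely different from the paper's. You construct the triangle and the equivalence $p^{*}\nu \simeq U_{T}$ exactly as the paper does, but you then finish by realizing both $\Alg(T)$ and $\Mod_{\mathcal{L}(T)}(\mathcal{S})$ as the sifted cocompletion $\mathcal{P}_{\Sigma}(\mathcal{L}(T)^{\op})$ and invoking the universal property: since $\nu$ preserves sifted colimits (your conservativity argument via $p^{*}\nu \simeq U_{T}$ is sound) and restricts on $\mathcal{L}(T)^{\op}$ to the Yoneda embedding (automatic, as the restriction of $\Map_{\Alg(T)}(-,-)$ to a full subcategory in both variables), it must agree with the canonical equivalence. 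The facts you flag as the ``main obstacle'' are in fact routine: compact projectivity of $F_{T}(x_{1},\ldots,x_{n})$ follows from $\Map_{\Alg(T)}(F_{T}(x_{1},\ldots,x_{n}),-) \simeq \prod_{i}\ev_{x_{i}}\circ U_{T}$ together with $U_{T}$ preserving sifted colimits, and generation follows from bar resolutions plus $F_{T}$ preserving colimits, so \cite{HTT}*{\S 5.5.8} applies. The paper instead keeps everything inside monad theory: it applies \cite{HA}*{Corollary 4.7.3.16} to the two monadic right adjoints and must then verify that the \emph{canonical} transformation $L \to \nu F_{T}$ is an equivalence; the bulk of its proof is the delicate point-chasing showing that the canonical map $Lx \to \nu F_{T}x$ corresponds to $\id_{F_{T}x}$ under the adjunction equivalences, so that it coincides with the Yoneda-derived equivalence. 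Your approach buys a shorter, more conceptual proof that sidesteps tracking any specific comparison map (note that the paper's remark after the proposition records the $\mathcal{P}_{\Sigma}$ identification as a \emph{consequence}, whereas you use it as the engine); the paper's approach buys self-containedness relative to its own lemmas and exhibits the morphism of monads explicitly. One caution about your closing ``alternative'': deducing $\nu F_{T} \simeq L$ merely because ``both sides preserve sifted colimits and agree on $\xF_{/X}$'' is not enough --- abstract agreement of values on generators does not make the canonical natural transformation an equivalence (that is precisely the subtlety the paper's proof labours over) --- but since your primary route does not rely on this, the proposal stands.
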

\begin{proof}
  By \cref{rmk:Alglocsmall} the \icat{} $\Alg(T)$ is locally small, so
  we have a Yoneda embedding
  $\Alg(T) \hookrightarrow \Fun(\Alg(T)^{\op},
  \mathcal{S})$. Composing this with the inclusion
  $\mathcal{L}(T)^{\op} \hookrightarrow \Alg(T)$, we get a restricted
  Yoneda functor
  \[ \nu \colon \Alg(T) \to \Fun(\mathcal{L}(T),
    \mathcal{S}).\]
  This copresheaf takes $A \in \Alg(T)$ to the copresheaf
  \[ F_{T}(x_{1},\ldots,x_{n}) \in \mathcal{L}(T) \quad \mapsto\quad
    \Map_{\Alg(T)}(F_{T}(x_{1},\ldots,x_{n}),
    A) \simeq \prod_{i=1}^{n} U_{T}A(x_{i}).\]
  This takes finite products in $\mathcal{L}(T)$ to
  products in $\mathcal{S}$, since the former correspond to coproducts
  in $\Alg(T)$, and so factors through the full
  subcategory $\Mod_{\mathcal{L}(T)}(\mathcal{S})$.  Note also that we have a natural
  equivalence
  \begin{equation}
    \label{eq:nuFTeq}
    \nu F_{T}(x_{1},\ldots,x_{n}) \simeq \Yo_{\mathcal{L}(T)^{\op}}
    F_{T}(x_{1},\ldots,x_{n}),
  \end{equation}
  since by \cref{lem:ThTL} we know that
  $\Yo_{\mathcal{L}(T)^{\op}}\Lambda$ lies in
  $\Mod_{\mathcal{L}(T)}(\mathcal{S}) $ for any object 
  $\Lambda \simeq
 F_{T}(y_{1},\ldots,y_{m})$ in $\mathcal{L}(T)$, and so we have
 natural equivalences
  \[
    \begin{split}
    \Map_{\Mod_{\mathcal{L}(T)}(\mathcal{S})}(\Yo_{\mathcal{L}(T)^{\op}}\Lambda,
  \nu F_{T}(x_{1},\ldots,x_{n}))
  & \simeq (\nu F_{T}(x_{1},\ldots,x_{n}))(\Lambda) \\
  & \simeq \Map_{\Alg(T)}(\Lambda,
    F_{T}(x_{1},\ldots,x_{n})) \\
      & \simeq \Map_{\mathcal{L}(T)}(\Lambda, F_{T}(x_{1},\ldots,x_{n})) \\
  & \simeq \Map_{\Mod_{\mathcal{L}(T)}(\mathcal{S})}(\Yo_{\mathcal{L}(T)^{\op}} \Lambda,
   \Yo_{\mathcal{L}(T)^{\op}} F_{T}(x_{1},\ldots,x_{n})).
    \end{split}
  \]

  Now for $A \in \Alg(T)$ we have natural equivalences
  \begin{equation}
    \label{eq:p*nuisU}
   \Map_{\Fun(X,\mathcal{S})}(\Yo_{X}x, p^{*}\nu A) \simeq (\nu A)(px) \simeq
    \Map_{\Alg(T)}(F_{T}x, A) \simeq
    \Map_{\Fun(X,\mathcal{S})}(\Yo_{X}x, U_{T}A),
  \end{equation}
  and so by the Yoneda lemma there is a natural equivalence $p^{*}\nu
  \simeq U_{T}$. This means that we have the required commutative triangle
  \[
    \begin{tikzcd}
      \Alg(T) \arrow{rr}{\nu} \arrow{dr}[swap]{U_{T}} & &
      \Mod_{\mathcal{L}(T)}(\mathcal{S})\arrow{dl}{p^{*}} \\
       & \Fun(X, \mathcal{S}).
    \end{tikzcd}
  \]
  Here both the diagonal functors are monadic right adjoints by
  \cref{propn:ModisAlgMnd}, so to
  prove that the horizontal functor is an equivalence it suffices by
  \cite{HA}*{Corollary 4.7.3.16} to check that the induced transformation
  \[ L \to \nu F_{T} \]
  is an equivalence, where $L$ denotes the left adjoint $\Fun(X,
  \mathcal{S}) \to \Mod_{\mathcal{L}(T)}(\mathcal{S})$ to
  $p^{*}$. Since $p^{*}$
  is conservative, this is equivalent to the transformation of
  endofunctors
  \[ p^{*}L \to p^{*}\nu F_{T} \simeq T \] being an equivalence. Note
  that here $p^{*}$ preserves sifted colimits, since sifted colimits
  in $\Mod_{\mathcal{L}(T)}(\mathcal{S})$ are computed in
  $\Fun(\mathcal{L}(T), \mathcal{S})$, while $T$ preserves sifted
  colimits since it is algebraic. Moreover, every object of
  $\Fun(X, \mathcal{S})$ is a sifted colimit of objects of the form
  $(x_{1},\ldots,x_{n}) := \coprod_{i=1}^{n} \Yo_{X}(x_{i})$ for
  $x_{i} \in X$, so it suffices to check that we have an equivalence for such
  objects.

  We first consider the case where $n =1$. Then for $M \in
  \Mod_{\mathcal{L}(T)}(\mathcal{S})$ we have natural
  equivalences
  \[
    \begin{split}
    \Map_{\Mod_{\mathcal{L}(T)}(\mathcal{S})}(Lx,
    M) & \simeq  \Map_{\Fun(X,\mathcal{S})}(x,
    p^{*}M) \simeq p^{*}M(x) \simeq M(F_{T}x) \\ & \simeq
    \Map_{\Mod_{\mathcal{L}(T)}(\mathcal{S})}(\Yo_{\mathcal{L}(T)^{\op}}
    F_{T}x,  M).       
    \end{split}
\]
  
  Thus by the Yoneda Lemma we have an equivalence $Lx \isoto \Yo_{\mathcal{L}(T)^{\op}}
  F_{T}x$. Since the Yoneda embedding is fully faithful, this
  equivalence is the point of the mapping space $\Map_{\Mod_{\mathcal{L}(T)}(\mathcal{S})}(Lx,
  \Yo_{\mathcal{L}(T)^{\op}}
  F_{T}x)$ that corresponds to $\id_{F_{T}x}$ under the equivalence
  \begin{equation}
    \label{eq:mapLTeq}
    \begin{split}
    \Map_{\Mod_{\mathcal{L}(T)}(\mathcal{S})}(Lx,
    \Yo_{\mathcal{L}(T)^{\op}} F_{T}x) & \simeq  \Map_{\Fun(X,\mathcal{S})}(x,
    p^{*}\Yo_{\mathcal{L}(T)^{\op}} F_{T}x) \\ & \simeq p^{*}\Yo_{\mathcal{L}(T)^{\op}} (F_{T}x)(x) \simeq (\Yo_{\mathcal{L}(T)^{\op}}
    F_{T}x)(F_{T}x) \\ & \simeq \Map_{\Alg(T)}(F_{T}x,
    F_{T}x).
    \end{split}
  \end{equation}    
  On the other hand, the canonical map $L x \to \nu F_{T} x$
  corresponds under the equivalences
  \[ \Map_{\Mod_{\mathcal{L}(T)}(\mathcal{S})}(L x, \nu F_{T} x)
    \simeq \Map_{\Fun(X,\mathcal{S})}(x, p^{*}\nu F_{T}x) 
    \simeq \Map_{\Fun(X,\mathcal{S})}(x, U_{T} F_{T}x) \]
  to the unit map $x \to U_{T}F_{T}x$. 
  For the second equivalence here we used the equivalence $U_{T}
  \simeq p^{*}\nu$, and using
  \cref{eq:p*nuisU} we can decompose this as
  \[ \Map_{\Fun(X,\mathcal{S})}(x, p^{*}\nu F_{T}x) \simeq (p^{*}\nu
    F_{T}x)(x) \simeq \Map_{\Alg(T)}(F_{T}x, F_{T}x)
    \simeq \Map_{\Fun(X,\mathcal{S})}(x, U_{T} F_{T}x),\]
  where the unit map
  corresponds to the identity of $F_{T}x$ under the last adjunction
  equivalence.

  In other words, we have shown that the canonical map $Lx \to \nu
  F_{T}x$ is the point that corresponds to $\id_{F_{T}x}$ under the
  equivalence
  \[ \Map_{\Mod_{\mathcal{L}(T)}(\mathcal{S})}(L x, \nu F_{T} x) \simeq
    \Map_{\Fun(X,\mathcal{S})}(x, p^{*}\nu F_{T}x) 
    \simeq (p^{*}\nu
    F_{T}x)(x) \simeq \Map_{\Alg(T)}(F_{T}x,
    F_{T}x).\]
  It now only remains to observe that when we identify $\nu F_{T}x$ with
  $\Yo F_{T}x$ this equivalence is precisely that of
  \cref{eq:mapLTeq}, which is clear from the definitions. We conclude
  that the composite $L x \to \nu F_{T}x \simeq \Yo F_{T}x$ is
  precisely the map we showed was an equivalence, and so the canonical map
  $L x \to \nu F_{T}x$ is indeed an equivalence.

  Now we consider the general case, where we have a commutative square
  \[
    \begin{tikzcd}
      \coprod_{i=1}^{n} L x_{i} \arrow{r}{\sim} \arrow{d}{\sim} &
      L(x_{1},\ldots,x_{n}) \arrow{d} \\
      \coprod_{i=1}^{n} \nu F_{T}x_{i} \arrow{r} & \nu
      F_{T}(x_{1},\ldots,x_{n}) 
    \end{tikzcd}
  \]
  in which the top horizontal map is an equivalence since $L$
  preserves coproducts and the left vertical map is an equivalence by
  what we just proved. To show the right vertical map is an
  equivalence it is then sufficient to show that the bottom horizontal
  map is one. Mapping this into an object
  $M \in \Mod_{\mathcal{L}(T)}(\mathcal{S})$ we get using the
  identification \cref{eq:nuFTeq} a commutative diagram
    \[
      \begin{tikzcd}
        \Map(\nu F_{T}(x_{1},\ldots,x_{n}), M) \arrow{r}{\sim} \arrow{d} &
        M(F_{T}(x_{1},\ldots,x_{n})) \arrow{d} \\
        \Map(\coprod_{i=1}^{n}\nu F_{T}(x_{i}), M) \arrow{r}{\sim} &
        \prod_{i=1}^{n} M(F_{T}x_{i}).
    \end{tikzcd}
  \]
  Here the right vertical map is an equivalence since $M$ is an
  $\mathcal{L}(T)$-model, hence by the Yoneda lemma the canonical map
  \[ L(x_{1},\ldots,x_{n}) \to \nu F_{T}(x_{1},\ldots,x_{n}) \]
  is an equivalence in $\Mod_{\mathcal{L}(T)}(\mathcal{S})$, as required.
\end{proof}

\begin{remark}
  If $T$ is an algebraic monad, we have seen that $\Alg(T)$ is equivalent to
  $\Mod_{\mathcal{L}(T)}(\mathcal{S})$. In the terminology of
  \cite{HTT}, this identifies $\Alg(T)$ with the
  ``nonabelian derived \icat{}'' of $\mathcal{L}(T)^{\op}$, which by
  \cite{HTT}*{Proposition 5.5.8.15} is the
  \icat{} $\mathcal{P}_{\Sigma}(\mathcal{L}(T)^{\op})$ obtained by
  freely adjoining sifted colimits to $\mathcal{L}(T)^{\op}$.
\end{remark}

\begin{cor}\label{cor:AlgMndMor}\ 
  \begin{enumerate}[(i)]
  \item\label{it:AlgTpres} If $T$ is an algebraic monad on $\Fun(X, \mathcal{S})$, then
    $\Alg(T)$ is a presentable \icat{}.
  \item\label{it:AlgLadj} For any morphism of algebraic monads as in
    \cref{eq:algmndmor}, the functor $F^{*}\colon \Alg(T) \to \Alg(S)$
    has a  left adjoint $F_{!}$.
  \item\label{it:morLawv} The left adjoint $F_{!}$ restricts to a functor
    $\mathcal{L}(S)^{\op} \to \mathcal{L}(T)^{\op}$, which gives a
    morphism of Lawvere theories
    \[
      \begin{tikzcd}
        Y \arrow{d}[swap]{p_{S}} \arrow{r}{f} & X \arrow{d}{p_{T}} \\
        \mathcal{L}(S) \arrow{r}{F} & \mathcal{L}(T).
      \end{tikzcd}
      \]
  \end{enumerate}
\end{cor}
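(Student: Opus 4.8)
The plan is to handle the three parts in sequence, leaning on the identification $\Alg(T) \simeq \Mod_{\mathcal{L}(T)}(\mathcal{S})$ of \cref{propn:AlgMndisMod}. Part \ref{it:AlgTpres} is then immediate: since $\Th(T) = (\mathcal{L}(T), p_{T})$ is a Lawvere theory, \cref{propn:ModisAlgMnd}\ref{it:Modpres} shows that $\Mod_{\mathcal{L}(T)}(\mathcal{S})$, and hence $\Alg(T)$, is presentable. For \ref{it:AlgLadj} I would invoke the adjoint functor theorem for presentable \icats{} (\cite{HTT}*{Corollary 5.5.2.9}): by \ref{it:AlgTpres} both $\Alg(T)$ and $\Alg(S)$ are presentable, and by \cref{rmk:AlgMorpressifted} the functor $F^{*}$ preserves all small limits and sifted colimits, so in particular it preserves filtered colimits and is accessible. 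A limit-preserving accessible functor between presentable \icats{} admits a left adjoint $F_{!}$.

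The substance is in \ref{it:morLawv}, where I would pass to left adjoints throughout the square defining the morphism of algebraic monads. The equivalence $U_{S}F^{*} \simeq f^{*}U_{T}$ of right adjoints $\Alg(T) \to \Fun(Y, \mathcal{S})$ yields, upon taking left adjoints (the left adjoint of a composite being the reversed composite of left adjoints), a natural equivalence
\[ F_{!}F_{S} \simeq F_{T}f_{!}, \]
where $f_{!}$ denotes left Kan extension along $f \colon Y \to X$. Since $f_{!}$ preserves coproducts and satisfies $f_{!}\Yo_{Y}y \simeq \Yo_{X}f(y)$, this computes the effect of $F_{!}$ on a generating object of $\mathcal{L}(S)^{\op}$:
\[ F_{!}F_{S}(y_{1},\ldots,y_{n}) \simeq F_{T}f_{!}\Bigl(\coprod_{i=1}^{n} \Yo_{Y}y_{i}\Bigr) \simeq \coprod_{i=1}^{n} F_{T}\Yo_{X}f(y_{i}) = F_{T}(f(y_{1}),\ldots,f(y_{n})), \]
which lies in $\mathcal{L}(T)^{\op}$. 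Hence $F_{!}$ carries $\mathcal{L}(S)^{\op}$ into $\mathcal{L}(T)^{\op}$, and passing to opposites defines $F \colon \mathcal{L}(S) \to \mathcal{L}(T)$. The case $n = 1$ gives $F p_{S} \simeq p_{T} f$, so the required square commutes, and $F$ preserves finite products because these are computed as coproducts in $\mathcal{L}(S)^{\op} \subseteq \Alg(S)$ and $\mathcal{L}(T)^{\op} \subseteq \Alg(T)$, which the left adjoint $F_{!}$ preserves. Thus $F$ is a morphism of Lawvere theories.

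The main obstacle, modest as it is, lies in making the mate equivalence $F_{!}F_{S} \simeq F_{T}f_{!}$ precise as a natural equivalence of functors rather than merely an identification on objects, together with the standard fact that $f_{!}$ sends representables to representables. Some care is warranted since $F_{!}$ is produced only abstractly as an adjoint in \ref{it:AlgLadj}, so that its effect on the full subcategory $\mathcal{L}(S)^{\op}$ must be extracted from this mate relation rather than written down directly.
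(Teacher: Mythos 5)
Your proof is correct and follows essentially the same route as the paper: part \ref{it:AlgTpres} via the identification $\Alg(T) \simeq \Mod_{\mathcal{L}(T)}(\mathcal{S})$ from \cref{propn:AlgMndisMod} together with \cref{propn:ModisAlgMnd}, part \ref{it:AlgLadj} via \cite{HTT}*{Corollary 5.5.2.9} using \cref{rmk:AlgMorpressifted}, and part \ref{it:morLawv} by passing to left adjoints in the square \cref{eq:algmndmor} to get $F_{!}F_{S} \simeq F_{T}f_{!}$ and evaluating on finite coproducts of representables, with product-preservation following because products in the Lawvere theories are coproducts of algebras, which the left adjoint $F_{!}$ preserves. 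The only (harmless) difference is that you make explicit the mate step and the identity $f_{!}\Yo_{Y}(y) \simeq \Yo_{X}f(y)$, which the paper uses without comment.
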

\begin{proof}
  Part \ref{it:AlgTpres} is immediate from \cref{propn:AlgMndisMod}:
  We know that $\Alg(T)$ is equivalent to
  $\Mod_{\mathcal{L}(T)}(\mathcal{S})$, which is presentable by
  \cref{propn:ModisAlgMnd}\ref{it:Modpres}. The functor \[F^{*} \colon
  \Alg(T) \to \Alg(S)\] preserves limits and sifted colimits by
  \cref{rmk:AlgMorpressifted}; it is thus a limit-preserving
  accessible functor between presentable \icats{}, and therefore
  admits a left adjoint $F_{!}$ by \cite{HTT}*{Corollary
    5.5.2.9}. This proves \ref{it:AlgLadj}, and passing to left
  adjoints in the square \cref{eq:algmndmor} we get a commutative
  square
  \[
    \begin{tikzcd}
      \Fun(Y, \mathcal{S}) \arrow{r}{f_{!}} \arrow{d}{F_{S}} & \Fun(X,
      \mathcal{S})
      \arrow{d}{F_{T}}\\
      \Alg(S) \arrow{r}{F_{!}} & \Alg(T).
    \end{tikzcd}
  \]
  Thus we have
  \[ F_{!}F_{S}(\coprod_{i=1}^{n}\Yo_{Y}(y_{i})) \simeq
    F_{T}f_{!}(\coprod_{i=1}^{n}\Yo_{Y}(y_{i})) \simeq
    F_{T}(\coprod_{i=1}^{n} \Yo_{X}f(y_{i})),\]
  which shows that $F_{!}$ takes the full subcategory
  $\mathcal{L}(S)^{\op}$ into $\mathcal{L}(T)^{\op}$, and also
  that it gives the required commutative square. Moreover, $F_{!}$
  preserves coproducts, being a left adjoint, and so the induced
  functor $F \colon \mathcal{L}(S) \to \mathcal{L}(T)$ preserves
  products, since these correspond to coproducts of algebras.
\end{proof}

\begin{observation}
  From \cref{cor:AlgMndMor} we see that the assignment of the Lawvere
  theory $\Th(T)$ to an algebraic monad $T$ extends to a functor
  $\Th \colon \AlgMnd \to \Lawv$. Moreover, this functor is inverse to
  the functor $\Md$ from \cref{rmk:Mdfunctor}: for a Lawvere theory
  $(\mathcal{L},q)$ we have a natural equivalence \[\Th(\Md(\mathcal{L},q))
  \simeq (\mathcal{L},q)\] by \cref{lem:ThTL}, while for an algebraic
  monad $T$ we have a natural equivalence \[\Md(\Th(T)) \simeq T\] by
  \cref{propn:AlgMndisMod}. 
\end{observation}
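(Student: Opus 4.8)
The plan is to assemble the results already established in this section into the claimed pair of mutually inverse functors. Functoriality of $\Md$ is \cref{rmk:Mdfunctor}, and functoriality of $\Th$ is provided by \cref{cor:AlgMndMor}: indeed \cref{cor:AlgMndMor}\ref{it:morLawv} shows that a morphism of algebraic monads (a square as in \cref{eq:algmndmor}) is sent, via the left adjoint $F_{!}$ restricted to the full subcategories $\mathcal{L}(S)^{\op} \hookrightarrow \Alg(S)$ and $\mathcal{L}(T)^{\op} \hookrightarrow \Alg(T)$, to a morphism of Lawvere theories; one checks this assignment respects composites and identities, so that $\Th \colon \AlgMnd \to \Lawv$ is a functor. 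It then remains to show that $\Th$ and $\Md$ are inverse to each other.

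Here I would use the standard fact that a natural transformation of functors between \icats{} is an equivalence exactly when it is a pointwise equivalence. Thus it suffices to exhibit \emph{natural} transformations $\id_{\Lawv} \to \Th \circ \Md$ and $\Md \circ \Th \to \id_{\AlgMnd}$ and then invoke \cref{lem:ThTL} and \cref{propn:AlgMndisMod} to see that they are pointwise equivalences. For $\Md \circ \Th$, the comparison $\nu \colon \Alg(T) \isoto \Mod_{\mathcal{L}(T)}(\mathcal{S})$ of \cref{propn:AlgMndisMod} is the restricted Yoneda functor along $\mathcal{L}(T)^{\op} \hookrightarrow \Alg(T)$, and it sits over $\Fun(X, \mathcal{S})$ in the sense that $p^{*}\nu \simeq U_{T}$; this realizes the object-level equivalence $\Md(\Th(T)) \simeq T$ as an equivalence of algebraic monads. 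For $\Th \circ \Md$, the equivalence of \cref{lem:ThTL} is produced through the Yoneda embedding $\mathcal{L}^{\op} \hookrightarrow \Mod_{\mathcal{L}}(\mathcal{S})$, whose image is $\mathcal{L}(T_{\mathcal{L}})$, giving $\Th(\Md(\mathcal{L}, p)) \simeq (\mathcal{L}, p)$. Granting naturality, both composites are pointwise equivalent to the identity, so $\Th$ is an equivalence with inverse $\Md$, which is precisely \cref{thm:lawvere}.

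The main obstacle is exactly the naturality invoked above: the cited results give the equivalences one monad (resp. one Lawvere theory) at a time, and the real content is promoting them to morphisms in the functor \icats{} $\Fun(\AlgMnd, \AlgMnd)$ and $\Fun(\Lawv, \Lawv)$ --- that is, checking that the comparison maps are compatible with the functoriality maps $F_{!}$ and $F^{*}$ attached to a morphism of monads or of Lawvere theories. I expect this to follow from the observation that both comparison maps are instances of a (restricted) Yoneda embedding, which is natural by construction (\cf{} the naturality of $\Yo$ recalled in the Notation). Concretely, for a morphism $T \to S$ the square relating $\nu$ for $T$ and $S$ to $F_{!}$ and the induced $F^{*}$ on models commutes because passing to left adjoints in \cref{eq:algmndmor} and restricting Yoneda embeddings are each functorial operations, and the analogous remark handles $\Th \circ \Md$; once these squares are in place the two natural transformations exist and the pointwise results of \cref{lem:ThTL} and \cref{propn:AlgMndisMod} finish the proof.
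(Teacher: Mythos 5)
Your proposal matches the paper's own treatment, which records exactly this assembly: functoriality of $\Th$ from \cref{cor:AlgMndMor}, and the two pointwise equivalences $\Th(\Md(\mathcal{L},q)) \simeq (\mathcal{L},q)$ and $\Md(\Th(T)) \simeq T$ from \cref{lem:ThTL} and \cref{propn:AlgMndisMod}, respectively. The naturality issue you flag is genuine but is left implicit in the paper too, and your proposed resolution --- that both comparison maps are built from (restricted) Yoneda embeddings, whose naturality as a transformation $\Yo \colon \id \to \PSh_{!}$ is recalled in the paper's Notation following \cite{HHLN2} --- is precisely the intended justification.
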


We have thus proved:
\begin{thm}\label{thm:LawvTheq}
  The functors
  \[ \Md : \Lawv \rightleftarrows \AlgMnd : \Th
  \]
  are inverse equivalences of \icats{}.
\end{thm}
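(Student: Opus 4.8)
The plan is to observe that essentially all the work has already been done: both functors have been constructed, and each round-trip composite has been identified with the identity objectwise. What remains is to assemble these identifications into natural equivalences and invoke the standard criterion that a functor admitting a two-sided quasi-inverse is an equivalence.

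Concretely, the functor $\Md \colon \Lawv \to \AlgMnd$ was produced in \cref{rmk:Mdfunctor}, sending a Lawvere theory $(\mathcal{L},p)$ to the monadic adjunction $p^{*} \colon \Mod_{\mathcal{L}}(\mathcal{S}) \to \Fun(X,\mathcal{S})$, while the functoriality of $\Th$ follows from \cref{cor:AlgMndMor}, which shows that a morphism of algebraic monads induces a morphism of the associated Lawvere theories by restricting the left adjoint $F_{!}$. I would then recall the two objectwise equivalences: \cref{lem:ThTL} gives $\Th(\Md(\mathcal{L},p)) \simeq (\mathcal{L},p)$, realized through the restricted Yoneda embedding $\mathcal{L}^{\op} \hookrightarrow \Mod_{\mathcal{L}}(\mathcal{S})$ whose image is exactly $\mathcal{L}(T_{\mathcal{L}})$, and \cref{propn:AlgMndisMod} gives $\Md(\Th(T)) \simeq T$, realized through the restricted Yoneda functor $\nu \colon \Alg(T) \to \Mod_{\mathcal{L}(T)}(\mathcal{S})$ over $\Fun(X,\mathcal{S})$, which that proposition shows is an equivalence.

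The one genuine point to address is \emph{naturality}: the cited results produce equivalences for each fixed object, and I must check that they cohere into natural transformations $\Th \circ \Md \Rightarrow \id_{\Lawv}$ and $\Md \circ \Th \Rightarrow \id_{\AlgMnd}$ in the relevant functor $\infty$-categories. I expect this to be the main (if modest) obstacle, since objectwise equivalences are a priori weaker than natural ones. The route I would take is to note that both equivalences are built entirely from Yoneda embeddings together with the universal properties of free algebras and of product-preserving localizations, all of which are natural by construction; hence the comparison maps upgrade to genuine morphisms in the functor $\infty$-categories. Once these natural transformations are in hand, the cited results say precisely that they are pointwise equivalences, and since a morphism in a functor $\infty$-category is an equivalence as soon as it is a pointwise equivalence, both composites are equivalent to the identity. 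The standard fact that a functor equipped with a two-sided quasi-inverse is an equivalence then shows that $\Md$ and $\Th$ are inverse equivalences, as claimed.
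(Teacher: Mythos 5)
Your proposal is correct and follows essentially the same route as the paper, whose entire proof consists of the observation immediately preceding the theorem: $\Md$ comes from \cref{rmk:Mdfunctor}, $\Th$ from \cref{cor:AlgMndMor}, and the two round-trip identifications are exactly \cref{lem:ThTL} and \cref{propn:AlgMndisMod}, with naturality treated as implicit in their Yoneda/universal-property constructions. Your explicit flagging of the naturality point is, if anything, slightly more careful than the paper's own one-paragraph assembly.
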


\begin{remark}
  There are many versions of more general ``monad/theory
  correspondences'' for ordinary categories in the
  literature. Versions of these for ``monads with arities'' have been
  generalized to the \icatl{} setting by
  Henry--Meadows~\cite{HenryMeadows} and
  Kositsyn~\cite{KositsynTheories}, though they do not explicitly
  discuss how the situation simplifies in the special case of Lawvere
  theories.
\end{remark}

\section{Analytic monads and $\infty$-operads}\label{sec:anmnds}

We saw in \cref{cor:POpdSpFLT} that pinned $\SpF$-\iopds{} give a full
subcategory of Lawvere theories over $\SpFL$. In particular, we obtain
an algebraic monad from every pinned $\SpF$-\iopd{}, and our overall
goal in this section is to identify this full subcategory with that
coming from a special class of algebraic monads, namely the
\emph{analytic} ones. We start by introducing analytic monads in
\S\ref{sec:analytic-monad}, and then show that any pinned
$\SpF$-\iopd{} gives such an analytic monad in
\S\ref{sec:anmndfromopd}. After this we are ready to study the Lawvere
theories of analytic monads in \S\ref{sec:lawv-theor-analyt}, where we
in particular prove \cref{thm:SpopdLawv} (which together with our
previous work completes the proof of \cref{thm:main}).

\subsection{Analytic and algebraic monads}
\label{sec:analytic-monad}

In this first subsection we will recall the definition of analytic
monads, as introduced in \cite{polynomial}, and give a useful
description of them, as a full subcategory of a certain slice of
algebraic monads.

\begin{defn}
  A functor $F \colon \mathcal{S}_{/X} \to \mathcal{S}_{/Y}$ is
  \emph{analytic} if $F$ preserves sifted colimits and weakly
  contractible limits, that is limits indexed by \icats{} with contractible classifying space.
\end{defn}

\begin{remark}
  By \cite{polynomial}*{Theorem 2.2.3 and Proposition 3.1.9} any
  analytic functor $\mathcal{S}_{/X} \to \mathcal{S}_{/Y}$ is (uniquely) of the form $t_{!}p_{*}s^{*}$ for
  some diagram of spaces
  \[ X \xfrom{s} E \xto{p} B \xto{t} Y,\]
  where the fibres of $p$ are finite sets. This is the
  \emph{polynomial diagram} corresponding to the functor.
\end{remark}

\begin{defn}
  A natural transformation $\alpha \colon F \to G$ of functors
  $\mathcal{C} \to \mathcal{D}$ is \emph{cartesian} if for every
  morphism $\phi \colon x \to y$ in $\mathcal{C}$, the naturality
  square
  \[
    \begin{tikzcd}
      Fx \arrow{r}{\alpha_{x}} \arrow{d}{F\phi} & Gx \arrow{d}{G\phi}
      \\
      Fy \arrow{r}{\alpha_{y}} & Gy
    \end{tikzcd}
  \]
  is cartesian. A \emph{cartesian monad} is a monad whose unit and
  multiplication transformations are cartesian, and an \emph{analytic
    monad} is a cartesian monad on an \icat{} of the form
  $\mathcal{S}_{/X}$ whose underlying endofunctor is analytic.
\end{defn}

\begin{remark}
  An analytic monad can also be defined as a monad in an
  $(\infty,2)$-category whose objects are the \icats{}
  $\Fun(X,\mathcal{S})$ for $X \in \mathcal{S}$, with analytic functors
  as morphisms and cartesian natural transformations as
  2-morphisms. The appropriate notion of morphisms between analytic
  monads for us is then certain lax morphisms of monads in this
  $(\infty,2)$-category. These can also be described in terms of
  monadic adjunctions as follows:
\end{remark}

\begin{defn}
  Suppose $T$ is an analytic monad on $\Fun(X, \mathcal{S})$ and $S$
  is an analytic monad on $\Fun(Y, \mathcal{S})$. A \emph{morphism of
    analytic monads} from $S$ to $T$ is a commutative square of
  \icats{}
  \begin{equation}
    \label{eq:anmndmor}    
    \begin{tikzcd}
      \Alg(T) \arrow{r}{F^{*}} \arrow{d}{U_{T}} & \Alg(S)
      \arrow{d}{U_{S}} \\
      \Fun(X,\mathcal{S}) \arrow{r}{f^{*}} & \Fun(Y, \mathcal{S}),
    \end{tikzcd}
  \end{equation}
  where the bottom horizontal functor comes from a map of spaces $f
  \colon Y \to X$, and the induced transformation
  \[ S f^{*} \to f^{*} T \]
  is cartesian. (Since $U_{S}$ and $U_{T}$ detect pullbacks, we can
  equivalently ask for the mate transformation
  \begin{equation}
    \label{eq:matetr}
   F_{S}f^{*} \to F^{*}F_{T}    
  \end{equation}
  to be cartesian.)
\end{defn}

\begin{defn}
  We define $\AnMnd$ to be the subcategory of $\AlgMnd$ whose objects
  are the analytic monads and whose morphisms are those whose mate
  transformations \cref{eq:matetr} are cartesian.
\end{defn}

The first step towards a more useful description of the \icat{} $\AnMnd$ is to identify its terminal object:
\begin{propn}\label{propn:AnMndterm}
  $\AnMnd$ has a terminal object $\Sym$, given by the unique analytic monad
  structure on the terminal analytic endofunctor on $\mathcal{S}$. The
  latter is the endofunctor
  \[ X \mapsto \coprod_{n=0}^{\infty} X^{\times n}_{h \Sigma_{n}},\]
  given by the diagram
  \begin{equation}
    \label{eq:termanend}
    * \from \xF_{*}^{\simeq} \to \xF^{\simeq} \to *,    
  \end{equation}
  where the map $\xF_{*}^{\simeq} \to \xF^{\simeq}$ is given by forgetting the base point; this is the classifying map for
  morphisms of spaces with finite discrete fibres.\footnote{Note that
    the groupoids $\xF_{*}^{\simeq}$ and $\xF^{\simeq}$ are both
    equivalent to $\coprod_{n=0}^{\infty} B \Sigma_{n}$, and this
    classifying map can be described as the disjoint union of the maps
    $B \Sigma_{n} \to B \Sigma_{n+1}$ induced by the inclusion of
    $\Sigma_{n}$ in $\Sigma_{n+1}$ as the subgroup of automorphisms
    that fix one element in the set $\fset{n+1}$.
  }
\end{propn}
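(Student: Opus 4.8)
The plan is to identify the terminal object in three stages: first locate the terminal analytic \emph{endofunctor} of $\mathcal{S}$, then equip it with its (forced) monad structure, and finally verify terminality against analytic monads on arbitrary bases. For the first stage I would use the polynomial description recalled above: an analytic functor $\mathcal{S}_{/A} \to \mathcal{S}_{/C}$ is the same datum as a diagram $A \xfrom{s} E \xto{p} B \xto{t} C$ with $p$ finite-fibred, and (again by \cite{polynomial}) a cartesian transformation between two such is precisely a map $B \to B'$ over $C$ that is cartesian on the $p$-component and compatible with the maps to $A$. For endofunctors of $\mathcal{S}$ we have $A = C = *$, so $s$ and $t$ are trivial and the only remaining datum is the finite-fibred map $p$, classified by $B \to \xF^{\simeq}$; thus the analytic endofunctors of $\mathcal{S}$ with cartesian transformations form an $\infty$-category equivalent to $\mathcal{S}_{/\xF^{\simeq}}$. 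Its terminal object is $\id_{\xF^{\simeq}}$, \ie{} the polynomial $* \from \xF_*^{\simeq} \to \xF^{\simeq} \to *$ whose $p$ is the universal finite-fibred map; evaluating $t_!p_*s^*$ fibrewise gives $X \mapsto \coprod_{n} X^{\times n}_{h\Sigma_n}$, which is $\Sym$'s underlying endofunctor $\Sym_0$.

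Next I would promote $\Sym_0$ to a monad. Since analytic functors are closed under composition and horizontal composites of cartesian transformations are cartesian, composition makes the analytic endofunctors of $\mathcal{S}$ with cartesian transformations into a monoidal $\infty$-category whose associative algebras are exactly the analytic monads on $\mathcal{S}$. A terminal object of a monoidal $\infty$-category carries an essentially unique algebra structure and is the terminal algebra, since the forgetful functor from algebras creates limits \cite{HA}; applied here this gives $\Sym_0$ a unique monad structure $\Sym$, terminal among analytic monads on $\mathcal{S}$. Concretely, the multiplication $\Sym_0\Sym_0 \to \Sym_0$ and the unit $\id \to \Sym_0$ are the unique cartesian transformations into the terminal $\Sym_0$, and all monad coherences hold automatically because any two parallel cartesian transformations into $\Sym_0$ agree.

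It remains to check that $\Sym$ is terminal in $\AnMnd$, where the base may vary. By definition a morphism $T \to \Sym$ out of an analytic monad $T$ on $\mathcal{S}_{/Y}$ carries a map $f \colon Y \to *$ — which is unique — together with a cartesian structure $2$-cell relating $T$ and $\Sym$ through $f^{*}\colon \mathcal{S} \to \mathcal{S}_{/Y}$, namely $T_0 f^* \to f^* \Sym_0$. Repeating the polynomial analysis with $A = *$ and $C = Y$ identifies the analytic functors $\mathcal{S} \to \mathcal{S}_{/Y}$ with cartesian transformations as $\mathcal{S}_{/(Y \times \xF^{\simeq})}$, whose terminal object is $f^* \Sym_0$; hence the structure $2$-cell is again unique. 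The higher coherence data of such a lax morphism of monads consists once more of cartesian $2$-cells into $f^* \Sym_0$ and its composites, so it too is forced, and $\Map_{\AnMnd}(T, \Sym)$ is contractible.

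The main obstacle is the bookkeeping in this last step: I must ensure that the contractibility of the space of underlying cartesian $2$-cells genuinely propagates, through the \itcatl{} coherences and the change of base $f$, to contractibility of the full space of morphisms in $\AnMnd$. The clean way to organize this is to forget a morphism of analytic monads down to its underlying endomorphism together with its structure $2$-cell, and to show that this forgetful functor reflects the terminal object — equivalently, to check that base change $f^*$ is compatible with the composition monoidal structure, so that $f^*\Sym_0$ is terminal not merely as a functor but coherently with the monad multiplication. I expect this compatibility of $f^*$ with composition, rather than any single contractibility statement, to be the delicate point, and the monad-morphism formalism of \cite{adjmnd} is the natural tool for making it precise.
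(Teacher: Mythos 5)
Your first two stages are exactly the paper's argument: the identification of analytic endofunctors of $\mathcal{S}$ with $\mathcal{S}_{/\xF^{\simeq}}$, so that the terminal object is the universal finite-fibred map (this is \cite{polynomial}*{Corollary 3.1.13}), and the essentially unique algebra structure on a terminal object of the monoidal \icat{} of analytic endofunctors under composition, which is then terminal among associative algebras because the forgetful functor detects limits (\cite{HA}*{Corollary 3.2.2.5}). The gap is in your third stage, and you have in fact flagged it yourself: contractibility of the space of underlying structure $2$-cells $T q^{*} \to q^{*}\Sym$ does not by itself give contractibility of $\Map_{\AnMnd}(T, \Sym)$. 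A morphism in $\AnMnd$ is not a $2$-cell plus a list of verifiable conditions; it is a commutative square of \icats{} (equivalently, by \cite{adjmnd}, a lax morphism of monads with all of its higher coherence data), and your sentence asserting that ``the higher coherence data \ldots{} is forced'' is precisely the statement that needs proof, not a consequence of uniqueness of parallel cartesian $2$-cells into a fibrewise-terminal object.

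The paper closes exactly this hole by invoking \cite{polynomial}*{Proposition 4.4.2}: the forgetful functor $\AnMnd \to \mathcal{S}$ is a cartesian fibration, compatible with the cartesian (and cocartesian) fibration $\AnEnd \to \mathcal{S}$ via a forgetful functor $\AnMnd \to \AnEnd$ that preserves cartesian edges. For a cartesian fibration over a base with a terminal object, an object lying over the terminal object is terminal in the total \icat{} \IFF{} its cartesian transports are terminal in every fibre --- the fibrational decomposition of mapping spaces performs, once and for all, the ``propagation of contractibility'' you were worried about, with no coherence bookkeeping. Fibrewise terminality of $q^{*}\Sym$ is then checked using that the forgetful functors $\AnMnd(X) \to \AnEnd(X)$ detect limits and that $q^{*} \colon \AnEnd(*) \to \AnEnd(X)$ preserves limits, being a right adjoint. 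Your proposed organizing principle --- compatibility of base change with the composition monoidal structures --- is morally what this fibration statement packages (note that $q^{*}$ is certainly not \emph{monoidal} for composition, only compatible in the lax sense needed to transport algebras), but as written your proof leaves this input unestablished, and the missing ingredient is the fibrational result from \cite{polynomial}, not the mate calculus of \cite{adjmnd}.
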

\begin{proof}
  Restricting \cite{polynomial}*{Proposition 4.4.2} to analytic
  monads, we get in particular that the forgetful functor $\AnMnd \to
  \mathcal{S}$ is a cartesian fibration, which lives in a commutative
  triangle
  \[
    \begin{tikzcd}
      \AnMnd \arrow{dr} \arrow{rr}{\mathfrak{U}} &  & \AnEnd \arrow{dl} \\
      & \mathcal{S},
    \end{tikzcd}
  \]
  where $\AnEnd$ is the \icat{} of analytic endofunctors. Here the
  functor $\AnEnd \to \mathcal{S}$ is a cartesian and cocartesian
  fibration, and the forgetful functor $\mathfrak{U} \colon \AnMnd \to \AnEnd$ preserves
  cartesian morphisms. To show that $\AnMnd$ has a terminal object, it
  suffices to prove that the fibre $\AnMnd(*)$ has a terminal object $\Sym$,
  and for every morphism $q \colon X \to *$ in $\mathcal{S}$,
  its cartesian transport $q^{*}\Sym$ is terminal in $\AnMnd(X)$.
    
  The fibre $\AnMnd(X)$ is the \icat{} of associative algebras in the
  \icat{} $\AnEnd(X)$ of analytic endofunctors of $\Fun(X,\mathcal{S})$
  under composition, and the forgetful functor to $\AnEnd(X)$ detects
  limits. Moreover, we have a commutative square
  \[
    \begin{tikzcd}
      \AnMnd(*)\arrow{d}{\mathfrak{U}_{*}} \arrow{r}{q^{*}} &
      \AnMnd(X) \arrow{d}{\mathfrak{U}_{X}} \\
      \AnEnd(*) \arrow{r}{q^{*}} & \AnEnd(*)
    \end{tikzcd}
  \]
  where the lower horizontal functor preserves limits, being a right
  adjoint, and the vertical functors detect limits. Thus if $\Sym$ is
  a terminal object in $\AnMnd(*)$, then $q^{*}\Sym$ is terminal in
  $\AnMnd(X)$ \IFF{} $\mathfrak{U}_{X}q^{*}\Sym \simeq
  q^{*}\mathfrak{U}_{*}\Sym$ is terminal in $\AnEnd(X)$, which is true
  since $q^{*}$ and $\mathfrak{U}_{*}$ preserve limits. Moreover,  
  by \cite{HA}*{Corollary 3.2.2.5} the \icat{} $\AnMnd(*)$
  has a terminal object if $\AnEnd(*)$ has one, and this is
  given by the unique associative algebra structure on this terminal
  analytic endofunctor. To complete the proof we now observe that
  \cite{polynomial}*{Corollary 3.1.13} implies
  that \cref{eq:termanend} gives the terminal object of
  $\AnEnd(*)$, as required.
\end{proof}

Since $\AnMnd$ has a terminal object, the inclusion $\AnMnd \to
\AlgMnd$ factors through $\AlgMnd_{/\Sym}$. We now have:
\begin{lemma}
  The functor $\AnMnd \to \AlgMnd_{/\Sym}$ is fully faithful.
\end{lemma}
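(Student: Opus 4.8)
The plan is to show that the inclusion $\AnMnd \hookrightarrow \AlgMnd_{/\Sym}$ induces an equivalence on mapping spaces. Since $\AnMnd$ is defined as a (non-full) subcategory of $\AlgMnd$ whose objects are analytic monads and whose morphisms are those with cartesian mate transformation, and since every analytic monad comes equipped with a canonical map to the terminal object $\Sym$ (by \cref{propn:AnMndterm}), the functor to $\AlgMnd_{/\Sym}$ is automatically essentially injective on objects and faithful. What must be checked is that for analytic monads $S$ on $\Fun(Y,\mathcal{S})$ and $T$ on $\Fun(X,\mathcal{S})$, a morphism of algebraic monads $S \to T$ lying over the canonical maps to $\Sym$ automatically has cartesian mate transformation, so that it already defines a morphism in $\AnMnd$. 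In other words, the constraint ``morphism over $\Sym$'' is secretly equivalent to the constraint ``cartesian mate''.

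\emph{First} I would unwind what it means for a morphism of algebraic monads to lie over $\Sym$. A morphism in $\AlgMnd_{/\Sym}$ is a commutative square as in \cref{eq:algmndmor} together with compatible maps to the square defining $\Sym$; concretely this produces a commuting prism in which the mate transformation $F_{S}f^{*} \to F^{*}F_{T}$ of the given morphism maps compatibly to the corresponding mate for the structure maps into $\Sym$. \emph{The key observation} is that the structure map $T \to \Sym$ of \emph{any} analytic monad is itself a cartesian morphism of analytic monads---this is exactly the content of the terminal-object statement in \cref{propn:AnMndterm}, which was proved by exhibiting $\AnMnd \to \mathcal{S}$ as a cartesian fibration and identifying $\Sym$ with the fibrewise-terminal object; the cartesian transport maps $q^{*}\Sym$ are cartesian by construction. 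Thus for both $S$ and $T$ the comparison maps to $\Sym$ have cartesian naturality squares in the relevant sense.

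\emph{The main step} is then a ``pasting'' or cancellation argument for cartesian squares. Given the morphism $S \to T$ over $\Sym$, the mate transformation $Sf^{*} \to f^{*}T$ (equivalently \cref{eq:matetr}) fits into a commuting triangle with the two cartesian transformations $S \to \Sym$ and $f^{*}T \to \Sym$ coming from the terminal structure. Since these two legs are cartesian, the two-out-of-three property for cartesian transformations---a naturality square that factors through a cartesian square with cartesian ``outer'' leg is itself cartesian, precisely because the pullback of a pullback square is a pullback---forces the remaining transformation $Sf^{*}\to f^{*}T$ to be cartesian as well. Here I would use that the relevant comparison lives over the terminal analytic monad, whose own transformations are cartesian, so the fibres of the mate transformation are computed as fibres over a cartesian square and hence are equivalences exactly when the base condition holds.

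\emph{The hard part} will be bookkeeping the coherence of the prism of commutative squares: expressing ``lies over $\Sym$'' as genuine $(\infty,2)$-categorical or $\Fun([1],\LCatI)$-level data and extracting from it the precise cartesian square through which the mate factors, so that the cancellation lemma for cartesian transformations applies cleanly. Once this factorization is set up, the conclusion that the mate is cartesian---and hence that every morphism over $\Sym$ is already a morphism of analytic monads, giving full faithfulness---is formal. I would therefore organize the argument as: (1) record that $T \to \Sym$ is cartesian for every analytic monad via \cref{propn:AnMndterm}; (2) state and invoke the cancellation property of cartesian transformations; (3) conclude that the mate of any $\AlgMnd_{/\Sym}$-morphism between analytic monads is cartesian, which exactly matches the defining condition for $\AnMnd$.
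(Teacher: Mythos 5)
Your reduction of full faithfulness to the statement that a morphism of algebraic monads between analytic monads lying over the canonical maps to $\Sym$ automatically has cartesian mate is exactly the paper's framing, and your main tool---cancellation (two-out-of-three) for cartesian transformations applied to a triangle with cartesian legs to $\Sym$---is also the paper's first move. However, there is a genuine gap in your main step: the triangle you invoke, with legs ``$S \to \Sym$'' and ``$f^{*}T \to \Sym$'', does not exist as written, because $S$, $T$ and $\Sym$ are endofunctors of three \emph{different} \icats{} ($\Fun(Y,\mathcal{S})$, $\Fun(X,\mathcal{S})$ and $\mathcal{S}$ respectively). What the slice structure actually provides is a triangle only after whiskering with the pullback functors: writing $p \colon Y \to *$ and $q \colon X \to *$, one gets $\alpha q^{*} \colon Sp^{*} \to f^{*}Tq^{*}$ together with cartesian legs $Sp^{*} \to p^{*}\Sym$ and $f^{*}Tq^{*} \to p^{*}\Sym$ (using $p^{*} \simeq f^{*}q^{*}$). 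Cancellation therefore yields only that $\alpha q^{*}$ is cartesian, \ie{} that the naturality squares of the mate $\alpha \colon Sf^{*} \to f^{*}T$ are cartesian at morphisms pulled back from $\mathcal{S}$---not yet that $\alpha$ itself is cartesian.

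The passage from ``$\alpha q^{*}$ cartesian'' to ``$\alpha$ cartesian'' is the substantive remaining content, and it is not the coherence bookkeeping you flag as the hard part. The paper closes this gap with a second pasting argument: it compares $\alpha$ with $\alpha q^{*}q_{!}$ via the square built from the unit transformation $\id \to q^{*}q_{!}$, which is cartesian by \cite{polynomial}*{Lemma 2.1.5}---a nontrivial input special to presheaves of spaces (it encodes descent, \ie{} universality of colimits, in $\mathcal{S}$, and would fail for instance for set-valued presheaves). Since the two vertical maps and the bottom map of that square are cartesian (using also that $S$, $T$ and $f^{*}$ preserve pullbacks), cancellation applied once more shows that the top map $\alpha$ is cartesian. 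Your remark that ``the fibres of the mate transformation are computed as fibres over a cartesian square'' gestures in this direction but neither identifies the required cartesian unit nor supplies the second pasting step, so as it stands your argument establishes cartesianness of the mate only after restriction along $q^{*}$.
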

\begin{proof}
  Suppose we have analytic monads $T$ and $S$ over spaces $X$ and $Y$, respectively.
  Then a morphism from $S$ to $T$ in $\AlgMnd_{/\Sym}$ gives a
  morphism $f \colon Y \to X$, a natural transformation
  $\alpha \colon Sf^{*} \to f^{*}T$, and a commutative triangle
  \[
    \begin{tikzcd}
      Sp^{*} \arrow{rr}{\alpha q^{*}} \arrow{dr} & & f^{*}Tq^{*} \arrow{dl} \\
       & p^{*}\Sym,
    \end{tikzcd}
  \]
  where $q$ and $p$ denote the maps $X \to *$, $Y \to *$,
  respectively. Here the two diagonal transformations are cartesian,
  since by assumption the two maps to $\Sym$ come from $\AnMnd$, and
  our goal is to prove that $\alpha$ is then cartesian. 
  
  The pasting lemma for pullback squares implies that given a commutative triangle of natural transformations
  \[
    \begin{tikzcd}
      F \arrow{rr}{\phi} \arrow{dr}[swap]{\psi} & & G \arrow{dl}{\chi} \\
       & H
    \end{tikzcd}
  \]
  where $\chi$ is cartesian, the transformation $\phi$ is cartesian
  \IFF{} $\psi$ is cartesian. From the triangle above we can thus
  conclude that $\alpha q^{*}$ is cartesian.
  We can then make a commutative square
  \[
    \begin{tikzcd}
      Sf^{*} \arrow{r}{\alpha} \arrow{d}  & f^{*}T \arrow{d} \\
      Sp^{*}q_{!} \arrow{r}{\alpha q^{*}q_{!}} & f^{*}Tq^{*}q_{!}
    \end{tikzcd}
  \]
  using the counit transformation $\id \to q^{*}q_{!}$, which is
  cartesian by \cite{polynomial}*{Lemma 2.1.5}. Then the vertical maps
  in the square are cartesian transformations, as is the bottom
  horizontal map. Using the pasting lemma again, it follows that the
  top horizontal transformation $\alpha$ is also cartesian.
\end{proof}

\begin{observation}\label{obs:ThAnMnd}
  From \cref{thm:LawvTheq} it now follows that the functor
  \[ \AnMnd \hookrightarrow \AlgMnd_{/\Sym} \xto{\Th}
    \Lawv_{/\Th(\Sym)} \]
  identifies $\AnMnd$ with a full subcategory
  $\LawvAnpre$ of $\Lawv_{/\Th(\Sym)}$. Over the next few sections we
  will identify this full subcategory with that of pinned $\SpF$-\iopds{}.
\end{observation}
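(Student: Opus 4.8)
The plan is to read this off by composing the two results immediately preceding it. First I would observe that since $\Th \colon \AlgMnd \to \Lawv$ is an equivalence by \cref{thm:LawvTheq}, it restricts to an equivalence on slice \icats{}
\[ \AlgMnd_{/\Sym} \isoto \Lawv_{/\Th(\Sym)}; \]
indeed, any equivalence $F \colon \mathcal{C} \isoto \mathcal{D}$ induces an equivalence $\mathcal{C}_{/c} \isoto \mathcal{D}_{/Fc}$ for every object $c$, since the slice construction $(\blank)_{/c}$ is functorial and preserves equivalences.

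Next I would invoke the preceding Lemma, which exhibits $\AnMnd \hookrightarrow \AlgMnd_{/\Sym}$ as a fully faithful functor. A fully faithful functor followed by an equivalence is again fully faithful, so the composite
\[ \AnMnd \hookrightarrow \AlgMnd_{/\Sym} \xto{\Th} \Lawv_{/\Th(\Sym)} \]
is fully faithful. Since a fully faithful functor of \icats{} identifies its source with the full subcategory of its target spanned by the essential image, I would simply \emph{define} $\LawvAnpre$ to be this essential image, yielding $\AnMnd \simeq \LawvAnpre$ as full subcategories of $\Lawv_{/\Th(\Sym)}$, as claimed.

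There is essentially no obstacle here: both inputs are already in hand, and the only elementary fact invoked is that equivalences induce equivalences on slices. The genuine work lies ahead, in the \emph{concrete} identification of the abstractly defined subcategory $\LawvAnpre$ with the image of the pinned $\SpF$-\iopds{} from \cref{cor:POpdSpFLT}; that is the content of \cref{thm:SpopdLawv}, and it is there, rather than in the present bookkeeping, that the real difficulty will appear.
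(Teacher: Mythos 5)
Your argument is exactly the reasoning the paper leaves implicit in this observation: the equivalence $\Th$ of \cref{thm:LawvTheq} induces an equivalence on slices $\AlgMnd_{/\Sym} \isoto \Lawv_{/\Th(\Sym)}$, which composed with the fully faithful inclusion of the preceding lemma is fully faithful, and $\LawvAnpre$ is by definition its essential image. You also correctly locate the genuine content in the later identification of this subcategory with pinned $\SpF$-\iopds{}, so there is nothing to add.
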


\begin{remark}
  The \icat{} $\AnMnd$ can also be seen as a (non-full)
  subcategory of $\AlgMnd$, which means that it also corresponds to
  some subcategory of $\Lawv$. It would be interesting to know if this
  has an explicit description. In the 1-categorical setting, Szawiel
  and Zawadowski give such a description of the Lawvere theories of one-sorted
  analytic monads in \cite{SzZ}.
\end{remark}

\subsection{Analytic monads from $\SpF$-\iopds{}}\label{sec:anmndfromopd}
In this subsection we will describe the monad corresponding to a pinned
$\SpF$-\iopd{}, and show that this is always analytic. Using this, we
will then identify the \icat{} $\POpd(\SpF)$ with a full subcategory of
$\AnMnd$.

\begin{notation}
  Suppose $\mathcal{O}$ is a $\SpF$-\iopd{}. We write
  $U_{\mathcal{O}}$ for the restriction functor
  \[\Mon_{\mathcal{O}}(\mathcal{S}) \hookrightarrow \Fun(\mathcal{O},
    \mathcal{S}) \to \Fun(\mathcal{O}_{\fset{1}}^{\simeq},
    \mathcal{S})\] and $F_{\mathcal{O}}$ for its left adjoint. Note
  that $(\mathcal{O}, \mathcal{O}_{\fset{1}}^{\simeq})$ is a Lawvere
  theory by \cref{lem:SpFopdprod}, and by \cref{propn:SpFopdprod} the
  \icat{} $\Mon_{\mathcal{O}}(\mathcal{S})$ is the \icat{} of models
  for this Lawvere theory. Thus $U_{\mathcal{O}}$ is the monadic right
  adjoint for an algebraic monad by \cref{propn:ModisAlgMnd}; we write
  $T_{\mathcal{O}}$ for this monad.
\end{notation}

\begin{propn}\label{propn:opdmonad}
  Suppose $\mathcal{O}$ is an $\SpF$-\iopd{}. Then $T_{\mathcal{O}}$
  is an analytic monad, and its underlying endofunctor is given on
  $F \in \Fun(\mathcal{O}_{\fset{1}}^{\simeq}, \mathcal{S})$ by the
  formula
  \[ (T_{\mathcal{O}}F)(x) \simeq \colim_{y \actto x \in
      \Act_{\mathcal{O}}(x)}  \prod_{y \intto y_{i}}F(y_{i}),\]
  where $\Act_{\mathcal{O}}(x) := (\mathcal{O}^{\act}_{/x})^{\simeq}$
  denotes the \igpd{} of active maps to $x$ in
  $\mathcal{O}$ and the limit is over the set of inert maps $y \intto y_{i}$
  over $\rho'_{i}$.  Moreover, any morphism of $\SpF$-\iopds{} induces a
  morphism of analytic monads.
\end{propn}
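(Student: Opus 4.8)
The plan is to first identify the underlying endofunctor explicitly, deduce analyticity from it, and only then address the cartesianness of the monad structure and the functoriality.

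\emph{The formula.} By \cref{lem:SpFopdprod} the pair $(\mathcal{O},\mathcal{O}_{\fset1}^{\simeq})$ is a Lawvere theory, and by \cref{propn:SpFopdprod} we have $\Mon_{\mathcal{O}}(\mathcal{S})\simeq\Mod_{\mathcal{O}}(\mathcal{S})$, so $U_{\mathcal{O}}$ is precisely the functor $p^{*}$ of \cref{propn:ModisAlgMnd} and $F_{\mathcal{O}}$ its left adjoint. I would start from the values on finite coproducts of representables: exactly as in the proof of \cref{lem:ThTL} (applied to $\mathcal{L}=\mathcal{O}$) one gets $F_{\mathcal{O}}(\coprod_{i=1}^{n}\Yo_{X}x_{i})\simeq\Yo_{\mathcal{O}^{\op}}(x_{1}\times\cdots\times x_{n})$, and by \cref{lem:SpFopdprod} the product $x_{1}\times\cdots\times x_{n}$ is the object $(x_{1},\ldots,x_{n})\in\mathcal{O}_{\fset n}$. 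Restricting along $\mathcal{O}_{\fset1}^{\simeq}\hookrightarrow\mathcal{O}$ then yields $T_{\mathcal{O}}(\coprod_{i}\Yo_{X}x_{i})(z)\simeq\Map_{\mathcal{O}}((x_{1},\ldots,x_{n}),z)$ for every color $z$.

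To match this with the claimed colimit I would assemble the polynomial diagram $\mathcal{O}_{\fset1}^{\simeq}\xleftarrow{s}E\xrightarrow{p}B\xrightarrow{t}\mathcal{O}_{\fset1}^{\simeq}$, where $B\to\mathcal{O}_{\fset1}^{\simeq}$ is the total space of the groupoids $\Act_{\mathcal{O}}(z)$, the fibre of $p$ over a point $(y\actto z)$ is the finite set $\pi(y)$, and $s$ sends such a point together with $i\in\pi(y)$ to the component $y_{i}$. Then $t_{!}p_{*}s^{*}$ is by construction the right-hand side of the stated formula, and it is analytic because $p$ has finite fibres, by the description of analytic functors as polynomial functors in \cite{polynomial}. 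Since both $T_{\mathcal{O}}$ and $t_{!}p_{*}s^{*}$ preserve sifted colimits --- the former because it is algebraic --- and every object of $\Fun(X,\mathcal{S})$ is a sifted colimit of finite coproducts of representables, it suffices to compare them on such objects, where the inert--active factorization on $\mathcal{O}$ and the Segal decomposition (\cref{SpFopdcond}, \cref{lem:inertprod}) identify $\Map_{\mathcal{O}}((x_{1},\ldots,x_{n}),z)$ with the value of $t_{!}p_{*}s^{*}$ on $\coprod_{j}\Yo_{X}x_{j}$ by a co-Yoneda computation over the groupoid $\Act_{\mathcal{O}}(z)$. This simultaneously proves the formula and shows the underlying endofunctor is analytic.

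\emph{The monad structure.} It remains to check that the unit and multiplication of $T_{\mathcal{O}}$ are cartesian, which I would read off from the colimit formula, using that limits in $\mathcal{S}_{/X}$ are computed objectwise. The unit $\eta_{F}\colon F\to T_{\mathcal{O}}F$ is the inclusion of the summand indexed by the identity active map $\id_{z}$; since a transformation $F\to F'$ acts on $T_{\mathcal{O}}F$ only through the inputs $F(y_{i})$ and leaves the indexing groupoid $\Act_{\mathcal{O}}(z)$ untouched, this summand is cut out by pullback along $\eta_{F'}$, so $\eta$ is cartesian. The multiplication is the grafting map that composes active maps, and the same observation shows that its naturality squares are pullbacks, via the pasting lemma. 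I expect this bookkeeping to be the main obstacle; it is cleanest to phrase it by saying that the unit and multiplication are induced by composition of active maps in $\mathcal{O}$, and hence are cartesian transformations by construction, in the sense of the $(\infty,2)$-category of analytic functors and cartesian transformations of \cite{polynomial}.

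\emph{Functoriality.} Finally, a morphism of $\SpF$-\iopds{} $f\colon\mathcal{O}\to\mathcal{O}'$ preserves inert morphisms by definition, hence finite products by \cref{propn:SpFopdmor}, so it is a morphism of Lawvere theories over $\SpFL$, and by \cref{cor:AlgMndMor} it induces a morphism of algebraic monads $T_{\mathcal{O}}\to T_{\mathcal{O}'}$. To promote this to a morphism of analytic monads I must check that the mate \cref{eq:matetr} is cartesian. Under the formula this mate is the comparison of active-map colimits induced by $f$ on $\Act_{\mathcal{O}}(-)$ and on the component decomposition; because $f$ preserves inert and active morphisms and commutes with the Segal projections, the induced map is objectwise built from base-changes along $f$, which is cartesian by the same pasting argument together with the cartesian base-change results of \cite{polynomial}.
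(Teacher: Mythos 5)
Your first step is a legitimate and genuinely different route from the paper's. The paper proves essentially nothing by hand here: it observes that $\mathcal{O}$, with its inert--active factorization system and the objects over $\fset{1}$, is an algebraic pattern, that $\SpF$ is extendable by \cite{BHS}*{Example 3.3.25} and hence so is any $\SpF$-\iopd{} by \cite{patterns1}*{Corollary 9.17}, and then cites \cite{patterns1}*{Corollary 8.12} for the formula, \cite{patterns1}*{Proposition 10.6} for cartesianness and preservation of weakly contractible limits, and \cite{patterns1}*{Proposition 10.10} for functoriality. You instead re-derive the formula in this special case, and that part of your argument is sound: both $T_{\mathcal{O}}$ and the polynomial functor $t_{!}p_{*}s^{*}$ preserve sifted colimits, $\Fun(X,\mathcal{S})$ is freely generated under sifted colimits by finite coproducts of representables, and on those the inert--active factorization plus the Segal condition identify $\Map_{\mathcal{O}}((x_{1},\ldots,x_{n}),z)$ with $\colim_{y \actto z}\prod_{i}\coprod_{j}\Map_{X}(x_{j},y_{i}) \simeq \colim_{y\actto z}\prod_{i}F(y_{i})$. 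Modulo naturality bookkeeping, this buys a self-contained proof of the formula and of the analyticity of the underlying endofunctor, where the paper outsources both.

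The gap is in your treatment of the monad structure. The unit and multiplication of $T_{\mathcal{O}}$ are produced by the adjunction $F_{\mathcal{O}} \dashv U_{\mathcal{O}}$; to ``read off'' cartesianness from the colimit formula you must first prove that, under your identification, the adjunction unit is the inclusion of the (contractible) identity-active component of $\Act_{\mathcal{O}}(z)$ and the multiplication is the grafting map. You assert this (``induced by composition of active maps \ldots{} hence cartesian by construction''), but this identification is precisely the technical heart of the result --- it is the content of \cite{patterns1}*{\S\S 8, 10}, where the free monad and its structure maps are computed --- and nothing in your proposal establishes it. Your sketch of why the \emph{explicit} maps are cartesian is fine: a colimit over an \igpd{} is the total space of a fibration, $T_{\mathcal{O}}\phi$ is a map over the identity of $\Act_{\mathcal{O}}(z)$, and the unit and multiplication are base changes along fixed maps of indexing groupoids, so the naturality squares are pullbacks by pasting. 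The same criticism applies to functoriality: that the mate $Sf^{*} \to f^{*}T$ is, in terms of the formula, base change along $\Act_{\mathcal{O}}(y) \to \Act_{\mathcal{O}'}(fy)$ with fibrewise equivalences again presupposes the compatibility of the abstract mate with the explicit formula, which you assume rather than prove. (A small slip besides: the morphism of algebraic monads induced by a morphism of Lawvere theories is \cref{rmk:Mdfunctor}, not \cref{cor:AlgMndMor}, which goes in the other direction.) So the proposal is completable along the lines you indicate, but as written its crucial step is exactly the citation-shaped hole that the paper fills with \cite{patterns1}.
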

\begin{proof}
  In this proof it is convenient to use some terminology from \cite{patterns1}. The
  \icat{} $\mathcal{O}$, equipped with its inert-active factorization
  system and the objects over $\fset{1}$, is an \emph{algebraic
    pattern}, and $\Mon_{\mathcal{O}}(\mathcal{S})$ is the \icat{} of
  Segal $\mathcal{O}$-spaces for this pattern.

  The algebraic pattern $\SpF$ is \emph{extendable} by
  \cite{BHS}*{Example 3.3.25}, hence any $\SpF$-\iopd{} is also
  extendable by \cite{patterns1}*{Corollary 9.17} (or
  \cite{BHS}*{Lemma 4.1.15}). The formula for $T_{\mathcal{O}}$
  is then a special case of \cite{patterns1}*{Corollary 8.12}.

  It then follows from \cite{patterns1}*{Proposition 10.6} that the monad
  $T_{\mathcal{O}}$ is cartesian and preserves weakly contractible
  limits. Since we already know that $T_{\mathcal{O}}$ is algebraic,
  \ie{} preserves sifted colimits, this shows that $T_{\mathcal{O}}$
  is an analytic monad. A morphism of $\SpF$-\iopds{} then induces a
  morphism of analytic monads by \cite{patterns1}*{Proposition
    10.10}.
\end{proof}

\begin{cor}
  Suppose $(\mathcal{O}, q\colon X \twoheadrightarrow
  \mathcal{O}^{\simeq}_{\fset{1}})$ is a pinned $\SpF$-\iopd{}. Then
  the composite
  \[ \Mon_{\mathcal{O}}(\mathcal{S}) \xto{U_{\mathcal{O}}}
    \Fun(\mathcal{O}_{\fset{1}}^{\simeq}, \mathcal{S}) \xto{q^{*}}
    \Fun(X, \mathcal{S})\]
  is the monadic right adjoint for an analytic monad
  $T_{(\mathcal{O},q)}$. Moreover, any
  morphism of pinned $\SpF$-\iopds{} induces a morphism of analytic monads.
\end{cor}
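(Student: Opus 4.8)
The plan is to reduce the corollary to the preceding proposition by exhibiting the composite monadic adjunction as obtained from $T_{\mathcal{O}}$ via base change along the pinning map $q$. First I would observe that by \cref{propn:opdmonad} the functor $U_{\mathcal{O}} \colon \Mon_{\mathcal{O}}(\mathcal{S}) \to \Fun(\mathcal{O}_{\fset{1}}^{\simeq}, \mathcal{S})$ is the monadic right adjoint of the analytic monad $T_{\mathcal{O}}$. The essentially surjective map $q \colon X \twoheadrightarrow \mathcal{O}_{\fset{1}}^{\simeq}$ induces a pullback functor $q^{*} \colon \Fun(\mathcal{O}_{\fset{1}}^{\simeq}, \mathcal{S}) \to \Fun(X, \mathcal{S})$, and the whole point is that composing a monadic right adjoint with $q^{*}$ yields again a monadic right adjoint whose associated monad is again analytic.

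Concretely, I would verify the hypotheses of the monadicity theorem, \cite{HA}*{Theorem 4.7.3.5}, for the composite $q^{*}U_{\mathcal{O}}$. This composite is a right adjoint, being a composite of right adjoints (its left adjoint is $F_{\mathcal{O}}q_{!}$, where $q_{!}$ is left Kan extension along $q$). It is conservative: $U_{\mathcal{O}}$ is conservative as a monadic functor, and $q^{*}$ is conservative precisely because $q$ is essentially surjective, so that a natural transformation in $\Fun(\mathcal{O}_{\fset{1}}^{\simeq}, \mathcal{S})$ is an equivalence as soon as its restriction along $q$ is. It preserves sifted colimits: $U_{\mathcal{O}}$ preserves sifted colimits since $T_{\mathcal{O}}$ is algebraic (\cref{lem:algmndUsifted}), and $q^{*}$ preserves all colimits, being a restriction functor. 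Thus the composite is a conservative right adjoint preserving sifted colimits, hence monadic for an algebraic monad $T_{(\mathcal{O},q)}$ by \cref{lem:algmndUsifted}.

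It then remains to check that $T_{(\mathcal{O},q)}$ is not merely algebraic but \emph{analytic}. Here the cleanest route is to identify $T_{(\mathcal{O},q)}$ as the base change $q^{*}T_{\mathcal{O}}q_{!}$-type composite and appeal to the fact that analytic monads are stable under such base change along maps of spaces. Since $q^{*}$ corresponds to pullback along a map of spaces with (essentially surjective, hence in particular) finite fibres in the relevant sense, and the cartesianness of the unit and multiplication of $T_{\mathcal{O}}$ together with preservation of weakly contractible limits are inherited through base change, the monad $T_{(\mathcal{O},q)}$ is again cartesian with analytic underlying endofunctor; alternatively one can note that $(\mathcal{O}, q)$ is itself a (pinned) Lawvere theory by the \emph{Observation} following \cref{lem:SpFopdprod}, so that $T_{(\mathcal{O},q)}$ is the monad of \cref{propn:ModisAlgMnd}, and analyticity follows by transporting the computation of \cref{propn:opdmonad} along $q$.

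Finally, functoriality is straightforward: a morphism of pinned $\SpF$-\iopds{} consists of a morphism of underlying $\SpF$-\iopds{} together with a compatible map of pinning groupoids, and \cref{propn:opdmonad} already provides the induced morphism of analytic monads at the level of the $T_{\mathcal{O}}$'s; the compatibility with the pinning maps produces the commutative square \cref{eq:anmndmor} with the required cartesian mate. The main obstacle I anticipate is verifying cleanly that analyticity (as opposed to mere algebraicity) survives the base change along $q$ — in particular that the cartesian mate condition of the morphisms of \cite{polynomial} is preserved — rather than any difficulty with the monadicity or conservativity, which are formal.
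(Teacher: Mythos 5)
Your overall architecture is the same as the paper's: identify the monad of the composite as $q^{*}T_{\mathcal{O}}q_{!}$, get monadicity and algebraicity essentially for free (either via the monadicity theorem as you do, or via the observation after \cref{lem:SpFopdprod} that $(\mathcal{O},q)$ is a Lawvere theory together with \cref{propn:ModisAlgMnd} --- both are fine and amount to the same thing), and then check that analyticity survives the base change along $q$. However, the analyticity step --- which you yourself flag as ``the main obstacle'' --- is exactly where your proposal stops short of a proof, and the one concrete justification you offer there is wrong: the parenthetical ``essentially surjective, hence in particular finite fibres'' is a non sequitur (an essentially surjective map of spaces need not have finite fibres), and finiteness of fibres is in any case irrelevant here. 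In the polynomial formalism, finite discrete fibres are what make the \emph{pushforward} $p_{*}$ in the middle of a polynomial diagram analytic; they play no role for $q^{*}$ or $q_{!}$.

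What is actually needed, and what the paper supplies, are two specific lemmas from \cite{polynomial}, both special to maps of \igpds{} (they fail for base change along general functors). First, for the underlying endofunctor $q^{*}T_{\mathcal{O}}q_{!}$ to be analytic one needs $q_{!}$ to preserve weakly contractible limits; this is \cite{polynomial}*{Lemma 2.2.10} ($q^{*}$ preserves everything and $q_{!}$ preserves colimits, so sifted colimits are not the issue). Second, for the cartesianness of the unit and multiplication of $T_{(\mathcal{O},q)}$ one writes them as the composites
\[ \id \to q^{*}q_{!} \to q^{*}T_{\mathcal{O}}q_{!}, \qquad q^{*}T_{\mathcal{O}}q_{!}q^{*}T_{\mathcal{O}}q_{!} \to q^{*}T_{\mathcal{O}}T_{\mathcal{O}}q_{!} \to q^{*}T_{\mathcal{O}}q_{!},\]
and observes that the unit and counit of $q_{!} \dashv q^{*}$ are themselves cartesian transformations by \cite{polynomial}*{Lemma 2.1.5}; combined with cartesianness of the structure maps of $T_{\mathcal{O}}$ (from \cref{propn:opdmonad}) and the fact that all functors in sight preserve pullbacks, the displayed composites are cartesian. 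The same decomposition resolves the functoriality claim: the transformation attached to a morphism of pinned $\SpF$-\iopds{} is assembled from the one for the underlying morphism of $\SpF$-\iopds{} and these cartesian (co)units, so its mate is cartesian. Your sentence ``cartesianness \ldots\ inherited through base change'' gestures at this but does not identify or prove either ingredient, so as written the proposal has a genuine gap precisely at the step distinguishing analytic from merely algebraic; inserting the two cited lemmas turns your outline into the paper's proof.
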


\begin{proof}
  The endofunctor $T_{(\mathcal{O},q)}$ is the composite
  $q^{*}T_{\mathcal{O}}q_{!}$. Here the functor $q^{*}$ preserves all
  limits and colimits, and $q_{!}$ preserves all colimits (being a
  left adjoint) as well as weakly contractible limits (by
  \cite{polynomial}*{Lemma 2.2.10}), hence $T_{(\mathcal{O},q)}$ is an
  analytic functor since $T_{\mathcal{O}}$ is one. Moreover, the
  multiplication and unit transformations for $T_{(\mathcal{O},q)}$
  are given by the compositions
  \[ q^{*}T_{\mathcal{O}}q_{!}q^{*}T_{\mathcal{O}}q_{!} \to
    q^{*}T_{\mathcal{O}}T_{\mathcal{O}}q_{!} \to q^{*}T_{\mathcal{O}}q_{!},\qquad
    \id \to q^{*}q_{!} \to q^{*}T_{\mathcal{O}}q_{!}\]
  of the corresponding transformations for $T_{\mathcal{O}}$ and the
  counit and unit of the adjunction $q_{!} \dashv q^{*}$. The latter
  are cartesian transformations by \cite{polynomial}*{Lemma 2.1.5},
  hence so are these compositions, since we know $T_{\mathcal{O}}$ is
  a cartesian monad. Thus $T_{(\mathcal{O},q)}$ is indeed an analytic monad.
  Similarly, the natural transformation associated
  to a morphism of pinned $\SpF$-\iopds{} is built from that coming from the
  underlying morphism of $\SpF$-\iopds{} and cartesian (co)unit
  transformations, so that we get a morphism of analytic monads.
\end{proof}

The monad $T_{(\mathcal{O},q)}$ for a pinned $\SpF$-\iopd{}
$(\mathcal{O},q)$ is by definition the monad corresponding to the
Lawvere theory $(\mathcal{O},q)$. We next observe that the monad
corresponding to the terminal (pinned) $\SpF$-\iopd{} is the terminal
analytic monad:
\begin{lemma}
  The monad $T_{\SpFL}$ corresponding to the Lawvere theory $\SpFL$ is
  $\Sym$. Equivalently, the Lawvere theory $\Th(\Sym)$ is $\SpFL$.
\end{lemma}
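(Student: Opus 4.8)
The plan is to recognise $\SpFL$ as the Lawvere theory attached to the identity $\SpF$-\iopd{}, compute the resulting monad by the explicit formula of \cref{propn:opdmonad}, and then invoke the uniqueness of the monad structure on the terminal analytic endofunctor. First I would observe that $\id \colon \SpF \to \SpF$ is a $\SpF$-\iopd{}: every morphism is cocartesian, so \ref{spfwsf1} holds, while \ref{spfwsf2} follows from \cref{lem:SpFprod} (the $\rho'_{i}$ exhibit $\fset{n}$ as a product of copies of $\bfone$) and \ref{spfwsf3} is trivial since each fibre $\SpF_{S}$ is a single point. By \cref{lem:SpFopdprod} its associated Lawvere theory is $(\SpF, \SpF^{\simeq}_{\bfone})$, and as the fibre of $\id$ over $\bfone$ is the point $\{\bfone\}$ this is exactly $\SpFL$. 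Hence $T_{\SpFL}$ is the monad $T_{\SpF}$ of \cref{propn:opdmonad} for $\mathcal{O} = \SpF$, a monad on $\Fun(\SpF^{\simeq}_{\bfone}, \mathcal{S}) = \Fun(*, \mathcal{S}) = \mathcal{S}$.

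Next I would evaluate the formula of \cref{propn:opdmonad}. For $A \in \mathcal{S}$ (a functor on the point) and $x = \bfone$, the indexing \igpd{} is $\Act_{\SpF}(\bfone) = (\SpF^{\act}_{/\bfone})^{\simeq}$; using $\SpF^{\act} \simeq \xF$ from \cref{defn:SpFactint} this is $(\xF_{/\bfone})^{\simeq} \simeq \xF^{\simeq} \simeq \coprod_{n \geq 0} B\Sigma_{n}$, since $\bfone$ is terminal in $\xF$. An active map $y \actto \bfone$ corresponds to an object $\fset{n} \in \xF$, over which the $n$ inert maps $y \intto y_{i}$ (over the $\rho'_{i}$) all have target $y_{i} = \bfone$, so $\prod_{y \intto y_{i}} A = A^{\times n}$, with $\Sigma_{n}$ permuting the factors (via its action permuting the $\rho'_{i}$). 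The colimit over $\coprod_{n} B\Sigma_{n}$ therefore gives
\[ (T_{\SpF}A)(\bfone) \simeq \coprod_{n=0}^{\infty} (A^{\times n})_{h\Sigma_{n}}, \]
which is precisely the underlying endofunctor of $\Sym$ from \cref{propn:AnMndterm}.

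The main obstacle is upgrading this equivalence of underlying endofunctors to an equivalence of \emph{monads}, since a priori $T_{\SpF}$ and $\Sym$ could carry different multiplications. To handle this I would use the input already recorded in the proof of \cref{propn:AnMndterm}: the forgetful functor $\AnMnd(*) \to \AnEnd(*)$ detects limits, and the terminal analytic endofunctor carries an essentially unique associative-algebra structure (by \cite{HA}*{Corollary 3.2.2.5}), namely $\Sym$, which is terminal in $\AnMnd(*)$. Since $T_{\SpFL} = T_{\SpF}$ is an analytic monad whose underlying endofunctor is this terminal object of $\AnEnd(*)$, the space of its possible algebra structures is contractible, forcing $T_{\SpFL} \simeq \Sym$ in $\AnMnd(*)$.

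Finally, applying the equivalence $\Th$ of \cref{thm:LawvTheq} to the identity $\Md(\SpFL) = T_{\SpFL} \simeq \Sym$ yields $\SpFL \simeq \Th(\Md(\SpFL)) \simeq \Th(\Sym)$, which establishes the ``equivalently'' clause.
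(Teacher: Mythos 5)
Your proposal is correct and follows essentially the same route as the paper: both compute the underlying endofunctor of $T_{\SpF}$ from the formula of \cref{propn:opdmonad} using $\Act_{\SpF}(\fset{1}) \simeq \coprod_{n=0}^{\infty} B\Sigma_{n}$, and then conclude by the uniqueness of the analytic monad structure on the terminal analytic endofunctor established in \cref{propn:AnMndterm}. Your additional verifications---that $\id \colon \SpF \to \SpF$ is an $\SpF$-\iopd{} so that $T_{\SpFL} = T_{\SpF}$, and the deduction of the ``equivalently'' clause via \cref{thm:LawvTheq}---simply make explicit steps the paper leaves implicit.
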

\begin{proof}
  We know from \cref{propn:AnMndterm} that $\Sym$ is the unique analytic monad structure on its
  underlying endofunctor on $\mathcal{S}$, which is
  \[ X \mapsto \coprod_{n=0}^{\infty} X^{\times n}_{h \Sigma_{n}}.\]
  It thus suffices to show that this is also the underlying
  endofunctor of the analytic monad $T_{\SpF}$. This follows from the
  formula in \cref{propn:opdmonad} since we have
  $\Act_{\SpF}(\fset{1}) \simeq \coprod_{n=0}^{\infty} B\Sigma_{n}$,
  giving
  \[ T_{\SpF}(X) \simeq \colim_{\fset{n} \actto \fset{1} \in
      \Act_{\SpF}(\fset{1})} X^{\times n} \simeq
    \coprod_{n=0}^{\infty} X^{\times n}_{h \Sigma_{n}}.\]
\end{proof}

We have thus shown that both $\POpd(\SpF)$ and $\AnMnd$ give full
subcategories of $\LawvSpF$, and that the former subcategory is
contained in the latter:
\begin{cor}\label{cor:POpdinLawvAn}
  $\POpd(\SpF)$ is a full subcategory of $\LawvAn \simeq \AnMnd$. \qed
\end{cor}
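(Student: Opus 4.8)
The plan is to observe that both $\POpd(\SpF)$ and $\LawvAn$ have already been exhibited as full subcategories of $\LawvSpF$, and to reduce the statement to a containment between them. Indeed, \cref{cor:POpdSpFLT} provides a fully faithful functor $\POpd(\SpF) \hookrightarrow \LawvSpF$, while \cref{obs:ThAnMnd} identifies $\AnMnd$, via $\Th$, with the full subcategory $\LawvAnpre$ of $\Lawv_{/\Th(\Sym)}$; the preceding lemma, which shows $\Th(\Sym) \simeq \SpFL$, lets us rewrite this target as $\LawvSpF$ and the full subcategory as $\LawvAn$. Since a full subcategory of a full subcategory is again full, it suffices to check that the inclusion $\POpd(\SpF) \hookrightarrow \LawvSpF$ factors through $\LawvAn$; full faithfulness of the resulting functor $\POpd(\SpF) \to \LawvAn$ is then automatic, because fullness of $\LawvAn \hookrightarrow \LawvSpF$ identifies the relevant mapping spaces.

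To verify the factorization, I would identify, for a pinned $\SpF$-\iopd{} $(\mathcal{O}, q)$, the algebraic monad that corresponds to it under the equivalence $\Md$ of \cref{thm:LawvTheq}. By definition $\Md$ sends the Lawvere theory $(\mathcal{O}, q)$ to $\Mod_{\mathcal{O}}(\mathcal{S})$ equipped with restriction along the pinning; by \cref{propn:SpFopdprod} we have $\Mod_{\mathcal{O}}(\mathcal{S}) \simeq \Mon_{\mathcal{O}}(\mathcal{S})$, and this restriction functor is precisely the composite $q^{*}U_{\mathcal{O}}$. Hence $\Md(\mathcal{O}, q)$ is exactly the monad $T_{(\mathcal{O},q)}$.

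The essential input is then the preceding corollary, which establishes that $T_{(\mathcal{O},q)}$ is analytic for every pinned $\SpF$-\iopd{} $(\mathcal{O}, q)$. This places $(\mathcal{O}, q)$, viewed as an object of $\LawvSpF$, inside $\LawvAn$, so the inclusion factors as required and the proof is complete. I do not expect any real obstacle here: the genuinely substantive work—showing that these monads are analytic, through the extendability of $\SpF$-\iopds{} and the explicit formula for $T_{\mathcal{O}}$—has already been carried out in that corollary, and the present statement is the formal assembly combining it with the identifications $\Th(\Sym) \simeq \SpFL$ and \cref{cor:POpdSpFLT}.
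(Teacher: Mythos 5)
Your proposal is correct and is essentially identical to the paper's argument: the corollary carries a bare \qed precisely because it is the formal assembly you describe, combining \cref{cor:POpdSpFLT}, \cref{obs:ThAnMnd} with the lemma identifying $\Th(\Sym)$ as $\SpFL$, the observation that $\Md(\mathcal{O},q)$ is exactly $T_{(\mathcal{O},q)}$, and the corollary that this monad is analytic. One micro-point to keep in view: membership in $\LawvAn$ also requires the structure map to $\Sym$ to be a morphism of \emph{analytic} monads (cartesian mate), which is supplied by the ``moreover'' clause of that same corollary applied to the unique morphism $(\mathcal{O},q) \to \SpFL$ of pinned $\SpF$-\iopds{}, so your citation of it as the essential input does cover this.
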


\subsection{Lawvere theories of analytic monads}
\label{sec:lawv-theor-analyt}

In the previous subsection we showed that we have a fully faithful functor
$\POpd(\SpF) \hookrightarrow \LawvAn$. Our goal in this subsection is to
prove that this is also essentially surjective. In other words, we
want to show that the Lawvere theory of \emph{any} analytic monad is a
pinned $\SpF$-\iopd{}. To show this, we will first describe an
inert--active factorization system on such Lawvere theories. This
arises from a factorization system on the entire Kleisli \icat{},
which was constructed in \cite{patterns1}. We begin by recalling this,
which requires first introducing some terminology:

\begin{defn}
  A functor $F \colon \mathcal{C} \to \mathcal{D}$ is a \emph{local
    right adjoint} if for every object $C \in \mathcal{C}$ the induced
  functor on slices
  $F_{/C} \colon \mathcal{C}_{/C} \to \mathcal{D}_{/FC}$ is a right
  adjoint. 
\end{defn}

\begin{observation}
  By \cite{polynomial}*{Theorem 2.2.3}, a functor $F \colon \Fun(X,
  \mathcal{S}) \to \Fun(Y, \mathcal{S})$ is a local right adjoint
  \IFF{} it is accessible and preserves weakly contractible limits. In
  particular, every analytic functor is a local right adjoint.
\end{observation}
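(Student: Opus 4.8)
The plan is to read off the first assertion from the cited theorem and then verify the two hypotheses for an analytic functor. Since $X$ and $Y$ are \igpds{}, the straightening equivalence identifies $\Fun(X,\mathcal{S}) \simeq \mathcal{S}_{/X}$ and $\Fun(Y,\mathcal{S}) \simeq \mathcal{S}_{/Y}$, under which sifted colimits and weakly contractible limits correspond on both sides. The characterization of local right adjoints $\mathcal{S}_{/X} \to \mathcal{S}_{/Y}$ proved in \cite{polynomial}*{Theorem 2.2.3}---namely that such a functor is a local right adjoint precisely when it is accessible and preserves weakly contractible limits---therefore applies verbatim, which gives the stated equivalence.

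For the \emph{in particular} clause I would simply check that an analytic functor $F$ meets both conditions. By definition $F$ preserves weakly contractible limits, so it remains only to see that $F$ is accessible. As $F$ preserves sifted colimits it in particular preserves filtered colimits; since $\Fun(X,\mathcal{S})$ and $\Fun(Y,\mathcal{S})$ are presentable (hence accessible), a functor between them that preserves filtered colimits is accessible. The hypotheses of the cited theorem are thus satisfied, so $F$ is a local right adjoint.

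There is no real obstacle here: the substantive content is entirely in \cite{polynomial}*{Theorem 2.2.3}, and the only point requiring (minor) care is confirming that the slice formulation of that theorem transports to the functor-category formulation used in this section via the equivalence $\mathcal{S}_{/X}\simeq\Fun(X,\mathcal{S})$. Once that translation is in place, both assertions are immediate, which is why the statement is recorded as an observation rather than a proposition.
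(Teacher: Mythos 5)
Your proposal is correct and matches the paper's reasoning: the statement is recorded as an observation precisely because it amounts to citing \cite{polynomial}*{Theorem 2.2.3} (transported along $\mathcal{S}_{/X}\simeq\Fun(X,\mathcal{S})$) and noting that an analytic functor preserves weakly contractible limits by definition and is accessible since sifted colimits include filtered ones. Your filling-in of the accessibility point is exactly the intended one-line justification.
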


\begin{observation}\label{rmk:anmndL*}
  For an analytic functor
  $F \colon \Fun(X, \mathcal{S}) \to \Fun(Y, \mathcal{S})$ we can
  describe the left adjoint $L_{*}$ of $F_{/*}$ quite explicitly:
  Recall that an analytic functor $F$ is of the form $t_{!}p_{*}s^{*}$
  for a unique bispan of spaces
  \[ X \xfrom{s} E \xto{p} B \xto{t} Y \]
  where $p$ has finite discrete fibres.
  Here $t \colon B \to Y$ corresponds to $F(*)$ under the
  straightening equivalence $\Fun(Y, \mathcal{S}) \simeq \mathcal{S}_{/Y}$,
  since $t_{!}$ is given on slices by composition with $t$ and $p_{*}s^{*}$
  preserves the terminal object. Thus $F_{/*}$ can be identified with
  the functor $p_{*}s^{*} \colon \Fun(X, \mathcal{S})  \to
  \Fun(B, \mathcal{S})$, with the left adjoint $L_{*}$ being 
   $s_{!}p^{*}$. In
  particular, a map $\Yo_{Y}(y) \to F(*)$ corresponds to a point $b \in B$
  over $y \in Y$, and $L_{*}\Yo_{Y}(y)$ corresponds to $s_{!}p^{*}b$, which is
  the composite $E_{b} \hookrightarrow E \xto{s} X$. Since $E_{b}$ is
  a finite set, we have
  \begin{equation}
    \label{eq:L*Yo}
    L_{*}(\Yo_{Y}(y)\to F(*)) \simeq \coprod_{e \in E_{b}} \Yo(s(e)).
  \end{equation}
\end{observation}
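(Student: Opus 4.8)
The plan is to read everything off the polynomial decomposition $F \simeq t_{!}p_{*}s^{*}$ together with the standard adjunctions among the base-change functors. First I would compute $F(*)$: since $s^{*}$ and $p_{*}$ are right adjoints they preserve terminal objects, so $p_{*}s^{*}$ sends the terminal object of $\mathcal{S}_{/X}$ to the terminal object $(B \xto{\id} B)$ of $\mathcal{S}_{/B}$, and as $t_{!}$ is postcomposition with $t$ it sends this to $(B \xto{t} Y)$. Under $\Fun(Y,\mathcal{S}) \simeq \mathcal{S}_{/Y}$ this is precisely the object $t \colon B \to Y$, which gives the first claim.

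Next I would identify the sliced functor $F_{/*}$. Passing to slices over a fixed object is functorial for composites, in the sense that $(G \circ H)_{/a} \simeq G_{/H(a)} \circ H_{/a}$; applying this to $F \simeq t_{!} \circ p_{*} \circ s^{*}$ at the terminal object, the first two factors become $s^{*}$ and $p_{*}$ again (slicing over a terminal object being trivial), while the last factor $t_{!/*} \colon \mathcal{S}_{/B} \to (\mathcal{S}_{/Y})_{/(B \to Y)}$ is the \emph{identity} once we invoke the canonical equivalence $(\mathcal{S}_{/Y})_{/(B \to Y)} \simeq \mathcal{S}_{/B}$, as $t_{!}$ merely postcomposes with $t$. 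This yields $F_{/*} \simeq p_{*}s^{*}$ as a functor $\mathcal{S}_{/X} \to \mathcal{S}_{/B}$, which is manifestly a composite of right adjoints; its left adjoint is therefore $L_{*} \simeq s_{!}p^{*}$, since $s_{!} \dashv s^{*}$ and $p^{*} \dashv p_{*}$.

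Finally I would evaluate $L_{*}$ on a representable. A map $\Yo_{Y}(y) \to F(*)$ corresponds to a map $(\{y\} \hookrightarrow Y) \to (B \xto{t} Y)$ over $Y$, that is to a point $b \in B$ with $t(b) \simeq y$; under $(\mathcal{S}_{/Y})_{/(B \to Y)} \simeq \mathcal{S}_{/B}$ this is the representable $\Yo_{B}(b)$. Applying $p^{*}$ pulls $p \colon E \to B$ back along $b$ to give the fibre inclusion $E_{b} \hookrightarrow E$, and applying $s_{!}$ postcomposes with $s$ to give $E_{b} \hookrightarrow E \xto{s} X$ in $\mathcal{S}_{/X}$. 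Since $p$ has finite discrete fibres, $E_{b}$ is a finite set, so writing it as the coproduct of its points and using that $s_{!}$ preserves colimits with $s_{!}\Yo_{E}(e) \simeq \Yo_{X}(s(e))$ produces \cref{eq:L*Yo}. The only step requiring genuine care is the middle one: making the slice-functoriality identity precise and verifying that $t_{!/*}$ trivializes under $(\mathcal{S}_{/Y})_{/(B \to Y)} \simeq \mathcal{S}_{/B}$; the rest is unwinding the definitions of $s^{*}, s_{!}, p^{*}, p_{*}, t_{!}$ and the straightening equivalences.
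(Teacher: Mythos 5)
Your argument is correct and takes essentially the same route as the paper's own reasoning: compute $F(*)$ from the decomposition $F \simeq t_{!}p_{*}s^{*}$ using that $p_{*}s^{*}$ preserves terminal objects, identify $F_{/*}$ with $p_{*}s^{*}$ via the equivalence $(\mathcal{S}_{/Y})_{/(B \to Y)} \simeq \mathcal{S}_{/B}$, read off $L_{*} \simeq s_{!}p^{*}$ from the adjunctions $s_{!} \dashv s^{*}$ and $p^{*} \dashv p_{*}$, and evaluate on $\Yo_{B}(b)$ to obtain the fibre inclusion $E_{b} \hookrightarrow E \xto{s} X$ and hence \cref{eq:L*Yo}. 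The only difference is cosmetic: you make explicit the slice-functoriality $(G \circ H)_{/a} \simeq G_{/H(a)} \circ H_{/a}$ and the trivialization of $t_{!/*}$, which the paper leaves implicit in its ``thus''.
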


\begin{defn}
  For any monad $T$ on some \icat{} $\mathcal{C}$ , we write $\mathcal{K}(T)$ for the \emph{Kleisli
    \icat{}} of $T$, defined as the full subcategory of $\Alg(T)$
  containing the free algebras $F_{T}C$ for $C \in \mathcal{C}$.
\end{defn}

\begin{defn}
  Suppose $T$ is an analytic monad on $\Fun(X, \mathcal{S})$, and
  consider a morphism $\phi \colon F_{T}I \to F_{T}J$ in
  $\mathcal{K}(T)$. Under the adjunction $F_{T} \dashv U_{T}$ this
  corresponds to a morphism $\phi' \colon I \to TJ$ in $\Fun(X,
  \mathcal{S})$, which we can regard as a morphism in $\Fun(X,
  \mathcal{S})_{/T*}$ via the image $TJ \to T*$ under $T$ of the
  unique map from $J$ to the terminal object. Then $\phi'$ corresponds
  to a morphism $\phi'' \colon L_{*}I \to J$ in $\Fun(X,
  \mathcal{S})$, where $L_{*}$ is the left adjoint to $T_{/*}$. The
  unit of the
  adjunction $L_{*} \dashv T_{/*}$ then gives a factorization of
  $\phi'$ as
  \[ J \to TL_{*}J \xto{T \phi''} TI,\]
  which is adjoint to a factorization of $\phi$ as
  \[ F_{T}J \to F_{T}L_{*}J \xto{F_{T}\phi''} F_{T}I.\]
  We call this the \emph{canonical factorization} of $\phi$, and say
  that $\phi$ is \emph{inert} if the map $F_{T}J \to F_{T}L_{*}J$ is
  an equivalence, and \emph{active} if $F_{T}\phi''$ is an equivalence.
\end{defn}

\begin{thm}\label{thm:Kleislifact}
  If $T$ is an analytic monad, then the active and inert morphisms
  form a factorization system on $\mathcal{K}(T)$, and the canonical
  factorization is an active--inert factorization. Moreover, for any
  morphism of analytic monads \cref{eq:anmndmor}, the functor $F_{!}
  \colon \mathcal{K}(S) \to \mathcal{K}(T)$ preserves active and inert
  morphisms.
\end{thm}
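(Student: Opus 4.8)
The plan is to deduce the factorization system from the \icatl{} theory of generic--free factorizations for parametric (\ie{} local) right adjoints developed in \cite{patterns1}, and then to establish the functoriality by a direct computation with the left adjoint $L_{*}$, treating inert and active morphisms separately.

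For the first two assertions I would recall that every analytic functor is a local right adjoint (by \cite{polynomial}*{Theorem 2.2.3}), so an analytic monad $T$ is in particular a monad whose underlying endofunctor is a local right adjoint; for such monads the generic--free factorization system on $\mathcal{K}(T)$ is exactly the one constructed in \cite{patterns1}. Its generic (active) maps are built precisely from the unit of the adjunction $L_{*} \dashv T_{/*}$, so the canonical factorization of the statement agrees \emph{by construction} with the generic--free factorization, and the active--inert morphisms therefore form a factorization system with the canonical factorization as its (active, inert)-factorization. It then remains only to match the inert morphisms with the \emph{free} ones---those equivalent to $F_{T}g$ for a morphism $g$ in $\Fun(X,\mathcal{S})$---which is immediate from the definition, since $\phi$ is inert exactly when its canonical factorization reduces to the free factor $F_{T}\phi''$.

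For functoriality, inert preservation is formal: by \cref{cor:AlgMndMor} the functor $F^{*}$ of \cref{eq:anmndmor} admits a left adjoint $F_{!}$ fitting into a commuting square of left adjoints with $f_{!}$, so that $F_{!}F_{S} \simeq F_{T}f_{!}$; hence an inert morphism $F_{S}g$ in $\mathcal{K}(S)$ is carried to $F_{!}F_{S}g \simeq F_{T}f_{!}g$, which is free, hence inert, in $\mathcal{K}(T)$. For active morphisms it suffices, since every active morphism is equivalent to a canonical generic map $F_{S}I \to F_{S}L^{S}_{*}I$ and every object of $\mathcal{K}(S)$ has the form $F_{S}I$, to check that $F_{!}$ sends each such map to the canonical generic map $F_{T}f_{!}I \to F_{T}L^{T}_{*}f_{!}I$. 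Concretely this amounts to producing a compatible equivalence $f_{!}L^{S}_{*} \simeq L^{T}_{*}f_{!}$ intertwining the two units, and this is exactly where the cartesian-mate hypothesis is used: writing $L_{*} = s_{!}p^{*}$ in terms of the polynomial diagram of each monad (as in \cref{rmk:anmndL*}), the cartesianness of the mate transformation \cref{eq:matetr} translates into the Beck--Chevalley square relating the two polynomial diagrams over $f$, from which the required equivalence follows on representables via the formula \cref{eq:L*Yo} and then in general by colimit-preservation of $L_{*}$ and $f_{!}$.

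The main obstacle is precisely this last compatibility $f_{!}L^{S}_{*} \simeq L^{T}_{*}f_{!}$: unlike inert preservation it is not formal, because the right adjoint $F^{*}$ does not restrict to the Kleisli \icats{}, so the orthogonality of the factorization system cannot simply be transported along $F_{!} \dashv F^{*}$. The content therefore lies in showing that a morphism of analytic monads whose mate \cref{eq:matetr} is cartesian respects generic factorizations, which I would extract either from the explicit polynomial description above or, more cleanly, by citing the corresponding naturality statement for the generic--free factorization in \cite{patterns1}.
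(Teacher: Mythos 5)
Your proposal is correct and takes essentially the same route as the paper, whose entire proof consists of citing \cite{patterns1}: Theorem~12.1 there for the active--inert (generic--free) factorization system on $\mathcal{K}(T)$, and ``the same argument as in the proof of Lemma~11.18'' for preservation under morphisms of analytic monads. Your unfolding of that second citation is exactly the intended content --- the formal treatment of inerts via $F_{!}F_{S} \simeq F_{T}f_{!}$ is also how the paper later argues in \cref{propn:anmndmorcart}, and your exchange $f_{!}L^{S}_{*} \simeq L^{T}_{*}f_{!}$, extracted from the cartesian mate as a pullback of polynomial middle squares and checked on representables via \cref{eq:L*Yo} before extending along colimits, is the compatibility the citation outsources (with the minor caveat that the factorization system itself also uses cartesianness of the unit and multiplication, not just the local right adjoint property, which is fine since $T$ is analytic).
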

\begin{proof}
  That active and inert morphisms form a factorization system is a
  special case of \cite{patterns1}*{Theorem 12.1}, while their
  preservation by morphisms of analytic monads follows from the same
  argument as in the proof of \cite{patterns1}*{Lemma 11.18}.
\end{proof}

\begin{cor}\label{cor:LTfact}
  Let $T$ be an analytic monad on $\Fun(X, \mathcal{S})$. The inert--active factorization system on $\mathcal{K}(T)^{\op}$ restricts
  to the Lawvere theory $\mathcal{L}(T)$. Moreover, for any morphism
  of analytic monads \cref{eq:anmndmor}, the functor $F \colon
  \mathcal{L}(S) \to \mathcal{L}(T)$ preserves inert and active moprhisms.
\end{cor}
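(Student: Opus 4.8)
The plan is to derive both assertions from \cref{thm:Kleislifact} together with the explicit description of $L_{*}$ in \cref{rmk:anmndL*}. Passing to opposite \icats{}, the first claim is equivalent to the statement that the active--inert factorization system on $\mathcal{K}(T)$ restricts to the full subcategory $\mathcal{L}(T)^{\op}$, which by definition is spanned by the free algebras $F_{T}I$ with $I$ a finite coproduct of representables (here I use that $F_{T}$ preserves coproducts, so that $\coprod_{i} F_{T}\Yo_{X}x_{i} \simeq F_{T}(\coprod_{i}\Yo_{X}x_{i})$). A factorization system restricts to a full subcategory exactly when the intermediate object of the (essentially unique) factorization of each morphism of the subcategory again lies in the subcategory; the orthogonality is then inherited since the inclusion is fully faithful. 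So everything reduces to a single computation: for $\phi \colon F_{T}I \to F_{T}J$ with $I$ and $J$ finite coproducts of representables, the intermediate term $F_{T}L_{*}I$ of the canonical factorization must lie in $\mathcal{L}(T)^{\op}$, i.e.\ $L_{*}I$ must again be a finite coproduct of representables.

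I would carry this out using \cref{rmk:anmndL*}. The map $I \to T*$ relevant to the factorization is the one induced by $\phi$ (the composite of $\phi' \colon I \to TJ$ with $TJ \to T*$), and when $I \simeq \coprod_{i}\Yo_{X}(x_{i})$ it restricts on each summand to a point $b_{i}$ of the base $B$ of the polynomial diagram $X \xfrom{s} E \xto{p} B \xto{t} X$ of $T$ lying over $x_{i}$. Since $L_{*}$ is a left adjoint it preserves coproducts, so \cref{eq:L*Yo} gives
\[ L_{*}I \simeq \coprod_{i} L_{*}\bigl(\Yo_{X}(x_{i}) \to T*\bigr) \simeq \coprod_{i}\coprod_{e \in E_{b_{i}}}\Yo(s(e)). \]
Because $T$ is analytic the map $p$ has finite discrete fibres, so each $E_{b_{i}}$ is finite and the coproduct over $i$ is finite; hence $L_{*}I$ is a finite coproduct of representables and $F_{T}L_{*}I \in \mathcal{L}(T)^{\op}$. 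This gives the first claim, after dualizing back to $\mathcal{K}(T)^{\op}$ and $\mathcal{L}(T)$.

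For the second assertion I would observe that everything is inherited from the Kleisli level. By \cref{cor:AlgMndMor} the functor $F \colon \mathcal{L}(S) \to \mathcal{L}(T)$ is, up to the opposite, the restriction of $F_{!} \colon \mathcal{K}(S) \to \mathcal{K}(T)$ to the full subcategories $\mathcal{L}(S)^{\op}$ and $\mathcal{L}(T)^{\op}$. By \cref{thm:Kleislifact} this $F_{!}$ preserves active and inert morphisms, hence so does $F_{!}^{\op}$ on the opposite categories; and by the first part the inert and active morphisms of $\mathcal{L}(S)$ and $\mathcal{L}(T)$ are exactly the restrictions of those on $\mathcal{K}(S)^{\op}$ and $\mathcal{K}(T)^{\op}$. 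Preservation by $F$ therefore follows immediately.

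The only genuine content is the coproduct computation in the second paragraph, which is precisely where the finiteness packaged into analyticity enters; the surrounding arguments are formal bookkeeping with factorization systems and the identification of $\mathcal{L}(T)^{\op}$ as a full subcategory of $\mathcal{K}(T)$. The main points to keep straight are the matching of the labels ``inert''/``active'' across the various opposites and the standard fact that a factorization system restricts to a full subcategory closed under intermediate objects --- both routine once the intermediate term is identified as $F_{T}L_{*}I$.
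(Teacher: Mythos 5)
Your argument is correct and follows essentially the same route as the paper's proof: both reduce the first claim to showing that the intermediate object $F_{T}L_{*}I$ of the canonical factorization stays in $\mathcal{L}(T)^{\op}$, verify this via \cref{eq:L*Yo} together with the fact that $L_{*}$ preserves coproducts (with finiteness coming from the finite discrete fibres of $p$ in the polynomial diagram), and deduce the second claim by restricting \cref{thm:Kleislifact} along $F_{!}$. Your explicit remarks on why a factorization system restricts to a full subcategory closed under intermediate objects simply spell out what the paper leaves implicit.
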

\begin{proof}
  Given a morphism $F_{T}(\coprod_{i=1}^{n}\Yo_{X}(x_{i})) \to
  F_{T}(\coprod_{j=1}^{m} \Yo_{X}(y_{j}))$, its canonical factorization is
  \[ F_{T}(\coprod_{i=1}^{n}\Yo_{X}(x_{i})) \to
    F_{T}L_{*}(\coprod_{i=1}^{n}\Yo_{X}(x_{i})) \to
    F_{T}(\coprod_{j=1}^{m} \Yo_{X}(y_{j}))\]
  where $L_{*}(\coprod_{i=1}^{n}\Yo_{X}(x_{i}))$ is defined with respect
  to the composite \[\coprod_{i=1}^{n}\Yo_{X}(x_{i})  \to
  T(\coprod_{j=1}^{m} \Yo_{X}(y_{j})) \to T(*)\] of the adjoint map and $T$
  applied to the unique map $\coprod_{j=1}^{m} \Yo_{X}(y_{j}) \to *$. It
  thus suffices to check that $L_{*}(\coprod_{i=1}^{n}\Yo_{X}(x_{i}))$ is
  again a finite coproduct of representables for any map
  $\coprod_{i=1}^{n}\Yo_{X}(x_{i}) \to T(*)$. This follows from
  \cref{eq:L*Yo} in the case $n=1$ and so in general since we have
  $L_{*}(\coprod_{i=1}^{n}\Yo_{X}(x_{i}))
  \simeq \coprod_{i=1}^{n} L_{*}\Yo_{X}(x_{i})$ as $L_{*}$ is a left
  adjoint. The preservation of active and inert morphisms then follows
  from \cref{thm:Kleislifact}.
\end{proof}

\begin{propn}\label{propn:anmndmorcart}
  Consider a morphism of analytic monads \cref{eq:anmndmor}, and let
  $F_{!}$ be the left adjoint to $F^{*}$ (which exists by
  \cref{cor:AlgMndMor}). Then:
  \begin{enumerate}[(i)]
  \item Every inert morphism in $\mathcal{K}(S)$ is
    $F_{!}$-cartesian.
  \item Given $F_{S}I \in \mathcal{K}(S)$ and a morphism $\psi \colon J \to
    f_{!}I$ in $\Fun(X, \mathcal{S})$, there exists an
    $F_{!}$-cartesian morphism over $F_{T}\psi$ in
    $\mathcal{K}(T)$, given by $F_{S}\phi$ where $\phi$ is determined by
    the pullback square
    \[
      \begin{tikzcd}
        K \arrow{r}{\phi} \arrow{d} & I \arrow{d} \\
        f^{*}J \arrow{r}{f^{*}\psi} & f^{*}f_{!}I
      \end{tikzcd}
    \]
    in $\Fun(Y, \mathcal{S})$.
  \item A morphism in $\mathcal{K}(S)$ is inert \IFF{} it is cartesian
    over an inert morphism in $\mathcal{K}(T)$.
  \end{enumerate}
\end{propn}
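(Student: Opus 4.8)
The plan is to analyze $F_{!}$-cartesian morphisms through the mapping spaces of the Kleisli \icats{}, where everything is controlled by a single comparison transformation. Since every object of $\mathcal{K}(S)$ is free, I would use $\Map_{\mathcal{K}(S)}(F_{S}C, F_{S}A) \simeq \Map_{\Fun(Y,\mathcal{S})}(C, SA)$ together with the identification $F_{!}F_{S} \simeq F_{T}f_{!}$ (from the proof of \cref{cor:AlgMndMor}) and $f_{!} \dashv f^{*}$ to rewrite $\Map_{\mathcal{K}(T)}(F_{!}F_{S}C, F_{!}F_{S}A) \simeq \Map(C, f^{*}Tf_{!}A)$. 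Under these equivalences the map induced by $F_{!}$ becomes postcomposition with a natural transformation $\theta_{A}\colon SA \to f^{*}Tf_{!}A$, namely the composite $SA \xto{Su_{A}} Sf^{*}f_{!}A \xto{\alpha_{f_{!}A}} f^{*}Tf_{!}A$, where $\alpha\colon Sf^{*} \to f^{*}T$ is the structure map of the morphism of analytic monads and $u\colon \id \to f^{*}f_{!}$ is the unit. The key point is that $\theta$ is \emph{cartesian}: $\alpha$ is cartesian by assumption, $u$ is cartesian (by the argument of \cite{polynomial}*{Lemma 2.1.5}), and $S$ preserves pullbacks since it is analytic, so both whiskered transformations are cartesian and hence so is their composite. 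Consequently a Kleisli morphism $\gamma\colon F_{S}K \to F_{S}I$ is $F_{!}$-cartesian \IFF{} the square with vertices $SK, SI, f^{*}Tf_{!}K, f^{*}Tf_{!}I$ and vertical maps $\theta_{K},\theta_{I}$ is a pullback in $\Fun(Y,\mathcal{S})$.

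For a free morphism $F_{S}\phi$ this square is exactly the naturality square of $\theta$ at $\phi$, so every free morphism is automatically $F_{!}$-cartesian. This immediately yields (ii): given the pullback square defining $\phi\colon K \to I$, the morphism $F_{S}\phi$ is free and hence $F_{!}$-cartesian, and a short Beck--Chevalley computation (working in $\mathcal{S}_{/Y}$ and $\mathcal{S}_{/X}$, where $f_{!}$ is composition with $f$ and $f^{*}$ is pullback along $f$) shows $f_{!}K \simeq J$ and $f_{!}\phi \simeq \psi$, so that $F_{!}(F_{S}\phi) = F_{T}f_{!}\phi \simeq F_{T}\psi$; thus $F_{S}\phi$ is a lift of $F_{T}\psi$ with target $F_{S}I$, as required.

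The main obstacle is (i), since inert morphisms are \emph{not} free: an inert morphism is the unit $\iota_{I}\colon F_{S}I \to F_{S}L_{*}I$ of the adjunction $L_{*} \dashv S_{/*}$, whose adjoint $I \to SL_{*}I$ does not factor through the monad unit, so the naturality-square shortcut does not apply and one must verify the pullback criterion directly. Here I would use the explicit description of $L_{*}$ as $s_{!}p^{*}$ from \cref{rmk:anmndL*} and the resulting formula \cref{eq:L*Yo} on representables, reducing to the case of (coproducts of) representables. For these the comparison square for $\iota_{I}$ assembles out of base-change squares of spaces; the compatibility $f_{!}L_{*} \simeq L_{*}f_{!}$ (which is forced by \cref{thm:Kleislifact}, since $F_{!}$ preserves inert morphisms, together with $F_{!}F_{S} \simeq F_{T}f_{!}$) lets one identify the bottom edge, and cartesianness then follows because the unit $\id \to f^{*}f_{!}$ is cartesian and $S$, $T$ preserve weakly contractible limits. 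Establishing this pullback on the nose is the technical heart of the proposition.

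Finally, (iii) follows formally once (i) is known. For the forward direction, if $\gamma$ is inert then $F_{!}\gamma$ is inert by \cref{thm:Kleislifact} and $\gamma$ is $F_{!}$-cartesian by (i). For the converse, I would factor $\gamma$ as an inert morphism $i$ followed by an active morphism $a$; since $F_{!}$ preserves both classes and factorizations are unique, the hypothesis that $F_{!}\gamma$ is inert forces $F_{!}a$ to be an equivalence. Active morphisms are free up to equivalence, hence $F_{!}$-cartesian, and a cartesian lift of an equivalence is an equivalence; therefore $a$ is an equivalence and $\gamma \simeq i$ is inert.
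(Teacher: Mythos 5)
Your first paragraph and your treatment of (ii) in fact reproduce the paper's entire argument for (i) and (ii): the paper also reduces $F_{!}$-cartesianness to the square $SI \to SJ$ over $f^{*}Tf_{!}I \to f^{*}Tf_{!}J$ being a pullback, observes that for a free morphism this is a naturality square of the cartesian transformation $S \to Sf^{*}f_{!} \to f^{*}Tf_{!}$, and verifies $f_{!}K \simeq J$, $f_{!}\phi \simeq \psi$ exactly as you sketch. The genuine gap comes next: you have the two classes of the factorization system swapped. In the canonical factorization $F_{S}I \to F_{S}L_{*}I \xto{F_{S}\phi''} F_{S}J$, a morphism is \emph{inert} when the unit-type map $F_{S}I \to F_{S}L_{*}I$ is an equivalence and \emph{active} when the free part $F_{S}\phi''$ is an equivalence; so the inert morphisms of $\mathcal{K}(S)$ are precisely the free morphisms up to equivalence, and the unit maps $F_{S}I \to F_{S}L_{*}I$ you single out are the \emph{active} ones. (This is forced by \cref{lem:SpFinertagree}: under $\mathcal{L}(\Sym) \simeq \SpF$ the inert morphisms, $\SpF^{\xint} \simeq \xF^{\op}$, correspond to free Kleisli maps $F_{\Sym}\psi$, while the active spans $S \xfrom{=} S \to T$ correspond to unit-type maps; it also matches the \emph{active--inert} order of the factorization in \cref{thm:Kleislifact}.) So (i) is already finished once free morphisms are shown cartesian --- the paper's proof of (i) is literally ``inert $=$ free composed with an equivalence, equivalences are cartesian, free morphisms are cartesian by the naturality square'' --- and what you defer to as ``the technical heart'' is not a missing step but a detour.

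Worse, the deferred claim is \emph{false}: active morphisms are not $F_{!}$-cartesian in general, so no completion of your sketch exists. Take the morphism of analytic monads over a point from the free associative monad $S$ (with $S(A) \simeq \coprod_{n} A^{\times n}$) to $T = \Sym$, and the active morphism $\gamma \colon F_{S}(*) \to F_{S}(\fset{2})$ adjoint to the word $(1,2) \in S(\fset{2})$. The cartesianness criterion asks that the square with horizontal maps $S(*) \to S(\fset{2})$ and $T(*) \to T(\fset{2})$ (composition with $\gamma$ and $F_{!}\gamma$) be a pullback; over the length-one component of $T(*)$ the left side contributes the single word $x \mapsto (1,2)$, while the fibre of $S(\fset{2}) \to T(\fset{2})$ over $[(1,2)]$ is the two-point orbit $\{(1,2),(2,1)\}$, so the square is not cartesian. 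The same swap infects your (iii): ``active morphisms are free up to equivalence, hence $F_{!}$-cartesian'' is this false lemma. Your (iii) is repairable without it --- once uniqueness of factorizations gives that $F_{!}a$ is an equivalence, note that $\gamma$ is cartesian by hypothesis and the inert part $i$ is cartesian by (i), so $a$ is cartesian by right cancellation and a cartesian morphism over an equivalence is an equivalence --- which would be a mild variant of the paper's argument (the paper instead uses (ii) to produce an inert cartesian lift of the same free morphism and invokes uniqueness of cartesian lifts). As written, however, parts (i) and (iii) rest on a misidentification of the inert class and on a false statement.
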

\begin{proof}
  Every inert morphism in $\mathcal{K}(S)$ is a composite of a free
  morphism and an equivalence. Since equivalences are always
  cartesian, to prove (i) it suffices to show that free morphisms in
  $\mathcal{K}(S)$ are cartesian. Given a morphism $\phi \colon I \to
  J$ in $\Fun(Y, \mathcal{S})$, we must show that for every object $K \in
  \Fun(Y, \mathcal{S})$ the commutative square
  \[
    \begin{tikzcd}
      \Map_{\Alg(S)}(F_{S}K, F_{S}I) \arrow{r}{(F_{S}\phi)_{*}} \arrow{d} & \Map_{\Alg(S)}(F_{S}K,
      F_{S}J) \arrow{d} \\
      \Map_{\Alg(T)}(F_{!}F_{S}K, F_{!}F_{S}I)
      \arrow{r}{(F_{!}F_{S}\phi)_{*}} &
      \Map_{\Alg(T)}(F_{!}F_{S}K, F_{!}F_{S}J)
    \end{tikzcd}
  \]
  is cartesian. Using the natural equivalence $F_{!}F_{S}
  \simeq F_{T}f_{!}$ and the various adjunctions involved, we identify
  this with the commutative square
    \[
    \begin{tikzcd}
      \Map_{\Fun(Y, \mathcal{S})}(K, SI) \arrow{r}{(S\phi)_{*}}
      \arrow{d} & \Map_{\Fun(Y, \mathcal{S})}(K, SJ) \arrow{d} \\
      \Map_{\Fun(Y, \mathcal{S})}(K, f^{*}Tf_{!}I)
      \arrow{r}{(f^{*}Tf_{!}\phi)_{*}} &
      \Map_{\Fun(Y, \mathcal{S})}(K, f^{*}Tf_{!}J).
    \end{tikzcd}
  \]
  This is cartesian for all $K$ \IFF{} the commutative square
  \[
    \begin{tikzcd}
      SI \arrow{r}{S\phi} \arrow{d} & SJ \arrow{d} \\
      f^{*}Tf_{!}I \arrow{r}{f^{*}Tf_{!}\phi} & f^{*}Tf_{!}J
    \end{tikzcd}
  \]
  is a cartesian square in $\Fun(Y, \mathcal{S})$. Now this square is
  indeed cartesian since it is a naturality square for the natural
  transformation $S \to Sf^{*}f_{!} \to f^{*}Tf_{!}$ which is
  cartesian since all the functors involved preserve pullbacks and the
  unit transformation $\id \to f^{*}f_{!}$ and the transformation
  $Sf^{*} \to f^{*}T$ are both cartesian. This proves (i).

  To prove (ii), first define $\phi \colon K \to I$ by the pullback
  square
    \[
      \begin{tikzcd}
        K \arrow{r}{\phi} \arrow{d} & I \arrow{d} \\
        f^{*}J \arrow{r}{f^{*}\psi} & f^{*}f_{!}I.
      \end{tikzcd}
    \]
  Then consider the commutative diagram
  \[
    \begin{tikzcd}
      f_{!}K \arrow{r}{f_{!}\phi} \arrow{d} & f_{!}I \arrow{d}
      \arrow[bend left=40,equals]{dd} \\
      f_{!}f^{*}J \arrow{r}{f_{!}f^{*}\psi}  \arrow{d} &
      f_{!}f^{*}f_{!}I  \arrow{d}  \\
      J \arrow{r}{\psi} & f_{!}I
    \end{tikzcd}
  \]
  in $\Fun(X, \mathcal{S})$. Here the bottom square is cartesian since the
  counit $f_{!}f^{*} \to \id$ is cartesian, and the top square is
  cartesian since
  $f_{!}$ preserves pullbacks. The composite square is then also
  cartesian, and since the right vertical composite is the identity,
  it follows that the map $f_{!}K \to J$ is an equivalence. The
  composite square thus gives an equivalence between $f_{!}\phi$ and
  $\psi$. The free morphism $F_{S}\phi$ is then cartesian over
  $F_{T}f_{!}\phi \simeq F_{T}\psi$ by part (i), which proves (ii).

  To prove (iii), it remains to prove that every cartesian morphism in
  $\mathcal{K}(S)$ that lies over an inert morphism in
  $\mathcal{K}(T)$ is inert. Since inert morphisms are composites of
  free morphisms and equivalences, we may assume the morphism in
  $\mathcal{K}(T)$ is free. Then the construction in (ii) gives a
  cartesian morphism with the same image in $\mathcal{K}(T)$ that is
  manifestly inert. Since cartesian morphisms are unique when they
  exist, this completes the proof.
\end{proof}

Restricting this to Lawvere theories, we have:
\begin{cor}\label{cor:LSintact}
  Consider a morphism of analytic monads \cref{eq:anmndmor}, and let
  $F \colon \mathcal{L}(S) \to \mathcal{L}(T)$ be the induced morphism
  of Lawvere theories. Then:
  \begin{enumerate}[(i)]
  \item   $\mathcal{L}(S)$ has $F$-cocartesian
    morphisms over inert morphisms in $\mathcal{L}(T)$, and these are
    precisely the inert morphisms in $\mathcal{L}(S)$.  
  \item A morphism in $\mathcal{L}(S)$ is active \IFF{} it lies over an
    active morphism in $\mathcal{L}(T)$.
  \end{enumerate}
\end{cor}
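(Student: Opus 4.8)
The plan is to deduce the corollary from \cref{propn:anmndmorcart} by transporting that statement about the Kleisli \icats{} $\mathcal{K}(S),\mathcal{K}(T)$ and the functor $F_{!}$ across the fully faithful inclusions $\mathcal{L}(S)^{\op} \hookrightarrow \mathcal{K}(S)$ and $\mathcal{L}(T)^{\op} \hookrightarrow \mathcal{K}(T)$ of \cref{cor:LTfact}, along which $F_{!}$ restricts to the opposite of $F$. I will use that a morphism of $\mathcal{L}(S)$ is $F$-cocartesian \IFF{} the corresponding morphism of $\mathcal{L}(S)^{\op}$ is cartesian for the restricted functor, and that any $F_{!}$-cartesian morphism of $\mathcal{K}(S)$ lying in the full subcategory $\mathcal{L}(S)^{\op}$ is a fortiori cartesian for this restriction. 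By \cref{cor:LTfact} a morphism of $\mathcal{L}(S)$ is moreover inert, resp.\ active, \IFF{} the corresponding morphism of $\mathcal{L}(S)^{\op} \subseteq \mathcal{K}(S)$ is inert, resp.\ active, in $\mathcal{K}(S)$, and $F$ preserves inert and active morphisms; the same holds for $T$.

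For (i), I first note that if $\iota$ is inert in $\mathcal{L}(S)$ then it is inert in $\mathcal{K}(S)$, hence $F_{!}$-cartesian by \cref{propn:anmndmorcart}(i); since it and its image lie in the full subcategories $\mathcal{L}(S)^{\op},\mathcal{L}(T)^{\op}$, it stays cartesian for the restricted functor, so $\iota$ is $F$-cocartesian, lying over the inert morphism $F(\iota)$. For existence of cocartesian lifts, given inert $\beta \colon F(c) \to d$, the opposite morphism $\beta^{\op}$ is inert, hence free, so is of the form $F_{T}\psi$ for some $\psi \colon J' \to f_{!}I$ with $c^{\op} = F_{S}I$ and $d^{\op} = F_{T}J'$; then \cref{propn:anmndmorcart}(ii) produces an $F_{!}$-cartesian lift $F_{S}\phi \colon F_{S}K \to F_{S}I$, which is inert by \cref{propn:anmndmorcart}(iii). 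The one point requiring genuine work is that this lift stays inside $\mathcal{L}(S)^{\op}$, i.e.\ that $K$ is again a finite coproduct of representables. This I check from the pullback defining $K$: writing $I = \coprod_{i}\Yo_{Y}(y_{i})$ and $J' = \coprod_{j}\Yo_{X}(x_{j})$, the map $I \to f^{*}f_{!}I$ is the unit of $f_{!} \dashv f^{*}$, and $\psi$ carries the summand $\Yo_{X}(x_{j})$ into a single summand $\Yo_{X}(fy_{g(j)})$ of $f_{!}I$; pulling $f^{*}J'$ back against this unit replaces each fibre $f^{-1}(x_{j})$ by the homotopy fibre over $y_{g(j)}$ of the transport equivalence $f^{-1}(x_{j}) \simeq f^{-1}(fy_{g(j)})$, which is contractible, so $K \simeq \coprod_{j}\Yo_{Y}(p_{j})$ is a finite coproduct of representables (this is the content of \cref{eq:L*Yo}). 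Hence the inert lift lives in $\mathcal{L}(S)$ and provides the desired $F$-cocartesian lift of $\beta$. Finally, any $F$-cocartesian morphism over an inert morphism coincides with this constructed inert lift by uniqueness of cocartesian lifts, hence is itself inert; combined with the first observation this identifies the $F$-cocartesian morphisms over inert morphisms with exactly the inert morphisms of $\mathcal{L}(S)$.

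For (ii), the ``only if'' direction is immediate since $F$ preserves active morphisms. For ``if'', suppose $\gamma$ lies over an active morphism and factor it as $\gamma \simeq \alpha\iota$ with $\iota$ inert and $\alpha$ active, using the factorization system on $\mathcal{L}(S)$ from \cref{cor:LTfact}. Applying $F$ gives an inert--active factorization $F(\gamma) \simeq F(\alpha)F(\iota)$; as $F(\gamma)$ is active, uniqueness of factorizations forces $F(\iota)$ to be an equivalence. By part (i) the inert morphism $\iota$ is $F$-cocartesian, and a cocartesian morphism lying over an equivalence is itself an equivalence, so $\iota$ is an equivalence and $\gamma \simeq \alpha$ is active. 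The main obstacle in the whole argument is the single computational step in (i) showing that the cartesian lift of \cref{propn:anmndmorcart}(ii) remains in $\mathcal{L}(S)^{\op}$; everything else is a formal transport of \cref{propn:anmndmorcart} across the op-equivalences together with uniqueness of (co)cartesian lifts.
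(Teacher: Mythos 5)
Your proof is correct, and for part (i) it is essentially the paper's own argument: the same restriction of \cref{propn:anmndmorcart}(i) to the full subcategories $\mathcal{L}(S)^{\op} \subseteq \mathcal{K}(S)$ and $\mathcal{L}(T)^{\op} \subseteq \mathcal{K}(T)$, the same reduction of existence of cocartesian lifts to free morphisms $F_{T}\psi$, and the same key computation --- the paper likewise unpacks $\psi$ into a map $h \colon \fset{m} \to \fset{n}$ together with equivalences $x_{j} \simeq f(y_{h(j)})$ and identifies $K \simeq \coprod_{j}\Yo_{Y}(y_{h(j)})$ using disjointness of coproducts and universality of colimits, which is exactly what your fibre computation establishes. (Two cosmetic quibbles there: your parenthetical attribution to \cref{eq:L*Yo} is slightly off, since that equation computes $L_{*}$ of a representable rather than this particular pullback, though your direct computation stands on its own; and ``inert, hence free'' should strictly read ``free up to composition with an equivalence'', a harmless gloss that the paper makes implicitly as well, since cartesian lifts depend on the base morphism only up to equivalence in the slice.) For part (ii) you take a genuinely different route: the paper uses the cocartesian lifts from (i) to \emph{lift} the inert--active factorization system from $\mathcal{L}(T)$ to a factorization system on $\mathcal{L}(S)$ whose left class is, by (i), the inert morphisms, and then invokes \cite{HTT}*{Proposition 5.2.8.11} (one class of a factorization system determines the other) to conclude that the active morphisms are exactly those over actives. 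You instead argue directly: factor $\gamma \simeq \alpha\iota$, use contractibility of inert--active factorizations in $\mathcal{L}(T)$ to force $F(\iota)$ to be an equivalence, and then use that $\iota$ is $F$-cocartesian (by (i)) over an equivalence, hence itself an equivalence. Your argument in effect inlines the uniqueness principle the paper cites; it buys you independence from the lifted-factorization-system construction and from the HTT reference, at the cost of the extra (standard, and correctly deployed) facts that a cocartesian morphism over an equivalence is an equivalence and that $F$ preserves both classes, which you correctly source from \cref{cor:LTfact}. Both arguments rest on the same essential inputs, so the statement is fully proved either way.
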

\begin{proof}
  By \cref{propn:anmndmorcart} every inert morphism in
  $\mathcal{K}(S)$ is cartesian over $\mathcal{K}(T)$. Restricting
  to the full subcategories $\mathcal{L}(S)^{\op}$ and
  $\mathcal{L}(T)^{\op}$, we get after taking op that every inert
  morphism in $\mathcal{L}(S)$ is cocartesian over
  $\mathcal{L}(T)$. To show that $\mathcal{L}(S)$ has
  cocartesian morphisms over inert maps in $\mathcal{L}(T)$, it is
  enough to
  check that given $\Xi := \coprod_{i=1}^{n} \Yo_{Y}(y_{i})$ and a
  morphism $\psi \colon \coprod_{j=1}^{m}\Yo_{X}(x_{j}) \to f_{!}\Xi
  \simeq \coprod_{i=1}^{n}\Yo_{X}f(y_{i})$ in
  $\Fun(X,\mathcal{S})$, the cartesian morphism over $\psi$ from
  \cref{propn:anmndmorcart}(ii) lies in the full subcategory
  $\mathcal{L}(S)^{\op}$. First observe that the map $\psi$ amounts to
  specifiying a morphism $h \colon \fset{m} \to \fset{n}$ and
  equivalences $x_{j} \simeq f(y_{h(j)})$ in $X$. In the cartesian
  square
  \[
    \begin{tikzcd}
      K \arrow{r} \arrow{d} &
      \coprod_{i=1}^{n} \Yo(y_{i}) \arrow{d} \\
      f^{*}\coprod_{j=1}^{m}\Yo_{X}(x_{j}) \arrow{r} &
      f^{*}\coprod_{i=1}^{n}\Yo_{X}f(y_{i}),
    \end{tikzcd}
  \]
  that induces the cartesian morphism in $\mathcal{K}(S)$, the object
  $K$ can therefore be identified with $\coprod_{j=1}^{m}
  \Yo(y_{h(j)})$, using that coproducts are disjoint
  and colimits are universal in $\Fun(Y, \mathcal{S})$. To prove (i)
  it remains to show that
  \emph{every} morphism in $\mathcal{L}(S)$ that is cocartesian over
  an inert map in $\mathcal{L}(T)$ is inert; this is true since cocartesian
  morphisms are unique when they exist and the construction in
  \cref{propn:anmndmorcart}(ii) gives a cocartesian morphism that
  \emph{is} inert.
  
  Since $\mathcal{L}(S)$ has cocartesian morphisms over inert maps in
  $\mathcal{L}(T)$, we can lift the inert--active factorization system
  on $\mathcal{L}(T)$
  to a factorization system on $\mathcal{L}(S)$, whereby a morphism
  factors as a cocartesian morphism over an inert map in $\mathcal{L}(T)$
  followed by a morphism that maps to an active map in
  $\mathcal{L}(T)$. By (i),
  the first class is precisely that of inert morphisms in
  $\mathcal{L}(S)$. But one class in a factorization system is completely
  determined by the other by \cite{HTT}*{Proposition 5.2.8.11}, so
  this means that the active morphisms in
  $\mathcal{L}(S)$ must be
  exactly those that lie over active morphisms in $\mathcal{L}(T)$.
\end{proof}

\begin{lemma}\label{lem:SpFinertagree}
  Under the identification $\SpF \simeq \mathcal{L}(\Sym)$, the
  inert--active factorization system on $\SpF$ (as in
  \cref{defn:SpFactint}) corresponds to that from \cref{cor:LTfact}.
\end{lemma}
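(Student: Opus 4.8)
Since a factorization system is determined by either one of its two classes by \cite{HTT}*{Proposition 5.2.8.11}, it suffices to check that the inert morphisms of the two systems agree under the identification $\SpF \simeq \mathcal{L}(\Sym)$. The plan is to first recall how this identification arises: by the preceding lemma we have $\Th(\Sym) \simeq \SpFL$, and by definition $\mathcal{L}(\Sym)^{\op}$ is the full subcategory of the Kleisli \icat{} $\mathcal{K}(\Sym)$ on the free algebras $F_{\Sym}\fset{n}$. Under the free--forgetful adjunction together with the formula $\Sym(\fset{n}) \simeq \coprod_{k} (\fset{n})^{\times k}_{h\Sigma_{k}}$, a morphism $\phi \colon F_{\Sym}\fset{m} \to F_{\Sym}\fset{n}$ in $\mathcal{K}(\Sym)$ corresponds to a span $\fset{m} \xfrom{f} Y \xto{g} \fset{n}$; this exhibits an equivalence $\mathcal{K}(\Sym) \simeq \SpF$, so that $\mathcal{L}(\Sym) \simeq \SpF^{\op}$, and the identification with $\SpF$ itself is obtained by composing with the canonical self-equivalence $\SpF^{\op} \isoto \SpF$ reversing spans (which interchanges the active and inert morphisms of \cref{defn:SpFactint}).

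Next I would identify the Kleisli-inert morphisms explicitly under $\mathcal{K}(\Sym) \simeq \SpF$. By \cref{thm:Kleislifact} and the construction of the canonical factorization, the inert morphisms of $\mathcal{K}(\Sym)$ are precisely the free morphisms, \ie{} those of the form $F_{\Sym}(u)$ for a map of finite sets $u$ (up to equivalence). To compute the associated span I would use the explicit bispan for $\Sym$ from \cref{propn:AnMndterm} together with \cref{rmk:anmndL*}: the left adjoint $L_{*}$ to $\Sym_{/*}$ sends $\coprod_{i=1}^{m}\Yo(x_{i})$, whose structure map classifies finite sets $A_{i}$, to $\coprod_{i} A_{i}$, and under the adjunction the adjoint of $F_{\Sym}(u)\colon F_{\Sym}\fset{m} \to F_{\Sym}\fset{n}$ is exhibited as the span $\fset{m} \xfrom{=} \fset{m} \xto{u} \fset{n}$ with invertible backward leg. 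Thus, under $\mathcal{K}(\Sym) \simeq \SpF$, the free (equivalently, Kleisli-inert) morphisms are exactly the spans with invertible backward leg, which by \cref{defn:SpFactint} are the \emph{active} morphisms $\SpF^{\act} \simeq \xF$.

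Finally, passing to opposites and applying the span-reversal self-equivalence $\SpF^{\op} \isoto \SpF$ used in the identification, the class $\SpF^{\act}$ (backward leg invertible) is carried to $\SpF^{\xint} \simeq \xF^{\op}$ (forward leg invertible). Hence under $\mathcal{L}(\Sym) \simeq \SpF$ the Kleisli-inert morphisms of \cref{cor:LTfact} coincide with the inert morphisms of \cref{defn:SpFactint}, and the two factorization systems agree. The step requiring the most care is precisely this bookkeeping of the two opposite-category operations — the passage $\mathcal{L}(\Sym) = \mathcal{K}(\Sym)^{\op}$ and the self-duality of $\SpF$ — which must be tracked simultaneously to ensure that inert morphisms are matched with inert (rather than active) ones; once the explicit identification of free morphisms with active spans is in hand, everything else is formal.
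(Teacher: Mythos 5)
Your route is genuinely different from the paper's: the paper identifies both $\SpF$ and $\mathcal{L}(\Sym)$ with the \emph{canonical pattern} $\mathcal{W}(\Sym)$ of \cite{patterns1}*{\S 13} and then invokes \cite{patterns1}*{Corollary 14.18}, reducing the whole statement to extendability of $\SpF$ (from \cite{BHS}) and saturation, which is exactly \cref{lem:SpFprod}. Your reduction via \cite{HTT}*{Proposition 5.2.8.11} is legitimate (the paper uses the same move in \cref{cor:LSintact}), and your final bookkeeping --- Kleisli-inert $=$ free, free morphisms $=$ spans with invertible backward leg, the two opposite-category operations cancelling so that inert matches inert --- lands on the correct answer. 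But as written there are two connected gaps. First, ``this exhibits an equivalence $\mathcal{K}(\Sym) \simeq \SpF$'' does not follow from a computation of mapping spaces: an identification of mapping spaces produces neither a functor nor an equivalence, and verifying that Kleisli composition coherently agrees with composition of spans by pullback is a genuine $\infty$-categorical construction --- this coherence problem is essentially why the paper reaches for the canonical-pattern machinery instead of arguing by hand.

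Second, and more delicately: the lemma is about the \emph{specific} identification $\SpF \simeq \mathcal{L}(\Sym)$ supplied by the preceding lemma via \cref{lem:ThTL} (Yoneda into $\Mod_{\SpF}(\mathcal{S}) \simeq \Alg(\Sym)$), whereas you substitute a hand-built one (Kleisli spans followed by span reversal) without checking that the two agree. Since span reversal is a nontrivial self-equivalence of $\SpF$ that fixes every object and interchanges the two classes of \cref{defn:SpFactint}, agreement on objects does not rule out being off by exactly the automorphism that would flip your conclusion. Both gaps can be closed simultaneously by constructing no new equivalence at all: unwind the \emph{given} identification on mapping spaces. By \cref{propn:ModisAlgMnd}\ref{it:LYox} (as used in \cref{lem:ThTL}), a span $S \xfrom{f} Y \xto{g} T$ is sent to the algebra map $F_{\Sym}(T) \to F_{\Sym}(S)$ (note the op) whose adjoint $T \to \Sym(S)$ classifies $t \mapsto (g^{-1}(t) \to S)$; this is free precisely when every fibre $g^{-1}(t)$ is a singleton, \ie{} when $g$ is invertible, \ie{} when the span is inert in the sense of \cref{defn:SpFactint}. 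Combined with your reduction to one class and with ``Kleisli-inert $=$ free'' --- where the implication free $\Rightarrow$ inert also deserves a line: by \cref{eq:L*Yo} the adjoint of $F_{\Sym}(u)$ classifies singletons, so the unit part of its canonical factorization is an equivalence --- this yields a complete direct proof, a self-contained alternative to the paper's appeal to \cite{patterns1}*{Corollary 14.18}.
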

\begin{proof}
  We will show that both $\SpF$ and $\mathcal{L}(\Sym)$ can be
  identified with the canonical pattern $\mathcal{W}(\Sym)$ associated to the monad
  $\Sym$ in \cite{patterns1}*{\S 13}. There $\mathcal{W}(\Sym)^{\op}$
  is defined as the full subcategory of $\Alg(\Sym) \simeq
  \Mon_{\SpF}(\mathcal{S})$ of free algebras on objects on the full
  subcategory $\mathcal{U}(\Sym)^{\op} \subseteq \mathcal{S}$
  consisting of objects of the form $L_{*}(p)$ for morphisms $p \colon
  * \to \Sym(*)$. By \cref{eq:L*Yo} these objects are precisely the
  finite coproducts of the point, \ie{} the finite sets, so that
  $\mathcal{W}(\Sym)^{\op}$ is precisely
  $\mathcal{L}(\Sym)^{\op}$. The factorization system on both is
  obtained by restricting that on $\mathcal{K}(\Sym)$, and both have
  the free algebra on a point as their unique elementary object.

  It remains to show that the canonical pattern $\mathcal{W}(\Sym)$ is
  in fact $\SpF$. For this we apply \cite{patterns1}*{Corollary
    14.18}. As already observed, the pattern $\SpF$ is
  \emph{extendable} by \cite{BHS}*{Example 3.3.25}, and by
  inspection every object of $\SpF$ admits a (unique) active morphism
  to the elementary object $\fset{1}$. The only condition left to
  check is then that the pattern $\SpF$ is \emph{saturated}, meaning
  that for every object $S \in \SpF$, the functor
  $\Map_{\SpF}(S,\blank) \colon \SpF \to \mathcal{S}$ is an
  $\SpF$-monoid. This amounts to the functor taking disjoint unions of
  sets to products, or in other words that the disjoint union is the
  cartesian product in $\SpF$, which we saw in \cref{lem:SpFprod}.
\end{proof}

\begin{propn}\label{propn:LTopd}
  Let $T$ be an analytic monad on $\Fun(X, \mathcal{S})$. Then the
  morphism of Lawvere theories $\mathcal{L}(T) \to \SpF$, corresponding
  to the unique morphism of analytic monads
  \[
    \begin{tikzcd}
      \Alg(\Sym) \arrow{d}{U_{\Sym}} \arrow{r}{Q^{*}} & \Alg(T)
      \arrow{d}{U_{T}} \\
      \mathcal{S} \arrow{r}{q^{*}} & \Fun(X, \mathcal{S})
    \end{tikzcd}
    \]
    to $\Sym$, is a $\SpF$-\iopd{}. Moreover, the functor
    $p \colon X \to \mathcal{L}(T)$ makes
    $\Th(T) = (\mathcal{L}(T),p)$ a pinned $\SpF$-\iopd{}.
\end{propn}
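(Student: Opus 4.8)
The plan is to verify directly the three conditions \ref{spfwsf1}--\ref{spfwsf3} of \cref{SpFopdcond} for the functor $F \colon \mathcal{L}(T) \to \SpF$ induced by the unique map of analytic monads $T \to \Sym$. Here $F$ is the restriction to Lawvere theories of the left adjoint $F_{!} \colon \Alg(T) \to \Alg(\Sym)$ from \cref{cor:AlgMndMor}, and since $F_{!}F_{T} \simeq F_{\Sym}q_{!}$ with $q \colon X \to *$ and $q_{!}$ sending each representable $\Yo_{X}(x)$ to the point, the functor $F$ carries the object $F_{T}(x_{1},\ldots,x_{n})$ of $\mathcal{L}(T)^{\op}$ to $\fset{n} \in \SpF$; in particular the fibre $\mathcal{L}(T)_{\fset{1}}$ consists exactly of the free algebras $F_{T}(x) = p_{T}(x)$ on single representables. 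Throughout I would use \cref{lem:SpFinertagree} to identify the inert--active factorization system on $\SpF = \mathcal{L}(\Sym)$ of \cref{defn:SpFactint} with the one coming from \cref{cor:LTfact}, so that \cref{cor:LSintact} applies verbatim.

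Condition \ref{spfwsf1} is then immediate: applying \cref{cor:LSintact}(i) to the map $T \to \Sym$ (so that the source monad there is $T$ and the target is $\Sym$) shows precisely that $\mathcal{L}(T)$ has $F$-cocartesian morphisms over inert morphisms of $\SpF$, and that these coincide with the inert morphisms of $\mathcal{L}(T)$. For condition \ref{spfwsf2} I would first identify the cocartesian lift of $\rho'_{i} \colon \fset{n} \intto \fset{1}$ starting from $F_{T}(x_{1},\ldots,x_{n})$. Using the explicit construction of \cref{propn:anmndmorcart}(ii) — applied with $\psi$ the point $\fset{1} \to \fset{n} \simeq q_{!}(\coprod_{k}\Yo_{X}(x_{k}))$ picking out the $i$-th factor — the defining pullback in $\Fun(X,\mathcal{S})$ computes, using that coproducts are disjoint and universal, the object $K$ as $\Yo_{X}(x_{i})$ and the map $\phi$ as the $i$-th coproduct inclusion. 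Thus the cocartesian lift has target $F_{T}(x_{i})$ and is the opposite of the coproduct inclusion $F_{T}(x_{i}) \hookrightarrow \coprod_{j}F_{T}(x_{j}) = F_{T}(x_{1},\ldots,x_{n})$ in $\Alg(T)$. Since products in $\mathcal{L}(T)$ are by definition coproducts of free algebras in $\mathcal{L}(T)^{\op} \subseteq \Alg(T)$, these cocartesian morphisms are exactly the projections exhibiting $F_{T}(x_{1},\ldots,x_{n})$ as $\prod_{i}F_{T}(x_{i})$, which is condition \ref{spfwsf2}.

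Condition \ref{spfwsf3} is then essentially formal: by the object description above the cocartesian-transport functor $\mathcal{L}(T)_{\fset{n}} \to \prod_{i=1}^{n}\mathcal{L}(T)_{\fset{1}}$ sends $F_{T}(x_{1},\ldots,x_{n})$ to $(F_{T}(x_{1}),\ldots,F_{T}(x_{n}))$, and since every object of $\mathcal{L}(T)_{\fset{1}}$ is of the form $F_{T}(x)$, every tuple is hit, giving essential surjectivity. This establishes that $F \colon \mathcal{L}(T) \to \SpF$ is a $\SpF$-\iopd{}. Finally, the functor $p = p_{T} \colon X \to \mathcal{L}(T)$ lands in $\mathcal{L}(T)_{\fset{1}}$ and is essentially surjective onto $\mathcal{L}(T)_{\fset{1}}^{\simeq}$, again because these objects are precisely the $F_{T}(x)$, so $\Th(T) = (\mathcal{L}(T),p)$ is a pinned $\SpF$-\iopd{}. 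The step I expect to require the most care is the bookkeeping in the second paragraph: confirming that the abstract $F$-cocartesian lift of $\rho'_{i}$ produced by \cref{cor:LSintact} and \cref{propn:anmndmorcart} really is the coproduct-inclusion map, so that it coincides with the Lawvere-theory product projection and conditions \ref{spfwsf2} and \ref{spfwsf3} can be read off the free-algebra description of $\mathcal{L}(T)^{\op}$.
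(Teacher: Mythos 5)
Your proof is correct and takes essentially the same route as the paper: both verify conditions (1$'$)--(3$'$) of \cref{SpFopdcond} by combining \cref{cor:LSintact} with \cref{lem:SpFinertagree} for the inert lifts, use the pullback construction of \cref{propn:anmndmorcart}(ii) to identify the cocartesian lift of $\rho'_{i}$ with (the opposite of) the coproduct inclusion $F_{T}x_{i} \hookrightarrow F_{T}(x_{1},\ldots,x_{n})$ in $\Alg(T)$, and deduce conditions (2$'$), (3$'$) and the pinning from the resulting free-algebra description. The only cosmetic difference is that you identify the objects of $\mathcal{L}(T)$ lying over $\fset{1}$ by computing $F$ on objects via $F_{!}F_{T} \simeq F_{\Sym}q_{!}$, whereas the paper argues via product-preservation of the functor to $\SpF$; these amount to the same observation.
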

\begin{proof}
  We check the conditions \ref{spfwsf1}--\ref{spfwsf3} in
  \cref{SpFopdcond}. Condition \ref{spfwsf1} follows from 
  \cref{cor:LSintact} and the identification of the inert morphisms in
  $\SpF$ from \cref{lem:SpFinertagree}.
  The pullback square
  \[
    \begin{tikzcd}
      F_{T}x_{i} \arrow{r} \arrow{d} & F_{T}(x_{1},\ldots,x_{n})
      \arrow{d} \\
      F_{T}q^{*}\{i\} \arrow{r} & F_{T}q^{*}\fset{n}
    \end{tikzcd}
  \]
  in $\mathcal{K}(T)$ shows, by the description of the cartesian
  morphisms in  \cref{propn:anmndmorcart}, that the inclusion of the
  factor $F_{T}x_{i}$ in the coproduct
  $F_{T}(x_{1},\ldots,x_{n})\simeq \coprod_{i=1}^{n}F_{T}x_{i}$ is
  cartesian over the inclusion $\{i\} \hookrightarrow \fset{n}$. Over
  in $\mathcal{L}(T)$, this
  means that the cocartesian morphisms $F_{T}(x_{1},\ldots,x_{n}) \to
  F_{T}(x_{i})$ over $\rho_{i}$ exhibit $F_{T}(x_{1},\ldots, x_{n})$
  as the product $\prod_{i=1}^{n}F_{T}x_{i}$, which is precisely
  condition \ref{spfwsf2}. Moreover, given objects $F_{T}x_{i}$ for $i
  = 1,\ldots,n$, the same argument shows that the object
  $F_{T}(x_{1},\ldots,x_{n})$ in $\mathcal{L}(T)_{\fset{n}}$ is sent
  to $(F_{T}(x_{1}),\ldots, F_{T}(x_{n}))$ in $\prod_{i=1}^{n}
  \mathcal{L}(T)_{\fset{1}}$ under cocartesian transport along the
  maps $\rho_{i}$. This gives condition \ref{spfwsf3}, which completes
  the proof that $\mathcal{L}(T)$ is a $\SpF$-\iopd{}.

  We have a commutative square
  \[
    \begin{tikzcd}
      X \arrow{r}{p} \arrow{d} & \mathcal{L}(T) \arrow{d} \\
      \{\fset{1}\} \arrow{r} & \SpF,
    \end{tikzcd}
  \]
  which shows that $p$ factors through a morphism $X \to
  \mathcal{L}(T)_{\fset{1}}^{\simeq}$. It remains to show that this is
  essentially surjective. Every object of $\mathcal{L}(T)$ is a
  product of objects in the image of $p$, but since the functor to
  $\SpF$ preserves products, these products cannot lie over $\fset{1}$
  if they have more than a single factor. Thus the only objects of
  $\mathcal{L}(T)$ that map to $\fset{1}$ are those in the image of
  $p$, as required.
\end{proof}

\cref{propn:LTopd} shows that the Lawvere theory of any analytic monad
is a pinned $\SpF$-\iopd{}. We already saw in \cref{cor:POpdinLawvAn}
that $\POpd(\SpF)$ was a full subcategory of $\LawvAn$, so this means
this subcategory is actually the entire \icat{} $\LawvAn$. Combining this with \cref{thm:LawvTheq} and
\cref{obs:ThAnMnd}, we get:
\begin{cor}\label{cor:opdmndeq}
  We have mutually inverse equivalences
  \[ \Md : \POpd(\SpF) \rightleftarrows \AnMnd : \Th \]
  induced by those of \cref{thm:LawvTheq}.
\end{cor}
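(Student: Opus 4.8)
The plan is to deduce the statement purely by assembling the full faithfulness and essential surjectivity results already in hand, checking that the equivalences of \cref{thm:LawvTheq} restrict to the relevant full subcategories. Recall from \cref{obs:ThAnMnd} that the equivalence $\Th \colon \AlgMnd \isoto \Lawv$ of \cref{thm:LawvTheq} restricts to an equivalence $\AnMnd \isoto \LawvAn$ onto a full subcategory of $\LawvSpF = \Lawv_{/\SpFL}$ (using that $\Th(\Sym) \simeq \SpFL$), with inverse the restriction of $\Md$. Thus it suffices to identify this full subcategory $\LawvAn$ with $\POpd(\SpF)$.

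By \cref{cor:POpdinLawvAn} we already have a \emph{fully faithful} functor $\POpd(\SpF) \hookrightarrow \LawvAn$, obtained by combining the inclusion $\POpd(\SpF) \hookrightarrow \Lawv_{/\SpFL}$ of \cref{cor:POpdSpFLT} with the fact that the monad $T_{(\mathcal{O},q)}$ of a pinned $\SpF$-\iopd{} is analytic. The only thing left is \emph{essential surjectivity}. Since $\Th$ is an equivalence onto $\LawvAn$, every object of $\LawvAn$ is of the form $\Th(T) = (\mathcal{L}(T), p)$ for some analytic monad $T$; but \cref{propn:LTopd} says precisely that this Lawvere theory is a pinned $\SpF$-\iopd{}, hence lies in the essential image of $\POpd(\SpF)$. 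Therefore the fully faithful functor $\POpd(\SpF) \hookrightarrow \LawvAn$ is essentially surjective, and so is an equivalence.

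Composing the equivalences $\POpd(\SpF) \simeq \LawvAn \simeq \AnMnd$ then yields the asserted mutually inverse equivalences, and since each is a restriction of $\Th$ (respectively $\Md$), they are indeed induced by those of \cref{thm:LawvTheq}. I expect no genuinely hard step to remain here—all the substance is in \cref{propn:LTopd}, which supplies essential surjectivity, and in \cref{cor:POpdSpFLT} together with the analyticity of $T_{(\mathcal{O},q)}$, which supply the fully faithful inclusion. The one point deserving explicit care is the compatibility of the named functors with the restrictions: namely that $\Th|_{\AnMnd}$ lands in $\POpd(\SpF)$ (recorded in \cref{propn:LTopd}) and that $\Md|_{\POpd(\SpF)}$ lands in $\AnMnd$ (recorded in the construction of $T_{(\mathcal{O},q)}$), so that the abstract equivalence of full subcategories is realized by $\Md$ and $\Th$ themselves rather than merely by some equivalence.
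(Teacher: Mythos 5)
Your proposal is correct and follows essentially the same route as the paper: the paper likewise combines \cref{cor:POpdinLawvAn} (the fully faithful inclusion $\POpd(\SpF) \hookrightarrow \LawvAn$) with \cref{propn:LTopd} (the Lawvere theory $\Th(T)$ of any analytic monad is a pinned $\SpF$-\iopd{}, giving essential surjectivity), and then invokes \cref{thm:LawvTheq} and \cref{obs:ThAnMnd} to conclude. Your added remark on checking that the equivalence is realized by the named functors $\Md$ and $\Th$ themselves is a sensible explicit spelling-out of what the paper leaves implicit, but it is not a departure from the paper's argument.
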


\section{$\infty$-operads and complete dendroidal Segal spaces}\label{sec:completedend}

Our work so far gives an equivalence
\[ \POpd(\SpF) \simeq \AnMnd.\]
On the other hand, in \cite{polynomial} we identified $\AnMnd$ with
the \icat{} $\Seg_{\Oop}(\mathcal{S})$ of \emph{dendroidal Segal
  spaces}. Our goal in this section is to prove \cref{thm:csegO},
\ie{} to show that the resulting
composite equivalence between $\POpd(\SpF)$ and
$\Seg_{\Oop}(\mathcal{S})$ restricts to an equivalence between
$\Opd(\SpF)$, which we identify as the
full subcategory of $\POpd(\SpF)$ spanned by those pinned
$\SpF$-\iopds{}
$(\mathcal{O}, \alpha \colon X \twoheadrightarrow
\mathcal{O}_{\fset{1}}^{\simeq})$ where the map $\alpha$ is an
equivalence, and the full subcategory of \emph{complete} dendroidal
Segal spaces, meaning those
dendroidal Segal spaces whose underlying Segal space is complete in
the sense of Rezk~\cite{RezkCSS}.

We can regard (pinned) \icats{} as (pinned) \iopds{} with only unary
operations, and similarly we can regard Segal spaces over $\simp$ as a
full subcategory of dendroidal Segal spaces (whose values at all
non-unary corollas are $\emptyset$). Most of this section is concerned
with showing that our equivalence and that of \cite{polynomial}
restricts to an equivalence between these full subcategories, thus
producing the following
commutative diagram:
\begin{equation}
  \label{eq:linmnddiag}
  \begin{tikzcd}
    \POpd(\SpF) \arrow{r}{\sim} & \AnMnd &
    \Seg_{\Oop}(\mathcal{S}) \arrow{l}[swap]{\sim} \\
    \PCatI \arrow{r}{\sim} \arrow[hookrightarrow]{u} & \LinMnd \arrow[hookrightarrow]{u} & \Seg_{\Dop}(\mathcal{S})
    \arrow{l}[swap]{\sim} \arrow[hookrightarrow]{u}
  \end{tikzcd}
\end{equation}
Here:
\begin{itemize}
\item $\PCatI$ is the \icat{} of \emph{pinned \icats{}}, meaning 
\icats{} $\mathcal{C}$ equipped with essentially surjective maps of
\igpds{} $X \twoheadrightarrow \mathcal{C}^{\simeq}$.
\item The fully faithful inclusion $\PCatI \hookrightarrow
  \POpd(\SpF)$ is left adjoint to the functor given by
  \[ (\mathcal{O}, p \colon X
  \twoheadrightarrow \mathcal{O}_{\fset{1}}^{\simeq}) \mapsto
  (\mathcal{O}_{\fset{1}}, p).\]
\item $\LinMnd$ is a full subcategory of $\AnMnd$ containing the
  \emph{linear monads} (which preserve all colimits).
\item The fully faithful inclusion $\SegD \hookrightarrow \SegO$ is
  given by left Kan extension along the inclusion $\Dop
  \hookrightarrow \Oop$, and so is left adjoint to the underlying
  Segal space functor.
\end{itemize}
We begin by constructing the left-hand square in
\cref{sec:line-monads-compl}, which amounts to understanding the
restriction of the equivalence $\POpd(\SpF) \simeq \AnMnd$ to the full
subcategory of pinned \iopds{} with only unary operations, which we
identify with pinned \icats{}.  We then construct the right-hand
square in \cref{sec:linmndsegal}; this requires proving that the
equivalence between analytic monads and dendroidal Segal spaces from
\cite{polynomial} restricts to an equivalence between linear monads and
Segal spaces on $\Dop$.  Finally, in
\cref{sec:complete}, we show that the composite equivalence
between pinned \icats{} and Segal spaces restricts to one between
\icats{} and complete Segal spaces, and then extend this to the
operadic setting.

\subsection{Linear monads and $\infty$-categories}
\label{sec:line-monads-compl}

Any \icat{} can be regarded as an \iopd{} with only unary
operations. In this subsection we will first describe this embedding
concretely in terms of $\SpF$-\iopds{}, and then identify the full
subcategory of $\AnMnd$ that corresponds to (pinned) \icats{} when we
restrict the equivalence of \cref{cor:opdmndeq}.

\begin{defn}
  For $\mathcal{C} \in \CatI$, we define $\xF^{\op}_{\mathcal{C}} \to
  \xF^{\op}$ as the cocartesian fibration corresponding to the functor
  \[ j_{*}\mathcal{C} \colon \xF^{\op} \to \CatI \]
  obtained by right Kan extension of $\mathcal{C}$ along the inclusion
  $j \colon \{\fset{1}\} \hookrightarrow \xF^{\op}$. The functor
  $j_{*}\mathcal{C}$ is then given by
  \[ j_{*}\mathcal{C}(\fset{n}) \simeq \Fun(\fset{n}, \mathcal{C})
    \simeq \mathcal{C}^{\times n}.\]
\end{defn}

\begin{lemma}
  The composite $\xF^{\op}_{\mathcal{C}} \to \xF^{\op} \to \SpF$ is a
  $\SpF$-\iopd{}, and $\xF^{\op}_{(\blank)}$ gives a functor $\CatI
  \to \Opd(\SpF)$.
\end{lemma}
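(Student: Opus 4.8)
The plan is to verify conditions \ref{spfwsf1}--\ref{spfwsf3} of \cref{SpFopdcond} for the composite $\pi \colon \xF^{\op}_{\mathcal{C}} \to \xF^{\op} \to \SpF$, exploiting that $\xF^{\op} \simeq \SpF^{\xint}$ is exactly the subcategory of inert morphisms (\cref{defn:SpFactint}) and that $\xF^{\op}_{\mathcal{C}} \to \xF^{\op}$ is by construction a cocartesian fibration.

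For condition \ref{spfwsf1}, the cocartesian fibration $\xF^{\op}_{\mathcal{C}} \to \xF^{\op}$ already supplies cocartesian lifts over every morphism of $\xF^{\op}$, hence over every inert morphism of $\SpF$; the point to check is that such a lift $\phi \colon a \to b$ over an inert $\beta \colon \pi a \intto \pi b$ stays cocartesian for the composite to $\SpF$. For any $c$, cocartesianness over $\xF^{\op}$ says the comparison square of mapping spaces with target $c$ lying over $\Map_{\xF^{\op}}(\pi(\blank), \pi c)$ is cartesian, so by the pasting lemma it suffices to check that
\[
\begin{tikzcd}
\Map_{\xF^{\op}}(\pi b, \pi c) \arrow{r} \arrow{d} & \Map_{\xF^{\op}}(\pi a, \pi c) \arrow{d} \\
\Map_{\SpF}(\pi b, \pi c) \arrow{r}{\beta^{*}} & \Map_{\SpF}(\pi a, \pi c)
\end{tikzcd}
\]
is cartesian. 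This is immediate from the description of $\SpF$: a span is inert exactly when its right leg is an isomorphism, and precomposition with the inert $\beta$ alters only the left leg, so a span out of $\pi b$ lands in the inert morphisms precisely when its image under $\beta^{*}$ does.

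For conditions \ref{spfwsf2} and \ref{spfwsf3} I would use the explicit description of $\xF^{\op}_{\mathcal{C}}$. By functoriality of the right Kan extension $j_{*}\mathcal{C}$, cocartesian transport along $\rho'_{i} \colon \fset{n} \intto \fset{1}$ (corresponding to $\{i\} \hookrightarrow \fset{n}$ in $\xF$) is the $i$-th projection $\mathcal{C}^{\times n} \to \mathcal{C}$, so the comparison functor of \ref{spfwsf3} is the identity of $\mathcal{C}^{\times n}$ and is in particular essentially surjective. For \ref{spfwsf2}, the mapping-space formula for the cocartesian fibration gives, for $Y$ over $\fset{m}$ and $X$ over $\fset{n}$,
\[ \Map_{\xF^{\op}_{\mathcal{C}}}(Y, X) \simeq \coprod_{\bar\alpha \colon \fset{n} \to \fset{m}} \prod_{i=1}^{n} \Map_{\mathcal{C}}(Y_{\bar\alpha(i)}, X_{i}), \]
and distributivity of products over coproducts in $\mathcal{S}$ rewrites this as $\prod_{i=1}^{n} \bigl(\coprod_{j} \Map_{\mathcal{C}}(Y_{j}, X_{i})\bigr) \simeq \prod_{i=1}^{n} \Map_{\xF^{\op}_{\mathcal{C}}}(Y, X_{i})$, where the $i$-th factor is the mapping space into the projection $X_{i}$. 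This is exactly the statement that the cocartesian maps $X \to X_{i}$ exhibit $X$ as $\prod_{i} X_{i}$.

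Finally, for functoriality, the assignment $\mathcal{C} \mapsto j_{*}\mathcal{C}$ is a functor $\CatI \to \Fun(\xF^{\op}, \CatI)$, and unstraightening followed by composition with $\xF^{\op} \to \SpF$ produces a functor $\CatI \to \Cat_{\infty/\SpF}$. A functor $\mathcal{C} \to \mathcal{D}$ induces a map of cocartesian fibrations over $\xF^{\op}$, which by definition preserves $\pi$-cocartesian edges; since the inert morphisms of $\xF^{\op}_{\mathcal{C}}$ are precisely the cocartesian lifts of morphisms of $\xF^{\op}$ (as identified while verifying \ref{spfwsf1}), this map preserves inert morphisms and is therefore a morphism of $\SpF$-\iopds{}, giving the desired functor $\CatI \to \Opd(\SpF)$. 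The only genuinely delicate point is the compatibility in \ref{spfwsf1} between cocartesianness over $\xF^{\op}$ and over $\SpF$; once that is in place, the remaining conditions reduce to the distributivity computation above.
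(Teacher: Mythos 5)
Your proof is correct and follows essentially the same route as the paper: both reduce the existence of inert-cocartesian lifts over $\SpF$ to the cocartesian fibration over $\xF^{\op} \simeq \SpF^{\xint}$ (you spell out the pasting argument the paper leaves implicit in its claim that the inclusion of inert morphisms ``clearly'' admits cocartesian lifts), and both verify the remaining conditions by the same fibrewise mapping-space computation using the right Kan extension description of $\xF^{\op}_{\mathcal{C}}$. The only cosmetic difference is that you check conditions (1$'$)--(3$'$) of \cref{SpFopdcond} via a coproduct formula and distributivity, where the paper checks conditions (2) and (3) of \cref{defn:SpFiopd} via a pullback square of mapping spaces; these are interchangeable by \cref{SpFopdcond} itself.
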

\begin{proof}
  We can regard $\xF^{\op}_{(\blank)} \to \SpF$ as the composite
  functor
  \[ \CatI \xto{j_{*}} \Fun(\xF^{\op},\CatI) \simeq
    \Cat_{\infty/\xF^{\op}}^{\txt{coc}} \to
    \Cat_{\infty/\SpF}, \]
  where the last functor is given by composition with the inclusion $i
  \colon \xF^{\op} \to \SpF$ of the inert (\ie{} backwards)
  morphisms. Here $i$ clearly admits cocartesian lifts of inert
  morphisms in $\SpF$, hence so does the composite
  $\xF^{\op}_{\mathcal{C}} \to \SpF$ (given by cocartesian lifts of
  the morphisms in $\xF^{\op}$), and the functor
  $\xF^{\op}_{\mathcal{C}} \to \xF^{\op}_{\mathcal{D}}$ induced by any
  functor $\mathcal{C} \to \mathcal{D}$ in $\CatI$ preserves these, as
  it preserves cocartesian morphisms over $\xF^{\op}$.

  It remains to
  check that $\xF^{\op}_{\mathcal{C}}$ is an \iopd{}. By definition
  we have the Segal condition $(\xF^{\op}_{\mathcal{C}})_{\fset{n}}
  \simeq \prod_{i=1}^{n} \mathcal{C}$. For any objects
  $\fset{n}$, $\fset{m}$ in $\xF$ we have a pullback square
  \[
    \begin{tikzcd}
      \Map_{\xF^{\op}}(\fset{n},\fset{m}) \arrow{r}{\sim} \arrow{d} &
      \prod_{i=1}^{m} \Map_{\xF^{\op}}(\fset{n},\fset{1}) \arrow{d} \\
      \Map_{\SpF}(\fset{n},\fset{m}) \arrow{r}{\sim}  &
      \prod_{i=1}^{m} \Map_{\SpF}(\fset{n},\fset{1}),
    \end{tikzcd}
  \]
  so it suffices to show that for $\phi \colon \fset{n} \to
  \mathcal{C}, \psi \colon \fset{m} \to \mathcal{C}$ we have a
  pullback square
  \[
    \begin{tikzcd}
      \Map_{\xF_{\mathcal{C}}^{\op}}(\phi, \psi) \arrow{r}{\sim} \arrow{d} &
      \prod_{i=1}^{m} \Map_{\xF_{\mathcal{C}}^{\op}}(\phi,\psi(i)) \arrow{d} \\
      \Map_{\xF^{\op}}(\fset{n},\fset{m}) \arrow{r}{\sim} &
      \prod_{i=1}^{m} \Map_{\xF^{\op}}(\fset{n},\fset{1}).
    \end{tikzcd}
  \]
  To see this, observe that for $f \colon \fset{m} \to \fset{n}$, the
  map on fibres over $f$ can be identified with
  \[ \Map_{\Fun(\fset{m}, \mathcal{C})}(\phi\circ f, \psi) \isoto
    \prod_{i=1}^{m} \Map_{\mathcal{C}}(\phi(f(i)), \psi(i)). \qedhere\]
\end{proof}

\begin{lemma}\label{lem:opdpbFop}
  If $\mathcal{O}$ is a $\SpF$-\iopd{}, then there is a natural
  equivalence
  \[ \mathcal{O} \times_{\SpF} \xF^{\op} \isoto \xF^{\op}_{\mathcal{O}_{\fset{1}}}.\]
\end{lemma}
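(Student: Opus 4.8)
The plan is to compare both sides as cocartesian fibrations over $\xF^{\op}$ and then straighten. First I would use \cref{defn:SpFactint} to identify the inclusion $\xF^{\op} \simeq \SpF^{\xint} \hookrightarrow \SpF$ with the wide subcategory of inert morphisms. Since a $\SpF$-\iopd{} $\pi \colon \mathcal{O} \to \SpF$ has cocartesian lifts of inert morphisms (condition \ref{wsf1} of \cref{defn:SpFiopd}), the base change $\mathcal{O} \times_{\SpF} \xF^{\op} \to \xF^{\op}$ is a cocartesian fibration whose cocartesian morphisms are exactly those whose image in $\mathcal{O}$ is inert. Because morphisms of $\SpF$-\iopds{} preserve inert morphisms, this base change assembles into a functor $\Phi \colon \Opd(\SpF) \to \Cat_{\infty/\xF^{\op}}^{\coc}$, which I identify with a functor to $\Fun(\xF^{\op}, \CatI)$ by straightening. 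On the other hand $\xF^{\op}_{\mathcal{O}_{\fset{1}}}$ straightens by construction to $\Psi := j_{*} \circ (\blank)_{\fset{1}}$, where $(\blank)_{\fset{1}} \colon \Opd(\SpF) \to \CatI$ is the fibre over $\fset{1}$ and $j_{*}$ is right Kan extension along $j \colon \{\fset{1}\} \hookrightarrow \xF^{\op}$; it therefore suffices to produce a natural equivalence $\Phi \isoto \Psi$.

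Next I would build the comparison map abstractly, rather than by hand. The fibre of $\Phi(\mathcal{O})$ over $\fset{n}$ is $\mathcal{O}_{\fset{n}}$, and $\Phi(\mathcal{O})$ sends a morphism of $\xF^{\op}$ to cocartesian transport along the corresponding inert span of $\SpF$; in particular $j^{*}\Phi(\mathcal{O}) \simeq \mathcal{O}_{\fset{1}}$ naturally in $\mathcal{O}$, so that $j^{*}\Phi \simeq (\blank)_{\fset{1}}$ and hence $\Psi \simeq j_{*}j^{*}\Phi$. The unit of the adjunction $j^{*} \dashv j_{*}$ on $\Fun(\xF^{\op}, \CatI)$ then supplies, after whiskering with $\Phi$, a canonical natural transformation $\alpha \colon \Phi \to j_{*}j^{*}\Phi \simeq \Psi$. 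Naturality in $\mathcal{O}$ is automatic here, since $\alpha$ is built only from the straightening functor and a unit transformation.

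Finally I would verify that $\alpha$ is an equivalence, which may be checked objectwise in both $\mathcal{O} \in \Opd(\SpF)$ and $\fset{n} \in \xF^{\op}$. By the pointwise formula for $j_{*}$, the relevant comma category is the discrete set of the morphisms $\rho'_{i} \colon \fset{n} \to \fset{1}$ in $\xF^{\op}$ for $i \in \fset{n}$, so $\Psi(\mathcal{O})(\fset{n}) \simeq \mathcal{O}_{\fset{1}}^{\times n}$ and the component $\alpha_{\mathcal{O},\fset{n}} \colon \mathcal{O}_{\fset{n}} \to \mathcal{O}_{\fset{1}}^{\times n}$ has $i$-th coordinate the cocartesian transport along $\rho'_{i}$. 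This is exactly the Segal map of condition \ref{wsf3} of \cref{defn:SpFiopd}, which is an equivalence, so $\alpha$ is a natural equivalence and straightening it back yields the claim. I expect the main obstacle to be the bookkeeping in the first step: carefully checking that the base change really is a cocartesian fibration with the stated cocartesian morphisms, and that the straightening of $\Phi(\mathcal{O})$ sends each $\rho'_{i}$ to the same cocartesian transport that appears in the Segal condition. Once these two identifications are pinned down, the construction of $\alpha$ and its objectwise verification are formal.
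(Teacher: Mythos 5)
Your proposal is correct and takes essentially the same route as the paper: both exhibit $\mathcal{O} \times_{\SpF} \xF^{\op} \to \xF^{\op}$ as a cocartesian fibration, obtain the comparison map from the unit of the adjunction $j^{*} \dashv j_{*}$ applied to its straightening, and verify the map is an equivalence fibrewise using the Segal condition \ref{wsf3} of \cref{defn:SpFiopd}. The only cosmetic difference is that you identify the fibre components via the pointwise right Kan extension formula (with the discrete comma category of the $\rho'_{i}$), whereas the paper packages the same check as a commutative square of Segal equivalences.
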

\begin{proof}
  The projection $\mathcal{O} \times_{\SpF} \xF^{\op} \to \xF^{\op}$
  is a cocartesian fibration, corresponding to a functor $\xF^{\op}
  \to \CatI$. Hence the unit of the adjunction $j^{*} \dashv j_{*}$
  corresponds to a morphism $\mathcal{O} \times_{\SpF} \xF^{\op}
  \to \xF^{\op}_{\mathcal{O}_{\fset{1}}}$ of cocartesian fibrations
  over $\xF^{\op}$. To show that this is an equivalence it suffices to
  show that it is an equivalence on the fibre over every $\fset{n} \in
  \xF^{\op}$. To see this we observe that since the functor preserves
  cocartesian morphisms we have a commutative square
  \[
    \begin{tikzcd}
      \mathcal{O}_{\fset{n}} \arrow{r} \arrow{d}{\sim} &
      (\xF^{\op}_{\mathcal{O}_{\fset{1}}})_{\fset{n}} \arrow{d}{\sim} \\
      \prod_{i=1}^{n} \mathcal{O}_{\fset{1}} \arrow{r}{\sim} &
      \prod_{i=1}^{n} (\xF^{\op}_{\mathcal{O}_{\fset{1}}})_{\fset{1}},
    \end{tikzcd}
  \]
  where the vertical maps are equivalences since both sides satisfy
  the Segal condition, and the bottom horizontal map is an equivalence
  since by construction the two fibres over $\fset{1}$ are the same.
\end{proof}

\begin{propn}
  The functor $\xF^{\op}_{(\blank)}$ is fully faithful, and is left
  adjoint to the functor
  \[(\blank)_{\fset{1}} \colon \Opd(\SpF) \to \CatI\] that extracts
  the underlying \icat{} of unary operations in a $\SpF$-\iopd{}.
\end{propn}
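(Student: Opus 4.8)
The plan is to exhibit a natural equivalence of mapping spaces
\[ \Map_{\Opd(\SpF)}(\xF^{\op}_{\mathcal{C}}, \mathcal{O}) \simeq
  \Map_{\CatI}(\mathcal{C}, \mathcal{O}_{\fset{1}}), \]
natural in $\mathcal{C} \in \CatI$ and $\mathcal{O} \in \Opd(\SpF)$. Such an equivalence exhibits $\xF^{\op}_{(\blank)}$ as left adjoint to $(\blank)_{\fset{1}}$, and full faithfulness then follows by taking $\mathcal{O} = \xF^{\op}_{\mathcal{D}}$: the right-hand side becomes $\Map_{\CatI}(\mathcal{C}, (\xF^{\op}_{\mathcal{D}})_{\fset{1}}) \simeq \Map_{\CatI}(\mathcal{C}, \mathcal{D})$, since $(\xF^{\op}_{\mathcal{D}})_{\fset{1}} \simeq \mathcal{D}^{\times 1} \simeq \mathcal{D}$. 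Equivalently, the unit $\mathcal{C} \to (\xF^{\op}_{\mathcal{C}})_{\fset{1}}$ of the adjunction is the canonical equivalence, so the left adjoint is fully faithful.

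The key step, and the main thing to get right, is the reduction that produces this equivalence. Since $\xF^{\op}_{\mathcal{C}} \to \SpF$ factors through the inert subcategory $\xF^{\op} \simeq \SpF^{\xint}$, the universal property of the pullback identifies functors $\xF^{\op}_{\mathcal{C}} \to \mathcal{O}$ over $\SpF$ with functors $\xF^{\op}_{\mathcal{C}} \to \mathcal{O} \times_{\SpF} \xF^{\op}$ over $\xF^{\op}$, and by \cref{lem:opdpbFop} the target is the cocartesian fibration $\xF^{\op}_{\mathcal{O}_{\fset{1}}}$. Under this identification the inert morphisms of $\mathcal{O}$ lying over $\xF^{\op}$ are precisely the cocartesian morphisms of $\xF^{\op}_{\mathcal{O}_{\fset{1}}} \to \xF^{\op}$, while the inert morphisms of $\xF^{\op}_{\mathcal{C}}$ are its cocartesian morphisms over $\xF^{\op}$. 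Hence a functor over $\SpF$ preserves inert morphisms exactly when the corresponding functor over $\xF^{\op}$ preserves cocartesian morphisms, giving
\[ \Alg_{\xF^{\op}_{\mathcal{C}}}(\mathcal{O}) \simeq
  \Fun^{\txt{coc}}_{/\xF^{\op}}(\xF^{\op}_{\mathcal{C}},
  \xF^{\op}_{\mathcal{O}_{\fset{1}}}), \]
where $\Fun^{\txt{coc}}_{/\xF^{\op}}$ denotes functors over $\xF^{\op}$ preserving cocartesian morphisms.

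To conclude, I would pass through the straightening equivalence $\Cat_{\infty/\xF^{\op}}^{\txt{coc}} \simeq \Fun(\xF^{\op}, \CatI)$ already used above, under which the core of the preceding mapping category becomes $\Map_{\Fun(\xF^{\op}, \CatI)}(j_{*}\mathcal{C}, j_{*}\mathcal{O}_{\fset{1}})$. Since $j \colon \{\fset{1}\} \hookrightarrow \xF^{\op}$ is fully faithful, its right Kan extension $j_{*}$ is fully faithful, so this mapping space is equivalent to $\Map_{\CatI}(\mathcal{C}, \mathcal{O}_{\fset{1}})$, completing the chain. All the equivalences used are natural in $\mathcal{C}$ and $\mathcal{O}$, so the resulting identification is natural, as required; the only point demanding care is the bookkeeping of the previous paragraph matching up the two notions of inert morphism across the pullback, which is exactly what \cref{lem:opdpbFop} is designed to provide.
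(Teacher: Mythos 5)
Your proposal is correct and takes essentially the same route as the paper: the same chain of equivalences through $\Fun_{/\xF^{\op}}^{\txt{coc}}$, \cref{lem:opdpbFop}, the straightening equivalence, and the adjunction $j^{*} \dashv j_{*}$ (your appeal to full faithfulness of $j_{*}$ is the same fact as the paper's $j^{*}j_{*} \simeq \id$), concluding via the unit being an equivalence. The only difference is cosmetic: you spell out the matching of inert morphisms in $\mathcal{O}$ with cocartesian morphisms of the pullback over $\xF^{\op}$, which the paper leaves implicit in its first equivalence.
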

\begin{proof}
  Using \cref{lem:opdpbFop}, for $\mathcal{C} \in \CatI,
  \mathcal{O} \in \Opd(\SpF)$ we have natural equivalences
  \[
    \begin{split}
    \Map_{\Opd(\SpF)}(\xF^{\op}_{\mathcal{C}}, \mathcal{O}) & \simeq
    \Map_{\Cat_{\infty/\xF^{\op}}^{\txt{coc}}}(\xF^{\op}_{\mathcal{C}},
    \mathcal{O} \times_{\SpF} \xF^{\op})\\
     &
     \simeq \Map_{\Cat_{\infty/\xF^{\op}}^{\txt{coc}}}(\xF^{\op}_{\mathcal{C}},
     \xF^{\op}_{\mathcal{O}_{\fset{1}}}) \\
     & \simeq \Map_{\Fun(\xF^{\op}, \CatI)}(j_{*}\mathcal{C},
     j_{*}\mathcal{O}_{\fset{1}}) \\
     & \simeq \Map_{\CatI}(j^{*}j_{*}\mathcal{C},
     \mathcal{O}_{\fset{1}}) \\
     & \simeq \Map_{\CatI}(\mathcal{C}, \mathcal{O}_{\fset{1}}).
    \end{split}
  \]
  This shows that $\xF^{\op}_{(\blank)}$ is left adjoint to
  $(\blank)_{\fset{1}}$. Moreover, the unit map
  $\mathcal{C} \to (\xF^{\op}_{\mathcal{C}})_{\fset{1}}$ is an
  equivalence, which implies that $\xF^{\op}_{(\blank)}$ is fully
  faithful.
\end{proof}

\begin{defn}
  We define $\PCatI$ as the pullback $\CatI
  \times_{\mathcal{S}} \Fun([1], \mathcal{S})_{\txt{es}}$ (given by the functors
  $(\blank)^{\simeq}$ and $\ev_{1}$), where $\Fun([1], \mathcal{S})_{\txt{es}}$ is the full subcategory of $\Fun([1], \mathcal{S})$ spanned by  the essentially surjective maps.
\end{defn}

\begin{notation}
  If $\mathcal{C}$ is an \icat{}, we denote the 
  pinned \icat{} \[(\mathcal{C}, \mathcal{C}^{\simeq} \xto{=}
    \mathcal{C}^{\simeq})\]
  by $\mathcal{C}^{\natural}$.
\end{notation}

\begin{observation}\label{obs:mapfromnat}
If $\mathcal{C}$ is an \icat{} and $(\mathcal{D},q \colon X
  \twoheadrightarrow \mathcal{D}^{\simeq})$ is a pinned \icat{},
  we have
  \[ \Map_{\PCatI}(\mathcal{C}^{\natural}, (\mathcal{D},q)) \simeq
    \Map_{\CatI}(\mathcal{C}, \mathcal{D})
    \times_{\Map_{\mathcal{S}}(\mathcal{C}^{\simeq},
      \mathcal{D}^{\simeq})} \Map_{\mathcal{S}}(\mathcal{C}^{\simeq}, X).
  \]  
\end{observation}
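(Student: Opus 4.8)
The plan is to use that $\PCatI$ is \emph{defined} as the pullback $\CatI \times_{\mathcal{S}} \Fun([1],\mathcal{S})_{\txt{es}}$ along $(\blank)^{\simeq}$ and $\ev_1$, together with the standard fact that mapping spaces in a pullback of \icats{} are computed as the pullback of the mapping spaces. First I would record the two pinned \icats{} as pairs, writing $\mathcal{C}^{\natural} = (\mathcal{C}, \id_{\mathcal{C}^{\simeq}})$ and $(\mathcal{D},q) = (\mathcal{D}, q\colon X \to \mathcal{D}^{\simeq})$, so that
\[ \Map_{\PCatI}(\mathcal{C}^{\natural}, (\mathcal{D},q)) \simeq \Map_{\CatI}(\mathcal{C},\mathcal{D}) \times_{\Map_{\mathcal{S}}(\mathcal{C}^{\simeq}, \mathcal{D}^{\simeq})} \Map_{\Fun([1],\mathcal{S})}(\id_{\mathcal{C}^{\simeq}}, q), \]
where the left-hand leg is induced by $(\blank)^{\simeq}$ and the right-hand leg by $\ev_1$. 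Here I would note that since $\Fun([1],\mathcal{S})_{\txt{es}}$ is a full subcategory of $\Fun([1],\mathcal{S})$ its mapping spaces agree with those of the latter, and that $\id_{\mathcal{C}^{\simeq}}$ is essentially surjective, so $\mathcal{C}^{\natural}$ really is an object of $\PCatI$.

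The next step is to identify the remaining factor. The key point is that the constant-diagram functor $\mathcal{S} \to \Fun([1],\mathcal{S})$, $A \mapsto \id_A$, is left adjoint to evaluation at the source $\ev_0$: unwinding, a morphism $\id_{\mathcal{C}^{\simeq}} \to q$ is a commutative square whose only non-contractible datum is the top edge $\alpha\colon \mathcal{C}^{\simeq} \to X$, since the bottom edge together with the commuting homotopy are determined (up to contractible choice) as $q\alpha$. This yields
\[ \Map_{\Fun([1],\mathcal{S})}(\id_{\mathcal{C}^{\simeq}}, q) \simeq \Map_{\mathcal{S}}(\mathcal{C}^{\simeq}, \ev_0 q) = \Map_{\mathcal{S}}(\mathcal{C}^{\simeq}, X). \]

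Finally I would trace this identification through the $\ev_1$-leg: since $\ev_1$ of such a square is its bottom edge, which we just saw is $q\alpha$, this leg becomes post-composition $q_{*} \colon \Map_{\mathcal{S}}(\mathcal{C}^{\simeq}, X) \to \Map_{\mathcal{S}}(\mathcal{C}^{\simeq}, \mathcal{D}^{\simeq})$. Substituting into the pullback above gives exactly the claimed formula, with the two structure maps being $(\blank)^{\simeq}$ on $\Map_{\CatI}(\mathcal{C},\mathcal{D})$ and $q_{*}$ on $\Map_{\mathcal{S}}(\mathcal{C}^{\simeq}, X)$. The argument is essentially bookkeeping, so there is no serious obstacle; the one place that requires care is keeping track of which evaluation ($\ev_0$ versus $\ev_1$) appears where, so that the two legs of the final pullback come out as $(\blank)^{\simeq}$ and post-composition with $q$ rather than something else.
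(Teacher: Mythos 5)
Your proof is correct, and it follows exactly the route the paper leaves implicit (the statement is given as an observation without proof): mapping spaces in the pullback $\CatI \times_{\mathcal{S}} \Fun([1],\mathcal{S})_{\txt{es}}$ defining $\PCatI$ are computed as pullbacks of mapping spaces, and the adjunction $c \dashv \ev_{0}$ identifies $\Map_{\Fun([1],\mathcal{S})}(\id_{\mathcal{C}^{\simeq}}, q) \simeq \Map_{\mathcal{S}}(\mathcal{C}^{\simeq}, X)$ with the $\ev_{1}$-leg becoming post-composition with $q$. Your bookkeeping of the two legs (so that they come out as $(\blank)^{\simeq}$ and $q_{*}$) is exactly right and matches how the formula is used later in the paper, e.g.\ in the proof of \cref{propn:simpemb}.
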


\begin{lemma}
  The adjunction $\xF^{\op}_{(\blank)} \dashv (\blank)_{\fset{1}}$
  extends to an adjunction
  \[ \xF^{\op}_{(\blank)} : \PCatI \rightleftarrows \POpd(\SpF) :
    (\blank)_{\fset{1}} \]
  where the left adjoint $\xF^{\op}_{(\blank)}$ is still fully faithful.
\end{lemma}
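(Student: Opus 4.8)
The plan is to realise the pinned adjunction as the base change of the unpinned adjunction $\xF^{\op}_{(\blank)} \dashv (\blank)_{\fset 1}$ along $\ev_{1} \colon \Fun([1], \mathcal{S})_{\txt{es}} \to \mathcal{S}$. The starting observation is that this adjunction is compatible with the underlying-groupoid functors used to define the two pinned \icats{}: on the one hand $(\mathcal{O}_{\fset 1})^{\simeq} = \mathcal{O}^{\simeq}_{\bfone}$, so $(\blank)^{\simeq} \circ (\blank)_{\fset 1} \simeq (\blank)^{\simeq}_{\bfone}$; on the other hand the unit $\mathcal{C} \to (\xF^{\op}_{\mathcal{C}})_{\fset 1}$ is an equivalence (this is exactly the full faithfulness established just above), so $(\blank)^{\simeq}_{\bfone} \circ \xF^{\op}_{(\blank)} \simeq (\blank)^{\simeq}$. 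Since $\PCatI$ and $\POpd(\SpF)$ are by definition the pullbacks of the common leg $\ev_{1}$ against $(\blank)^{\simeq}$ and $(\blank)^{\simeq}_{\bfone}$ respectively, these compatibilities let both functors be base-changed to induced functors $\xF^{\op}_{(\blank)} \colon \PCatI \to \POpd(\SpF)$ and $(\blank)_{\fset 1} \colon \POpd(\SpF) \to \PCatI$, each of which only alters the categorical/operadic factor and leaves the pinning (viewed as an object of $\Fun([1], \mathcal{S})_{\txt{es}}$) unchanged.

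To verify that these induced functors form an adjunction, I would compute mapping spaces using the fact that mapping spaces in a pullback of \icats{} are the corresponding pullbacks of mapping spaces. Fixing $(\mathcal{C}, f \colon X \twoheadrightarrow \mathcal{C}^{\simeq})$ in $\PCatI$ and $(\mathcal{O}, g \colon Y \twoheadrightarrow \mathcal{O}^{\simeq}_{\bfone})$ in $\POpd(\SpF)$, the source mapping space in $\POpd(\SpF)$ is the pullback of $\Map_{\Opd(\SpF)}(\xF^{\op}_{\mathcal{C}}, \mathcal{O})$ and $\Map_{\Fun([1],\mathcal{S})_{\txt{es}}}(f, g)$ over $\Map_{\mathcal{S}}(\mathcal{C}^{\simeq}, \mathcal{O}^{\simeq}_{\bfone})$, while the target mapping space into $(\mathcal{O}_{\fset 1}, g)$ in $\PCatI$ is the pullback of $\Map_{\CatI}(\mathcal{C}, \mathcal{O}_{\fset 1})$ and the \emph{same} factor $\Map_{\Fun([1],\mathcal{S})_{\txt{es}}}(f, g)$ over $\Map_{\mathcal{S}}(\mathcal{C}^{\simeq}, \mathcal{O}^{\simeq}_{\bfone})$. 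The unpinned adjunction identifies the two categorical factors, and since neither induced functor touches the $\Fun([1],\mathcal{S})_{\txt{es}}$-factor these two pullbacks coincide naturally in both variables, giving the desired adjunction.

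For the full faithfulness of the induced left adjoint, I would check that its unit is an equivalence. On an object $(\mathcal{C}, f)$ the pinned unit is assembled from the unpinned unit $\mathcal{C} \to (\xF^{\op}_{\mathcal{C}})_{\fset 1}$ together with the identity on the pinning $f$; the first is an equivalence by the full faithfulness of $\xF^{\op}_{(\blank)}$ and the second is trivially one, so the pinned unit is an equivalence and the left adjoint remains fully faithful.

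The step I expect to be the main obstacle is the compatibility invoked in the mapping-space comparison: one must check that, under the unpinned adjunction equivalence, the map $(\blank)^{\simeq}_{\bfone}$ recording the action on underlying groupoids on the operad side corresponds to the map $(\blank)^{\simeq}$ on the \icat{} side, so that the two pullbacks are taken over the same cospan. This is a naturality statement for the adjunction relative to $\mathcal{S}$; it holds because the adjunction equivalence is implemented by applying $(\blank)_{\fset 1}$ and precomposing with the invertible unit, so that postcomposition with $(\blank)^{\simeq}$ computes $(\blank)^{\simeq}_{\bfone}$ by functoriality — but pinning this down coherently in the \icatl{} setting is where the care is needed.
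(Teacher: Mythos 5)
Your proposal is correct and takes essentially the same route as the paper, whose proof consists of exactly the observation you expand upon: both adjoints and the (co)unit transformations are compatible with the functors $(\blank)^{\simeq} \colon \CatI \to \mathcal{S}$ and $(\blank)_{\fset{1}}^{\simeq} \colon \Opd(\SpF) \to \mathcal{S}$, so the adjunction and the invertibility of its unit base-change along $\ev_{1}$ to the pullbacks defining $\PCatI$ and $\POpd(\SpF)$. The coherence issue you flag at the end (that the adjunction equivalence lies over the common cospan to $\Map_{\mathcal{S}}(\mathcal{C}^{\simeq}, \mathcal{O}_{\fset{1}}^{\simeq})$) is precisely the content the paper declares immediate, and your sketch of why it holds is the right one.
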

\begin{proof}
  Immediate from the observation that both adjoints and the (co)unit
  transformations are compatible with the functors $(\blank)^{\simeq}
  \colon \CatI \to \mathcal{S}$ and $(\blank)_{\fset{1}}^{\simeq}
  \colon \Opd(\SpF) \to \mathcal{S}$.
\end{proof}

\begin{propn}
  The \icat{} $\xF^{\op}_{\mathcal{C}}$ has finite products. If
  $\mathcal{D}$ is an \icat{} with finite products, then a
  functor $\xF^{\op}_{\mathcal{C}} \to \mathcal{D}$ preserves products
  \IFF{} it is right Kan extended along the inclusion $\mathcal{C}
  \hookrightarrow \xF^{\op}_{\mathcal{C}}$. Restriction along this
 inclusion therefore gives an equivalence
 \[\Mon_{\xF^{\op}_{\mathcal{C}}}(\mathcal{D}) \isoto
   \Fun(\mathcal{C}, \mathcal{D}).\]
\end{propn}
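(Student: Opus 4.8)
The plan is to deduce the existence of finite products from the $\SpF$-\iopd{} structure established in the preceding lemma, and then to compute the relevant right Kan extension explicitly. For the first claim, we have just seen that the composite $\xF^{\op}_{\mathcal{C}} \to \xF^{\op} \to \SpF$ is a $\SpF$-\iopd{}, so \cref{lem:SpFopdprod} immediately gives that $\xF^{\op}_{\mathcal{C}}$ has finite products. The same result describes them: the inclusion $\iota \colon \mathcal{C} \simeq (\xF^{\op}_{\mathcal{C}})_{\fset{1}} \hookrightarrow \xF^{\op}_{\mathcal{C}}$ is the inclusion of the fibre over $\fset{1}$, and for an object $X = (c_1, \ldots, c_n)$ over $\fset{n}$ the inert cocartesian morphisms $X \intto \iota(c_i)$ over $\rho'_i$ exhibit $X$ as the product $\prod_{i=1}^n \iota(c_i)$.

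First I would compute the pointwise right Kan extension $\iota_* \phi$ of a functor $\phi \colon \mathcal{C} \to \mathcal{D}$ along $\iota$. The key point is to identify, for $X = (c_1,\ldots,c_n)$ over $\fset{n}$, the comma $\infty$-category $\mathcal{C} \times_{\xF^{\op}_{\mathcal{C}}} (\xF^{\op}_{\mathcal{C}})_{X/}$ indexing the defining limit. Projecting a morphism $X \to \iota(c)$ to $\xF^{\op}$ records an element $i \in \fset{n}$ (the image of the resulting map $\fset{1} \to \fset{n}$ in $\xF$), giving a functor from this comma category to the discrete set $\fset{n}$; since any functor to a discrete set exhibits its source as the coproduct of its fibres, the comma category splits as the coproduct over $i \in \fset{n}$ of these fibres. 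Using the cocartesian fibration structure, every morphism $X \to \iota(c)$ projecting to $i$ factors essentially uniquely through the inert cocartesian lift $X \intto \iota(c_i)$ of $\rho'_i$, so the fibre over $i$ is identified with the coslice $\mathcal{C}_{c_i/}$, in which the cocartesian lift is the initial object. Hence the comma category is $\coprod_{i=1}^n \mathcal{C}_{c_i/}$, the limit over each summand is the value at its initial object, and
\[ (\iota_* \phi)(X) \simeq \prod_{i=1}^n \phi(c_i). \]
In particular these limits exist in $\mathcal{D}$, so $\iota_* \phi$ is defined, and the counit $\iota^* \iota_* \phi \to \phi$ is an equivalence because $\iota$ is fully faithful.

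Next I would use this formula to identify the essential image of $\iota_*$. From the formula it is immediate that $\iota_* \phi$ preserves finite products, since the product of $(c_1,\ldots,c_n)$ and $(d_1,\ldots,d_m)$ is the concatenation $(c_1,\ldots,c_n,d_1,\ldots,d_m)$ and the empty tuple gives the terminal object. Conversely, if $M \colon \xF^{\op}_{\mathcal{C}} \to \mathcal{D}$ preserves finite products, I would show the unit $M \to \iota_* \iota^* M$ is an equivalence: its restriction along $\iota$ is an equivalence by the triangle identity together with the counit being an equivalence, and a natural transformation between product-preserving functors that is an equivalence on $\mathcal{C}$ is an equivalence everywhere, since for any $X = \prod_i \iota(c_i)$ both sides send $X$ to the product of their values on the $\iota(c_i)$, so the component at $X$ is a product of equivalences. (This is the same conservativity argument used for $p^*$ in \cref{propn:ModisAlgMnd}.) Thus a functor is a right Kan extension along $\iota$ precisely when it preserves finite products.

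Finally, assembling these: since $\iota$ is fully faithful the adjunction $\iota^* \dashv \iota_*$ has $\iota_*$ fully faithful, with essential image exactly the product-preserving functors, which by \cref{propn:SpFopdprod} is $\Mon_{\xF^{\op}_{\mathcal{C}}}(\mathcal{D})$; hence $\iota^*$ restricts to the claimed equivalence $\Mon_{\xF^{\op}_{\mathcal{C}}}(\mathcal{D}) \isoto \Fun(\mathcal{C}, \mathcal{D})$. The main obstacle is the comma-category computation: making precise, in the $\infty$-categorical setting, that the indexing category splits as $\coprod_{i=1}^n \mathcal{C}_{c_i/}$ with the cocartesian lifts as initial objects, which is what turns the defining limit of the right Kan extension into the product $\prod_i \phi(c_i)$.
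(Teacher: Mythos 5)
Your proposal is correct and takes essentially the same route as the paper: both hinge on identifying the comma $\infty$-category $\mathcal{C} \times_{\xF^{\op}_{\mathcal{C}}} (\xF^{\op}_{\mathcal{C}})_{X/}$ as split over the discrete set $\fset{n}$ into coslices $\mathcal{C}_{c_{i}/}$ with the cocartesian lifts initial (the paper packages exactly this as coinitiality of the discrete set of cocartesian maps, via the criterion of \cite{HTT}*{Theorem 4.1.3.1}), so that the pointwise right Kan extension is computed by the finite product $\prod_{i}\phi(c_{i})$. The concluding step you phrase through the unit of the adjunction $\iota^{*} \dashv \iota_{*}$ is the same identification the paper reads off directly from this formula together with \cref{propn:SpFopdprod}, matching product-preservation with the monoid condition.
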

\begin{proof}
  Since $\xF^{\op}_{\mathcal{C}}$ is a $\SpF$-\iopd{}, we know it has
  finite products by \cref{SpFopdcond}, and by \cref{propn:SpFopdprod}
  a functor $F \colon \xF^{\op}_{\mathcal{C}} \to \mathcal{D}$
  preserves these \IFF{} it is an $\xF^{\op}_{\mathcal{C}}$-monoid,
  \ie{} for every object $X = (x_{1},\ldots,x_{n})$ the
  canonical map
  \begin{equation}
    \label{eq:prodxFopCmap}
  F(X) \to
  \prod_{i=1}^{n}F(x_{i})  
  \end{equation}
  is an equivalence.

  The \icat{}
  \[ \mathcal{C}_{X/} := \mathcal{C}
    \times_{\xF^{\op}_{\mathcal{C}}}
    (\xF^{\op}_{\mathcal{C}})_{X/} \] contains
  the discrete set $S_{X}$ of  cocartesian maps
  $X \to x_{i}$ as a subcategory. We claim the
  inclusion $S_{X} \to
  \mathcal{C}_{X/}$ is coinitial. To see this,
  first observe that the target of the projection 
  \[ \mathcal{C}_{X/} \to \{\fset{1}\}
    \times_{\xF^{\op}} \xF^{\op}_{\fset{n}/}\] is the discrete set of
  morphisms $\fset{1} \to \fset{n}$ in $\xF$, \ie{} the set
  $\fset{n}$. The fibre over an element $i \in \fset{n}$ identifies
  (via the cocartesian morphism $X \to x_{i}$) with
  $\mathcal{C}_{x_{i}/}$, where $x_{i}$ is of course initial. From this the
  criterion of \cite{HTT}*{Theorem 4.1.3.1} immediately implies that
  $S_{X}$ is coinitial.

  It follows that
  pointwise right Kan extensions along the inclusion
  $\mathcal{C} \hookrightarrow \xF^{\op}_{\mathcal{C}}$ exist provided
  the target has finite products, and that a functor
  $F \colon \xF^{\op}_{\mathcal{C}} \to \mathcal{D}$ is a right Kan
  extension of its restriction to $\mathcal{C}$ precisely when the maps
  \cref{eq:prodxFopCmap} are equivalences.
\end{proof}

The composite functor $\PCatI \to \POpd(\SpF) \isoto \AnMnd$ thus
takes the pair $(\mathcal{C}, p \colon X \twoheadrightarrow
\mathcal{C}^{\simeq})$ to the monadic right adjoint
\[ \Mon_{\xF^{\op}_{\mathcal{C}}}(\mathcal{S}) \simeq
  \Fun(\mathcal{C}, \mathcal{S}) \xto{p^{*}} \Fun(X, \mathcal{S}). \]
Our next goal is to identify the image of $\PCatI$ as precisely the
full subcategory of \emph{linear} monads inside $\AnMnd$:
\begin{defn}
  A \emph{linear monad} on $\Fun(X,\mathcal{S})$ is an analytic monad
  such that the underlying endofunctor preserves small colimits.
\end{defn}

\begin{observation}\label{rmk:linearpolydiag}
  Since presheaves on an \icat{} are its free cocompletion, we can
  identify colimit-preserving functors $\Fun(X, \mathcal{S}) \to
  \Fun(Y, \mathcal{S})$ for $X,Y \in \mathcal{S}$ with
  \[ \Fun(X, \Fun(Y, \mathcal{S})) \simeq \Fun(X \times Y,
    \mathcal{S}) \simeq \mathcal{S}_{/X\times Y},\]
  so that a colimit-preserving functor corresponds to a \emph{span}
  \[
    \begin{tikzcd}
      {} & E \arrow{dl}[swap]{f} \arrow{dr}{g} \\
      X & & Y,
    \end{tikzcd}
  \]
  with this span corresponding to the functor $g_{!}f^{*}$. In terms
  of polynomial diagrams, this means a colimit-preserving functor
  corresponds to a diagram
  \[ X \xfrom{s} E \xto{p} B \xto{t} Y,\]
  where $p$ is an \emph{equivalence}.
\end{observation}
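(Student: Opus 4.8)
The plan is to verify the two identifications asserted---first with spans, then with normalized polynomial diagrams---by reducing everything to the universal property of presheaves. Since $X$ is an \igpd{} we have $X \simeq X^{\op}$, so $\Fun(X, \mathcal{S}) \simeq \PSh(X)$ is the free cocompletion of $X$; by the universal property of presheaf \icats{} (\cite{HTT}*{Theorem 5.1.5.6}), restriction along $\Yo_{X}$ identifies colimit-preserving functors $\Fun(X, \mathcal{S}) \to \mathcal{E}$ with arbitrary functors $X \to \mathcal{E}$ for any cocomplete $\mathcal{E}$. Taking $\mathcal{E} = \Fun(Y, \mathcal{S})$ and combining with the exponential equivalence $\Fun(X, \Fun(Y, \mathcal{S})) \simeq \Fun(X \times Y, \mathcal{S})$ and straightening over the \igpd{} $X \times Y$ (which gives $\Fun(X \times Y, \mathcal{S}) \simeq \mathcal{S}_{/X \times Y}$) yields the displayed chain; an object on the right is a map $E \to X \times Y$, \ie{} a span $X \xfrom{f} E \xto{g} Y$.

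I would then identify the colimit-preserving functor attached to this span as $g_{!}f^{*}$. Under the equivalences above the span corresponds to the functor $X \to \mathcal{S}_{/Y}$ sending $x$ to the fibre $E_{x} \to Y$, and the associated colimit-preserving functor is its left Kan extension along $\Yo_{X}$. Since $f^{*}$ (pullback of spaces over groupoids) and $g_{!}$ (postcomposition with $g$) both preserve colimits, it suffices to check agreement on representables: the base-change computation $g_{!}f^{*}\Yo_{X}(x) \simeq (E_{x} \to Y)$ exhibits exactly this fibre, so uniqueness of colimit-preserving extensions forces the functor to be $g_{!}f^{*}$.

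For the polynomial description, recall that any analytic functor is uniquely of the form $t_{!}p_{*}s^{*}$ with $p$ of finite discrete fibres. The functor $g_{!}f^{*}$ is visibly of this shape with $s = f$, $p = \id_{E}$ (which has finite fibres, being points) and $t = g$; in particular it is analytic, and by the uniqueness of the polynomial diagram its canonical $p$ is an equivalence. Conversely, if $p$ is an equivalence then $p_{*}$ is an equivalence, so $t_{!}p_{*}s^{*}$ preserves all colimits. The main obstacle---and the only genuinely delicate point---is ensuring that the ad hoc diagram $(f, \id_{E}, g)$ read off from the span really is the \emph{canonical} polynomial diagram of \cite{polynomial}, rather than merely \emph{a} factorization; I would settle this by appeal to the uniqueness statement for canonical polynomial diagrams, so that the computed equivalence $p \simeq \id_{E}$ transfers automatically. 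An alternative that avoids this point is to argue directly that for an analytic $t_{!}p_{*}s^{*}$ the functor $p_{*}$, whose value over a point is the finite product over the corresponding fibre, preserves all colimits \IFF{} every fibre of $p$ is a single point, \ie{} \IFF{} $p$ is an equivalence.
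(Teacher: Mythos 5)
Your proposal is correct and takes essentially the same route as the paper, which states this as an Observation whose implicit justification is exactly your chain: the universal property of presheaves as free cocompletion, the exponential/straightening identifications, the base-change computation identifying the span with $g_{!}f^{*}$, and the uniqueness of polynomial diagrams from \cite{polynomial}*{Theorem 2.2.3 and Proposition 3.1.9} to force $p$ to be an equivalence. Your added care about distinguishing the ad hoc factorization $(f,\mathrm{id}_{E},g)$ from the canonical polynomial diagram, together with the alternative fibrewise argument that $p_{*}$ preserves colimits only when all fibres are singletons, merely fills in details the paper leaves to the reader.
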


As a special case of \cref{propn:monadcolim}, we have:
\begin{cor}\label{linmndcocts}
  A monad $T$ on $\Fun(X, \mathcal{S})$ is linear \IFF{} for the
  corresponding monadic adjunction $F_{T} \dashv U_{T}$, the \icat{}
  $\Alg(T)$ has small colimits and the
  right adjoint
  $U_{T} \colon \Alg(T) \to \Fun(X, \mathcal{S})$ preserves these. \qed
\end{cor}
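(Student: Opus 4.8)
The plan is to deduce this directly from \cref{propn:monadcolim} by quantifying over all colimit shapes. Since $X$ is a space, $\Fun(X, \mathcal{S})$ is presentable and hence cocomplete, so it admits $\mathcal{I}$-shaped colimits for every small \icat{} $\mathcal{I}$. The hypothesis of \cref{propn:monadcolim} is therefore satisfied for each such $\mathcal{I}$, and the proposition gives that the endofunctor $T$ preserves $\mathcal{I}$-shaped colimits \IFF{} $\Alg(T)$ has $\mathcal{I}$-shaped colimits and $U_{T}$ preserves them.

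Next I would package these shape-by-shape equivalences into a single statement. A functor preserves all small colimits exactly when it preserves $\mathcal{I}$-shaped colimits for every small $\mathcal{I}$, and, dually, saying that $\Alg(T)$ has all small colimits and that $U_{T}$ preserves them is the same as saying that for every small $\mathcal{I}$ the \icat{} $\Alg(T)$ has $\mathcal{I}$-shaped colimits that $U_{T}$ preserves. Intersecting the equivalences from \cref{propn:monadcolim} over all small $\mathcal{I}$ thus yields: the underlying endofunctor of $T$ preserves all small colimits \IFF{} $\Alg(T)$ is cocomplete and $U_{T}$ preserves colimits.

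It remains to match the left-hand condition with linearity, which is the only point that warrants a comment (and is the ``obstacle,'' such as it is). By definition a linear monad is an analytic monad whose underlying endofunctor preserves small colimits, so for a monad already known to be analytic the two conditions literally agree. For a general monad one also checks that colimit-preservation of the endofunctor forces it to be analytic as a functor: by \cref{rmk:linearpolydiag} such an endofunctor has the form $g_{!}f^{*}$, and this preserves weakly contractible limits because $f^{*}$ preserves all limits while $g_{!}$ preserves weakly contractible limits by \cite{polynomial}*{Lemma 2.2.10}. Since the whole argument uses nothing beyond \cref{propn:monadcolim} and the cocompleteness of $\Fun(X,\mathcal{S})$, the corollary is immediate and requires no separate proof.
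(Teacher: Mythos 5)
Your first two paragraphs reproduce exactly the paper's argument: the corollary is recorded there with no separate proof precisely because it is \cref{propn:monadcolim} applied with $\mathcal{I}$ ranging over all small \icats{}, using that $\Fun(X,\mathcal{S})$ is cocomplete. So the core of your proposal is correct and is the same route as the paper's.

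The place where you go beyond the paper is also where your argument has a genuine gap. You rightly flag that ``linear'' is, by the paper's definition, stronger than ``the underlying endofunctor preserves small colimits'': a linear monad is an \emph{analytic} monad whose endofunctor preserves small colimits. But an analytic monad is a \emph{cartesian} monad whose underlying endofunctor is analytic, and your patch via \cref{rmk:linearpolydiag} and \cite{polynomial}*{Lemma 2.2.10} only establishes the second half --- that a colimit-preserving endofunctor of $\Fun(X,\mathcal{S})$ is analytic \emph{as a functor}. For the ``if'' direction applied to a general monad you must also check that the unit $\id \to T$ and multiplication $T^{2} \to T$ are cartesian natural transformations, and your proposal is silent on this. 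The omission is repairable: $\id$, $T$ and $T^{2}$ all preserve colimits, and any natural transformation between colimit-preserving functors $\Fun(X,\mathcal{S}) \to \Fun(X,\mathcal{S})$ is automatically cartesian. Indeed, by \cref{rmk:linearpolydiag} such functors correspond to spans, say $\alpha \colon g_{!}f^{*} \to g'_{!}f'^{*}$ corresponds to a map $u \colon E \to E'$ over $X \times X$, and the naturality square of $\alpha$ at $\phi \colon F \to G$ is, in $\mathcal{S}_{/X}$, the square with horizontal maps $E \times_{X} F \to E' \times_{X} F$ and $E \times_{X} G \to E' \times_{X} G$; since $E \times_{X} F \simeq E \times_{E'} (E' \times_{X} F)$ and likewise for $G$, this square is cartesian. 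Alternatively, you can sidestep the issue entirely by reading the corollary with $T$ already assumed analytic --- which is how the paper actually uses it --- in which case your first two paragraphs are the whole proof.
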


\begin{propn}
  Suppose $T$ is a linear monad on $\Fun(X, \mathcal{S})$, and let
  $\mathcal{C}^{\op}$ be the full subcategory of $\Alg(T)$ spanned by
  the objects $F_{T}\Yo_{X}(x)$ for $x \in X$. Then the restricted
  Yoneda embedding $\Alg(T) \to \Fun(\mathcal{C}, \mathcal{S})$ is an
  equivalence.
\end{propn}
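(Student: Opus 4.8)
The plan is to identify the restricted Yoneda functor as the inverse of a colimit-preserving comparison functor out of a presheaf \icat{}. Write $\mathcal{D} := \mathcal{C}^{\op} \subseteq \Alg(T)$ for the (small) full subcategory spanned by the free algebras $F_{T}\Yo_{X}(x)$. Since $T$ is in particular algebraic, $\Alg(T)$ is presentable by \cref{cor:AlgMndMor}, so by the universal property of presheaves \cite{HTT}*{Theorem 5.1.5.6} the inclusion $\mathcal{D} \hookrightarrow \Alg(T)$ extends uniquely to a colimit-preserving functor
\[ L \colon \Fun(\mathcal{C}, \mathcal{S}) \simeq \PSh(\mathcal{C}^{\op}) \to \Alg(T). \]
This $L$ has a right adjoint $R$, which is precisely the restricted Yoneda functor, taking $A \in \Alg(T)$ to the presheaf $c \mapsto \Map_{\Alg(T)}(c, A)$ on $\mathcal{C}^{\op}$. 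It therefore suffices to prove that $L$ is an equivalence, for then the functor in the statement is its inverse.

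The key point is that every object of $\mathcal{D}$ is \emph{atomic}, i.e.\ that $\Map_{\Alg(T)}(F_{T}\Yo_{X}(x), \blank)$ preserves small colimits. Indeed, by adjunction this functor is
\[ A \mapsto \Map_{\Fun(X, \mathcal{S})}(\Yo_{X}(x), U_{T}A) \simeq (U_{T}A)(x), \]
where $U_{T}$ preserves small colimits because $T$ is linear (\cref{linmndcocts}), and evaluation at $x$ preserves colimits since these are computed pointwise in $\Fun(X, \mathcal{S})$. Hence $R$, and so also $RL$, preserves small colimits. The unit $\id \to RL$ is an equivalence on representables, since $L\Yo_{\mathcal{C}^{\op}}(c) \simeq c$ and $R(c) \simeq \Yo_{\mathcal{C}^{\op}}(c)$ as $\mathcal{D}$ is a full subcategory of $\Alg(T)$; as $\id$ and $RL$ both preserve colimits and every presheaf is a colimit of representables, the unit is an equivalence throughout, so $L$ is fully faithful.

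Finally, $L$ is essentially surjective: by \cref{propn:AlgMndisMod} (and the identification $\Alg(T) \simeq \mathcal{P}_{\Sigma}(\mathcal{L}(T)^{\op})$) every object of $\Alg(T)$ is a sifted colimit of objects of $\mathcal{L}(T)^{\op}$, each of which is a finite coproduct of free algebras $F_{T}\Yo_{X}(x)$, hence a small colimit of objects of $\mathcal{D}$; as $L$ preserves colimits and its image contains $\mathcal{D}$, it is essentially surjective. Thus $L$ is an equivalence and the restricted Yoneda functor $R$ is its inverse. The crux --- and the only place where linearity rather than mere analyticity enters --- is the atomicity computation: it is exactly the preservation of \emph{all} colimits by $U_{T}$ that upgrades the free sifted-cocompletion description of $\Alg(T)$ to the full presheaf \icat{} $\Fun(\mathcal{C}, \mathcal{S})$.
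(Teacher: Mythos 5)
Your proof is correct, and the atomicity computation at its heart --- $\Map_{\Alg(T)}(F_{T}\Yo_{X}(x), A) \simeq (U_{T}A)(x)$ together with \cref{linmndcocts} --- is exactly the computation the paper makes. The difference is in how the result is then extracted. The paper simply applies \cite{polynomial}*{Lemma 3.3.11}, which says the restricted Yoneda functor on a full subcategory of \emph{completely compact} objects is an equivalence provided those objects are also \emph{jointly conservative}; the second hypothesis is verified there using conservativity of $U_{T}$ and the fact that equivalences of presheaves on $X$ are detected pointwise. You instead re-prove that lemma inline, replacing the joint-conservativity check with a density argument: you verify full faithfulness of $L$ via the unit on representables and colimit preservation of $RL$, and essential surjectivity via the sifted-colimit generation of $\Alg(T)$ by $\mathcal{L}(T)^{\op}$ coming from \cref{propn:AlgMndisMod}. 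Both routes work; the paper's is shorter because the cited lemma absorbs the adjunction bookkeeping, while yours is self-contained and makes visible exactly where linearity (as opposed to mere algebraicity) enters, as you note at the end. One small point of care in your last step: the essential image of a colimit-preserving functor is closed under colimits only once you know the functor is fully faithful (so that diagrams valued in the image lift to the source); since you establish full faithfulness first, your argument is fine, but it would be worth flagging that dependence explicitly, or alternatively running the counit $LR \to \id$ through the same ``equivalence on $\mathcal{D}$, both sides preserve colimits'' argument.
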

\begin{proof}
  We apply \cite{polynomial}*{Lemma 3.3.11}, for which it suffices
  to check that the objects of $\mathcal{C}^{\op}$ are completely
  compact and jointly conservative. We have a natural equivalence
  \[ \Map_{\Alg(T)}(F_{T}\Yo_{X}(x), A) \simeq \Map_{\Fun(X,
      \mathcal{S})}(\Yo_{X}(x), U_{T}A) \simeq (U_{T}A)(x),\]
  from which we see that $F_{T}\Yo_{X}(x)$ is completely compact since
  $\Yo_{X}(x)$ is completely compact in $\Fun(X, \mathcal{S})$ and
  $U_{T}$ preserves small colimits by  \cref{linmndcocts}. Moreover,
  the objects of $\mathcal{C}^{\op}$ are jointly conservative since
  $U_{T}$ is conservative and equivalences in $\Fun(X, \mathcal{S})$
  are detected by evaluation at the points of $X$.
\end{proof}

\begin{cor}
  An analytic monad $T$ is linear \IFF{} it is in the image of
  $\PCatI$. \qed
\end{cor}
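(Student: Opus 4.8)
The plan is to prove both implications by exploiting the explicit description, recorded just before the definition of linear monads, of the composite equivalence $\PCatI \to \POpd(\SpF) \isoto \AnMnd$ of \cref{cor:opdmndeq}: it sends a pinned \icat{} $(\mathcal{C}, p \colon X \twoheadrightarrow \mathcal{C}^{\simeq})$ to the analytic monad whose monadic adjunction is $\Fun(\mathcal{C}, \mathcal{S}) \xto{p^{*}} \Fun(X, \mathcal{S})$. Thus the image of $\PCatI$ in $\AnMnd$ consists precisely of those analytic monads $T$ on $\Fun(X, \mathcal{S})$ for which there exist an \icat{} $\mathcal{C}$ and an essentially surjective map $p \colon X \to \mathcal{C}$ together with an identification of the monadic forgetful functor $U_{T}$ with the restriction $p^{*} \colon \Fun(\mathcal{C}, \mathcal{S}) \to \Fun(X, \mathcal{S})$. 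The whole argument then reduces to recognizing this condition as linearity.

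For the ``only if'' direction, I would suppose $T$ lies in this image. Then $\Alg(T) \simeq \Fun(\mathcal{C}, \mathcal{S})$ is cocomplete, and $U_{T}$ is identified with the precomposition functor $p^{*}$, which preserves all small colimits since colimits in functor \icats{} are computed pointwise. By \cref{linmndcocts} this is exactly the criterion for $T$ to be linear.

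For the ``if'' direction, I would suppose $T$ is linear and invoke the preceding proposition, which via the restricted Yoneda embedding gives an equivalence $\nu \colon \Alg(T) \isoto \Fun(\mathcal{C}, \mathcal{S})$, where $\mathcal{C}^{\op} \subseteq \Alg(T)$ is the full subcategory spanned by the free algebras $F_{T}\Yo_{X}(x)$. Taking $p \colon X \to \mathcal{C}$ to be the map sending $x$ to $F_{T}\Yo_{X}(x)$ --- which is essentially surjective since by construction every object of $\mathcal{C}$ is of this form --- the defining formula $\nu(A)(F_{T}\Yo_{X}x) \simeq (U_{T}A)(x)$ for the restricted Yoneda embedding yields $p^{*}\nu \simeq U_{T}$. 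Hence under $\nu$ the forgetful functor $U_{T}$ is identified with $p^{*}$, so the monadic adjunction of $T$ agrees with that of the pinned \icat{} $(\mathcal{C}, p)$, placing $T$ in the image of $\PCatI$.

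The genuinely substantial input is the preceding proposition reconstructing $\Alg(T)$ as the functor \icat{} $\Fun(\mathcal{C}, \mathcal{S})$; granting it, the only point needing care is that the identification $p^{*}\nu \simeq U_{T}$ be natural and compatible with the monad structure, so that $T$ is recognized as the \emph{same} object of $\AnMnd$ as the monad produced from $(\mathcal{C}, p)$, rather than merely an abstractly equivalent one. This is exactly what the formula computing $\nu$ on free algebras provides, together with the fact that an object of $\AnMnd$ is determined by its monadic right adjoint, so I expect no real obstacle beyond bookkeeping the equivalences.
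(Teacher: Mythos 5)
Your proposal is correct and is essentially the paper's own (implicit) argument: the corollary is stated with an immediate \qed{} precisely because, as you spell out, one direction follows from cocontinuity of $p^{*}$ on the cocomplete $\Fun(\mathcal{C},\mathcal{S})$ together with \cref{linmndcocts}, and the other from the preceding proposition, with the restricted Yoneda equivalence $\nu$ satisfying $p^{*}\nu \simeq U_{T}$ by its defining formula on free algebras. You also correctly handle the two points the paper leaves tacit, namely that $p \colon X \to \mathcal{C}^{\simeq}$ is essentially surjective by construction of $\mathcal{C}$, and that identifying the monadic right adjoints over $\Fun(X,\mathcal{S})$ identifies the objects of $\AnMnd$.
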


\begin{cor}
  We have a commutative square
  \[
    \begin{tikzcd}
      \POpd(\SpF) \arrow{r}{\sim} & \AnMnd \\
      \PCatI \arrow[hookrightarrow]{u}{\xF^{\op}_{(\blank)}}
      \arrow{r}{\sim} & \LinMnd, \arrow[hookrightarrow]{u}
    \end{tikzcd}
  \]
  where the bottom horizontal functor takes a pinned \icat{} $(\mathcal{C}, p \colon X
  \twoheadrightarrow \mathcal{C}^{\simeq})$ to the monadic right
  adjoint
  $\Fun(\mathcal{C}, \mathcal{S}) \xto{p^{*}} \Fun(X,
    \mathcal{S})$. \qed
\end{cor}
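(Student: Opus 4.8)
The plan is to assemble the square from results already in hand, since all the real content is contained in the preceding corollaries. First I would record that the square is built from the fully faithful inclusion $\xF^{\op}_{(\blank)} \colon \PCatI \hookrightarrow \POpd(\SpF)$ together with the equivalence $\POpd(\SpF) \isoto \AnMnd$ of \cref{cor:opdmndeq}. The composite $\PCatI \to \POpd(\SpF) \isoto \AnMnd$ was computed in the paragraph introducing the notion of linear monad: it carries a pinned \icat{} $(\mathcal{C}, p \colon X \twoheadrightarrow \mathcal{C}^{\simeq})$ to the monadic right adjoint $p^{*} \colon \Fun(\mathcal{C}, \mathcal{S}) \to \Fun(X, \mathcal{S})$. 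This both pins down the bottom horizontal functor and makes the square commute on the nose, since that functor is by construction the corestriction of this composite.

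Next I would observe that the composite $\PCatI \to \AnMnd$ is fully faithful, being the composite of a fully faithful functor with an equivalence. By the immediately preceding corollary, its essential image is precisely the full subcategory $\LinMnd \subseteq \AnMnd$ of linear monads. Since a fully faithful functor whose essential image is a full subcategory $\mathcal{A}$ is an equivalence onto $\mathcal{A}$, the composite corestricts to an equivalence $\PCatI \isoto \LinMnd$; placing the inclusion $\LinMnd \hookrightarrow \AnMnd$ on the right-hand side then produces the asserted commutative square.

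I do not expect any genuine obstacle here: the substantive steps---the computation of the composite functor and the characterization of linear monads as the essential image of $\PCatI$---are already established. The only things to verify are that the commutativity is strict, which is immediate once the bottom map is taken to be the corestricted composite, and that the essential-image statement of the previous corollary is exactly what is needed to upgrade full faithfulness to an equivalence onto $\LinMnd$.
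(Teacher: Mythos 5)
Your proposal is correct and matches the paper's intended argument exactly: the paper states this as an immediate corollary (with no written proof), relying precisely on the computation that the composite $\PCatI \to \POpd(\SpF) \isoto \AnMnd$ sends $(\mathcal{C}, p)$ to $p^{*} \colon \Fun(\mathcal{C},\mathcal{S}) \to \Fun(X,\mathcal{S})$ together with the preceding corollary identifying the essential image with the linear monads. Your additional remark that full faithfulness plus the essential-image identification upgrades the corestriction to an equivalence is exactly the (implicit) content of the paper's \qed.
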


\subsection{Linear monads and Segal spaces}\label{sec:linmndsegal}
Our goal in this section is to show that the equivalence between
analytic monads and dendroidal Segal spaces from \cite{polynomial}
restricts to an equivalence between linear monads and Segal spaces.

In order to do this, we will briefly revisit each step in the
comparison and discuss how it restricts to the
linear case; we refer the reader to \cite{polynomial} for full details.

Firstly, by giving an alternative description of the \icat{} $\AnMnd$
we constructed (in \cite{polynomial}*{\S 5.1}) a forgetful functor
$\Uan \colon \AnMnd \to \AnEnd$ where $\AnEnd$ is an \icat{}
of \emph{analytic endofunctors} on \icats{} of the form
$\mathcal{S}_{/X}$; an object of $\AnEnd$ is a space $X$ together with
an analytic endofunctor
$T \colon \mathcal{S}_{/X} \to \mathcal{S}_{/X}$, and a morphism
$(X,T) \to (Y,S)$ is a morphism of spaces $f \colon X \to Y$ together
with a cartesian transformation $T f^{*} \to f^{*} S$.

\begin{defn}
  We define $\LinEnd$ as the full subcategory of
$\AnEnd$ containing the linear (\ie{} colimit-preserving)
endofunctors. By construction we then have a pullback square
\[
  \begin{tikzcd}
    \LinMnd \arrow[hookrightarrow]{r} \arrow{d}{\Ulin} &
    \AnMnd \arrow{d}{\Uan} \\
    \LinEnd \arrow[hookrightarrow]{r} & \AnEnd.
  \end{tikzcd}
\]
\end{defn}

Next, we defined the category $\bbOint$ of trees and inert morphisms
(or subtree inclusions) as a full subcategory of $\AnEnd$. Here we
regard a tree $T$ as the endofunctor corresponding to the polynomial 
diagram
\[ E \xfrom{s} V_{*} \xto{p} V \xto{t} E,\]
where $E$ is the set of edges of $T$, $V$ is the set of vertices, and
$V_{*}$ is the set of vertices with a marked incoming edge. The map
$s$ is the projection $(v,e) \mapsto e$ and $p$ is the projection
$(v,e) \mapsto v$, while $t$ takes a vertex $v$ to its unique outgoing
edge. Here the fibre of $p$ at $v$ is the set of incoming edges of
$v$, so the tree $T$ is linear in the sense that it has only unary
vertices \IFF{} $p$ is an equivalence, which we saw in 
\cref{rmk:linearpolydiag} is equivalent to the corresponding functor
being linear. It is easy to see that the full subcategory of $\bbO^{\txt{int}}$ on
the linear trees is $\simp^{\txt{int}}$, so we get a pullback square
\[
  \begin{tikzcd}
    \simp^{\txt{int}} \arrow[hookrightarrow]{d}
    \arrow[hookrightarrow]{r} & \LinEnd \arrow[hookrightarrow]{d} \\
    \bbOint \arrow[hookrightarrow]{r} & \AnEnd.
  \end{tikzcd}
\]
Let us write $\bbOel$ for the full subcategory of $\bbOint$ on the
edge $\eta$ and the corollas $C_{n}$ (with $C_{n}$ having $n$ leaves
for $n = 0,1,\ldots$). Then we define $\Del$ as the intersection of
$\bbOel$ with $\Dint$, so that $\Del$ has objects $\eta = [0]$ and
$C_{1} = [1]$.

In \cite{polynomial}*{\S 3.3} we showed that the restricted Yoneda
embedding $\AnEnd \to \mathcal{P}(\bbOel)$ is an equivalence, while
$\AnEnd \to \mathcal{P}(\bbOint)$ is fully faithful with image the
functors that satisfy the dendroidal Segal condition, or equivalently
are right Kan extended from $\bbOel$. To deduce an equivalent
statement for linear endofunctors we first prove the following
characterization:

\begin{propn}\label{propn:anendlinearchar}
  The following are equivalent for an object $F \in \AnEnd$.
  \begin{enumerate}[(i)]
  \item $F$ is linear.
  \item $\Map_{\AnEnd}(C_{n},F) \simeq \emptyset$ for $n \neq 1$.    
  \item $\Map_{\AnEnd}(T, F) \simeq \emptyset$ for every tree $T \in
    \bbOint$ which is not linear.
  \item $\Map_{\AnEnd}(\blank,F)|_{\bbOint} \in \mathcal{P}(\bbOint)$
    is left Kan extended from $\Dintop$.
  \item $\Map_{\AnEnd}(\blank,F)|_{\bbOel} \in \mathcal{P}(\bbOel)$
    is left Kan extended from $\Delop$.
  \end{enumerate}
\end{propn}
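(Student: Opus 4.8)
The plan is to prove the chain $\text{(i)}\Leftrightarrow\text{(ii)}\Leftrightarrow\text{(iii)}\Leftrightarrow\text{(iv)}$ together with $\text{(ii)}\Leftrightarrow\text{(v)}$, so that all the real content is isolated in the single equivalence $\text{(i)}\Leftrightarrow\text{(ii)}$ and everything else becomes formal manipulation of Kan extensions and the combinatorics of trees. First I would unwind what the corolla $C_{n}$ detects. Writing $F$ via its (unique) polynomial diagram $X \xfrom{s} E \xto{p} B \xto{t} X$ and using the computation of mapping spaces out of the generators of $\bbOel$ under the equivalence $\AnEnd \simeq \mathcal{P}(\bbOel)$, the space $\Map_{\AnEnd}(C_{n}, F)$ is the space of $n$-ary operations of $F$: reading these off the diagram (cf.\ \cref{rmk:anmndL*}, where the fibre $E_{b} = p^{-1}(b)$ governs the arity of the operation $b$), it is the summand $B_{n} \subseteq B$ of those $b$ with $|p^{-1}(b)| = n$, decorated by the colours of their edges, and in particular it is nonempty exactly when $B_{n}$ is. Since $p$ has finite discrete fibres we have $B = \coprod_{n} B_{n}$, and $p$ is an equivalence precisely when $B_{n} = \emptyset$ for all $n \neq 1$. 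As $F$ is linear iff $p$ is an equivalence (\cref{rmk:linearpolydiag}), this gives $\text{(i)}\Leftrightarrow\text{(ii)}$.

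For $\text{(ii)}\Leftrightarrow\text{(iii)}$ I would use that $\Map_{\AnEnd}(\blank, F)|_{\bbOint}$ lies in the image of the fully faithful embedding $\AnEnd \hookrightarrow \mathcal{P}(\bbOint)$, hence is right Kan extended from $\bbOel$: for each tree $T$,
\[ \Map_{\AnEnd}(T, F) \simeq \lim_{e \hookrightarrow T} \Map_{\AnEnd}(e, F), \]
the limit running over the elementary subtrees $e \in \bbOel$ of $T$. The implication $\text{(iii)}\Rightarrow\text{(ii)}$ is immediate, and conversely any non-linear $T$ contains a vertex of arity $n \neq 1$, hence a corolla $C_{n}$ as a subtree; if (ii) holds, the factor $\Map_{\AnEnd}(C_{n}, F) \simeq \emptyset$ forces the whole limit to be empty, which is (iii).

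The remaining two equivalences are purely formal consequences of the pointwise formula for left Kan extension. For a fully faithful inclusion $j$, the restriction of the counit $\operatorname{Lan}_{j} j^{*}\mathcal{Q} \to \mathcal{Q}$ to the subcategory is automatically an equivalence, so $\mathcal{Q}$ is left Kan extended iff the counit is an equivalence at the remaining objects, where the value of the extension is the colimit over the relevant comma category. For $\text{(iii)}\Leftrightarrow\text{(iv)}$, at a tree $T$ this comma category has as objects the maps $T \to L$ in $\bbOint$ with $L \in \Dint$, i.e.\ the subtree inclusions of $T$ into a linear tree; since any subtree of a linear tree is again linear, it is empty precisely when $T$ is non-linear, so the extension vanishes at every non-linear $T$ and condition (iv) reduces to $\Map_{\AnEnd}(T, F) \simeq \emptyset$ for all non-linear $T$, which is (iii). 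For $\text{(ii)}\Leftrightarrow\text{(v)}$, recalling $\Del = \{\eta, C_{1}\}$, the analogous comma category at $C_{n}$ consists of the subtree inclusions of $C_{n}$ into $\eta$ or $C_{1}$, of which there are none when $n \neq 1$; hence the extension vanishes at each $C_{n}$ with $n \neq 1$ and (v) reduces to (ii).

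The main obstacle is the opening computation $\text{(i)}\Leftrightarrow\text{(ii)}$: one must pin down exactly what $\Map_{\AnEnd}(C_{n}, F)$ computes, reading the $n$-ary operations off the polynomial diagram and checking that emptiness for all $n \neq 1$ is equivalent to $p$ having one-element fibres, hence to $p$ being an equivalence. Once this dictionary between corollas and fibres of $p$ is in place, the remaining equivalences are forced by the Segal property and by the combinatorial facts — which I would state explicitly — that subtrees of linear trees are linear and that there is no inert map out of a non-unary corolla into $\eta$ or $C_{1}$.
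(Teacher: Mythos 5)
Your proposal is correct and follows essentially the same route as the paper: the paper likewise reduces everything to (i)$\Leftrightarrow$(ii) via the polynomial diagram (citing \cite{polynomial}*{Lemma 3.3.6} to identify $\Map_{\AnEnd}(C_{n},F)$ with the fibre of the classifying map $B \to \xF^{\simeq}$ at an $n$-element set), gets (ii)$\Leftrightarrow$(iii) from the decomposition of a tree into its edges and corollas, and settles (iv) and (v) by the observation that there are no maps in $\bbOint$ from a non-linear tree to a linear one. Your treatment merely spells out the comma-category/Kan-extension bookkeeping that the paper leaves implicit.
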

\begin{proof}
  Suppose $F$ corresponds to the polynomial diagram
  \[ X \xfrom{s} E \xto{p} B \xto{t} X,\]
  where $p$ has finite discrete fibres. The map $p$ is then classified
  by a map $\pi \colon B \to \xF^{\simeq}$, and by \cite{polynomial}*{Lemma
    3.3.6} the space $\Map_{\AnEnd}(C_{n}, F)$ is equivalent to
  the fibre of $\pi$ at an $n$-element set. This fibre is empty \IFF{}
  no fibres of $p$ are $n$-element sets, so that condition (ii) is
  equivalent to all fibres of $p$ having size $1$, \ie{} to $p$ being
  an equivalence, which by \cref{rmk:linearpolydiag} corresponds to
  $F$ being linear.

  Condition (ii) is a special case of (iii), but it also implies (iii)
  since every tree is a colimit in $\AnEnd$ of its edges and corollas
  by \cite{polynomial}*{Lemma 3.3.5}, and so $\Map_{\AnEnd}(T, F)$
  must be empty whenever $T$ contains a non-unary vertex.

  To see that (ii) and (iii) are equivalent to (iv) and (v),
  respectively, it suffices to note that there are no maps in
  $\bbOint$ from a non-linear tree to a linear one.
\end{proof}

\begin{cor}\label{cor:LinEndsquares}
  We have commutative squares
  \[
    \begin{tikzcd}
     \LinEnd \arrow{r}{\sim} \arrow[hookrightarrow]{d} &
     \PSh(\Del) \arrow[hookrightarrow]{d} \\
     \AnEnd \arrow{r}{\sim} & \PSh(\bbOel),
   \end{tikzcd}
   \qquad
    \begin{tikzcd}
     \LinEnd \arrow{r}{\sim} \arrow[hookrightarrow]{d} &
     \PSh_{\Seg}(\Dint) \arrow[hookrightarrow]{d} \\
     \AnEnd \arrow{r}{\sim} & \PSh_{\Seg}(\bbOint),
   \end{tikzcd}
 \]
 where in both the right-hand vertical arrow is a left Kan extension.
\end{cor}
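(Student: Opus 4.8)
The plan is to deduce both squares from \cref{propn:anendlinearchar}, using the equivalences $\AnEnd \simeq \PSh(\bbOel)$ and $\AnEnd \simeq \PSh_{\Seg}(\bbOint)$ from \cite{polynomial}*{\S 3.3}; under these an object $F$ corresponds to the restricted copresheaves $\Map_{\AnEnd}(\blank, F)|_{\bbOel}$ and $\Map_{\AnEnd}(\blank, F)|_{\bbOint}$, respectively, and the second equivalence composed with restriction $v^{*}$ along $v$ recovers the first. Write $a \colon \Del \hookrightarrow \Dint$, $b \colon \Del \hookrightarrow \bbOel$, $u \colon \Dint \hookrightarrow \bbOint$, $v \colon \bbOel \hookrightarrow \bbOint$ for the (fully faithful) inclusions, so that $ua = vb$, and let $b_{!}, u_{!}$ and $a^{*}, b^{*}, u^{*}, v^{*}$ denote the induced left Kan extension and restriction functors on presheaf \icats{}.

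For the first square, condition~(v) of \cref{propn:anendlinearchar} says that $F$ is linear exactly when $\Map_{\AnEnd}(\blank, F)|_{\bbOel}$ lies in the essential image of $b_{!} \colon \PSh(\Del) \to \PSh(\bbOel)$, which is fully faithful since $b$ is. Hence $\AnEnd \simeq \PSh(\bbOel)$ restricts to an equivalence $\LinEnd \simeq \PSh(\Del)$, compatibly with the inclusions into $\AnEnd$ and $\PSh(\bbOel)$, the latter being the left Kan extension $b_{!}$; this is the left square.

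For the second square, condition~(iv) identifies $\LinEnd$, under $\AnEnd \simeq \PSh_{\Seg}(\bbOint)$, with the full subcategory $\mathcal{A} \subseteq \PSh_{\Seg}(\bbOint)$ of dendroidal Segal presheaves that are left Kan extended from $\Dintop$. I will show $\mathcal{A}$ is the essential image of $u_{!}$ on $\PSh_{\Seg}(\Dint)$, with inverse $u^{*}$. The key combinatorial input is that the only elementary trees admitting a map to a linear tree $[n]$ in $\bbOint$ are themselves linear, so $\bbOel_{/[n]} = \Del_{/[n]}$; consequently the Segal condition of a dendroidal Segal presheaf $G$ at $[n]$ involves only objects of $\Del$, and $u^{*}G$ is a Segal presheaf on $\Dint$. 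Since $u$ is fully faithful, every $G \in \mathcal{A}$ satisfies $G \simeq u_{!}u^{*}G$, so $u^{*}$ restricts to a fully faithful functor $\mathcal{A} \to \PSh_{\Seg}(\Dint)$.

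It remains to see this is an equivalence with inverse $u_{!}$, which I obtain by tracing the abstract composite $\PSh_{\Seg}(\Dint) \xrightarrow{a^{*}} \PSh(\Del) \simeq \LinEnd \simeq \mathcal{A}$ against $u^{*}$: restricting $\mathcal{A} \subseteq \PSh_{\Seg}(\bbOint)$ along $v^{*}$ (the Segal equivalence) and then $b^{*}$ gives $b^{*}v^{*} = (vb)^{*} = (ua)^{*} = a^{*}u^{*}$, and since $u^{*}G$ is Segal on $\Dint$ (hence right Kan extended from $\Del$, so $a_{*}a^{*}u^{*}G \simeq u^{*}G$) the composite through the first square is exactly $G \mapsto u^{*}G$. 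Being a composite of equivalences it is an equivalence, so $u^{*} \colon \mathcal{A} \to \PSh_{\Seg}(\Dint)$ is an equivalence with inverse $u_{!}$; in particular $u_{!}$ carries $\PSh_{\Seg}(\Dint)$ into $\PSh_{\Seg}(\bbOint)$ and realizes the right-hand inclusion of the second square as a left Kan extension. The main obstacle is precisely this last point --- that $u_{!}H$ is again a dendroidal Segal presheaf for every Segal presheaf $H$ on $\Dint$ (equivalently, a Beck--Chevalley identity $u_{!}a_{*} \simeq v_{*}b_{!}$); rather than computing it directly, I sidestep it through the tracing above, which reduces everything to the fully faithfulness of $u_{!}$ together with the observation that elementary subtrees of linear trees are linear.
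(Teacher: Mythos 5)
Your argument is correct, and for the first square it is in substance the paper's own proof: the paper also reduces everything to \cref{propn:anendlinearchar}(v), identifying the image of $\LinEnd$ in $\PSh(\bbOel)$ with the essential image of the fully faithful left Kan extension from $\Del$. The difference is in packaging and in the second square. For the first square the paper does not merely match up images; it builds the square as a genuinely commutative square of functors, by taking mates of the square of inclusions $\Del \hookrightarrow \LinEnd$, $\bbOel \hookrightarrow \AnEnd$ and verifying pointwise that the resulting lax square commutes (the comma category $(\Del)^{\op}_{/T}$ has a terminal object when $T \in \Del$ and is empty otherwise, with the target $\emptyset$ by linearity). Your ``compatibly with the inclusions'' glosses exactly this coherence step; it is harmless here, but note that the functoriality supplied by the mate formalism is what the paper later leans on when assembling these squares into the commutative cube. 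For the second square the paper simply says the proof is ``essentially the same'' (i.e.\ rerun the mate argument for $\Dint \hookrightarrow \bbOint$, using condition (iv) and the fact that no non-linear tree admits a map to a linear one), whereas you take a genuinely different route: you identify $\LinEnd$ with the full subcategory $\mathcal{A}$ of Segal presheaves left Kan extended from $\Dintop$, observe that $u^{*}$ preserves the Segal condition because elementary subtrees of linear trees are linear, and then obtain essential surjectivity and the inverse by tracing through the first square via $b^{*}v^{*} = a^{*}u^{*}$. This buys you an explicit inverse ($u^{*}$) and lets you sidestep the verification that $u_{!}$ carries Segal presheaves on $\Dint$ to dendroidal Segal presheaves --- though that sidestepped fact is in any case easy and is implicitly what the paper's omitted argument checks: at a non-linear tree $T$ the relevant comma category is empty, so $(u_{!}H)(T) \simeq \emptyset$, and the Segal limit at such $T$ contains a factor $(u_{!}H)(C_{n}) \simeq \emptyset$ with $n \neq 1$, hence is itself empty. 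Both routes rest on the same two inputs (\cref{propn:anendlinearchar} and the combinatorics of subtrees of linear trees), so your proof is a valid, slightly more economical variant.
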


\begin{proof}
  Let $\lambda,
  \delta^{\txt{el}}, \omega^{\txt{el}}, L^{\txt{el}}$ denote the
  inclusions $\LinEnd \hookrightarrow \AnEnd$, $\Del \hookrightarrow
  \LinEnd$, $\bbOel \hookrightarrow \AnEnd$, $\Del \hookrightarrow
  \bbOel$, respectively. From the commutative square
  \[
    \begin{tikzcd}
      \Del \arrow[hookrightarrow]{r}{\delta^{\txt{el}}}
      \arrow[hookrightarrow]{d}{L^{\txt{el}}} & \LinEnd
      \arrow[hookrightarrow]{d}{\lambda} \\
      \bbOel \arrow[hookrightarrow]{r}{\omega^{\txt{el}}} & \AnEnd
    \end{tikzcd}
  \]
  we obtain a commutative diagram
  \[
    \begin{tikzcd}
      \LinEnd \arrow[hookrightarrow]{r}{\Yo_{\LinEnd}} \arrow[hookrightarrow]{d}{\lambda} & \PSh(\LinEnd)
      \arrow{r}{\delta^{\txt{el},*}} \arrow{d}{\lambda_{!}} &
      \PSh(\Del) \arrow[hookrightarrow]{d}{L^{\txt{el}}_{!}} \arrow[Rightarrow]{dl} \\
      \AnEnd \arrow[hookrightarrow]{r}{\Yo_{\AnEnd}} & \PSh(\AnEnd)
      \arrow{r}{\omega^{\txt{el},*}} & \PSh(\bbOel),
    \end{tikzcd}
  \]
  where the left square comes from the Yoneda embedding and the right
  square is the mate of the square of restriction functors from square
  above. To obtain the first commutative square it then suffices to
  show that the composite lax square actually commutes, \ie{} that for
  a linear endofunctor $F$ the map
  $L_{!}^{\txt{el}}\delta^{\txt{el},*}\Yo_{\LinEnd}F \to
  \omega^{\txt{el},*}\Yo_{\AnEnd}\lambda F$ is an equivalence. We can
  check this by evaluating at every object $T \in \bbOel$, where we
  get the canonical map
  \[ \colim_{X \in (\Del)^{\op}_{/T}} \Map(\delta^{\txt{el}}(X), F)
    \to \Map(\omega^{\txt{el}}(T), \lambda F);\]
  if $T$ lies in $\Del$ then this is an equivalence since
  $(\Del)^{\op}_{/T}$ has a terminal object, and if $T$ does not lie
  in $\Del$ it is an equivalence because $(\Del)^{\op}_{/T}$ is empty
  and the right-hand side is $\emptyset$ because $F$ is linear. We
  thus have a commutative square
  \[
    \begin{tikzcd}
     \LinEnd \arrow{r} \arrow[hookrightarrow]{d} &
     \PSh(\Del) \arrow[hookrightarrow]{d}{L^{\txt{el}}_{!}} \\
     \AnEnd \arrow{r}{\sim} & \PSh(\bbOel),
   \end{tikzcd}
 \]
 and to obtain the first square it remains only to show that the top
 horizontal morphism is an equivalence. To see this, we first observe
 that this functor is automatically fully faithful (since essentially surjective
 and fully faithful functors are a factorization system on
 $\CatI$). It then suffices to observe that the image of $\LinEnd$ in
 $\PSh(\bbOel)$ consists precisely of the functors left Kan extended from
 $\Del$ by \cref{propn:anendlinearchar}. We omit the argument for the
 second square, since it is essentially the same.
\end{proof}

In \cite{polynomial}*{\S 5.2}, we proved that
$\Uan$  has a left adjoint $\Fan \colon \AnEnd
\to \AnMnd$, taking an analytic endofunctor to the free monad on
it. We can define $\bbO$ as the full subcategory of $\AnMnd$
containing the free monads on the objects of $\bbO^{\xint}$; $\simp
\subseteq \bbO$
is then the full subcategory of $\AnMnd$ containing the free monads on
linear trees.

\begin{lemma}
  $\Fan$ restricts to a functor $\Flin \colon \LinEnd \to \LinMnd$,
  left adjoint to $\Ulin$.
\end{lemma}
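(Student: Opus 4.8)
The plan is to show that $\Fan$ carries the full subcategory $\LinEnd$ into $\LinMnd$; once this is established, the restricted adjunction follows formally from the pullback square defining $\LinMnd$ together with the fullness of the two inclusions $\LinEnd \hookrightarrow \AnEnd$ and $\LinMnd \hookrightarrow \AnMnd$.

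For the main step I would use the characterization of linear monads from \cref{linmndcocts}: a monad $T$ is linear precisely when $\Alg(T)$ admits small colimits and $U_{T}$ preserves them. Given a linear endofunctor $F$ on $\Fun(X, \mathcal{S})$, the monad $\Fan(F)$ is analytic by construction, so it only remains to verify the colimit condition. The key observation is that the algebras for the free monad $\Fan(F)$ are identified with the algebras for the underlying endofunctor $F$ (objects $A$ equipped with a structure map $FA \to A$), with $U_{\Fan(F)}$ corresponding to the forgetful functor to $\Fun(X, \mathcal{S})$. This forgetful functor creates all colimits that $F$ preserves; since $F$ is linear it preserves all small colimits, so $\Alg(\Fan(F))$ has small colimits and $U_{\Fan(F)}$ preserves them. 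By \cref{linmndcocts}, $\Fan(F)$ is linear, so $\Fan$ restricts to a functor $\Flin \colon \LinEnd \to \LinMnd$.

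For the adjunction, write $\iota_{E} \colon \LinEnd \hookrightarrow \AnEnd$ and $\iota_{M} \colon \LinMnd \hookrightarrow \AnMnd$ for the inclusions, so that the pullback square defining $\LinMnd$ supplies a natural equivalence $\Uan \iota_{M} \simeq \iota_{E} \Ulin$ and the construction of $\Flin$ gives $\iota_{M} \Flin \simeq \Fan \iota_{E}$. For $F \in \LinEnd$ and $T \in \LinMnd$ I would then chain the equivalences
\[ \Map_{\LinMnd}(\Flin F, T) \simeq \Map_{\AnMnd}(\Fan \iota_{E} F, \iota_{M} T) \simeq \Map_{\AnEnd}(\iota_{E} F, \Uan \iota_{M} T) \simeq \Map_{\LinEnd}(F, \Ulin T), \]
using fullness of $\iota_{M}$, the adjunction $\Fan \dashv \Uan$, and finally $\Uan \iota_{M} \simeq \iota_{E} \Ulin$ with fullness of $\iota_{E}$. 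This exhibits $\Flin$ as left adjoint to $\Ulin$.

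The main obstacle is the identification of $\Alg(\Fan(F))$ with endofunctor algebras together with the creation-of-colimits fact, in the specific $(\infty,2)$-categorical monad framework used here. If the former is not directly available in this form, I would instead appeal to the explicit description of the free monad in \cite{polynomial}*{\S 5.2}, combined with \cref{rmk:linearpolydiag}: a linear endofunctor is exactly one whose polynomial diagram has its middle map an equivalence, and checking that this property is inherited by $\Fan(F)$ would again yield that $\Fan(F)$ is linear.
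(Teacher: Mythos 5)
Your fallback is essentially the paper's own proof, and it is the branch that actually works: the paper verifies that the underlying endofunctor of $\Fan P$ is linear by citing the explicit tree formula of \cite{polynomial}*{Theorem 5.2.4} and observing that the space of maps from any non-linear tree to a linear endofunctor is empty (this is the content of \cref{propn:anendlinearchar}), so that only linear trees contribute to the formula; equivalently, the middle map of the resulting polynomial diagram is an equivalence as in \cref{rmk:linearpolydiag}. The paper leaves the restriction of the adjunction implicit, and your mapping-space chain for that step is correct and formal --- note only that you need full \emph{faithfulness} (not merely fullness) of the inclusions $\iota_{E}$ and $\iota_{M}$, which holds since both are inclusions of full subcategories ($\LinMnd$ being the pullback $\AnMnd \times_{\AnEnd} \LinEnd$), and that the equivalence $\iota_{M}\Flin \simeq \Fan \iota_{E}$ is available only after the restriction has been established.

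Your primary route, however, has a genuine gap exactly where you suspected. The identification of $\Alg(\Fan F)$ with the \icat{} of endofunctor algebras $FA \to A$, with $U_{\Fan F}$ the forgetful functor --- i.e.\ the assertion that the free monad is \emph{algebraically free} --- is a classical 1-categorical theorem (Barr, Kelly), but it is proved neither in this paper nor in \cite{polynomial}: there the left adjoint $\Fan$ is produced by exhibiting $\Uan$ as a monadic right adjoint and its value is computed by the tree formula, with no comparison to endofunctor algebras. Establishing algebraic freeness in this $(\infty,2)$-categorical setting (say, by realizing endofunctor algebras as a lax equalizer and running a transfinite or bar-type argument) would be a substantial detour, arguably harder than the lemma itself. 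The steps surrounding that claim are sound: $\Fan F$ is analytic by construction, so \cref{linmndcocts} applies, and the forgetful functor from endofunctor algebras does create the colimits that $F$ preserves. So as written the primary branch does not constitute a proof, but since you flagged the obstruction and your fallback coincides with the paper's argument, the proposal as a whole is correct.
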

\begin{proof}
  If $P$ is a linear endofunctor, then we must show that the free analytic monad
  $F_{\txt{an}}P$ is a linear monad. This amounts to checking that the
  underlying endofunctor of $F_{\txt{an}}P$ is
  linear. This follows from the explicit description of this
  endofunctor in \cite{polynomial}*{Theorem 5.2.4}, since the space of
  maps from any non-linear tree to $P$ is empty if $P$ is a linear endofunctor.
\end{proof}

\begin{propn}\label{propn:linearmndcond}
  The following are equivalent for an analytic monad $M \in \AnMnd$.
  \begin{enumerate}[(i)]
  \item $M$ is linear.
  \item $\Map_{\AnMnd}(C_{n}, M) \simeq \emptyset$ for $n \neq 1$.
  \item $\Map_{\AnMnd}(T, M) \simeq \emptyset$ for every $T \in \bbO$
    which is not linear.
  \item $\Map_{\AnMnd}(\blank, M)|_{\bbO^{\op}} \in \PSh(\bbO)$
    is left Kan extended from $\Dop$.
  \end{enumerate}
\end{propn}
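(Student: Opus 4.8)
The plan is to treat this as the monadic counterpart of \cref{propn:anendlinearchar}: I would reduce conditions (i)--(iii) to the endofunctor statement via the free--forgetful adjunction $\Fan \dashv \Uan$, and then handle (iii) $\Leftrightarrow$ (iv) by the same left Kan extension bookkeeping used in the proof of \cref{propn:anendlinearchar} and \cref{cor:LinEndsquares}.

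First I would note that, by definition, $M$ is linear \IFF{} its underlying endofunctor $\Uan M$ is a linear object of $\AnEnd$, so condition (i) for $M$ is literally condition (i) of \cref{propn:anendlinearchar} for $\Uan M$. Next, for any tree $T \in \bbOint$ the adjunction supplies a natural equivalence $\Map_{\AnMnd}(\Fan T, M) \simeq \Map_{\AnEnd}(T, \Uan M)$; taking $T = C_{n}$ identifies condition (ii) here with condition (ii) there, and taking general $T$ identifies (iii) here with (iii) there. Since conditions (i)--(iii) of \cref{propn:anendlinearchar} are already known to be equivalent, this immediately yields (i) $\Leftrightarrow$ (ii) $\Leftrightarrow$ (iii).

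For (iii) $\Leftrightarrow$ (iv) I would mirror the endofunctor argument. Writing $\iota \colon \Dop \hookrightarrow \bbO^{\op}$ for the inclusion, the left Kan extension $\iota_{!}$ sends a presheaf $Q \in \PSh(\simp)$ to one whose value at $T$ is $\colim_{\iota/T} Q$; this is $\emptyset$ when the comma category $\iota/T$ is empty, and agrees with $Q(T)$ at a linear tree $T$ (where $\mathrm{id}_{T}$ is terminal in $\iota/T$, exactly as in the endofunctor case). The decisive point is that $\iota/T$ is empty for non-linear $T$, i.e.\ that there are no morphisms in $\bbO$ from a non-linear tree to a linear one. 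Granting this, the counit $\iota_{!}\iota^{*}P \to P$ for $P := \Map_{\AnMnd}(\blank, M)|_{\bbO^{\op}}$ is automatically an equivalence on linear trees and has $(\iota_{!}\iota^{*}P)(T) \simeq \emptyset$ on non-linear trees, so $P$ is left Kan extended from $\Dop$ \IFF{} $P(T) \simeq \Map_{\AnMnd}(T, M) \simeq \emptyset$ for every non-linear $T \in \bbO$, which is precisely condition (iii).

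The main obstacle is exactly this emptiness claim, which does \emph{not} follow formally from the endofunctor version, since the morphisms of $\bbO$ are the free-monad (Kleisli-type) morphisms rather than the inert subtree inclusions of $\bbOint$. To establish it I would compute $\Map_{\bbO}(\Fan T, \Fan L) \simeq \Map_{\AnEnd}(T, \Uan \Fan L)$ and invoke the preceding lemma, which guarantees that $\Fan L$ is a \emph{linear} monad when $L$ is linear, so that $\Uan \Fan L$ is a linear endofunctor; then \cref{propn:anendlinearchar} applied to $\Uan \Fan L$ forces this mapping space to be empty for non-linear $T$. Once this is in place, the remaining verification that $\mathrm{id}_{T}$ is terminal in $\iota/T$ for linear $T$ is routine and identical to the endofunctor case.
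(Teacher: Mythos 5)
Your proof is correct and takes essentially the same route as the paper: you reduce (i)--(iii) to \cref{propn:anendlinearchar} via the adjunction $\Fan \dashv \Uan$ and the freeness of the objects of $\bbO$, and you derive (iii) $\Leftrightarrow$ (iv) from the fact that there are no maps in $\bbO$ from a non-linear tree to a linear one. Your additional verification of that emptiness claim --- computing $\Map_{\AnMnd}(\Fan T, \Fan L) \simeq \Map_{\AnEnd}(T, \Uan\Fan L)$ and applying \cref{propn:anendlinearchar} to the linear endofunctor $\Uan\Fan L$, which uses the preceding lemma that $\Fan$ preserves linearity --- is a correct justification of a point the paper asserts without proof.
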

\begin{proof}
  By definition, $M$ is linear precisely if $\Uan M$ is linear;
  since the elements of $\bbO$ are free, the equivalence of the first
  three conditions follows immediately from
  \cref{propn:anendlinearchar}. The last two
  conditions are then equivalent because a presheaf on $\bbO$ is left
  Kan extended from its restriction to $\simp^{\op}$ precisely when
  its value at all non-linear trees is $\emptyset$, because there are
  no maps from a non-linear tree to a linear one in $\bbO$.
\end{proof}

In \cite{polynomial}*{\S 5.3} we proved as our main theorem that the
restricted Yoneda embedding $\AnMnd \to \PSh(\bbO)$ fits in a pullback
square
\[
  \begin{tikzcd}
    \AnMnd \arrow[hookrightarrow]{r} \arrow{d}{U_{\txt{an}}}  &
    \PSh(\bbO) \arrow{d} \\
    \AnEnd \arrow[hookrightarrow]{r} & \PSh(\bbO^{\xint}),
  \end{tikzcd}
\]
and so identifies $\AnMnd$ with the full subcategory
$\SegO \subseteq \PSh(\bbO)$. Together with \cref{propn:linearmndcond}
this gives the following analogue of \cref{cor:LinEndsquares} (with
essentially the same proof):
\begin{cor}\label{cor:LinMndsquare}
  There is a commutative square
    \[
    \begin{tikzcd}
     \LinMnd \arrow{r}{\sim} \arrow[hookrightarrow]{d} &
     \PSh_{\Seg}(\simp) \arrow[hookrightarrow]{d} \\
     \AnMnd \arrow{r}{\sim} & \PSh_{\Seg}(\bbO),
   \end{tikzcd}
 \]
 where the right-hand vertical arrow is a left Kan extension. \qed
\end{cor}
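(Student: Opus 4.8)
The plan is to reproduce the proof of \cref{cor:LinEndsquares}, now for monads rather than endofunctors, replacing the endofunctor data $(\LinEnd,\AnEnd,\Del,\bbOel)$ by the monad data $(\LinMnd,\AnMnd,\simp,\bbO)$ and feeding in the main theorem of \cite{polynomial} (which identifies $\AnMnd$ with $\SegO \subseteq \PSh(\bbO)$ via the restricted Yoneda embedding) in place of the endofunctor comparison $\AnEnd \simeq \PSh(\bbOel)$. Concretely, I would start from the square of fully faithful inclusions with corners $\simp$, $\LinMnd$, $\bbO$, $\AnMnd$, where $\simp \subseteq \bbO$ is the full subcategory on the free monads on linear trees and the two horizontal inclusions are its restrictions.

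Passing to Yoneda embeddings and taking the mate, along the inclusion $\lambda \colon \LinMnd \hookrightarrow \AnMnd$, of the resulting square of restriction functors produces a lax-commutative diagram exactly as in \cref{cor:LinEndsquares}; its right-hand cell is a comparison transformation between $L_{!}$ applied to the restricted Yoneda functor $\LinMnd \to \PSh(\simp)$ and the restricted Yoneda functor $\AnMnd \to \PSh(\bbO)$ precomposed with $\lambda$, where $L_{!}$ denotes left Kan extension along $\simp \hookrightarrow \bbO$. The only nontrivial step is to show this lax cell commutes on the nose. Evaluating at a tree $T \in \bbO$ and a linear monad $M$, the comparison map is
\[
  \colim_{X \in (\simp)^{\op}_{/T}} \Map_{\AnMnd}(X,M) \longrightarrow \Map_{\AnMnd}(T,M).
\]
If $T$ is linear, then $(\simp)^{\op}_{/T}$ has a terminal object and the map is an equivalence; if $T$ is non-linear, then $(\simp)^{\op}_{/T}$ is empty while $\Map_{\AnMnd}(T,M) \simeq \emptyset$ by \cref{propn:linearmndcond}, so the map is again an equivalence. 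This is the sole place where the linearity characterization of \cref{propn:linearmndcond} enters, and it is the crux of the argument.

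Having upgraded the lax cell to a genuine commutative square, I would finish as in \cref{cor:LinEndsquares}. The induced top functor $\LinMnd \to \PSh_{\Seg}(\simp)$ lands in the Segal-local subcategory because restricting a Segal presheaf on $\bbO$ to $\simp$ yields a Segal space. It is fully faithful: in the commutative square its composite with the fully faithful right vertical arrow $L_{!}$ agrees with $\lambda$ followed by the equivalence $\AnMnd \simeq \SegO$, hence is fully faithful, and fully faithful functors satisfy the relevant cancellation, being the right class of the essentially surjective / fully faithful factorization system on $\CatI$. It is essentially surjective onto $\PSh_{\Seg}(\simp)$ because, by \cref{propn:linearmndcond} again, the image of $\LinMnd$ inside $\SegO$ consists exactly of those Segal presheaves on $\bbO$ that are left Kan extended from $\simp^{\op}$, and under $L_{!}$ these correspond precisely to Segal spaces on $\simp$; the identification of the right vertical arrow with the left Kan extension is then immediate.

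The main obstacle is precisely the nose-commutativity of the mate cell, i.e.\ verifying that the displayed comparison map is an equivalence at every tree $T$. Everything else — the mate calculus, the factorization-system cancellation, and the ambient equivalence $\AnMnd \simeq \SegO$ — is either formal or already established; the one genuine input is the vanishing $\Map_{\AnMnd}(T,M) \simeq \emptyset$ for non-linear $T$ and linear $M$ from \cref{propn:linearmndcond}, together with the behaviour of the slices $(\simp)^{\op}_{/T}$ (a terminal object when $T$ is linear, empty otherwise).
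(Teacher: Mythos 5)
Your proposal is correct and is precisely the argument the paper intends: it states the corollary as an analogue of \cref{cor:LinEndsquares} ``with essentially the same proof,'' with the equivalence $\AnMnd \simeq \SegO$ from \cite{polynomial} and the linearity characterization of \cref{propn:linearmndcond} as the two inputs, exactly as you use them. Your verification of the mate cell at each tree $T$ (terminal object in $(\simp)^{\op}_{/T}$ for linear $T$, emptiness of both sides for non-linear $T$) and the fully-faithful/essentially-surjective finish match the paper's template step for step.
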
 

Our final goal in this section is to combine this square and that from \cref{cor:LinEndsquares} into a commutative cube:
\begin{propn}
  There is a commutative diagram
  \[
    \begin{tikzcd}[row sep=small,column sep=small]
      \LinMnd \arrow{rr}{\sim} \arrow[hookrightarrow]{dr}
      \arrow{dd} & &
      \PSh_{\Seg}(\simp) \arrow[hookrightarrow]{dr}
      \arrow{dd} \\
      & \AnMnd \arrow[crossing over]{rr}[near start]{\sim} &  &
      \PSh_{\Seg}(\bbO) \arrow{dd} \\
      \LinEnd \arrow{rr}[near start]{\sim} \arrow[hookrightarrow]{dr} & &
      \PSh_{\Seg}(\Dint) \arrow[hookrightarrow]{dr} \\
      & \AnEnd \arrow[leftarrow,crossing over]{uu} \arrow{rr}{\sim} & &  \PSh_{\Seg}(\bbOint) \\
    \end{tikzcd}
  \]
  where all the squares are cartesian.
\end{propn}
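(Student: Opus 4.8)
The plan is to identify four of the six faces of the cube with results already established and to deduce the other two purely formally.

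I would first record the four equivalences along the $\sim$-edges: the restricted Yoneda embeddings $\AnEnd \simeq \PSh_{\Seg}(\bbOint)$ and $\AnMnd \simeq \PSh_{\Seg}(\bbO)$ from \cite{polynomial}*{\S 3.3 and \S 5.3}, together with $\LinEnd \simeq \PSh_{\Seg}(\Dint)$ from \cref{cor:LinEndsquares} and $\LinMnd \simeq \PSh_{\Seg}(\simp)$ from \cref{cor:LinMndsquare}. Of the six faces, the front face is the cartesian square exhibiting $\AnMnd$ as $\PSh_{\Seg}(\bbO) \times_{\PSh_{\Seg}(\bbOint)} \AnEnd$ (the main theorem of \cite{polynomial} recalled above), and the left face is the cartesian square of the definition of $\LinMnd$, exhibiting it as $\AnMnd \times_{\AnEnd} \LinEnd$; both are pullbacks by construction. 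The top and bottom faces are the commutative squares of \cref{cor:LinMndsquare} and \cref{cor:LinEndsquares}.

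Next I would assemble the whole cube as an honestly commutative diagram. The essential observation is that all of the data in the top, bottom and back faces is obtained from the front face by restriction and corestriction to full subcategories: $\Ulin$ is the restriction of $\Uan$, the presheaf restrictions $\PSh_{\Seg}(\bbO) \to \PSh_{\Seg}(\bbOint)$ and $\PSh_{\Seg}(\simp) \to \PSh_{\Seg}(\Dint)$ are restriction along $\bbOint \hookrightarrow \bbO$ and $\Dint \hookrightarrow \simp$, the class inclusions are left Kan extension along $\simp \hookrightarrow \bbO$ and $\Dint \hookrightarrow \bbOint$, and the equivalences $\LinMnd \simeq \PSh_{\Seg}(\simp)$, $\LinEnd \simeq \PSh_{\Seg}(\Dint)$ are precisely the corestrictions of $\AnMnd \simeq \PSh_{\Seg}(\bbO)$, $\AnEnd \simeq \PSh_{\Seg}(\bbOint)$ produced in \cref{cor:LinMndsquare} and \cref{cor:LinEndsquares}. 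Thus the back face is a subsquare of the (commuting) front face, and the cube is obtained by restricting the front square along the full subcategory inclusions. I expect the only genuine difficulty here to be bookkeeping: producing the cube as a single functor out of $[1]^{3}$ (i.e. checking that these identifications are coherent) rather than merely verifying that each face commutes separately.

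Finally I would read off cartesianness of the remaining faces. The top, bottom and back faces each have two parallel edges (the $\sim$-edges) that are equivalences, and a commutative square two of whose parallel edges are equivalences is automatically a pullback; hence these three faces are cartesian. For the right face I would invoke that, once the cube commutes, the four $\sim$-equivalences assemble into an equivalence in $\Fun([1] \times [1], \CatI)$ from the left face to the right face; since the property of being a pullback is invariant under equivalence in $\Fun([1] \times [1], \CatI)$ and the left face is cartesian, the right face is cartesian too. This establishes all six faces, completing the proof.
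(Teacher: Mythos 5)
Your proposal is correct in substance but organizes the argument quite differently from the paper. The paper's proof is devoted almost entirely to the point you defer as ``bookkeeping'': it starts from a strictly commutative cube of inclusions and free-monad functors relating $\Dint$, $\simp$, $\bbOint$, $\bbO$, $\LinEnd$, $\LinMnd$, $\AnEnd$, $\AnMnd$, applies presheaves and passes to mates (using the functoriality of mates from \cite{HHLN1}), composes with the naturality cube for the Yoneda embedding, and then checks that the three a priori lax faces of the resulting cube genuinely commute --- the top and bottom via \cref{cor:LinMndsquare} and \cref{cor:LinEndsquares}, and the remaining face because both left Kan extensions just extend presheaves by $\emptyset$ on non-linear trees. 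Your route instead takes the cartesian face from \cite{polynomial} as given and restricts it along full subcategories at each vertex; this can indeed be made coherent without any mate calculus, but you should say why, since ``checking that each face commutes separately'' would not suffice in the $\infty$-categorical setting: for a full subcategory $\mathcal{C}_{0} \subseteq \mathcal{C}$ the functor $\Fun(K, \mathcal{C}_{0}) \to \Fun(K, \mathcal{C})$ is fully faithful with image the diagrams landing in $\mathcal{C}_{0}$, so lifting the given square (and hence assembling the cube) is a \emph{property} rather than additional coherence data. What then needs verifying is that each edge of the front square preserves the relevant full subcategories --- which is \cref{propn:linearmndcond} and \cref{propn:anendlinearchar}, together with the easy observation that restriction along $\bbOint \hookrightarrow \bbO$ preserves vanishing on non-linear trees --- and that the resulting corestrictions agree with the stated left Kan extension functors and equivalences, which is exactly what \cref{cor:LinEndsquares} and \cref{cor:LinMndsquare} supply. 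Your concluding cartesianness argument is correct (a commutative square with two parallel edges equivalences is a pullback, and the remaining face is equivalent in $\Fun([1]\times[1],\CatI)$ to the definitional pullback square for $\LinMnd$), and is in fact more explicit than the paper, whose proof addresses only commutativity and leaves the pullback claims implicit. In short: a genuinely different and arguably more economical assembly, at the price that the one step you wave off as bookkeeping is precisely where the full-subcategory lifting argument must actually be carried out.
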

\begin{proof}
  We have a commutative cube
  \[
    \begin{tikzcd}[row sep=small,column sep=small]
      \Dint \arrow[hookrightarrow]{rr} \arrow[hookrightarrow]{dr}
      \arrow{dd} & & \LinEnd
       \arrow[hookrightarrow]{dr}
      \arrow{dd}[near start]{\Flin} \\
      & \bbOint \arrow[crossing over,hookrightarrow]{rr} &  &
      \AnEnd \arrow{dd}{\Fan} \\
      \simp \arrow[hookrightarrow]{rr} \arrow[hookrightarrow]{dr} & &
      \LinMnd \arrow[hookrightarrow]{dr} \\
      & \bbO \arrow[leftarrow,crossing over]{uu} \arrow[hookrightarrow]{rr} & & \AnMnd. \\
    \end{tikzcd}
  \]
  Taking presheaves and mates we get (since mates in $\CatI$ are
  functorial, as was proved in \cite{HHLN1}) the following diagram, where all except the
  front and back squares are a priori lax:
  \[
    \begin{tikzcd}[row sep=small,column sep=small]
      \PSh(\LinMnd) \arrow{rr} \arrow[hookrightarrow]{dr}
      \arrow{dd}{\Flin^{*}} & & \PSh(\simp)
       \arrow[hookrightarrow]{dr}
      \arrow{dd} \\
      & \PSh(\AnMnd) \arrow[crossing over]{rr} &  &
      \PSh(\bbO) \arrow{dd} \\
      \PSh(\LinEnd) \arrow{rr} \arrow[hookrightarrow]{dr} & &
      \PSh(\Dint) \arrow[hookrightarrow]{dr} \\
      & \PSh(\AnEnd) \arrow[leftarrow,crossing over]{uu}[near start]{\Fan^{*}} \arrow{rr} & & \PSh(\bbOint). \\
    \end{tikzcd}
  \]
  The functor $\PSh^{*}$ preserves adjunctions, so we can identify
  $\Fan^{*}$ as the left adjoint of $\Uan^{*}$. Moreover, $\PSh^{*}$
  preserves mate squares, so that the mate in the vertical direction
  of the square
  \[
    \begin{tikzcd}
      \PSh(\AnMnd) \arrow{r} \arrow{d}{\Fan^{*}} & \PSh(\LinMnd) \arrow{d}{\Flin^{*}} \\
      \PSh(\AnEnd) \arrow{r} & \PSh(\LinEnd) 
    \end{tikzcd}
  \]
  is the commutative square
  \[
    \begin{tikzcd}
      \PSh(\AnEnd) \arrow{r} \arrow{d}{\Uan^{*}} & \PSh(\LinEnd)
      \arrow{d}{\Ulin^{*}} \\      
      \PSh(\AnMnd) \arrow{r}  & \PSh(\LinMnd)
    \end{tikzcd}
  \]
  obtained by applying $\PSh^{*}$ to the square
  \[
    \begin{tikzcd}
      \LinMnd \arrow[hookrightarrow]{r} \arrow{d}{\Ulin} & \AnMnd \arrow{d}{\Uan} \\
      \LinEnd \arrow[hookrightarrow]{r} & \AnEnd.
    \end{tikzcd}
  \]
  The left-hand square in our cube above is therefore obtained by
  taking mates in both vertical and horizontal directions for this
  last square, and so it is precisely the corresponding commutative
  square of left adjoints (and so is in particular not actually lax). We can therefore compose our cube with the
  following commutative cube arising from the naturality of the Yoneda
  embedding:
  \[
    \begin{tikzcd}[row sep=small,column sep=small]
      \LinMnd  \arrow[hookrightarrow]{rr} \arrow[hookrightarrow]{dr}
      \arrow{dd}{\Ulin} & & \PSh(\LinMnd)
       \arrow[hookrightarrow]{dr}
      \arrow{dd}[near start]{(\Ulin)_{!}} \\
      & \AnMnd \arrow[crossing over,hookrightarrow]{rr} &  &
      \PSh(\AnMnd) \arrow{dd}{(\Uan)_{!}} \\
      \LinEnd \arrow[hookrightarrow]{rr} \arrow[hookrightarrow]{dr} & &
      \PSh(\LinEnd) \arrow[hookrightarrow]{dr} \\
      & \AnEnd \arrow[leftarrow,crossing over]{uu}[near start]{\Uan} \arrow[hookrightarrow]{rr} & & \PSh(\AnEnd).
    \end{tikzcd}
  \]
  This produces a diagram of the form
  \[
    \begin{tikzcd}[row sep=small,column sep=small]
      \LinMnd \arrow{rr} \arrow[hookrightarrow]{dr}
      \arrow{dd} & &
      \PSh(\simp) \arrow[hookrightarrow]{dr}
      \arrow{dd} \\
      & \AnMnd \arrow[crossing over]{rr} &  &
      \PSh(\bbO) \arrow{dd} \\
      \LinEnd \arrow{rr} \arrow[hookrightarrow]{dr} & &
      \PSh(\Dint) \arrow[hookrightarrow]{dr} \\
      & \AnEnd \arrow[leftarrow,crossing over]{uu} \arrow{rr} & &  \PSh(\bbOint) \\
    \end{tikzcd}
  \]
  where the top, bottom and right-hand squares are potentially lax. To
  complete the proof it remains to show that these squares actually
  commute. For the top and bottom this follows from
  \cref{cor:LinMndsquare} and 
  \cref{cor:LinEndsquares}, respectively, while for the right-hand
  square it is immediate (since both left Kan extensions amount to
  extending the functors by $\emptyset$ on non-linear trees). 
\end{proof}

This completes the construction of the diagram \cref{eq:linmnddiag}.

\begin{remark}
  Combining our work so far in this section, we have in particular 
  an equivalence $\PCatI \simeq \Seg_{\Dop}(\mathcal{S})$ between
  pinned \icats{} and (not necessarily complete) Segal spaces in the
  sense of Rezk~\cite{RezkCSS}. Such an equivalence has previously
  been proved by Ayala and Francis \cite{AyalaFrancisFlagged} (as the
  case $n=1$ of a more general equivalence between \emph{flagged
    $(\infty,n)$-categories} and Rezk's Segal $\Theta_{n}$-spaces
  \cite{RezkThetaN}).
\end{remark}

\subsection{Completeness}\label{sec:complete}

Our goal in this final section is to understand the composite of the equivalences
\[ \PCatI \isoto \LinMnd \isofrom \Seg_{\Dop}(\mathcal{S}),\]
and to show that it restricts to the standard equivalence between
\icats{} and complete Segal spaces. Using this, we can then complete
the proof that $\SpF$-\iopds{} are equivalent to complete dendroidal
Segal spaces.

\begin{lemma}
  The composite functor $\PCatI \isoto \LinMnd \xto{U_{\txt{lin}}}
  \LinEnd$ takes a pinned \icat{} $(\mathcal{C}, p \colon X \twoheadrightarrow
  \mathcal{C}^{\simeq})$, to the span
  \[ S(\mathcal{C},p) \quad :=\quad X \from \Map([1],
    \mathcal{C})\times_{\mathcal{C}^{\simeq} \times
      \mathcal{C}^{\simeq}} X \times X \to X,\]
  where the pullback is along $(\ev_{0},\ev_{1}) \colon \Map([1],
  \mathcal{C}) \to \mathcal{C}^{\simeq} \times \mathcal{C}^{\simeq}$
  and two copies of $p$.
\end{lemma}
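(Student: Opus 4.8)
The plan is to compute the underlying endofunctor of the monad attached to $(\mathcal{C}, p)$ directly and then read off the corresponding span. By the description of the composite $\PCatI \to \AnMnd$ recalled just above the statement, the monadic right adjoint is the restriction functor $p^{*} \colon \Fun(\mathcal{C}, \mathcal{S}) \to \Fun(X, \mathcal{S})$ along the composite $X \xto{p} \mathcal{C}^{\simeq} \hookrightarrow \mathcal{C}$ (which I also denote $p$). Its left adjoint is the left Kan extension $p_{!}$, so the underlying endofunctor of the monad --- which is precisely the datum that $\Ulin$ records --- is $p^{*}p_{!}$ on $\Fun(X, \mathcal{S})$. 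This endofunctor preserves all colimits, since $p_{!}$ is a left adjoint and $p^{*}$ preserves colimits (they are computed pointwise), so it is linear and hence corresponds to a span by \cref{rmk:linearpolydiag}.

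First I would recall from \cref{rmk:linearpolydiag} how to extract the span from the endofunctor: under the identification of colimit-preserving endofunctors of $\Fun(X, \mathcal{S})$ with $\Fun(X \times X, \mathcal{S}) \simeq \mathcal{S}_{/X \times X}$, an endofunctor $F$ corresponds to $x \mapsto F\Yo_{X}(x)$, and the resulting object of $\mathcal{S}_{/X \times X}$ is the total space $E$ of the span, with fibre over $(x, x')$ equal to $(F\Yo_{X}(x))(x')$. The two legs of the span $X \from E \to X$ are then the projections to the two copies of $X$, the first recording the source variable $x$ and the second the target variable $x'$ (so that $F \simeq g_{!}f^{*}$ with $f$ the source leg and $g$ the target leg). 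It therefore suffices to evaluate $p^{*}p_{!}$ on representables.

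The key computation is $p_{!}\Yo_{X}(x) \simeq \Yo_{\mathcal{C}}(p(x))$. Writing $\Yo_{X}(x) \simeq x_{!}(\bbone)$ for the left Kan extension of the point along $x \colon * \to X$, we have $p_{!}x_{!}(\bbone) \simeq (p x)_{!}(\bbone) \simeq \Yo_{\mathcal{C}}(p(x))$, the corepresentable $\Map_{\mathcal{C}}(p(x), \blank)$. Applying $p^{*}$ and evaluating at $x' \in X$ then gives
\[ (p^{*}p_{!}\Yo_{X}(x))(x') \simeq \Map_{\mathcal{C}}(p(x), p(x')), \]
so that the total space of the associated span is the space of triples $(x, x', \gamma)$ with $x, x' \in X$ and $\gamma \in \Map_{\mathcal{C}}(p(x), p(x'))$.

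Finally I would identify this total space with the stated pullback. The fibre of $(\ev_{0}, \ev_{1}) \colon \Map([1], \mathcal{C}) \to \mathcal{C}^{\simeq} \times \mathcal{C}^{\simeq}$ over $(c, c')$ is $\Map_{\mathcal{C}}(c, c')$, so pulling back along $p \times p$ yields exactly the space of triples above, with its two projections to $X$ being the source and target legs of the span; this is $S(\mathcal{C}, p)$ as claimed. The only real care needed is the bookkeeping in \cref{rmk:linearpolydiag} --- matching the evaluation-on-representables formula to the two projections of the span and keeping the source/target variances straight --- but there is no substantive difficulty beyond this.
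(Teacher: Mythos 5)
Your proof is correct and takes essentially the same route as the paper: both identify the underlying endofunctor as $p^{*}p_{!}$, compute the classifying functor $X \times X \to \mathcal{S}$ by evaluating on representables to get $\Map_{\mathcal{C}}(p(x), p(x'))$, and then recognize its unstraightening as the pullback of $(\ev_{0},\ev_{1}) \colon \Map([1], \mathcal{C}) \to \mathcal{C}^{\simeq} \times \mathcal{C}^{\simeq}$ along $p \times p$. The only difference is cosmetic: where you assert that this evaluation fibration classifies the mapping-space functor, the paper justifies it by noting that $\Map([1],\mathcal{C})$ is the underlying $\infty$-groupoid of the twisted arrow $\infty$-category, and your variance bookkeeping via \cref{rmk:linearpolydiag} is in fact slightly more careful than the paper's.
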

\begin{proof}
  We first identify the functor $X \times X \to \mathcal{S}$
  corresponding to the endofunctor
  \[ \Fun(X, \mathcal{S}) \xto{p_{!}} \Fun(\mathcal{C}, \mathcal{S})
    \xto{p^{*}} \Fun(X,\mathcal{S}).\]
  This is the functor
  \[ 
    \begin{split}
     (x,x') \mapsto
    \Map_{\Fun(X,\mathcal{S})}(\Yo_{X}(x), p^{*}p_{!}\Yo_{X}(x')) & \simeq
    \Map_{\Fun(\mathcal{C},\mathcal{S})}(p_{!}\Yo_{X}(x), 
    p_{!}\Yo_{X}(x')) \\ & \simeq \Map_{\mathcal{C}}(p(x), p(x')). 
    \end{split}
  \]
  To find the corresponding span we now observe that $\Map([1],
  \mathcal{C}) \to \mathcal{C}^{\simeq} \times \mathcal{C}^{\simeq}$
  is the fibration for the mapping space functor restricted to the
  underlying \igpd{} of $\mathcal{C}$ (since $\Map([1], \mathcal{C})$ is the
  underlying \igpd{} of the twisted arrow \icat{}), and composition of
  functors corresponds to pulling back fibrations.
\end{proof}

\begin{propn}\label{propn:simpemb}
  Under the composite equivalence $\epsilon \colon \SegD \isoto \LinMnd \isoto
  \PCatI$, the full subcategory $\simp$ in $\SegD$ corresponds to the
  standard embedding of $\simp$ in $\CatI \subseteq \PCatI$.
\end{propn}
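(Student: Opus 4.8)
The plan is to identify the composite equivalence $\epsilon$ object-by-object on $\simp \subseteq \SegD$ and then promote this to an equivalence of functors using the rigidity of the simplex category. First I would recall that under the equivalence $\SegD \simeq \LinMnd$ of \cref{cor:LinMndsquare} --- which is the (inverse of the) restricted Yoneda embedding --- the representable Segal space $\Yo([n])$ corresponds to the object of $\simp \subseteq \bbO \subseteq \AnMnd$ lying in $\LinMnd$, namely the free linear monad $\Flin(\ell_{n})$ on the linear tree $\ell_{n}$ with edge set $\{0,1,\dots,n\}$; this is immediate from the full faithfulness of $\simp \hookrightarrow \AnMnd$. It therefore suffices to compute the image of $\Flin(\ell_{n})$ under the equivalence $\LinMnd \simeq \PCatI$ and check that it is $[n]^{\natural}$.

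For the object-level computation I would use the inverse equivalence $\LinMnd \to \PCatI$ supplied by the linear-monad proposition, which sends a linear monad $T$ on $\Fun(X,\mathcal{S})$ to the pinned \icat{} $(\mathcal{C}_{T}, X \to \mathcal{C}_{T}^{\simeq})$ where $\mathcal{C}_{T}^{\op} \subseteq \Alg(T)$ is spanned by the free algebras $F_{T}\Yo_{X}(x)$. For $T = \Flin(\ell_{n})$ the base is the edge set $X = \{0,\dots,n\}$, and since $T$ is the free monad on the endofunctor $\ell_{n}$ its algebras are $\ell_{n}$-algebras; a short computation of $\ell_{n}$ on a presheaf (it sends $F$ to $j \mapsto F(j-1)$, with value $\emptyset$ at $j=0$) identifies $\Alg(T) \simeq \Fun([n],\mathcal{S})$, with the $\ell_{n}$-action encoding the maps $F(0) \to F(1) \to \dots \to F(n)$. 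Under this identification the free algebra $F_{T}\Yo_{X}(i) \simeq \coprod_{k\ge 0}\ell_{n}^{\circ k}\Yo_{X}(i)$ is the copresheaf $\Map_{[n]}(i,\blank)$ corepresented by $i$, so the $F_{T}\Yo_{X}(i)$ span a copy of $[n]^{\op}$ and the pinning $\{0,\dots,n\} \to \mathcal{C}_{T}^{\simeq} = [n]^{\simeq}$ is the identity bijection. Hence $\Flin(\ell_{n})$ corresponds to $[n]^{\natural}$, i.e.\ $\epsilon(\Yo([n])) \simeq [n]^{\natural}$. As a cross-check, the preceding lemma computes the underlying linear endofunctor of $[n]^{\natural}$ as the span $\{0,\dots,n\} \xleftarrow{\ev_{0}} \Map([1],[n]) \xrightarrow{\ev_{1}} \{0,\dots,n\}$, the nerve (path-space) fibration of $[n]$, which agrees with $\Ulin\Flin(\ell_{n})$.

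It then remains to upgrade this object-level identification to an equivalence of functors $\simp \to \PCatI$, that is, to show $\epsilon \circ \Yo \simeq \iota$, where $\iota \colon \simp \hookrightarrow \CatI \xrightarrow{(\blank)^{\natural}} \PCatI$ is the standard embedding. I would consider $\Phi := \epsilon^{-1}\circ\iota \colon \simp \to \SegD$. It is fully faithful (since $(\blank)^{\natural}$ is fully faithful by \cref{obs:mapfromnat} and $\epsilon$ is an equivalence), it agrees with the Yoneda embedding $\Yo$ on objects by the previous paragraph, and so it factors through the full subcategory of representables. Writing $\Phi \simeq \Yo \circ \alpha$ with $\alpha \colon \simp \to \simp$, the functor $\alpha$ is fully faithful and the identity on objects; since $\simp$ is an ordinary category this makes $\alpha$ bijective on every hom-set, hence an automorphism of $\simp$, and by the rigidity of the simplex category $\alpha \simeq \id$. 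Therefore $\Phi \simeq \Yo$, equivalently $\epsilon\circ\Yo \simeq \iota$, which is the assertion.

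I expect the main obstacle to be the object-level step, specifically the passage from the underlying endofunctor to the monad: matching spans only shows that $\Ulin$ agrees, and since $\Ulin \colon \LinMnd \to \LinEnd$ is not conservative one genuinely needs the identification $\Alg(\Flin(\ell_{n})) \simeq \Fun([n],\mathcal{S})$ together with the recognition of the free algebras on representables as corepresentables. This is where the free-monad description from \cite{polynomial} (and the defining property $\Alg(\Flin(P)) \simeq \Alg(P)$) does the real work.
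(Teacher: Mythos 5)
Your object-level computation is correct, and it is a genuinely different route from the paper's: where you identify $\Alg(\Flin(\ell_{n})) \simeq \Fun([n],\mathcal{S})$ directly and recognize the free algebras on representables as corepresentables (handling all $[n]$ at once), the paper instead computes the mapping spaces $\Map([0],\epsilon^{-1}(\mathcal{C},p))$ and $\Map([1],\epsilon^{-1}(\mathcal{C},p))$ for an arbitrary pinned \icat{} $(\mathcal{C},p)$, using the adjunction $\Flin \dashv \Ulin$ together with \cite{polynomial}*{Lemma 3.3.6} and \cref{obs:mapfromnat}, and then extends from $[0]$ and $[1]$ to all of $\simp$ via the colimit decomposition $[n] \simeq [1] \amalg_{[0]} \cdots \amalg_{[0]} [1]$ in $\SegD$. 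Your substitute computation is fine, including the free-monad formula $\coprod_{k} \ell_{n}^{\circ k}$ for a linear endofunctor, which as you note follows from the tree formula of \cite{polynomial}*{Theorem 5.2.4} since only linear trees map to a linear endofunctor.

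However, your final step contains a genuine error: the simplex category is \emph{not} rigid. Its automorphism group is $\mathbb{Z}/2$, generated by the order-reversing automorphism that sends $f \colon [m] \to [n]$ to $i \mapsto n - f(m-i)$; this automorphism is the identity on objects and fully faithful, so your argument that $\alpha$ is ``fully faithful and the identity on objects, hence $\alpha \simeq \id$'' does not exclude it --- and $\Yo$ and $\Yo$ composed with the reversal are not equivalent functors $\simp \to \SegD$, so the distinction matters. This is exactly the point the paper's proof addresses in its last sentences: the object-level identification pins down the functor only up to automorphisms of $\simp$, ``of which the only non-trivial one is the order-reversing functor,'' and one must separately check that the two inclusions of $[0]$ into $[1]$ occur in the expected order. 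Your machinery can supply this missing check with little extra work: the two monad morphisms from the free monad on $\eta$ into $\Flin(\ell_{1})$, lying over the two edge inclusions $\{0\},\{1\} \hookrightarrow \{0,1\}$, correspond under your identification $\Alg(\Flin(\ell_{1})) \simeq \Fun([1],\mathcal{S})$ to the free algebras $\Map_{[1]}(0,\blank)$ and $\Map_{[1]}(1,\blank)$, i.e.\ to the objects $0$ and $1$ of $[1]$; matching these against the two face maps $[0] \rightrightarrows [1]$ in $\simp \subseteq \LinMnd$ rules out the reversal. Without some such orientation check the proposal is incomplete.
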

\begin{proof}
  Since the resulting functor $\simp \to \PCatI$ is fully faithful, it almost
  suffices to check that it does the right thing on objects. Moreover,
  since we have the colimit decomposition $[n] \simeq [1] \amalg_{[0]}
  \cdots \amalg_{[0]} [1]$ in $\SegD$, and hence for the image in $\PCatI$,
  it is enough to look at the objects $[0]$ and $[1]$. 

  By construction, $[0]$ and $[1]$ correspond to the free linear
  monads on the spans
  \[ \eta \quad=\quad \fset{1} \from \emptyset \to \emptyset \to \fset{1},\]
  \[ C_{1} \quad=\quad \fset{2} \from \{1\} \to \{2\} \to \fset{2}. \]
  By \cite{polynomial}*{Lemma
    3.3.6} we then have for any span $T = X \from Y \to X$ that
  \[ \Map_{\LinEnd}(\eta, T) \simeq \Map_{\mathcal{S}}(\fset{1}, X)
    \simeq X,\]
  \[ \Map_{\LinEnd}(C_{1}, T) \simeq Y.\]
  Thus using \cref{obs:mapfromnat} we have
  \[
    \begin{split}
     \Map([0], \epsilon^{-1}(\mathcal{C},p)) & \simeq \Map_{\LinEnd}(\eta,
    S(\mathcal{C}, p)) \simeq X \\ &  \simeq 
    \Map_{\PCatI}([0]^{\natural}, (\mathcal{C},p)), \\
    \Map([1], \epsilon^{-1}(\mathcal{C},p)) & \simeq \Map_{\LinEnd}(C_{1},
    S(\mathcal{C}, p)) \simeq \Map([1],
    \mathcal{C})\times_{\mathcal{C}^{\simeq} \times
      \mathcal{C}^{\simeq}} X \times X  \\ &
    \simeq \Map_{\PCatI}([1]^{\natural}, (\mathcal{C},p)),
    \end{split}
  \]
  as required. This pins down the functor $\simp \to \PCatI$ up to
  automorphisms of $\simp$, of which the only non-trivial one is the
  order-reversing functor $\op \colon \simp \isoto \simp$. To see that
  our functor is not reversing the order we need only observe that the
  two inclusions of $[0]$ in $[1]$ indeed occur in the expected order.
\end{proof}

\begin{observation}\label{rmk:SegspfromPCat}
  It follows from \cref{propn:simpemb}  that the Segal space corresponding to a pinned \icat{}
  $(\mathcal{C}, p \colon X \twoheadrightarrow \mathcal{C}^{\simeq})$
  under the equivalence $\epsilon$ is given by
  \[ [n] \mapsto \Map_{\PCatI}([n]^{\natural}, (\mathcal{C}, p))
    \simeq \Map_{\CatI}([n], \mathcal{C})
    \times_{(\mathcal{C}^{\simeq})^{\times n}} X^{\times n}.\]  
\end{observation}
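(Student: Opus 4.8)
The plan is to read the value of the Segal space off the restricted Yoneda description underlying $\epsilon$ and then invoke the two preceding results. First I would recall that the equivalence $\LinMnd \isoto \SegD$ entering $\epsilon$ (the restriction of the main equivalence of \cite{polynomial}, as packaged in \cref{cor:LinMndsquare}) is the restricted Yoneda embedding: it sends a linear monad $M$ to the Segal presheaf $[n] \mapsto \Map_{\LinMnd}([n], M)$, where $[n]$ is regarded as an object of the full subcategory $\simp \subseteq \LinMnd$. Hence, writing $M$ for the linear monad corresponding to $(\mathcal{C}, p)$ under the equivalence $\LinMnd \simeq \PCatI$, the Segal space $\epsilon^{-1}(\mathcal{C}, p)$ has value $\Map_{\LinMnd}([n], M)$ at $[n]$.

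Next I would invoke \cref{propn:simpemb}, according to which the equivalence $\LinMnd \simeq \PCatI$ carries the object $[n] \in \simp \subseteq \LinMnd$ to $[n]^{\natural} \in \CatI \subseteq \PCatI$. Transporting the mapping space across this equivalence then yields the natural identification
\[ \Map_{\LinMnd}([n], M) \simeq \Map_{\PCatI}([n]^{\natural}, (\mathcal{C}, p)), \]
which is the first equivalence in the statement; naturality in $[n]$ is inherited from the functor $\simp \to \PCatI$ of \cref{propn:simpemb}.

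Finally I would specialize \cref{obs:mapfromnat}, taking the first pinned \icat{} to be $[n]^{\natural}$, to obtain
\[ \Map_{\PCatI}([n]^{\natural}, (\mathcal{C}, p)) \simeq \Map_{\CatI}([n], \mathcal{C}) \times_{\Map_{\mathcal{S}}([n]^{\simeq}, \mathcal{C}^{\simeq})} \Map_{\mathcal{S}}([n]^{\simeq}, X). \]
Since $[n]^{\simeq}$ is the discrete set of objects of $[n]$, the two outer terms are the corresponding finite powers of $\mathcal{C}^{\simeq}$ and of $X$ indexed by that set, which is exactly the fibre product displayed in the statement.

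I expect the only step needing genuine care to be the first: pinning down that the value of the Segal space at $[n]$ is the mapping space $\Map_{\LinMnd}([n], M)$ out of the object $[n] \in \simp \subseteq \LinMnd$, i.e. that the composite $\SegD \simeq \LinMnd$ really is (the inverse of) the restricted Yoneda embedding rather than some twist of it. Once this is secured, the statement is a purely formal consequence of \cref{propn:simpemb,obs:mapfromnat}.
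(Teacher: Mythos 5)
Correct, and essentially the paper's own route: the value at $[n]$ is $\Map_{\LinMnd}([n],M)$ precisely because the equivalence $\LinMnd \simeq \SegD$ of \cref{cor:LinMndsquare} is (the restriction of) the restricted Yoneda embedding along $\simp \subseteq \LinMnd$, after which \cref{propn:simpemb} and \cref{obs:mapfromnat} yield the displayed formula exactly as you argue. Your closing remark that the powers are indexed by the set $[n]^{\simeq}$ of objects of $[n]$ (hence $(n+1)$-fold) is the right reading of the superscript $\times n$ in the statement.
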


\begin{propn}
  An object of $\PCatI$ lies in the full subcategory $\CatI$ \IFF{} it
  is local with respect to the map
  \[ E:= ([0], \fset{2} \twoheadrightarrow *) \to [0]^{\natural}.\]
\end{propn}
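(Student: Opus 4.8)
The plan is to compute the two mapping spaces appearing in the locality condition and to identify the restriction map along $E \to [0]^{\natural}$ with the diagonal of the pinning map. Fix a pinned \icat{} $(\mathcal{C}, p \colon X \twoheadrightarrow \mathcal{C}^{\simeq})$. Since $[0]^{\simeq} \simeq *$, \cref{obs:mapfromnat} immediately gives
\[ \Map_{\PCatI}([0]^{\natural}, (\mathcal{C},p)) \simeq \Map_{\CatI}([0], \mathcal{C}) \times_{\Map_{\mathcal{S}}(*, \mathcal{C}^{\simeq})} \Map_{\mathcal{S}}(*, X) \simeq \mathcal{C}^{\simeq} \times_{\mathcal{C}^{\simeq}} X \simeq X, \]
where the map $X \to \mathcal{C}^{\simeq}$ in the pullback is $p$ itself. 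For the source $E = ([0], \fset{2} \twoheadrightarrow *)$ I would unwind the pullback defining $\PCatI$ directly: a morphism $E \to (\mathcal{C},p)$ is a functor $[0] \to \mathcal{C}$ (equivalently a point $c$ of $\mathcal{C}^{\simeq}$) together with a map $\fset{2} \to X$ lying over the constant map $\fset{2} \to * \xto{c} \mathcal{C}^{\simeq}$. Integrating the space $(\mathrm{fib}_{c}\, p)^{\times 2}$ of such maps over $c \in \mathcal{C}^{\simeq}$ then yields
\[ \Map_{\PCatI}(E, (\mathcal{C},p)) \simeq X \times_{\mathcal{C}^{\simeq}} X. \]

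Next I would identify the restriction map. The morphism $E \to [0]^{\natural}$ is the identity on the underlying category $[0]$ together with the fold map $\fset{2} \to *$ on pinning spaces. Precomposing a point $x \in X$ (viewed as a map $[0]^{\natural} \to (\mathcal{C},p)$) with this morphism sends both elements of $\fset{2}$ to $x$, so under the identifications above the restriction map
\[ \Map_{\PCatI}([0]^{\natural}, (\mathcal{C},p)) \to \Map_{\PCatI}(E, (\mathcal{C},p)) \]
is exactly the diagonal $\Delta_{p} \colon X \to X \times_{\mathcal{C}^{\simeq}} X$ of $p$.

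It then remains a purely space-level statement: $(\mathcal{C},p)$ is local with respect to $E \to [0]^{\natural}$ \IFF{} $\Delta_{p}$ is an equivalence, i.e.\ \IFF{} $p$ is a monomorphism of spaces (a $(-1)$-truncated map, so that every fibre is either empty or contractible). Since $p$ is by hypothesis essentially surjective, no fibre is empty, so a monomorphism $p$ must have all fibres contractible and is therefore an equivalence. Thus $(\mathcal{C},p)$ is local precisely when $p$ is an equivalence, which by the definition of the inclusion $\CatI \hookrightarrow \PCatI$ (sending $\mathcal{D}$ to $\mathcal{D}^{\natural}$) is exactly the condition that $(\mathcal{C},p)$ lie in the full subcategory $\CatI$.

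The computations of the two mapping spaces are routine given \cref{obs:mapfromnat}, and the final space-level implication is standard. The one place I would be careful — though I do not expect a genuine obstacle — is verifying that the restriction map is really the diagonal and not some twisted variant; this is a matter of tracking the fold map $\fset{2} \to *$ faithfully through the pullback description of morphisms in $\PCatI$.
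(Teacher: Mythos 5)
Your proposal is correct and follows essentially the same route as the paper's proof: compute both mapping spaces (giving $X$ and $X \times_{\mathcal{C}^{\simeq}} X$), identify the restriction along $E \to [0]^{\natural}$ with the diagonal of $p$, and conclude via the standard fact that the diagonal is an equivalence \IFF{} $p$ is a monomorphism (the paper cites \cite{HTT}*{Lemma 5.5.6.15} here), whence essential surjectivity forces $p$ to be an equivalence. The only cosmetic difference is that you unwind the pullback defining $\PCatI$ directly to compute $\Map_{\PCatI}(E,(\mathcal{C},p))$, where the paper simply asserts the natural equivalence; your careful tracking of the fold map $\fset{2} \to *$ resolves the one point you flagged, and the map is indeed the untwisted diagonal.
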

\begin{proof}
  For $(\mathcal{C}, p \colon X \twoheadrightarrow
  \mathcal{C}^{\simeq})$, we have a natural equivalence
  \[ \Map_{\PCatI}(E, (\mathcal{C}, p)) \simeq X
    \times_{\mathcal{C}^{\simeq}} X, \]
  under which the map $X \simeq \Map([0]^{\natural}, (\mathcal{C},p))
  \to \Map(E, (\mathcal{C},p))$ corresponds to the diagonal. But the
  diagonal $X \to X \times_{\mathcal{C}^{\simeq}} X$ is an
  equivalence \IFF{} the map $p$ is a monomorphism by
  \cite{HTT}*{Lemma 5.5.6.15}. Since $p$ is surjective by assumption,
  if it is a monomorphism then it is necessarily an equivalence, which
  completes the proof.
\end{proof}

\begin{cor}\label{cor:lincompeq}
  Under the equivalence $\SegD \isoto \LinMnd \isoto
  \PCatI$, the
  full subcategory $\CatI \subseteq \PCatI$ is identified with the full
  subcategory $\CSegD$ of \emph{complete} Segal spaces.
\end{cor}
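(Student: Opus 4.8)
The plan is to transport the locality characterization of $\CatI \subseteq \PCatI$ from the preceding Proposition across the composite equivalence $\epsilon \colon \SegD \isoto \LinMnd \isoto \PCatI$, and then to recognize the resulting localizing map in $\SegD$ as the one detecting Rezk completeness. By the preceding Proposition, $\CatI$ is the full subcategory of $\PCatI$ of objects local with respect to $E = ([0], \fset{2} \twoheadrightarrow *) \to [0]^{\natural}$. Since $\epsilon$ is an equivalence, it carries this to the full subcategory of $\SegD$ of objects local with respect to $\epsilon^{-1}(E) \to \epsilon^{-1}([0]^{\natural})$, so it suffices to identify this map with the completeness map and to recall that the local objects for the latter are exactly the complete Segal spaces.

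First I would compute the two Segal spaces. By \cref{rmk:SegspfromPCat} the Segal space $\epsilon^{-1}(E)$ is $[n] \mapsto \Map_{\PCatI}([n]^{\natural}, E)$, and evaluating with \cref{obs:mapfromnat} gives $\Map_{\PCatI}([n]^{\natural}, E) \simeq \Map_{\mathcal{S}}([n]^{\simeq}, \fset{2}) = \fset{2}^{\times (n+1)}$, with simplicial structure induced from $[n]^{\simeq} = \{0, \dots, n\}$. This is precisely the nerve $\mathrm{N}(I)$ of the contractible groupoid $I$ with set of objects $\fset{2}$ (the ``free-living isomorphism''). The same computation shows that $\epsilon^{-1}([0]^{\natural})$ is the terminal (hence complete) Segal space $\Delta^{0} = \Map_{\simp}(\blank, [0])$, and the map $E \to [0]^{\natural}$ corresponds to the collapse
\[ \mathrm{N}(I) \longrightarrow \Delta^{0}. \]

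It then remains to invoke the standard characterization of completeness: a Segal space $W$ is complete exactly when it is local with respect to $\mathrm{N}(I) \to \Delta^{0}$. Indeed, restriction along this collapse sends $w \in W_{0}$ to the degenerate equivalence $s_{0}(w)$, exhibiting the induced map $W_{0} = \Map_{\SegD}(\Delta^{0}, W) \to \Map_{\SegD}(\mathrm{N}(I), W) \simeq W^{\mathrm{eq}}$ onto the space $W^{\mathrm{eq}}$ of equivalences of $W$ as the corestricted degeneracy $s_{0}$, and completeness is exactly the assertion that $s_{0}$ is an equivalence. Hence the full subcategory of $\epsilon$-local objects of $\SegD$ is precisely $\CSegD$, which gives the claimed identification.

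The only step demanding genuine care is the identification $\Map_{\SegD}(\mathrm{N}(I), W) \simeq W^{\mathrm{eq}}$ --- that mapping out of the nerve of the free-living isomorphism computes the space of equivalences --- which is the standard input behind Rezk's completeness condition. If one prefers to avoid citing it, one can argue directly from the description of $S(\mathcal{C},p)$ given above: for $W = \epsilon^{-1}(\mathcal{C}, p)$ one computes $W_{0} \simeq X$ and $W^{\mathrm{eq}} \simeq X \times_{\mathcal{C}^{\simeq}} X$, with $s_{0}$ identified with the diagonal, so that $W$ is complete \ie{} iff $X \to X \times_{\mathcal{C}^{\simeq}} X$ is an equivalence; by \cite{HTT}*{Lemma 5.5.6.15} this holds iff $p$ is a monomorphism, and since $p$ is surjective this is equivalent to $p$ being an equivalence, \ie{} to $(\mathcal{C}, p) \in \CatI$, recovering the statement directly.
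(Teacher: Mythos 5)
Your proof is correct and follows essentially the same route as the paper: transport the locality characterization of $\CatI \subseteq \PCatI$ with respect to $E \to [0]^{\natural}$ across the equivalence, compute that $E$ corresponds to the nerve of the contractible groupoid on two objects, and conclude since $\CSegD$ is (by definition, or Rezk's equivalent formulation via $s_{0} \colon W_{0} \to W^{\mathrm{eq}}$) the subcategory local with respect to the collapse of that nerve to the point. Your level-$n$ computation $\fset{2}^{\times(n+1)}$ is in fact the correct count (the paper's displayed $\fset{2}^{\times n}$ is an off-by-one slip), and your closing direct argument merely re-derives the preceding proposition, so it is redundant but harmless.
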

\begin{proof}
  From the formula of \cref{rmk:SegspfromPCat}, the object $E := ([0],
  \fset{2} \twoheadrightarrow *)$ corresponds to the simplicial set
  \[ \Map([n], [0]) \times_{*^{\times n}} \fset{2}^{\times n} \simeq
    \fset{2}^{\times n}.\]
  This is precisely the nerve of the contractible groupoid with two
  objects, and by definition $\CSegD$ is the full subcategory of
  $\SegD$ of objects that are local with respect to the map from this
  to the terminal object. 
\end{proof}

\begin{cor}
  Under the equivalence $\SegO \simeq \AnMnd \simeq \POpd(\SpF)$, the
  full subcategory $\Opd(\SpF) \subseteq \POpd(\SpF)$ is identified
  with the full subcategory $\CSegO$ of \emph{complete} dendroidal
  Segal spaces.
\end{cor}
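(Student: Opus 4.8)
The plan is to deduce this from the categorical case already settled in \cref{cor:lincompeq}. By definition, a dendroidal Segal space lies in $\CSegO$ exactly when its underlying Segal space---its restriction along the inclusion $\Dop \hookrightarrow \Oop$---lies in $\CSegD$. Thus the entire statement reduces to identifying, under the composite equivalence $\SegO \simeq \POpd(\SpF)$, the underlying-Segal-space functor $\SegO \to \SegD$ with the functor $(\blank)_{\fset{1}} \colon \POpd(\SpF) \to \PCatI$ that sends a pinned $\SpF$-\iopd{} $(\mathcal{O}, p)$ to the pinned \icat{} $(\mathcal{O}_{\fset{1}}, p)$.

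To establish this identification I would exploit \cref{eq:linmnddiag}, whose two horizontal squares commute and whose horizontal functors are all equivalences. Composing the two squares, the left-Kan-extension inclusion $\SegD \hookrightarrow \SegO$ corresponds, under $\SegO \simeq \POpd(\SpF)$ and $\SegD \simeq \PCatI$, to the fully faithful inclusion $\xF^{\op}_{(\blank)} \colon \PCatI \hookrightarrow \POpd(\SpF)$. Both of these inclusions are left adjoints: the right adjoint of $\SegD \hookrightarrow \SegO$ is the underlying-Segal-space functor, and the right adjoint of $\xF^{\op}_{(\blank)}$ is $(\blank)_{\fset{1}}$. Since the relevant square of left adjoints commutes and its horizontal arrows are equivalences, its mate---the corresponding square of right adjoints---commutes as well; this is precisely the assertion that $\SegO \simeq \POpd(\SpF)$ carries the underlying-Segal-space functor to $(\blank)_{\fset{1}}$.

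Granting this, the conclusion is immediate. If $(\mathcal{O}, p)$ corresponds to the dendroidal Segal space $Y$, then $Y \in \CSegO$ \IFF{} its underlying Segal space lies in $\CSegD$, \IFF{} (by \cref{cor:lincompeq}) the associated pinned \icat{} $(\mathcal{O}_{\fset{1}}, p)$ lies in the full subcategory $\CatI \subseteq \PCatI$, \IFF{} the pinning $p \colon X \to \mathcal{O}_{\fset{1}}^{\simeq}$ is an equivalence. As $\Opd(\SpF)$ is exactly the full subcategory of $\POpd(\SpF)$ on the pinned $\SpF$-\iopds{} whose pinning is an equivalence, this says precisely that $(\mathcal{O}, p) \in \Opd(\SpF)$, so the equivalence $\SegO \simeq \POpd(\SpF)$ restricts to the desired equivalence $\CSegO \simeq \Opd(\SpF)$.

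The main obstacle is the middle step: making the mate argument rigorous, that is, checking that passing to right adjoints in the commuting square of inclusions extracted from \cref{eq:linmnddiag} preserves commutativity. This is exactly where one must use that the horizontal comparison functors are equivalences rather than merely adjoints, so that the mate of a genuinely commuting square is again genuinely commuting---and in particular that the underlying-Segal-space functor, a priori defined only on $\SegO$, is transported across the equivalence to the honest functor $(\blank)_{\fset{1}}$.
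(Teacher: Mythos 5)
Your proposal is correct and follows essentially the same route as the paper: both pass from the commuting squares of fully faithful left-adjoint inclusions in \cref{eq:linmnddiag} to the corresponding commuting square of right adjoints (underlying Segal space on one side, $(\blank)_{\fset{1}}$ on the other), using that the horizontal functors are equivalences, and then conclude from \cref{cor:lincompeq} together with the identification of $\Opd(\SpF)$ as the pinned $\SpF$-\iopds{} whose pinning is an equivalence. Your explicit remark that the mate of a commuting square remains commuting because the horizontals are equivalences is exactly the (implicit) justification in the paper's proof.
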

\begin{proof}
  We have constructed a commutative diagram
  \[
    \begin{tikzcd}
      \SegD \arrow{r}{\sim} \arrow[hookrightarrow]{d} & \LinMnd
      \arrow[hookrightarrow]{d} & \PCatI \arrow[hookrightarrow]{d}
      \arrow{l}[swap]{\sim} \\
      \SegO \arrow{r}{\sim}  & \AnMnd
       & \POpd(\SpF)\arrow{l}[swap]{\sim}.
    \end{tikzcd}
  \]
  Here the left and right vertical arrows have left adjoints, given
  respectively by restriction along the inclusion $\simp
  \hookrightarrow \bbO$ and by extracting the fibre over
  $\fset{1}$. We therefore also have a commutative square
  \[
    \begin{tikzcd}
      \SegO \arrow{r}{\sim} \arrow{d} & \POpd(\SpF)  \arrow{d} \\
      \SegD \arrow{r}{\sim} & \PCatI
    \end{tikzcd}
  \]
  with these left adjoints. Here $\CSegO$ can be defined as the full subcategory of
  $\SegO$ comprising those objects whose image in $\SegD$ lies in
  $\CSegD$. From \cref{cor:lincompeq} it follows that under our
  equivalence this full subcategory is identified with the full
  subcategory of $\POpd(\SpF)$ that contains the objects whose image
  in $\PCatI$ lies in $\CatI$, which is precisely $\Opd(\SpF)$.
\end{proof}

\begin{remark}
  We can also explicitly identify the full subcategory of $\AnMnd$
  that corresponds to $\Opd(\SpF)$ and $\CSegO$ under our
  equivalences: this contains the analytic monads $T$ on $\Fun(X,
  \mathcal{S})$ that are \emph{complete} in the sense that 
  the induced essentially surjective map $X \to
  \mathcal{L}(T)_{\fset{1}}^{\simeq}$ is an equivalence.
  (Here $\mathcal{L}(T)_{\fset{1}}^{\simeq}$ is the same object as
  $\mathcal{U}(T)$ in the notation of \cite{patterns1}*{\S 15}, so this
  definition of complete analytic monads agrees with the notion of
  completeness defined there.)
\end{remark}

\begin{bibdiv}
\begin{biblist}
\bib{AyalaFrancisFlagged}{article}{
  author={Ayala, David},
  author={Francis, John},
  title={Flagged higher categories},
  conference={ title={Topology and quantum theory in interaction}, },
  book={ series={Contemp. Math.}, volume={718}, publisher={Amer. Math. Soc., Providence, RI}, },
  date={2018},
  pages={137--173},
  eprint={arXiv:1801.08973},
}

\bib{BHS}{article}{
  author={Barkan, Shaul},
  author={Haugseng, Rune},
  author={Steinebrunner, Jan},
  title={Envelopes for algebraic patterns},
  date={2022},
  eprint={arXiv:2208.07183},
}

\bib{BarwickOpCat}{article}{
  author={Barwick, Clark},
  title={From operator categories to higher operads},
  journal={Geom. Topol.},
  volume={22},
  date={2018},
  number={4},
  pages={1893--1959},
  eprint={arXiv:1302.5756},
}

\bib{BarwickMackey}{article}{
  author={Barwick, Clark},
  title={Spectral {M}ackey functors and equivariant algebraic $K$-theory ({I})},
  journal={Adv. Math.},
  volume={304},
  date={2017},
  pages={646--727},
  eprint={arXiv:1404.0108},
  year={2014},
}

\bib{BKW}{article}{
  author={Batanin, Michael},
  author={Kock, Joachim},
  author={Weber, Mark},
  title={Regular patterns, substitudes, Feynman categories and operads},
  journal={Theory Appl. Categ.},
  volume={33},
  date={2018},
  pages={148--192},
}

\bib{BermanLawv}{article}{
  author={Berman, John D.},
  title={Higher Lawvere theories},
  journal={J. Pure Appl. Algebra},
  volume={224},
  date={2020},
  number={9},
  pages={106362, 17},
}

\bib{patterns1}{article}{
  author={Chu, Hongyi},
  author={Haugseng, Rune},
  title={Homotopy-coherent algebra via {S}egal conditions},
  date={2021},
  eprint={arXiv:1907.03977},
  journal={Advances in Mathematics},
  volume={385},
  pages={107733},
}

\bib{iopdcomp}{article}{
  author={Chu, Hongyi},
  author={Haugseng, Rune},
  author={Heuts, Gijs},
  title={Two models for the homotopy theory of $\infty $-operads},
  journal={J. Topol.},
  volume={11},
  date={2018},
  number={4},
  pages={856--872},
  eprint={arXiv:1606.03826},
}

\bib{CisinskiMoerdijkDendSeg}{article}{
  author={Cisinski, Denis-Charles},
  author={Moerdijk, Ieke},
  title={Dendroidal Segal spaces and $\infty $-operads},
  journal={J. Topol.},
  volume={6},
  date={2013},
  number={3},
  pages={675--704},
}

\bib{CisinskiMoerdijkSimplOpd}{article}{
  author={Cisinski, Denis-Charles},
  author={Moerdijk, Ieke},
  title={Dendroidal sets and simplicial operads},
  journal={J. Topol.},
  volume={6},
  date={2013},
  number={3},
  pages={705--756},
  eprint={arXiv:1109.1004},
}

\bib{CranchThesis}{article}{
  author={Cranch, James},
  title={Algebraic theories and $(\infty ,1)$-categories},
  date={2010},
  eprint={arXiv:1011.3243},
}

\bib{GepnerGrothNikolaus}{article}{
  author={Gepner, David},
  author={Groth, Moritz},
  author={Nikolaus, Thomas},
  title={Universality of multiplicative infinite loop space machines},
  journal={Algebr. Geom. Topol.},
  volume={15},
  date={2015},
  number={6},
  pages={3107--3153},
}

\bib{polynomial}{article}{
  author={Gepner, David},
  author={Haugseng, Rune},
  author={Kock, Joachim},
  title={$\infty $-operads as analytic monads},
  journal={Int. Math. Res. Not. IMRN},
  date={2022},
  number={16},
  pages={12516--12624},
  eprint={arXiv:1712.06469},
}

\bib{spans}{article}{
  author={Haugseng, Rune},
  title={Iterated spans and classical topological field theories},
  journal={Math. Z.},
  volume={289},
  number={3},
  pages={1427--1488},
  date={2018},
  eprint={arXiv:1409.0837},
}

\bib{adjmnd}{article}{
  author={Haugseng, Rune},
  title={On lax transformations, adjunctions, and monads in $(\infty ,2)$-categories},
  eprint={arXiv:2002.01037},
  journal={High. Struct.},
  volume={5},
  date={2021},
  number={1},
  pages={244--281},
}

\bib{HHLN1}{article}{
  author={Haugseng, Rune},
  author={Hebestreit, Fabian},
  author={Linskens, Sil},
  author={Nuiten, Joost},
  title={Lax monoidal adjunctions, two-variable fibrations and the calculus of mates},
  eprint={arXiv:2011.08808},
  date={2021},
}

\bib{HHLN2}{article}{
  author={Haugseng, Rune},
  author={Hebestreit, Fabian},
  author={Linskens, Sil},
  author={Nuiten, Joost},
  title={Two-variable fibrations, factorisation systems and $\infty $-categories of spans},
  eprint={arXiv:2011.11042},
  date={2022},
}

\bib{HenryMeadows}{article}{
  author={Henry, Simon},
  author={Meadows, Nicholas J.},
  title={Higher theories and monads},
  date={2021},
  eprint={arXiv:2106.02706},
}

\bib{HeutsHinichMoerdijkDendrComp}{article}{
  author={Heuts, Gijs},
  author={Hinich, Vladimir},
  author={Moerdijk, Ieke},
  title={On the equivalence between Lurie's model and the dendroidal model for infinity-operads},
  journal={Adv. Math.},
  volume={302},
  date={2016},
  pages={869--1043},
  eprint={arXiv:1305.3658},
}

\bib{HinichMoerdijkOpd}{article}{
  title={On the equivalence of the Lurie's $\infty $-operads and dendroidal $\infty $-operads},
  author={Hinich, Vladimir},
  author={Moerdijk, Ieke},
  date={2022},
  eprint={arXiv:2206.14033},
}

\bib{JoyalAnalytique}{incollection}{
  author={Joyal, Andr{\'e}},
  title={Foncteurs analytiques et esp\`eces de structures},
  booktitle={Combinatoire \'enum\'erative (Montr{\'e}al/Qu{\'e}bec, 1985)},
  series={Lecture Notes in Mathematics},
  volume={1234},
  pages={126--159},
  publisher={Springer},
  address={Berlin},
  year={1986},
}

\bib{KositsynTheories}{article}{
  author={Kositsyn, Roman},
  title={Completeness for monads and theories},
  date={2021},
  eprint={arXiv:2104.00367},
}

\bib{LawvereThesis}{article}{
  author={Lawvere, F. William},
  title={Functorial semantics of algebraic theories and some algebraic problems in the context of functorial semantics of algebraic theories},
  note={Reprinted from Proc. Nat. Acad. Sci. U.S.A. {\bf 50} (1963), 869--872 [MR0158921] and {\it Reports of the Midwest Category Seminar. II}, 41--61, Springer, Berlin, 1968 [MR0231882]},
  journal={Repr. Theory Appl. Categ.},
  number={5},
  date={2004},
  pages={1--121},
  review={\MR {2118935}},
}

\bib{Linton}{article}{
  author={Linton, F. E. J.},
  title={Some aspects of equational categories},
  conference={ title={Proc. Conf. Categorical Algebra}, address={La Jolla, Calif.}, date={1965}, },
  book={ publisher={Springer, New York}, },
  date={1966},
  pages={84--94},
}

\bib{HTT}{book}{
  author={Lurie, Jacob},
  title={Higher Topos Theory},
  series={Annals of Mathematics Studies},
  publisher={Princeton University Press},
  address={Princeton, NJ},
  date={2009},
  volume={170},
  note={Available from \url {http://math.ias.edu/~lurie/}},
}

\bib{HA}{book}{
  author={Lurie, Jacob},
  title={Higher Algebra},
  date={2017},
  note={Available at \url {http://math.ias.edu/~lurie/}.},
}

\bib{RezkCSS}{article}{
  author={Rezk, Charles},
  title={A model for the homotopy theory of homotopy theory},
  journal={Trans. Amer. Math. Soc.},
  volume={353},
  date={2001},
  number={3},
  pages={973--1007},
}

\bib{RezkThetaN}{article}{
  author={Rezk, Charles},
  title={A Cartesian presentation of weak $n$-categories},
  journal={Geom. Topol.},
  volume={14},
  date={2010},
  number={1},
  pages={521--571},
}

\bib{SzZ}{article}{
  author={Szawiel, Stanis\l aw},
  author={Zawadowski, Marek},
  title={Theories of analytic monads},
  journal={Math. Structures Comput. Sci.},
  volume={24},
  date={2014},
  number={6},
  pages={e240604, 33},
}
\end{biblist}
\end{bibdiv}

\end{document}